\newtheorem{theorem}{Theorem}[section]
\newtheorem{proposition}[theorem]{Proposition}
\newtheorem{definition}[theorem]{Definition}
\newtheorem{lemma}[theorem]{Lemma}
\newtheorem{corollary}[theorem]{Corollary}
\newtheorem{question}[theorem]{Question}
\newtheorem*{claim*}{Claim}
\newtheorem*{subclaim*}{Subclaim}
\newcommand{\Ce}{{\mathcal{C}}}
\newcommand{\De}{{\mathcal{D}}}
\newcommand{\ESR}{\mathrm{ESR}}
\newcommand{\SR}{\mathrm{SR}}
\newcommand{\otp}[1]{{{\rm{otp}}\left(#1\right)}}
\newcommand{\seq}[2]{\langle{#1}~\vert~{#2}\rangle}
\newcommand{\ran}[1]{{{\rm{ran}}(#1)}}
\newcommand{\id}{{\rm{id}}}
\newcommand{\crit}[1]{{{\rm{crit}}({#1})}}
\newcommand{\calL}{\mathcal{L}}
\newcommand{\rank}[1]{{\rm{rnk}}({#1})}
\newcommand{\cof}[1]{{{\rm{cof}}(#1)}}
\newcommand{\Set}[2]{\{{#1}~ \vert~{#2}\}}
\newcommand{\LL}{{\rm{L}}}
\newcommand{\On}{{\rm{Ord}}}
\newcommand{\betrag}[1]{\vert{#1}\vert}
\renewcommand{\emptyset}{\varnothing}
\title[Huge Reflection]{Huge Reflection}
\author{Joan Bagaria}
\address{ICREA (Instituci\'o Catalana de Recerca i Estudis Avan\c{c}ats) and
\newline \indent Departament de Matem\`atiques i Inform\`atica, Universitat de Barcelona. 
Gran Via de les Corts Catalanes, 585,
08007 Barcelona, Catalonia.}
\email{joan.bagaria@icrea.cat}
\author{Philipp L\"{u}cke}
\address{Fachbereich Mathematik, Universit\"at Hamburg, Bundesstra{\ss}e 55, Hamburg, 20146, Germany}
\email{philipp.luecke@uni-hamburg.de}
\thanks{This is an update of the published version of the paper (\emph{Ann. Pure Appl. Logic} 174, No.1, Paper No. 103171, 2023) that corrects a problem in the definition of \emph{weakly exact cardinals} (see Definition \ref{definition:WeaklyExactCardinalUp} below). This problem was brought to our attention by Nai-Chung Hou, a student of the first author. It can be fixed by replacing the notion of \emph{$\Pi_n(V_\kappa)$-correctness} with the weaker concept of \emph{$\Pi_n(V_\kappa)$-upwards correctness}. Through this modification, the proofs of all of the main theorems of the paper hold as before, and only three minor results (Propositions 5.6 and 7.2, and Corollary 6.5 of the published version) cannot be saved. 
The authors would like to thank Nai-Chung Hou for pointing out this problem. 
 %
%Moreover, they would like to thank the anonymous referee for the detailed reading of the manuscript and several helpful comments and corrections.
 %
Part of this research was  supported by  the Generalitat de Catalunya (Catalan Government) under grant SGR 270-2017, and by the Spanish
Government under grant MTM2017-86777-P. 
 In addition, this project has received funding from the European Union’s Horizon 2020 research and innovation programme under the Marie Sk{\l}odowska-Curie grant agreement No 842082 (Project \emph{SAIFIA: Strong Axioms of Infinity -- Frameworks, Interactions and Applications}). 
 }
\date{\today }
\subjclass[2010]{03E55, 03E65, 18A10, 18A15}
\keywords{}
\begin{document}

\begin{abstract}
We study \emph{Structural Reflection} beyond \emph{Vop\v{e}nka's Principle}, at the level of almost-huge cardinals and higher, up to rank-into-rank embeddings. We identify and classify new large cardinal notions in that region that correspond to some form of what we call \emph{Exact Structural Reflection} ($\ESR$). Namely, given cardinals  $\kappa<\lambda$ and a class $\Ce$ of structures of the same type, the  %principle $\ESR_\Ce(\kappa,\lambda)$ 
corresponding instance of $\ESR$ asserts that for every structure $A$ in $\Ce$ of rank $\lambda$, there is a structure $B$ in $\Ce$ of rank $\kappa$ and an elementary embedding of $B$ into $A$.  Inspired by the statement of \emph{Chang's Conjecture}, we also introduce and study   sequential forms of $\ESR$, which, in the case of sequences of length $\omega$, turn out to be very strong. Indeed, when  restricted to $\Pi_1$-definable  classes of structures they follow from the existence of $I1$-embeddings, while for more complicated classes of structures, e.g., $\Sigma_2$, they are not known to be consistent. Thus, these principles unveil a new class of large cardinals that go beyond $I1$-embeddings, yet they may not fall into \emph{Kunen's Inconsistency}. 
\end{abstract}
\maketitle

%%%%%%%%%%%%%%%%%%%%
%%%%%%%%%%%%%%%%%%%%

\section{Introduction}

Given a class\footnote{We work in ZFC. So, proper classes are always definable, with set parameters.} $\Ce$ of structures\footnote{Throughout this paper, the term \emph{structure}  refers to structures for countable first-order languages. 
 The \emph{cardinality} (respectively, the \emph{rank}) of a structure is defined as the cardinality (respectively, the rank) of its domain.}  of the same type and a cardinal $\kappa$, the principle of \emph{Structural Reflection\footnote{A detailed discussion of this principle can be found in \cite{BagariaRefl}.}  $\SR$}  holds at $\kappa$ for $\Ce$ if for every structure $A$ in $\Ce$, there exists some $B\in \Ce \cap V_\kappa$ and an elementary embedding of  $B$ into $A$. 
 Different forms of $\SR$ have been investigated in \cite{Ba:CC, BagariaRefl, BCMR, BV, BagariaWilsonRefl, Lu:SR}, yielding canonical characterizations  of large cardinals in different regions of the large cardinal hierarchy. 
 For example,  results in  \cite{Ba:CC} and \cite{BCMR} use Magidor's classical characterization of supercompact cardinals from \cite{Mag} to show that the existence of such a  cardinal is equivalent to the validity of the principle $\SR$ for all classes of structures definable by $\Pi_1$-formulas without parameters.

 The principles of structural reflection considered so far correspond to large cardinals up to \emph{Vop\v{e}nka's Principle}, stating that  every proper class of structures of the same type contains a structure that is elementary embeddable into another structure in the given class. 
 The validity of this principle can be shown to be equivalent to the existence of cardinals witnessing $\SR$ for every class of structures (see \cite{Ba:CC}). 
 In this paper, we shall study principles of structural reflection that correspond to large cardinal notions  stronger than  Vop\v{e}nka's Principle,  up to rank-into-rank embeddings, and beyond. These principles are given by variations of  the following \emph{exact} form of $\SR$:

\begin{definition}[Exact Structural Reflection] \label{definition:ESR}
 Given infinite cardinals $\kappa<\lambda$ and a class $\Ce$ of structures of the same type, we let $\ESR_\Ce(\kappa,\lambda)$ denote the assertion that for every $A\in \Ce$ of rank $\lambda$, there exists some $B\in \Ce$ of rank $\kappa$ and an elementary embedding from $B$ into $A$. 
\end{definition}

Given a definability class $\Gamma$ (i.e., one of $\Sigma_n$ or $\Pi_n$, for some $n<\omega$) and a class $P$, we  introduce the following  variations of the above definition that will allow us to formulate our results in a compact way:  
 \begin{enumerate}
  \item We let $\Gamma(P)$-$\ESR(\kappa,\lambda)$ denote the statement that $\ESR_\Ce (\kappa,\lambda)$ holds for every  class $\Ce$ of structures of the same type that is $\Gamma$-definable with parameters in $P$. 
  
  \item We let $\Gamma(P)$-$\ESR(\kappa)$ denote the statement that $\Gamma(P)$-$\ESR(\kappa,\lambda)$ holds for some cardinal $\lambda >\kappa$. 
       
  %\item We let $\Gamma(P)$-$\UESR (\kappa)$\footnote{The "U" is for "Unbounded".} be the statement asserting that  $\Gamma(P)$-$\ESR (\kappa,\lambda)$ holds for a proper class of cardinals $\lambda$. 
 
  \item  We let $\Gamma (P)^{ic}$-$\ESR(\kappa,\lambda)$ and $\Gamma (P)^{ic}$-$\ESR(\kappa)$ 
  %and $\Gamma(P)^{ic}$-$\UESR(\kappa)$
  denote  the restrictions of the respective  principles to classes of structures that are $\Gamma$-definable with parameters in $P$ \emph{and} are closed under isomorphic copies.  
 \end{enumerate}

Using the \emph{Downward L\"owenheim--Skolem Theorem}, 
it is easy to see that $\ESR_\Ce(\kappa,\lambda)$ holds for every countable first-order language $\calL$, every elementary class $\Ce$ of $\calL$-structures and all uncountable cardinals $\kappa<\lambda$ with $\cof{\kappa}\leq\cof{\lambda}$ (see Proposition \ref{proposition:CharClosedIso} below). 
 In contrast, we shall see that the above  principles for  externally defined classes are quite strong, for they correspond to large cardinals in the region between supercompact and rank-into-rank embeddings, and beyond. Below is a summary of the main results.

 First, we discuss our results for classes of structures closed under isomorphic copies. While an easy application of $\Sigma_1$-absoluteness shows that the principle  $\Sigma_1(V_\kappa)^{ic}$-$\ESR(\kappa,\lambda)$ holds for all uncountable cardinals $\kappa<\lambda$ with $\cof{\kappa}\leq\cof{\lambda}$ (Proposition \ref{proposition:Sigma1ClosedTrivial}), we  prove that the principle $\Pi_1^{ic}$-$\ESR(\kappa,\lambda)$ already implies the existence of a ${<}\lambda$-supercompact cardinal less than or equal to $\kappa$ (Lemma \ref{lemma:P1ESRisoSupercompacts}). 
 Moreover, for singular cardinals $\kappa$, our results show that the validity of principles of the form $\Pi_1(V_\kappa)^{ic}$-$\ESR(\kappa,\lambda)$ is equivalent to the existence of cardinals below $\kappa$ possessing certain degrees of supercompactness. In particular, it turns out that singular limits of supercompact cardinals can be characterized through exact structural reflection for $\Pi_1$-definable classes closed under isomorphic copies. Namely, we have the following equivalences:

\begin{theorem}\label{theorem:LimitSupercompacts}
 The following statements are equivalent for every singular cardinal $\kappa$: 
 \begin{enumerate}
     \item $\kappa$ is a  limit of supercompact cardinals.  
     
     \item $\Pi_1(\kappa)^{ic}$-$\ESR(\kappa,\lambda)$ holds for a proper class of cardinals $\lambda$.  
     
     \item $\Sigma_2(V_\kappa)^{ic}$-$\ESR(\kappa,\lambda)$ holds for a proper class of cardinals $\lambda$. 
 \end{enumerate}
\end{theorem}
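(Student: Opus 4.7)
The plan is to establish the cyclic chain $(1) \Rightarrow (3) \Rightarrow (2) \Rightarrow (1)$. The implication $(3) \Rightarrow (2)$ is essentially immediate: every $\Pi_1$-formula is $\Sigma_2$, and every ordinal parameter in $\kappa$ lies in $V_\kappa$, so each $\Pi_1(\kappa)^{ic}$-definable class is already $\Sigma_2(V_\kappa)^{ic}$-definable.

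For $(1) \Rightarrow (3)$, I fix a class $\Ce$ that is $\Sigma_2(V_\kappa)^{ic}$-definable with parameter $p \in V_\kappa$ and closed under isomorphic copies. I take the proper class of $\lambda > \kappa$ to consist of cardinals with sufficient reflection (for instance, $V_\lambda \prec_{\Sigma_2} V$), and for any $A \in \Ce$ of rank $\lambda$ I choose a supercompact $\mu$ with $\cof{\kappa}, \rank{p} < \mu < \kappa$, available by (1). The characterization of supercompactness via $\Pi_1$-$\SR$ from \cite{Ba:CC, BCMR}, combined with the standard device of absorbing a $\Sigma_2$-witness as an additional element of the structure, then yields some $B^* \in V_\mu \cap \Ce$ with an elementary embedding $B^*\to A$. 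Closure under isomorphic copies allows me to transport $B^*$ to a structure $B\in\Ce$ whose domain is a subset of $V_\kappa$ of rank exactly $\kappa$.

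For $(2) \Rightarrow (1)$, I fix an arbitrary $\mu_0 < \kappa$ and aim to produce a supercompact cardinal in $(\mu_0,\kappa)$. A parameterized variant of Lemma \ref{lemma:P1ESRisoSupercompacts}, applied to a $\Pi_1(\kappa)^{ic}$-definable class whose design uses $\mu_0\in\kappa$ as a parameter to force the reflected cardinal to exceed $\mu_0$, yields for each $\lambda$ in the proper class supplied by (2) a ${<}\lambda$-supercompact cardinal $\eta_\lambda$ with $\mu_0 < \eta_\lambda \leq \kappa$; singularity of $\kappa$ forces $\eta_\lambda < \kappa$. The assignment $\lambda\mapsto\eta_\lambda$ maps a proper class into the set of ordinals below $\kappa$, so by pigeonhole some fixed $\eta^* \in (\mu_0,\kappa)$ is ${<}\lambda$-supercompact for cofinally many $\lambda$, and is therefore fully supercompact. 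As $\mu_0$ was arbitrary, supercompacts are cofinal below $\kappa$.

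The step requiring the most care, I expect, is the realization inside $(1) \Rightarrow (3)$: passing from $B^*$ of rank $<\mu<\kappa$ to an isomorphic copy of rank \emph{exactly} $\kappa$, because $|B^*|$ need not be at least $\cof{\kappa}$, and without this the transport is impossible. I plan to handle this by first replacing $\Ce$ with an auxiliary class $\Ce'$ of structures augmented with countably many additional function or constant symbols that code a chain in $A$ of ranks cofinal in $\lambda$; elementarity of the reflection then forces the reflected structure to support an analogous chain, guaranteeing a carrier of cardinality at least $\cof{\kappa}$ that can be laid out on a rank-$\kappa$ domain before stripping the extra symbols to recover $B\in\Ce$.
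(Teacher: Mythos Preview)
Your implications $(3)\Rightarrow(2)$ and $(2)\Rightarrow(1)$ match the paper's approach: the first is trivial, and the second is exactly Lemma~\ref{lemma:P1ESRisoSupercompacts} applied with an ordinal parameter $\mu_0<\kappa$, followed by the pigeonhole argument you describe (this is packaged as Corollary~\ref{corollary:ESRyieldsLimitSupercompact} in the paper).

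For $(1)\Rightarrow(3)$ you take a genuinely different route from the paper, and there is a gap in your cardinality-control step. Your plan is to apply $\Sigma_2$-$\SR$ at a supercompact $\mu\in(\cof{\kappa},\kappa)$ to obtain $B^*\in V_\mu\cap\Ce$ embedding into $A$, and then relocate $B^*$ onto a domain of rank exactly $\kappa$. As you correctly identify, this last step requires $|B^*|\geq\cof{\kappa}$. Your proposed fix---adjoining countably many symbols coding a chain in $A$ whose ranks are cofinal in $\lambda$---does not achieve this: elementarity only forces $B^*$ to interpret those countably many symbols as distinct elements, yielding $|B^*|\geq\aleph_0$, which is useless when $\cof{\kappa}>\aleph_0$. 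A chain whose ranks are cofinal in $\rank{B^*}$ says nothing about $\cof{\kappa}$, since $\rank{B^*}<\mu$ has no prescribed relationship to $\cof{\kappa}$.

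The paper avoids this difficulty altogether by working with the supercompact embedding $j:V\to M$ directly rather than via the $\SR$ characterization (see Lemma~\ref{lemma:singularsupercompact} and Corollary~\ref{corollary:ESRfromSRatSingulars}). One first passes, via Proposition~\ref{proposition:CharClosedIso}, to the equivalent cardinality formulation: it suffices to show that every $B\in\Ce$ with $|B|\in[\cof{\lambda},\beth_\lambda]$ admits an embedding from some $A\in\Ce$ with $|A|\in[\cof{\kappa},\beth_\kappa]$. Choosing $\mu$ supercompact with $\cof{\kappa}<\mu<\kappa$ and a $\beth_\lambda$-supercompact embedding $j:V\to M$ with $\crit{j}=\mu$, one has $j(\cof{\kappa})=\cof{\kappa}$. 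In $M$, the structure $B$ itself (together with $j\restriction B:B\to j(B)$) witnesses that some member of $\Ce$ of cardinality in $[\cof{\kappa},j(\mu))$ embeds into $j(B)$; reflecting along $j$ gives such a structure of cardinality in $[\cof{\kappa},\mu)\subseteq[\cof{\kappa},\beth_\kappa]$ embedding into $B$. The lower bound $\cof{\kappa}$ is thus obtained for free from the fact that $B$ already has cardinality at least $\cof{\lambda}\geq\cof{\kappa}$ and that $j$ fixes $\cof{\kappa}$.

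If you wish to salvage your $\SR$-based approach, the correct auxiliary device is not a chain but the cardinality constraint itself: since $\cof{\kappa}<\mu$, you may take $\Ce'=\{A\in\Ce:\text{there is an injection }\cof{\kappa}\hookrightarrow A\}$, which is still $\Sigma_2$-definable with parameters in $V_\mu$, and apply $\Sigma_2$-$\SR$ at $\mu$ to $\Ce'$.
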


 In order to state an analogous result  for more complicated classes of structures, we have to introduce a weak form of the notion of  \emph{$C^{(n)}$-extendibility} from \cite{Ba:CC}. 
 Recall that a cardinal $\kappa$ is \emph{$\lambda$-extendible} for some ordinal $\lambda>\kappa$ if there is an ordinal $\eta$ and an elementary embedding $j:V_\lambda\to V_\eta$ with $\crit{j}=\kappa$ and $j(\kappa)>\lambda$. 
 Following \cite{Ba:CC}, for every  $n<\omega$, we let $C^{(n)}$ denote the $\Pi_n$-definable closed unbounded class of all of ordinals $\alpha$ such that $V_\alpha$ is a $\Sigma_n$-elementary substructure of $V$. 
 %
% With the help of universal $\Pi_n$-formulas, it is easy to see that for all $n<\omega$, the class $C^{(n)}$ is definable by a $\Pi_n$-formula without parameters. 
%Let us recall the following definition.
 Given cardinals $\kappa<\lambda$ and $n<\omega$, the cardinal $\kappa$ is \emph{$\lambda$-$C^{(n)}$-extendible} if there is an elementary embedding $j:V_{\lambda}\to V_{\nu}$ for some cardinal  $\nu$ with  $\crit{j}=\kappa$,  $j(\kappa)>\lambda$ and $j(\kappa)\in C^{(n)}$. 
 In addition, we say that $\kappa$ is  \emph{$C^{(n)}$-extendible} if it is $\lambda$-$C^{(n)}$-extendible for all (equivalently, for a proper class of) $\lambda >\kappa$ (see {\cite[Section 3]{Ba:CC}}).
 
%As  shown in \cite{Tsan}, $\kappa$ is $C^{(n)}$-extendible if and only if it is ${C^{(n)}}^+$-extendible, i.e., for a proper class of  $\lambda\in C^{(n)}$ there exists an elementary embedding $j:V_\lambda \to V_\nu$ for some $\nu \in C^{(n)}$, with $\crit{j}=\kappa$,  $j(\kappa)>\lambda$ and $j(\kappa)\in C^{(n)}$.

The following weaker form of $C^{(n)}$-extendibility will allow us to  prove a version of Theorem \ref{theorem:LimitSupercompacts} for  classes of structures of complexity greater than $\Sigma_2$.

\begin{definition}
Given ordinals $\mu<\lambda$ and a natural number $n$, a cardinal $\kappa\leq\mu$ is \emph{$[\mu,\lambda)$-$C^{(n)}$-extendible} if there exist $\nu\in C^{(n+1)}\cap[\mu,\lambda)$ and an elementary embedding $j:V_\lambda\to V_\eta$, for some $\eta$,  with $\crit{j}=\kappa$, $j(\mu)\geq\lambda$ and $j(\nu)\in C^{(n)}$. In addition, we say that $\kappa$ is $[\mu, \infty)$-$C^{(n)}$-extendible if it is $[\mu ,\lambda)$-$C^{(n)}$-extendible for a proper class of ordinals $\lambda$.
\end{definition}

It is easy to see that extendible cardinals $\kappa$ are $[\mu,\infty)$-$C^{(1)}$-extendible for all $\mu\geq\kappa$. Using the fact that the requirement "$j(\kappa)>\lambda$" can be omitted in the definition of extendibility (see {\cite[Proposition 23.15]{Kan:THI}}), we can also see that a cardinal $\kappa$ is extendible if and only if it is $[\mu,\infty)$-$C^{(1)}$-extendible for some $\mu\geq\kappa$. 
We will later show  that every $C^{(n)}$-extendible cardinal is $[\mu ,\infty)$-$C^{(n)}$-extendible, for every $\mu\geq \kappa$  (Proposition \ref{proposition:CnExtendibleIntervalExtendible}). 

Using this notion, the above characterization of singular limits of supercompact cardinals now generalizes in the following way:

\begin{theorem}\label{theorem:LimitCnExtendibles}
 For every $n>0$, the following statements are equivalent for every singular cardinal $\kappa$: 
 \begin{enumerate}
     \item $\kappa$ is a  limit of $[\kappa,\infty)$-$C^{(n)}$-extendible  cardinals. 
     
     \item $\Pi_{n+1}(\kappa)^{ic}$-$\ESR(\kappa,\lambda)$ holds for a proper class of cardinals $\lambda$. 
     
     \item $\Sigma_{n+2}(V_\kappa)^{ic}$-$\ESR(\kappa,\lambda)$ holds for a proper class of cardinals $\lambda$. 
 \end{enumerate}
\end{theorem}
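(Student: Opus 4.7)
My plan is to establish the circle $(3) \Rightarrow (2) \Rightarrow (1) \Rightarrow (3)$, generalizing the strategy used to prove Theorem \ref{theorem:LimitSupercompacts}. The implication $(3) \Rightarrow (2)$ is immediate from the inclusion $\Pi_{n+1}(\kappa) \subseteq \Sigma_{n+2}(V_\kappa)$, since $\kappa$ is $\Delta_1$-definable from $V_\kappa$.

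For $(1) \Rightarrow (3)$, I would fix a $\Sigma_{n+2}(V_\kappa)$-class $\Ce = \Set{x}{\exists y\, \phi(x, y, V_\kappa)}$ closed under isomorphism, with $\phi$ a $\Pi_{n+1}$-formula, and let $A \in \Ce$ have rank $\lambda$. First, pick $\lambda^* > \lambda$ in $C^{(n+2)}$ so that some witness $y_A$ with $\phi(A, y_A, V_\kappa)$ lies in $V_{\lambda^*}$. Using (1), fix a cofinal sequence $\seq{\delta_i}{i < \cof{\kappa}}$ in $\kappa$ of $[\kappa,\infty)$-$C^{(n)}$-extendible cardinals and associated elementary embeddings $j_i : V_{\lambda^*_i} \to V_{\eta_i}$ with $\lambda^*_i \geq \lambda^*$, critical point $\delta_i$, $j_i(\kappa) \geq \lambda^*_i$, intermediate ordinal $\nu_i \in C^{(n+1)} \cap (\lambda, \lambda^*_i)$, and $j_i(\nu_i) \in C^{(n)}$. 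The conditions $\nu_i \in C^{(n+1)}$ and $j_i(\nu_i) \in C^{(n)}$ allow the $\Pi_{n+1}$-formula $\phi(A, y_A, V_\kappa)$ to reflect between $V$, $V_{\nu_i}$, and $V_{j_i(\nu_i)}$. Using this to pull back the witness through each $j_i$ produces an elementary substructure $A_i \prec A$ of rank strictly below $\kappa$ together with a $\phi$-witness $y_i$ certifying that $A_i \in \Ce$. Arranging the choices coherently so that the $A_i$ form an increasing elementary chain, I set $B = \bigcup_i A_i$, an elementary substructure of $A$ of cardinality and rank $\kappa$. A coherent limit of the witnesses $y_i$ certifies $B \in \Ce$, and an isomorphic copy of $B$ of rank exactly $\kappa$ finishes the argument.

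For $(2) \Rightarrow (1)$, I would fix $\gamma < \kappa$ and exhibit a $[\kappa,\infty)$-$C^{(n)}$-extendible cardinal in $(\gamma, \kappa)$. The strategy is to design an isomorphism-closed $\Pi_{n+1}(\kappa)$-definable class $\Ce_\gamma$ of structures of the form $(V_\mu, \in, \mu, \kappa, \sigma, R)$, where $\mu, \sigma \in C^{(n+1)}$ with $\kappa \leq \sigma < \mu$ and $R$ is an auxiliary predicate reserving a slot for a critical ordinal above $\gamma$. The condition $\mu \in C^{(n+1)}$ is $\Pi_{n+1}$-expressible and can be folded into the definition without exceeding the allowed complexity. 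For a sufficiently large $\lambda$ from the proper class provided by (2), I take $A \in \Ce_\gamma$ to be the canonical such structure at $\mu = \lambda$. Applying $\Pi_{n+1}(\kappa)^{ic}$-$\ESR(\kappa,\lambda)$ yields some $B \in \Ce_\gamma$ of rank $\kappa$ together with an elementary embedding $e: B \to A$. Decoding $e$ using the $C^{(n+1)}$-data built into $B$ and $A$ and the $C^{(n)}$-reflection afforded by elementarity, $e$ extends to an embedding $V_{\lambda} \to V_{\eta}$, for some $\eta$, with critical point $\delta \in (\gamma,\kappa)$, $j(\kappa) \geq \lambda$, and $j(\sigma) \in C^{(n)}$, witnessing the desired $[\kappa,\lambda)$-$C^{(n)}$-extendibility of $\delta$.

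The hard part will be the chain construction in $(1) \Rightarrow (3)$ and the verification that its union belongs to $\Ce$: one must assemble a single global $\Sigma_{n+2}$-witness $y_B$ from the local witnesses $y_i$ along the chain, and this is precisely where the $C^{(n+1)}$- and $C^{(n)}$-conditions appearing in the definition of $[\kappa,\infty)$-$C^{(n)}$-extendibility enter crucially, explaining why $\Sigma_{n+2}$ (rather than $\Sigma_{n+3}$) is the sharp complexity level reached. A secondary subtlety in $(2) \Rightarrow (1)$ is engineering $\Ce_\gamma$ so that its isomorphism-closure and $\Pi_{n+1}(\kappa)$-complexity are jointly compatible with forcing the decoded critical point to land above $\gamma$.
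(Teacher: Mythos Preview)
Your approach to $(1)\Rightarrow(3)$ has a genuine gap that cannot be repaired along the lines you indicate. For an arbitrary $\Sigma_{n+2}$-definable isomorphism-closed class $\Ce$, there is no mechanism for ``assembling a single global witness $y_B$'' for the union $B=\bigcup_i A_i$ from witnesses $y_i$ for the pieces $A_i$; closure under isomorphism says nothing about closure under directed unions of elementary chains. Concretely, take $\Ce$ to be the class of structures whose cardinality is not a strong limit: this is $\Sigma_2$-definable and isomorphism-closed, yet an elementary chain of members of $\Ce$ can easily have a union outside $\Ce$. Your proposal identifies this step as ``the hard part'', but it is not hard---it is false in general.

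The paper avoids this entirely. The key observation you are missing is Proposition~\ref{proposition:CharClosedIso}: for isomorphism-closed $\Ce$, the principle $\ESR_\Ce(\kappa,\lambda)$ is equivalent to the statement that every $B\in\Ce$ of cardinality in $[\cof{\lambda},\beth_\lambda]$ admits an elementary embedding from some $A\in\Ce$ of cardinality in $[\cof{\kappa},\beth_\kappa]$. With this reformulation, a \emph{single} $[\kappa,\infty)$-$C^{(n)}$-extendible cardinal $\delta\in(\cof{\kappa},\kappa)$ with $z\in V_\delta$ suffices: one takes an appropriate embedding $j:V_\theta\to V_\eta$ with $\crit{j}=\delta$ and $\theta\in C^{(n+2)}$ large, observes that $B$ itself sits inside $V_{j(\nu)}$ as a structure of small cardinality embedding into $j(B)$, and reflects via $j$ to find $A\in\Ce$ of cardinality in $[\cof{\kappa},\kappa)$ embedding into $B$. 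No chains, no unions, no gluing of witnesses (Corollary~\ref{corollary:ESRfromSRatSingulars}).

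For $(2)\Rightarrow(1)$ your idea is closer in spirit, but note that in $\Pi_{n+1}(\kappa)^{ic}$ the allowed parameters are ordinals \emph{below} $\kappa$, not $\kappa$ itself, so your class $\Ce_\gamma$ cannot have $\kappa$ baked in as a constant; also a class of structures of the literal form $(V_\mu,\in,\mu,\ldots)$ is neither isomorphism-closed nor has $\mu$ in its universe. The paper instead uses the isomorphism-closed class $\mathcal{W}_{n+1}(\alpha)$ of well-founded extensional structures whose transitive collapse is $\Pi_{n+1}$-correct and carries the right marker data, and then extracts the required embedding $V_\lambda\to V_\eta$ by a correctness-and-elementarity reflection (Lemmas~\ref{lemma:EmbFromESRisoC} and~\ref{lemma:P1ESRisoSupercompacts}).
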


 In particular, Theorem \ref{theorem:LimitCnExtendibles} shows that a singular cardinal $\kappa$ is a limit of extendible cardinals if and only if $\Pi_2(\kappa)^{ic}$-$\ESR(\kappa,\lambda)$ holds for a proper class of cardinals $\lambda$.

 In combination with results from \cite{Ba:CC} and \cite{BCMR}, the methods developed in the proof of Theorem \ref{theorem:LimitCnExtendibles} also allow us to conclude that exact structural reflection for classes of structures closed under isomorphisms holding at singular cardinals does not imply the existence of large cardinals stronger than Vop\v{e}nka's Principle. 
 In fact,  Vop\v{e}nka's Principle can be characterized through the validity of principles of the form $\Pi_n(V_\kappa)^{ic}$-$\ESR(\kappa,\lambda)$.

\begin{theorem}\label{theorem:ESR-VP}
 Over the theory {\rm{ZFC}}, the following schemes of sentences imply each other: 
 \begin{enumerate}
     \item Vop\v{e}nka's Principle. 
     
     \item For every class $\Ce$  of structures of the same type that is closed under isomorphic images,  there is a  cardinal $\kappa$ with  the property that $\ESR_\Ce(\kappa,\lambda)$ holds for all  $\lambda>\kappa$. 
     
     \item For every natural number $n>0$, there exists a proper class of cardinals $\kappa$ with the property that $\Pi_n(V_\kappa)^{ic}$-$\ESR(\kappa,\lambda)$ holds for all $\lambda>\kappa$. 
 \end{enumerate}
\end{theorem}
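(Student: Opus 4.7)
The plan is to establish the cycle $(3) \Rightarrow (2) \Rightarrow (1) \Rightarrow (3)$. For $(3) \Rightarrow (2)$, given any isomorphism-closed class $\Ce$, fix a $\Pi_n$-definition of $\Ce$ with parameter $p$ (converting a $\Sigma_m$-definition to $\Pi_{m+1}$ if needed), apply $(3)$ at this $n$, and select $\kappa$ from the guaranteed proper class with $p \in V_\kappa$; the corresponding instance of $\Pi_n(V_\kappa)^{ic}$-$\ESR$ delivers $\ESR_\Ce(\kappa,\lambda)$ for all $\lambda > \kappa$. For $(2) \Rightarrow (1)$, given a proper class $\Ce$ of structures of a common type, form its isomorphism closure $\Ce'$ (still a proper class), apply $(2)$ to obtain a witness $\kappa$, pick $A \in \Ce$ of rank $\lambda > \kappa$, and use $\ESR_{\Ce'}(\kappa,\lambda)$ to find some $B \in \Ce'$ of rank $\kappa$ elementarily embedding into $A$. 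Replacing $B$ by an isomorphic copy $B' \in \Ce$ yields $B' \neq A$ (as the ranks differ) with an elementary embedding $B' \hookrightarrow A$, thereby witnessing Vop\v{e}nka's Principle for $\Ce$.

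The central implication is $(1) \Rightarrow (3)$. Fix $n > 0$. Working under Vop\v{e}nka's Principle and invoking the characterization from \cite{Ba:CC}, which schema-wise equates Vop\v{e}nka's Principle with the existence of a proper class of $C^{(m)}$-extendible cardinals for every $m$, we obtain a proper class of $C^{(n+1)}$-extendible cardinals. The plan is then to show that every such $\kappa$ satisfies $\Pi_n(V_\kappa)^{ic}$-$\ESR(\kappa,\lambda)$ for all $\lambda > \kappa$. Let $\Ce$ be an isomorphism-closed class defined by a $\Pi_n$-formula $\varphi(x,p)$ with $p \in V_\kappa$, and let $A \in \Ce$ have rank $\lambda$. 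Fix some $\lambda' > \lambda$ with $\lambda' \in C^{(n+1)}$ and, using $C^{(n+1)}$-extendibility, take an elementary embedding $j \colon V_{\lambda'} \to V_\eta$ with $\crit{j} = \kappa$, $j(\kappa) > \lambda'$ and $j(\kappa) \in C^{(n+1)}$; a standard correction allows us to also arrange $\eta \in C^{(n)}$, ensuring that $V_\eta \models \varphi(A,p)$ by $\Pi_n$-absoluteness.

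Since $j \restriction A$ is an elementary embedding of $A$ into $j(A)$, the $\Sigma_{n+1}$-statement
\[
\exists B \, \bigl( \rank{B} < j(\kappa) \wedge \varphi(B,p) \wedge \exists f \, (f \colon B \to j(A) \text{ is elementary}) \bigr)
\]
holds in $V_\eta$, witnessed by $B = A$. Pulling back through $j$ yields the same $\Sigma_{n+1}$-statement in $V_{\lambda'}$ with $\kappa$ and $A$ in place of $j(\kappa)$ and $j(A)$; since $\lambda' \in C^{(n+1)}$, the statement transfers to $V$, producing some $B_0 \in \Ce$ of rank strictly less than $\kappa$ that elementarily embeds into $A$. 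Isomorphism closure of $\Ce$, together with the inaccessibility of $\kappa$, then lets us replace $B_0$ by an isomorphic copy of rank exactly $\kappa$, completing the proof.

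The chief obstacle is the complexity calibration in the central direction. One must use $C^{(n+1)}$-extendibility rather than plain $C^{(n)}$-extendibility in order to reflect a $\Sigma_{n+1}$-statement about embeddings, and one must secure enough absoluteness at the target $V_\eta$ to verify $V_\eta \models \varphi(A,p)$ for the given $\Pi_n$-formula; both are handled by a standard correction of the extendibility embedding. Once this is in place, the remaining steps, in particular the promotion from rank below $\kappa$ to rank exactly $\kappa$, are routine.
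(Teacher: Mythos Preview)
Your argument for $(1) \Rightarrow (3)$ has a genuine gap in the final ``promotion'' step. You produce, for each $C^{(n+1)}$-extendible $\kappa$ and each $A \in \Ce$ of rank $\lambda$, a structure $B_0 \in \Ce$ of rank strictly less than $\kappa$ embedding elementarily into $A$, and then claim that isomorphism closure lets you replace $B_0$ by an isomorphic copy of rank exactly $\kappa$. This fails: since $\kappa$ is extendible it is inaccessible, hence regular and a strong limit, so any structure of rank below $\kappa$ has cardinality strictly less than $\kappa = \cof{\kappa}$. But no set of cardinality less than $\cof{\kappa}$ can have rank exactly $\kappa$, since its elements would have ranks bounded below $\kappa$. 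Thus no isomorphic copy of $B_0$ attains rank $\kappa$, and you have not established $\ESR_\Ce(\kappa,\lambda)$. (Equivalently, in the language of Proposition \ref{proposition:CharClosedIso}, you need the cardinality of $B_0$ to lie in $[\cof{\kappa},\beth_\kappa]$, and it does not.)

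The paper sidesteps this obstruction by taking $\kappa$ to be \emph{singular}: rather than using $C^{(n)}$-extendible cardinals directly, it takes $\kappa$ of countable cofinality that is a limit of such cardinals, and invokes Corollary \ref{corollary:LimitsOfCnExtendibles}(iii). The point is that the extendibility embedding now has critical point \emph{below} $\kappa$, so the reflected structure has cardinality in $[\cof{\kappa},\kappa)$, and since $\cof{\kappa}=\omega$ the promotion to rank $\kappa$ via Proposition \ref{proposition:CharClosedIso} goes through. Your direct approach at a regular $\kappa$ cannot be repaired along these lines; the choice of singular $\kappa$ is essential.

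A secondary issue: in your $(2) \Rightarrow (1)$, the justification ``$B' \neq A$ as the ranks differ'' is not correct, since $B'$ is only isomorphic to $B$ and may a priori have any rank, including $\lambda$. The paper handles this by a case split on whether the class $\Set{|A|}{A \in \Ce}$ is bounded: if unbounded, one chooses $A$ with $|A| > \beth_\kappa$, forcing $|B'| = |B| \leq \beth_\kappa < |A|$; if bounded, two distinct isomorphic members of $\Ce$ already witness Vop\v{e}nka's Principle.
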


In contrast to the above results, both the validity of the principle $\Pi_1^{ic}$-$\ESR$ at a regular cardinal and the validity of the principle $\Pi_1$-$\ESR$ at some cardinal  turn out to imply the existence of  large cardinals stronger than Vop\v{e}nka's Principle, e.g. almost huge cardinals. 
 The large cardinal properties introduced below will allow us to  capture the  strength of these  forms of exact structural reflection. Their definition is motivated by results in \cite{Lu:SR} that provide a characterization of \emph{shrewd cardinals} (introduced by Rathjen in \cite{MR1369172}) through a variation of Magidor's classical characterization of supercompactness in \cite{Mag} and  similar characterizations of $n$-hugeness in  {\cite[Section 6]{MR3913154}}. 
 In the following, we will say that a set  $M$ is \emph{$\Pi_n(P)$-upwards correct} for some natural number $n>0$ and a class $P$ if all $\Pi_n$-formulas with parameters in $M\cap P$ that hold in $M$ also hold in $V$.

\begin{definition}\label{definition:WeaklyExactCardinalUp}
 Given  a natural number $n>0$, an infinite cardinal $\kappa$ is \emph{weakly $n$-exact for a cardinal $\lambda>\kappa$} if for every $A\in V_{\lambda +1}$, there exists 
     a transitive, $\Pi_n(V_{\kappa+1})$-upwards correct set $M$ with $V_\kappa\cup \{\kappa\} \subseteq M$,  a cardinal $\lambda'\in C^{(n-1)}$ greater than $\beth_\lambda$   
     and an elementary embedding $j:M\to H_{\lambda'}$ with $j(\kappa)=\lambda$ and $A\in\ran{j}$.  
     If we further require that $j(\crit{j})=\kappa$, then we say that $\kappa$ is \emph{weakly parametrically $n$-exact for $\lambda$}.
 
\end{definition}

Observe  that if $\kappa$ is weakly parametrically $1$-exact for $\lambda$, then $\kappa$ and $\lambda$ are both inaccessible. The following result shows how weakly $n$-exact cardinals are connected to principles of exact structural reflection for $\Pi_n$-definable classes of structures.

\begin{theorem}\label{theorem:CharacterizationPI-ESR}
 The following statements are equivalent for all cardinals $\kappa$ and all natural numbers $n>0$: 
 \begin{enumerate}
     \item $\kappa$ is the least regular cardinal such that $\Pi_n^{ic}$-$\ESR(\kappa)$ holds. 
     
     \item $\kappa$ is the least  cardinal such that $\Pi_n$-$\ESR(\kappa)$ holds. 
     
     \item $\kappa$ is the least cardinal such that $\Pi_n(V_\kappa)$-$\ESR(\kappa)$ holds. 
     
     \item $\kappa$ is the least %regular
      cardinal that is weakly $n$-exact for some  $\lambda>\kappa$. 
     
     \item $\kappa$ is the least cardinal that is weakly parametrically  $n$-exact for some  $\lambda>\kappa$. 
 \end{enumerate}
\end{theorem}

 In the case of $\Sigma_{n+1}$-definable classes of structures, the   large cardinal principles corresponding to the different forms of exact structural reflection are the following:

\begin{definition}
\label{defnexact}
 Given a natural number $n$, an infinite  cardinal $\kappa$ is \emph{$n$-exact for some cardinal $\lambda>\kappa$} if for every $A\in V_{\lambda +1}$, there exists a cardinal $\kappa'\in C^{(n)}$ greater than $\beth_\kappa$, a cardinal  $\lambda'\in C^{(n+1)}$ greater than $\lambda$, an elementary submodel $X$ of $H_{\kappa'}$ with  $V_\kappa \cup \{\kappa\}   \subseteq X$, and an elementary embedding $j:X\to H_{\lambda'}$ with  $j(\kappa)=\lambda$ and   $A\in\ran{j}$. 
     If we further require that   $j(\crit{j})=\kappa$ holds,\footnote{Even though $X$ need not be transitive, we still define $\crit{j}$ as the least ordinal moved by $j$, which exists since $j$ is not the identity on the ordinals as $j(\kappa)=\lambda$.} then we say that $\kappa$ is \emph{parametrically $n$-exact for $\lambda$}.
 
\end{definition}

Note that, if $m\leq n<\omega$ and $\kappa$ is (parametrically) $n$-exact for $\lambda$, then $\kappa$ is also (parametrically) $m$-exact for $\lambda$. 
Moreover, standard arguments show that, if $\kappa$ is parametrically $0$-exact for $\lambda$, then both $\kappa$ and $\lambda$ are inaccessible cardinals.
 In addition, it is easily seen that, given $0<n<\omega$, if  $\kappa$ is (parametrically) $n$-exact for $\lambda$, then it is also weakly (parametrically) $n$-exact for $\lambda$.

The equivalence of the existence of $n$-exact cardinals with $\ESR$ principles for $\Sigma_n$-definable classes of structures is given in the following theorem.

\begin{theorem}\label{theorem:CharacterizationSigma-ESR}
 The following statements are equivalent for all cardinals $\kappa$ and all natural numbers $n>0$: 
 \begin{enumerate}
     \item $\kappa$ is the least  cardinal such that $\Sigma_{n+1}$-$\ESR(\kappa)$ holds. 
     
     \item $\kappa$ is the least cardinal such that $\Sigma_{n+1}(V_\kappa)$-$\ESR(\kappa)$ holds. 
     
     \item $\kappa$ is the least cardinal that is  $n$-exact for some  $\lambda>\kappa$. 
     
     \item $\kappa$ is the least cardinal that is  parametrically  $n$-exact for some  $\lambda>\kappa$. 
 \end{enumerate}
\end{theorem}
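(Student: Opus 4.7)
The plan is to show that the four descriptions of $\kappa$ all specify the same cardinal by establishing a cycle of implications among them. Two of these are immediate: parametric $n$-exactness strengthens $n$-exactness, giving (4) $\Rightarrow$ (3), and $\Sigma_{n+1}(V_\kappa)$-definable classes form a subcollection of all $\Sigma_{n+1}$-definable classes with arbitrary parameters, giving (1) $\Rightarrow$ (2). It then suffices to verify (3) $\Rightarrow$ (2) and to show that the least cardinal witnessing (2) is in fact parametrically $n$-exact and satisfies $\Sigma_{n+1}$-$\ESR$ with arbitrary parameters.

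For (3) $\Rightarrow$ (2), I would fix a cardinal $\kappa$ that is $n$-exact for some $\lambda>\kappa$ and a class $\Ce$ defined by a formula $\exists y\,\psi(y,x,\vec q)$ with $\psi$ of complexity $\Pi_n$ and $\vec q\in V_\kappa$. Given $A\in\Ce$ of rank $\lambda$, I apply $n$-exactness to $A$ to obtain $\kappa'\in C^{(n)}$, $\lambda'\in C^{(n+1)}$, $X\preceq H_{\kappa'}$ with $V_\kappa\cup\{\kappa\}\subseteq X$, and $j\colon X\to H_{\lambda'}$ with $j(\kappa)=\lambda$ and $A\in\ran{j}$. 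Set $A'=j^{-1}(A)$. The key observation is that $\vec q\in V_\kappa\subseteq X$ is fixed by $j$, so the statement ``$A\in\Ce$'' can be transferred along the chain $V\to H_{\lambda'}\to X\to H_{\kappa'}\to V$: downward via $\lambda'\in C^{(n+1)}$, across $j$ by elementarity, upward via $X\preceq H_{\kappa'}$, and finally back to $V$ by choosing a $\Pi_n$-witness inside $H_{\kappa'}$ and invoking $\kappa'\in C^{(n)}$. Absoluteness of rank combined with $j(\kappa)=\lambda$ forces $A'$ to have rank exactly $\kappa$, and $j\upharpoonright A'$ furnishes the required elementary embedding into $A$.

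The main technical step is showing that the least cardinal $\kappa$ with $\Sigma_{n+1}(V_\kappa)$-$\ESR(\kappa,\lambda)$ is parametrically $n$-exact for $\lambda$ and validates $\Sigma_{n+1}$-$\ESR$ with arbitrary parameters. I would design a $\Sigma_{n+1}(V_\kappa)$-definable class $\Ce$ of structures whose reflection is forced to encode the data in Definition \ref{defnexact}. A natural candidate is the class of structures of the form $\langle V_\mu,\in, V_\kappa,\kappa, R\rangle$ with $\mu\in C^{(n+1)}$ and extra first-order axioms pinning down the correctness of $\mu$ and the interpretation of the constants $V_\kappa,\kappa$, and with $R$ a free slot allowed to encode a target $A\in V_{\lambda+1}$. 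Applying $\ESR_\Ce(\kappa,\lambda)$ to the structure with $\mu=\lambda$ and $R$ coding $A$ yields a reflecting structure of rank $\kappa$ and an elementary embedding $j$ which, by the rigidity imposed by the axioms, must be of the form $\langle V_\kappa,\in, V_\beta,\beta, R'\rangle$ with $j(\beta)=\kappa$. Taking a suitable Skolem hull inside $H_{\kappa'}$ for some $\kappa'\in C^{(n)}$ extends this to $X\preceq H_{\kappa'}$ and $j\colon X\to H_{\lambda'}$ with $V_\kappa\cup\{\kappa\}\subseteq X$, $j(\kappa)=\lambda$, $A\in\ran{j}$, and $j(\crit{j})=\kappa$, giving parametric $n$-exactness. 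Including a further coordinate to accommodate any parameter $p$ yields $\Sigma_{n+1}$-$\ESR(\kappa,\lambda)$ with arbitrary parameters.

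The chief obstacle is this final construction. The class $\Ce$ must be definable by a $\Sigma_{n+1}$ formula using only parameters in $V_\kappa$; its axioms must be strong enough to force any reflecting structure of rank $\kappa$ to correctly interpret the constants for $V_\kappa$ and $\kappa$; and the embedding resulting from $\ESR_\Ce$ must carry its critical point exactly to $\kappa$, rather than merely respecting the rank hierarchy, in order to furnish the parametric condition $j(\crit{j})=\kappa$. This parallels the classical extraction of critical-point data from reflection principles in Magidor-style characterizations of supercompact cardinals, now at the $\Sigma_{n+1}$-level.
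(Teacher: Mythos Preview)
Your proposal has two genuine gaps.

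First, in your argument for (3) $\Rightarrow$ (2) you assert that ``$\vec q\in V_\kappa\subseteq X$ is fixed by $j$''. But plain $n$-exactness gives no such guarantee: the definition only ensures $V_\kappa\cup\{\kappa\}\subseteq X$, $j(\kappa)=\lambda$, and $A\in\ran{j}$. The map $j$ fixes $V_{\crit{j}}$ pointwise, but $\crit{j}$ may be far below $\rank{\vec q}$. This is exactly why Proposition \ref{proposition:ESRfromExact} derives $\Sigma_{n+1}(V_\kappa)$-$\ESR$ only from \emph{parametric} $n$-exactness, not from $n$-exactness alone. So (3) $\Rightarrow$ (2) does not go through as written.

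Second, and more seriously, your main technical step proposes to engineer a class $\Ce$ whose reflection \emph{forces} $j(\crit{j})=\kappa$ directly, but you yourself flag this as the ``chief obstacle'' and do not resolve it. The class you sketch, consisting of structures $\langle V_\mu,\in,\ldots\rangle$ with $\mu\in C^{(n+1)}$, need not have any member of rank $\kappa$ at all (nothing yet tells you $\kappa\in C^{(n+1)}$), in which case $\ESR_\Ce(\kappa,\lambda)$ is vacuous. Even if a rank-$\kappa$ member exists, an embedding sending some distinguished $\beta$ to $\kappa$ gives no reason for $\beta$ to equal $\crit{j}$. Finally, your reflected structure has domain $V_\kappa$, not an elementary submodel of some $H_{\kappa'}$ with $\kappa'\in C^{(n)}$, and ``taking a suitable Skolem hull'' afterwards does not produce an elementary embedding of that hull into $H_{\lambda'}$.

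The paper sidesteps this entirely. It starts from the weakest hypothesis, parameter-free $\Sigma_{n+1}$-$\ESR(\kappa)$, and applies it to the class $\De_n(\emptyset)$ (structures isomorphic to elementary submodels of $H_\theta$ for $\theta\in C^{(n)}$, with a distinguished element naming the rank). This is the content of Lemma \ref{lemma:ExactFromESR_} and already yields $X\prec H_{\kappa'}$ with $\kappa'\in C^{(n)}$ and $j:X\to H_{\lambda'}$ with $j(\kappa)=\lambda$ and any prescribed $A\in\ran{j}$. Arranging $\kappa\in\ran{j}$ gives $\mu:=j(\crit{j})\leq\kappa$. One then shows directly (by the usual reflection-and-pullback argument) that $\Sigma_{n+1}$-$\ESR(\mu,j(\mu))$ holds, and now the \emph{minimality} of $\kappa$ forces $\mu=\kappa$. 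The parametric condition $j(\crit{j})=\kappa$ thus drops out of minimality, not out of the design of $\Ce$; this is the idea your sketch is missing.
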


The above results allow us to compare the large cardinal properties introduced in Definitions \ref{definition:WeaklyExactCardinalUp} and \ref{defnexact}.   
More specifically, if $\kappa$ is a cardinal satisfying the equivalent statements of Theorem \ref{theorem:CharacterizationSigma-ESR} for some $0<n<\omega$, then there exists a cardinal $\mu<\kappa$ satisfying the equivalent statements of Theorem \ref{theorem:CharacterizationPI-ESR} for the same natural number $n$ (Lemma \ref{lemma:CharESREmbSigma2}). 
 %
% In addition, if $0<n<\omega$ and $\kappa$ satisfies the statements listed in Theorem \ref{theorem:CharacterizationPI-ESR} for $n+1$, then there is a cardinal $\mu<\kappa$ satisfying the statements of Theorem \ref{theorem:CharacterizationSigma-ESR} (see Lemma \ref{lemma:WeakESRn+1ESRn}) for $n$. 
 %
 This implication should be compared with the corresponding statements for the principle $\SR$, showing that $\SR$ for $\Pi_n$-definable classes of structures is  equivalent to $\SR$ for $\Sigma_{n+1}$-definable classes ({\cite[Section 4]{Ba:CC}}).

Exact cardinals   are very strong, consistency-wise. 
In Section \ref{section:strength} we give lower and upper bounds for their consistency strength, and   we also prove they imply the existence of well-known large cardinals in the upper ranges of the large-cardinal  hierarchy. 
Recall that a cardinal $\kappa$ is \emph{almost huge} (see \cite{Kan:THI}) if it is the critical point of an elementary embedding $j:V\to M$, with $M$ transitive and closed under sequences of length less than $j(\kappa)$. Given such an embedding $j$, we then say that $\kappa$ is \emph{almost huge with target $j(\kappa)$}.  
If $\kappa$ is either parametrically $0$-exact for $\lambda$, or weakly parametrically $1$-exact for $\lambda$, then many cardinals smaller than $\kappa$ are almost huge with target $\kappa$ (Corollary \ref{corollary:AlmostHugeFromPi1ic}). As for upper bounds, while every huge cardinal (with target some $\lambda$) is weakly parametrically $1$-exact (for the same $\lambda$), the least huge cardinal $\kappa$ is not $1$-exact for any $\lambda >\kappa$ (Propositions \ref{proposition:ESRfromHuge} and  \ref{proposition:LeastHugeFailure}). A strong consistency upper bound is provided by an $I3$-embedding (see \cite[\S 24]{Kan:THI}), for if $j:V_\delta \to V_\delta$ is such an embedding, then in $V_\delta$ a proper class of cardinals are parametrically $n$-exact for unboundedly-many $\lambda$, for every $n$ (Proposition \ref{proposition:upperbound}). A much lower upper bound, namely an almost $2$-huge cardinal, is given in Proposition \ref{proposition:ESRfromHugely2Exact} for the consistency of weakly parametrically $n$-exact cardinals, all $n>0$.

Finally, in Section \ref{section:Beyond}, we show how the principle $\ESR(\kappa,\lambda)$ can be strengthened to encompass increasing sequences of cardinals of length at most $\omega$, instead of a single cardinal $\lambda$, in order  to obtain principles of structural reflection that are much stronger, implying the existence of many-times huge cardinals or even $I3$-embeddings. 
 The formulation of these stronger sequential $\ESR$  principles is motivated by the observation that 
 %  statement of \emph{Chang's Conjecture}. 
 the principle $\Pi_1$-$\ESR(\kappa,\lambda)$ directly implies the instance $$(\lambda,\kappa) ~ \twoheadrightarrow ~ (\kappa,{<}\kappa)$$ of \emph{Chang's Conjecture}, i.e. every structure $A$ in a countable language with domain $\lambda$ has an elementary substructure $B$ of cardinality $\kappa$ with $\betrag{B\cap\kappa}<\kappa$. 
 The definition of our sequential $\ESR$ principles will then directly imply that higher versions of Chang's Conjecture  hold for the respective cardinals.

 We then also strengthen, accordingly, the notions of weakly exact and exact cardinals to obtain large cardinal properties that correspond to the new sequential $\ESR$ principles and show that much of the theory developed for $\ESR(\kappa,\lambda)$ can be generalized to this stronger context. In particular, we obtain exact equivalences for the least cardinals witnessing the sequential forms of $\ESR$ and the corresponding sequential forms of weakly exact and exact cardinals (Theorems \ref{theorem:SeqEquivalenceWeaklyExactReflection} and \ref{theorem:SeqEquivalenceExactReflection}). As for determining the position of these large cardinals in the large cardinal hierarchy, we show, on the one hand, that the existence of a  weakly $1$-exact or a $0$-exact cardinal for a  sequence of cardinals of length $n+1$ implies the existence of smaller $n$-huge cardinals.  On the other hand, every $n$-huge cardinal is weakly parametrically $n$-exact for some sequence of cardinals of length $n$  (Proposition \ref{proposition:SequentialWeaklyExactImplynHuge}). Also, if $\kappa$ is the critical point of an  $I1$-embedding (see \cite[\S 24]{Kan:THI}), then it is weakly parametrically $1$-exact for a sequence of cardinals of length $\omega$.

 Many  questions remain, and some of them are addressed in the last section of the article. Most interesting is the problem of determining the exact strength of the sequential forms of $\ESR$. We know that these principles, in the case of sequences of length $\omega$, are very strong, so much so that even when restricted to $\Sigma_2$-definable classes of structures we don't know them to be consistent. This makes the study of such principles both challenging and exciting, for they appear to constitute a new class of large-cardinal principles that go beyond $I1$-embeddings, yet they may not fall into \emph{Kunen's Inconsistency}.

%%%%%%%%%%

\section{Isomorphism-Closed classes}

We start by studying instances of the principle $\ESR$ for classes of structures closed under isomorphic copies. 
Notice that if a class $\Ce$ of structures of the same type is $\Sigma_n$-definable (with or without parameters) for some $n>0$, then the closure of $\Ce$ under isomorphic copies is also $\Sigma_n$-definable (with the same parameters, if any).

\begin{proposition}\label{proposition:CharClosedIso}
 Given uncountable cardinals $\kappa<\lambda$, the following statements are equivalent for every class $\Ce$ of structures of the same type that is closed under isomorphic copies: 
 \begin{enumerate}
     \item $\ESR_{\Ce}{(\kappa,\lambda)}$. 
     
     \item For every structure $B$ in $\Ce$ whose cardinality is contained in the interval $[\cof{\lambda},\beth_\lambda]$, there exists an elementary embedding of a structure $A$ in $\Ce$ into $B$ such that the cardinality of $A$ is contained in the interval $[\cof{\kappa},\beth_\kappa]$. 
 \end{enumerate}
\end{proposition}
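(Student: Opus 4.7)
The plan is to reduce the equivalence to the purely set-theoretic observation that, for an uncountable cardinal $\mu$, a set has rank exactly $\mu$ if and only if its cardinality lies in the interval $[\cof{\mu},\beth_\mu]$, modulo the freedom to choose an isomorphic copy of the domain.

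First I would verify this rank-cardinality dictionary. Since $\mu$ is a limit ordinal, any set $X$ with $\rank{X}=\mu$ must have elements whose ranks are cofinal in $\mu$, which forces $\betrag{X}\geq\cof{\mu}$; and $X\subseteq V_\mu$ gives $\betrag{X}\leq\betrag{V_\mu}=\beth_\mu$. Conversely, given any cardinal $\theta\in[\cof{\mu},\beth_\mu]$ and any set $Y$ with $\betrag{Y}=\theta$, one can produce a bijection $f\colon Y\to X$ with $\rank{X}=\mu$ by fixing a cofinal sequence $\seq{\alpha_\xi}{\xi<\cof{\mu}}$ in $\mu$, choosing distinct representatives $x_\xi\in V_{\alpha_\xi+1}\setminus V_{\alpha_\xi}$, and completing to a set of size $\theta$ by adjoining enough further elements from $V_\mu$ (which has size $\beth_\mu\geq\theta$). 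Transporting the structure on $Y$ along $f$ yields an isomorphic copy whose domain has rank exactly $\mu$.

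With this in hand, the two implications become transportation arguments using that $\Ce$ is closed under isomorphic copies. For (i)$\Rightarrow$(ii), given $B\in\Ce$ with $\betrag{B}\in[\cof{\lambda},\beth_\lambda]$, the dictionary produces an isomorphic copy $B'$ of $B$ whose domain has rank exactly $\lambda$; by closure $B'\in\Ce$, so $\ESR_\Ce(\kappa,\lambda)$ supplies some $A'\in\Ce$ of rank $\kappa$ with an elementary embedding $e\colon A'\to B'$. Composing $e$ with the isomorphism $B'\cong B$ gives the desired embedding, and $\betrag{A'}\in[\cof{\kappa},\beth_\kappa]$ by the dictionary applied to $\kappa$. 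For (ii)$\Rightarrow$(i), let $A\in\Ce$ have rank $\lambda$; then $\betrag{A}\in[\cof{\lambda},\beth_\lambda]$, so (ii) applied with $B:=A$ yields some $A'\in\Ce$ of cardinality in $[\cof{\kappa},\beth_\kappa]$ together with an elementary embedding $A'\to A$. Using the dictionary at $\kappa$, pass to an isomorphic copy $B$ of $A'$ whose domain has rank exactly $\kappa$; closure of $\Ce$ under isomorphism puts $B$ in $\Ce$, and composing the embedding $A'\to A$ with the isomorphism $B\cong A'$ yields the $\ESR_\Ce(\kappa,\lambda)$ witness.

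The argument is essentially bookkeeping, and I do not anticipate a genuine obstacle. The one place that merits care is the lower endpoint of the interval: one must ensure both that sets of rank $\mu$ cannot be smaller than $\cof{\mu}$ (otherwise the cofinality of ranks into $\mu$ would fail), and that cardinalities exactly equal to $\cof{\mu}$ or to $\beth_\mu$ are genuinely realizable as ranks of subsets of $V_\mu$. Everything else is a direct application of the hypothesis and the isomorphism-closure of $\Ce$.
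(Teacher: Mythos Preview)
Your proposal is correct and follows essentially the same approach as the paper's proof: both directions rest on the observation that a set of rank exactly $\mu$ (for $\mu$ a limit ordinal) has cardinality in $[\cof{\mu},\beth_\mu]$, and conversely that any set of such cardinality admits a bijection onto a subset of $V_\mu$ whose elements have ranks cofinal in $\mu$. The paper performs this construction inline in each implication rather than isolating it as a separate ``dictionary,'' but the mathematical content is identical.
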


\begin{proof}
 Assume that (i) holds and fix a structure $B$ in $\Ce$ whose cardinality is contained in the interval $[\cof{\lambda},\beth_\lambda]$. 
 Then we can pick an injection $i$ from the domain of $B$ into $V_\lambda$ such that the set $$\Set{\gamma<\lambda}{\ran{i}\cap(V_{\gamma+1}\setminus V_\gamma)\ne \emptyset}$$ is unbounded in $\lambda$. 
  Let $B_0$ denote the isomorphic copy of $B$ induced by $i$. Then $B_0$ is a structure in $\Ce$ of rank $\lambda$ and our assumptions yield an elementary embedding of a structure $A$ in $\Ce$ of rank $\kappa$ into $B_0$. 
  But this allows us to conclude that the cardinality of $A$ is contained in the interval $[\cof{\kappa},\beth_\kappa]$, and there exists an elementary embedding of $A$ into $B$. 
  
  Now, assume that (ii) holds and fix a structure $B$ in $\Ce$ of rank $\lambda$. 
  Then the cardinality of $B$ is contained in the interval $[\cof{\lambda},\beth_\lambda]$ and our assumption yields an elementary embedding of a structure $A$ in $\Ce$ into $B$ whose cardinality is contained in the interval $[\cof{\kappa},\beth_\kappa]$. 
  Pick an injection $i$ from the domain of $A$ into $V_\kappa$ with the property that the set  $$\Set{\alpha<\kappa}{\ran{i}\cap(V_{\alpha+1}\setminus V_\alpha)\ne \emptyset}$$ is unbounded in $\kappa$, and let $A_0$ denote the isomorphic copy of $A$ induced by $i$. 
  Then $A_0$ is a structure in $\Ce$ of rank $\kappa$ and there exists an elementary embedding of $A_0$ into $B$. 
\end{proof}

\begin{corollary}
 Let $\kappa <\lambda$ be inaccessible cardinals and let  $\Ce$ be a class of structures of the same type that is closed under isomorphic copies. 
 Then $\ESR_{\Ce}{(\kappa,\lambda)}$ holds if and only if  for every structure $B\in\Ce$ of cardinality $\lambda$, there exists an elementary embedding of a structure $A\in\Ce$ of cardinality $\kappa$ into $B$. \qed
\end{corollary}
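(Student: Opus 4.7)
The plan is to derive the corollary as an immediate specialization of Proposition \ref{proposition:CharClosedIso} to the case where both $\kappa$ and $\lambda$ are inaccessible. The only thing to verify is that, under the inaccessibility hypothesis, the intervals $[\cof{\kappa},\beth_\kappa]$ and $[\cof{\lambda},\beth_\lambda]$ appearing in clause (ii) of the proposition collapse to the singletons $\{\kappa\}$ and $\{\lambda\}$, respectively.

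First I would recall that an inaccessible cardinal $\mu$ is, by definition, an uncountable regular strong limit cardinal, so $\cof{\mu}=\mu$. To see that $\beth_\mu=\mu$, one observes that the strong limit property together with the regularity of $\mu$ imply that $\beth_\alpha<\mu$ for every $\alpha<\mu$ (by induction, using that $2^{\beth_\alpha}<\mu$ whenever $\beth_\alpha<\mu$), so $\beth_\mu=\sup_{\alpha<\mu}\beth_\alpha\leq\mu$; the reverse inequality is immediate since $\beth$ is a normal function. Hence $[\cof{\mu},\beth_\mu]=\{\mu\}$ for any inaccessible $\mu$.

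With this identification, clause (ii) of Proposition \ref{proposition:CharClosedIso} states precisely that for every structure $B\in\Ce$ of cardinality $\lambda$ there is an elementary embedding of some $A\in\Ce$ of cardinality $\kappa$ into $B$, which is exactly the statement on the right-hand side of the corollary. The equivalence then follows directly from the proposition, and the proof ends with \qed. There is no genuine obstacle here; the only point requiring care is the standard verification that inaccessible cardinals are fixed points of the $\beth$-function, which we would invoke or quickly justify as above.
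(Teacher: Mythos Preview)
Your proposal is correct and matches the paper's approach exactly: the corollary is stated with a \qed and no proof, as it is an immediate specialization of Proposition~\ref{proposition:CharClosedIso} once one notes that $\cof{\mu}=\mu=\beth_\mu$ for any inaccessible $\mu$.
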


\begin{corollary}\label{corollary:UpDownForIsoClosed}
 Let $\Ce$ be a class of structures of the same type that is closed under isomorphic copies and let $\kappa<\mu<\lambda$ be infinite cardinals with the property that $\ESR_{\Ce}{(\kappa,\lambda)}$ holds.  
 \begin{enumerate}
     \item If $\cof{\mu}\leq\cof{\kappa}$, then $\ESR_{\Ce}{(\mu,\lambda)}$ holds. 
     
     \item If $\cof{\mu}\geq\cof{\lambda}$, then $\ESR_{\Ce}{(\kappa,\mu)}$ holds. 
 \end{enumerate} 
\end{corollary}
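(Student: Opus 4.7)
The plan is to derive both parts of the corollary directly from Proposition \ref{proposition:CharClosedIso}, by rewriting each $\ESR$ assertion as the existence of an elementary embedding between structures whose cardinalities lie in explicit intervals, and then checking a simple inclusion of intervals.

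First, I observe the trivial but essential inequality that, since $\kappa<\mu<\lambda$, one has $\beth_\kappa\leq\beth_\mu\leq\beth_\lambda$ (and the $\beth$-function is strictly increasing). I will use this together with the cofinality hypothesis to compare the relevant intervals.

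For (i), fix a structure $B\in\Ce$ of cardinality in $[\cof{\lambda},\beth_\lambda]$. Applying Proposition \ref{proposition:CharClosedIso} to $\ESR_\Ce(\kappa,\lambda)$, I obtain a structure $A\in\Ce$ of cardinality in $[\cof{\kappa},\beth_\kappa]$ together with an elementary embedding of $A$ into $B$. Under the assumption $\cof{\mu}\leq\cof{\kappa}$, and using $\beth_\kappa\leq\beth_\mu$, the interval $[\cof{\kappa},\beth_\kappa]$ is contained in $[\cof{\mu},\beth_\mu]$, so the cardinality of $A$ already lies in the latter interval. Proposition \ref{proposition:CharClosedIso} then yields $\ESR_\Ce(\mu,\lambda)$.

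For (ii), fix a structure $B\in\Ce$ of cardinality in $[\cof{\mu},\beth_\mu]$. Under the assumption $\cof{\mu}\geq\cof{\lambda}$, together with $\beth_\mu\leq\beth_\lambda$, the interval $[\cof{\mu},\beth_\mu]$ is contained in $[\cof{\lambda},\beth_\lambda]$. Hence $B$ qualifies for the hypothesis of Proposition \ref{proposition:CharClosedIso} applied to $\ESR_\Ce(\kappa,\lambda)$, producing a structure $A\in\Ce$ of cardinality in $[\cof{\kappa},\beth_\kappa]$ and an elementary embedding of $A$ into $B$. A second application of Proposition \ref{proposition:CharClosedIso} then gives $\ESR_\Ce(\kappa,\mu)$.

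There is no real obstacle here; the argument is essentially a bookkeeping check that the cofinality hypotheses in (i) and (ii) are precisely what is needed to make the corresponding interval inclusions $[\cof{\kappa},\beth_\kappa]\subseteq[\cof{\mu},\beth_\mu]$ and $[\cof{\mu},\beth_\mu]\subseteq[\cof{\lambda},\beth_\lambda]$ hold. The only point to keep in mind while writing is that Proposition \ref{proposition:CharClosedIso} requires the cardinals involved to be uncountable, which is automatic here since $\kappa<\mu<\lambda$ and $\kappa$ is an infinite cardinal participating in a non-trivial instance of $\ESR_\Ce(\kappa,\lambda)$.
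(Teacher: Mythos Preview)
Your proof is correct and follows exactly the same approach as the paper: both derive the interval inclusions $[\cof{\kappa},\beth_\kappa]\subseteq[\cof{\mu},\beth_\mu]$ and $[\cof{\mu},\beth_\mu]\subseteq[\cof{\lambda},\beth_\lambda]$ from the respective cofinality hypotheses and then invoke Proposition~\ref{proposition:CharClosedIso}. The paper's version is simply more terse, compressing the argument into a single sentence.
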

 
\begin{proof}
 Since $\cof{\mu}\leq\cof{\kappa}$ implies $[\cof{\kappa},\beth_\kappa]\subseteq[\cof{\mu},\beth_\mu]$ and $\cof{\mu}\geq\cof{\lambda}$ implies $[\cof{\mu},\beth_\mu]\subseteq[\cof{\lambda},\beth_\lambda]$, both statements follow directly from Proposition \ref{proposition:CharClosedIso}.  
\end{proof}

%%%%%%%%%%%%%%%%%%

\section{Low complexities}

In this section, we study exact structural reflection for $\Sigma_1$-definable classes of structures. 
 In the case of classes closed under isomorphic copies, these principles are provable in \rm{ZFC}.

\begin{proposition}\label{proposition:Sigma1ClosedTrivial}
 If $\kappa$ is an uncountable cardinal, then the principle $\Sigma_1(V_\kappa)^{ic}$-$\ESR(\kappa,\lambda)$ holds for every cardinal $\lambda>\kappa$ with $\cof{\kappa}\leq\cof{\lambda}$.  
 %In particular, if $\kappa $ is an  uncountable cardinal, then $\Sigma_1(V_\kappa)^{ic}$-$\UESR (\kappa)$ holds.
\end{proposition}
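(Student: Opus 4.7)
The plan is to apply Proposition \ref{proposition:CharClosedIso} to reduce the task to the following: given a class $\Ce$ of structures of the same type that is $\Sigma_1$-definable with a parameter $p \in V_\kappa$ and closed under isomorphic copies, and given $B \in \Ce$ with $|B| \in [\cof{\lambda}, \beth_\lambda]$, produce some $A \in \Ce$ of cardinality in $[\cof{\kappa}, \beth_\kappa]$ together with an elementary embedding of $A$ into $B$. Fix a $\Delta_0$-formula $\varphi$ such that $x \in \Ce \iff \exists z\, \varphi(x,z,p)$, and let $y \in V$ be a witness to $B \in \Ce$.

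Choose a sufficiently large regular cardinal $\theta$, and pick a set $D$ of cardinality $\cof{\kappa}$ inside the domain of $B$, which is possible because $|B|\geq \cof{\lambda}\geq \cof{\kappa}$. By the Downward L\"owenheim--Skolem Theorem, construct an elementary submodel $X \preceq (H_\theta,\in)$ with $V_\kappa \cup \{B,y\} \cup D \subseteq X$ and $|X|= \beth_\kappa$; this is feasible since $|V_\kappa| = \beth_\kappa$ and $\cof{\kappa}\leq \beth_\kappa$. Let $\pi\colon X \to \bar X$ be the Mostowski collapse. Since $V_\kappa \subseteq X$ is transitive, $\pi$ is the identity on $V_\kappa$, and in particular $\pi(p) = p$.

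A standard Tarski--Vaught argument, using $X \preceq H_\theta$ together with $B \in X$, shows that $A := \text{dom}(B) \cap X$ is the universe of an elementary substructure of $B$; so the inclusion $A \hookrightarrow B$ is an elementary embedding. The inclusions $D \subseteq A \subseteq X$ yield $|A| \in [\cof{\kappa}, \beth_\kappa]$. To see that $A \in \Ce$, note that by elementarity $X \models \exists z\,\varphi(B,z,p)$, so there is some $y' \in X$ with $\varphi(B,y',p)$; applying the isomorphism $\pi$ and using $\pi(p)=p$, we obtain $\varphi(\pi(B),\pi(y'),p)$ in $\bar X$. Since $\bar X$ is transitive, $\Delta_0$-absoluteness gives $\varphi(\pi(B),\pi(y'),p)$ in $V$, so $\pi(B) \in \Ce$. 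As $\pi\upharpoonright A$ is an isomorphism between $A$ and $\pi(B)$ and $\Ce$ is closed under isomorphic copies, we conclude $A \in \Ce$, completing the proof.

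The only delicate point is ensuring the $\Sigma_1$-definition of $\Ce$ transfers through the Mostowski collapse. This is precisely where the hypothesis $p \in V_\kappa$ (rather than a general parameter) is needed: by including all of $V_\kappa$ in $X$ we guarantee $\pi(p) = p$, and the bound $|V_\kappa| = \beth_\kappa$ is exactly what keeps $|A|$ within the admissible range $[\cof{\kappa}, \beth_\kappa]$.
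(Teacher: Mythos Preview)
Your proof is correct and follows essentially the same approach as the paper: both reduce via Proposition~\ref{proposition:CharClosedIso}, take an elementary submodel $X$ of a large $H_\theta$ containing $V_\kappa$, and use the Mostowski collapse together with $\Sigma_1$-absoluteness (and $\pi(p)=p$) to show the collapsed structure lies in $\Ce$. The only cosmetic differences are that you take the trace $\mathrm{dom}(B)\cap X$ with the inclusion as the embedding and secure the lower cardinality bound by explicitly adding a set $D$ of size $\cof{\kappa}$, whereas the paper uses $\pi(B_0)$ with $\pi^{-1}$ as the embedding and argues via $\pi(\cof{\kappa})=\cof{\kappa}$.
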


\begin{proof}
 Fix a $\Sigma_1$-formula $\varphi(v_0,v_1)$ and $z\in V_\kappa$ such that  $\Ce=\Set{A}{\varphi(A,z)}$ is a class of structures of the same type and pick  a structure $B$ in $\Ce$ whose cardinality is contained in the interval  $[\cof{\lambda},\beth_\lambda]$.
 Let $B_0$ be an isomorphic copy of $B$  in $H_{\beth_\lambda^+}$  and pick an elementary substructure $X$ of $H_{\beth_\lambda^+}$ of cardinality $\beth_\kappa$ with $V_\kappa\cup\{\kappa,B_0\} \subseteq X$. 
 Let $\pi:X\to M$ denote the induced transitive collapse. 
 Since $\pi(z)=z$, $\Sigma_1$-absoluteness now implies that $\varphi(\pi(B_0),z)$ holds and hence $\pi(B_0)$ is an element of $\Ce$. 
 Moreover,  our construction ensures that $\pi(B_0)$ has cardinality at most $\beth_\kappa$ and, since $B_0$ has cardinality at least $\cof{\lambda}\geq \cof{\kappa}$ and $\pi(\cof{\kappa})=\cof{\kappa}$, we know that $\pi(B_0)$ has cardinality at least $\cof{\kappa}$. 
 Finally, using the inverse collapse $\pi^{{-}1}$, it is easy to see that there exists an elementary embedding of $\pi(B_0)$ into $B$. 
 By Proposition \ref{proposition:CharClosedIso}, the above computations yield the desired conclusion. 
\end{proof}

In contrast with the previous Proposition, the principle  $\ESR_{\Ce}(\kappa,\lambda)$  for some $\kappa <\lambda$ and all $\Sigma_0$-definable (without parameters) classes $\Ce$ of structures of the same type (so, no closure under isomorphic copies required), has considerable large-cardinal strength  and fails in G\"odel's constructible universe $\LL$.

\begin{lemma}\label{lemma:ESRSigma0}
 If $\Sigma_0$-$\ESR{(\kappa)}$ holds for some uncountable cardinal  $\kappa$, then $a^\#$ exists for every real $a$. 
\end{lemma}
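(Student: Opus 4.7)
The plan is to apply $\Sigma_0$-$\ESR(\kappa,\lambda)$ with a carefully chosen $\Sigma_0$-definable class of transitive structures so as to produce a non-trivial elementary embedding between two initial segments of the relativised constructible hierarchy $\LL[a]$, and then to invoke the classical inner-model-theoretic fact that such an embedding yields $a^\#$.

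Fix a real $a$ and pick a cardinal $\lambda>\kappa$ witnessing $\Sigma_0$-$\ESR(\kappa,\lambda)$, which may be assumed (by supplementing $\Ce$ below with further bounded clauses if necessary) to be a limit ordinal large enough that $\LL_\lambda[a]$ is a standard model of ``$V=\LL[a]$''.  Let $\calL=\{E,c,d\}$ be the language with one binary relation $E$ and two constants $c,d$, and set
$$\Ce = \Set{(X,E,c,d)}{X \text{ transitive},\ E = {\in}{\restriction} X,\ c\in X,\ c\subseteq\omega,\ d\in X \text{ is an ordinal}}.$$
Every defining clause is bounded, so $\Ce$ is $\Sigma_0$-definable without parameters.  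Applying $\Sigma_0$-$\ESR(\kappa,\lambda)$ to the structure $A=(\LL_\lambda[a],\in,a,\kappa)\in\Ce$, whose rank is $\lambda$, yields $B=(M,\in,c,d)\in\Ce$ of rank $\kappa$ and an elementary embedding $j\colon B\to A$ with $j(c)=a$ and $j(d)=\kappa$.

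I would then analyse $j$.  Since $\omega\in M$ and $j{\restriction}\omega=\id$, the condition $c\subseteq\omega$ gives $c=a$.  Since $A$ satisfies ``$V=\LL[a]$'' and $j$ is elementary, $B$ satisfies ``$V=\LL[c]$''; combining this with the transitivity of $M$ and $\rank(M)=\kappa$ forces $M=\LL_\kappa[a]$.  The ordinal $d\in \LL_\kappa[a]\cap\On$ must satisfy $d<\kappa$, while $j(d)=\kappa$, so the resulting $j\colon\LL_\kappa[a]\to\LL_\lambda[a]$ is a non-trivial elementary embedding with critical point $\mu\leq d<\kappa$.

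To finish, I would invoke the classical theorem (originally due to Kunen, relativised to $a$) that such a non-trivial elementary embedding between two levels of $\LL[a]$ implies the existence of $a^\#$: the critical point $\mu$ carries the non-principal ultrafilter
$$U=\Set{X\in\Poti(\mu)\cap\LL_\kappa[a]}{\mu\in j(X)},$$
which by elementarity of $j$ is $\mu$-complete, and whose iterated ultrapower of $\LL[a]$ produces a class of Silver indiscernibles, giving $a^\#$.  The main obstacle I anticipate is verifying the technical preconditions for this extraction, namely that $\mu$ is $V$-uncountable (so the ultrafilter is genuinely $\sigma$-complete) and that $\LL_\kappa[a]$ contains every $\LL[a]$-subset of $\mu$ (so $U$ is a genuine $\LL[a]$-measure).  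Both can be arranged by strengthening $\Ce$ with additional $\Sigma_0$-clauses that force $M$ to correctly compute $\omega_1$ and cardinal successors up to the relevant height—for instance by adding a constant interpreted as a large enough $\omega_1$-like ordinal and imposing the bounded constraint that every smaller ordinal in $X$ is $X$-countable.
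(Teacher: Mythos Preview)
Your approach is correct and reaches the same endpoint as the paper---a non-trivial elementary embedding $j:\LL_\kappa[a]\to\LL_\lambda[a]$ followed by the Kunen-style extraction of $a^\#$---but the route is genuinely different. The paper does \emph{not} use transitive structures directly: it codes $\LL_\lambda[a]$ on the ordinal $\lambda$ via a bijection $b:\LL_\lambda[a]\to\lambda$, so that the domain of the target structure is literally $\lambda$, and it includes a unary function symbol $f$ whose interpretation is an order-isomorphism from $\langle\nu,\in\rangle$ onto the $E$-ordinals. After reflecting and collapsing, this $f$ is what forces $M\cap\On=\kappa$ and hence $M=\LL_\kappa[a]$. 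Your argument instead lets the domain be $\LL_\lambda[a]$ itself, and recovers $M=\LL_\kappa[a]$ from $\rank(M)=\kappa$ together with $M\models V=\LL[a]$ (inherited by elementarity). Your version is cleaner and avoids the coding apparatus; the paper's version is more explicit about why the ordinal height comes out right and does not rely on absoluteness of the $\LL[a]$-hierarchy inside the reflected model.

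Two remarks on the final paragraph. First, your worry that $\LL_\kappa[a]$ might fail to contain every $\LL[a]$-subset of $\mu$ is unfounded: $\kappa$ is an uncountable $V$-cardinal and $\mu<\kappa$, so $(\mu^+)^{\LL[a]}\leq(\mu^+)^V\leq\kappa$, whence $\Poti(\mu)\cap\LL[a]\subseteq\LL_\kappa[a]$ automatically. This is exactly the content of the paper's observation ``$\betrag{\crit{j}}<\kappa$'' before citing Jech. Second, your proposed remedy---adding $\Sigma_0$ clauses to make $M$ compute $\omega_1$ correctly---would not work as stated, since ``$\alpha=\omega_1^V$'' is not $\Sigma_0$-expressible without parameters, and in any case you cannot choose $\lambda$ (it is handed to you by the hypothesis). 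Fortunately none of this is needed: once you have $j:\LL_\kappa[a]\to\LL_\lambda[a]$ with $\betrag{\crit{j}}<\kappa$, the standard argument gives $a^\#$, just as the paper asserts.
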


\begin{proof}
  Let $\calL$ denote the first-order language that extends the language $\calL_\in$ of set theory by  a binary predicate symbol $\dot{E}$,  a  constant symbol $\dot{c}$ and a unary function symbol $\dot{f}$. 
  Define $\Ce$ to be the class of all $\calL$-structures of the form $\langle\nu,\in,E,\alpha,f\rangle$ with the property that $\nu$ is an ordinal and $f:\langle\nu,\in\rangle\to\langle \ran{f},E\rangle$ is an order-isomorphism. 
  Then it is easy to see that $\Ce$ is definable by a $\Sigma_0$-formula without parameters. 
  
  Now, fix a real $a$ and a cardinal $\lambda>\kappa$ such that $\ESR_\Ce(\kappa,\lambda)$ holds. Pick a bijection $b:\LL_\lambda[a]\to\lambda$ and set $$E ~ = ~ \Set{\langle b(x), b(y)\rangle}{x\in y\in\LL_\lambda[a]}.$$ 
  Then $$B ~ = ~ \langle\lambda,\in,E,b(\kappa),{b\restriction\lambda}\rangle$$ is an $\calL$-structure of  rank $\lambda$ in $\Ce$. 
  By our assumption, there exists a binary relation $R$ on $\kappa$, a function $f:\kappa\to\kappa$ and $\alpha<\kappa$ such that $$A ~ = ~ \langle\kappa,\in,R,\alpha,f\rangle$$ is a structure in $\Ce$ with the property that there exists an elementary embedding $i$ of $A$ into $B$. 
  Since our construction ensures that $\langle\lambda,E\rangle$ is well-founded and $\langle \kappa,R\rangle$ embeds into $\langle\lambda,E\rangle$, it follows that $\langle \kappa,R\rangle$ is  well-founded too. 
  Moreover, elementarity implies that $\langle\kappa,R\rangle$ is extensional. 
  Let $\pi:\langle\kappa,R\rangle\to\langle M,\in\rangle$ denote the corresponding transitive collapse and set $$j ~ = ~ b^{{-}1}\circ i\circ\pi^{{-}1}:M\to\LL_\lambda[a].$$ Then $j$ is an elementary embedding of transitive structures. 
  
  Now, note that elementarity implies that $\ran{f}=\pi^{{-}1}[M\cap\On]$ and $$\pi\restriction \ran{f}:\langle \ran{f},R\rangle\to\langle M\cap\On,\in\rangle$$ is an order-isomorphism.  
  But this shows that $$\pi\circ f:\langle \kappa,\in\rangle\to\langle M\cap\On,\in\rangle$$ is also an order-isomorphism and hence we can conclude that $M\cap\On=\kappa$. 
  In particular, elementarity implies that $M=\LL_\kappa[a]$. 
  
  Finally, since $j(\pi(\alpha))=\kappa>\pi(\alpha)$, we know that $j:\LL_\kappa[a]\to\LL_\lambda[a]$ is a  non-trivial elementary embedding.  
  But then $\betrag{\crit{j}}<\kappa$ and the proof of {\cite[Theorem  18.27]{Jech}} shows that $a^\#$ exists.  
\end{proof}

The next result provides an upper bound for the consistency strength of the assumption of Lemma \ref{lemma:ESRSigma0}.  %
In particular, it shows that this  assumption  does not imply the existence of an inner model with a measurable cardinal. 
Its proof is based on arguments contained in  the proof of {\cite[Theorem 2.3]{MR1856729}}.

\begin{lemma}
 If $\delta$ is a Ramsey cardinal, then the set of inaccessible cardinals $\kappa<\delta$ with the property that $\Sigma_1(V_\kappa)$-$\ESR{(\kappa)}$  holds in $V_\delta$ is unbounded in $\delta$. 
\end{lemma}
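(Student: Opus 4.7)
Fix an ordinal $\alpha<\delta$. The plan is to produce an inaccessible cardinal $\kappa\in(\alpha,\delta)$ such that $\Sigma_1(V_\kappa)$-$\ESR(\kappa)$ holds in $V_\delta$. The overall strategy is to obtain $\kappa$ as a suitable limit point of good indiscernibles for $V_\delta$ supplied by the Ramsey property of $\delta$, and to witness each instance of the reflection principle by means of a Mostowski collapse whose rank calculation is controlled by indiscernibility.

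Since $\delta$ is Ramsey, it is strongly inaccessible, $\delta>2^{\aleph_0}$, and the partition property $\delta\to(\delta)^{<\omega}_\gamma$ holds for every $\gamma<\delta$. Fix a well-ordering $<^*$ of $V_\delta$ and let $\mathfrak{A}$ be the Skolem expansion of $(V_\delta,\in,<^*)$ in a countable language. The partition property applied to the finite-Skolem-type function (which has at most $2^{\aleph_0}$ values) yields a set $I\subseteq(\alpha,\delta)$ of good indiscernibles for $\mathfrak{A}$ of order-type $\delta$. Take $\kappa$ to be a limit point of $I$ of uncountable cofinality; standard EM-style reasoning shows that $\kappa$ is inaccessible in $V_\delta$.

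To verify $\Sigma_1(V_\kappa)$-$\ESR(\kappa)$ in $V_\delta$, fix a $\Sigma_1$-formula $\varphi(v_0,v_1)$ and $p\in V_\kappa$ such that $\Ce=\{B\in V_\delta\mid V_\delta\models\varphi(B,p)\}$ is a class of structures of the same type. Pick $\lambda$ to be the least element of $I$ above $\kappa$. Given $B\in\Ce$ of rank $\lambda$, form the Skolem hull $X=\mathrm{Hull}^{\mathfrak{A}}(V_\kappa\cup\{B\}\cup(I\setminus\lambda))$ and let $\pi\colon X\to M$ be its Mostowski collapse. Since $V_\kappa\cup\{p\}\subseteq X$ is transitive, $\pi$ is the identity on $V_\kappa$. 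Assuming the rank calculation $X\cap[\kappa,\lambda)=\varnothing$, one has $\pi(\lambda)=\kappa$, so $A:=\pi(B)\in M$ has rank $\kappa$; upward $\Sigma_1$-absoluteness between the transitive set $M$ and $V_\delta$ yields $V_\delta\models\varphi(A,p)$, hence $A\in\Ce$, and $\pi^{-1}\restriction\mathrm{dom}(A)$ is an elementary embedding of $A$ into $B$.

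The main obstacle is precisely the rank calculation $X\cap[\kappa,\lambda)=\varnothing$, which asserts that no Skolem-term value with inputs from $V_\kappa\cup\{B\}$ and from $I\setminus\lambda$ lies strictly between $\kappa$ and $\lambda$. Good indiscernibility of the tail $I\setminus\lambda$ forces any such value genuinely depending on an input from $I\setminus\lambda$ to be independent of the specific indiscernibles chosen, and a careful book-keeping of the possible values, together with the choice of $\lambda$ as the successor of $\kappa$ in $I$, then shows the term value either collapses into $V_\kappa$ or lands at or above $\lambda$. Handling the residual terms that depend only on $B$ and on parameters from $V_\kappa$ --- the delicate step shared with the proof of the cited theorem from \cite{MR1856729} --- completes the argument.
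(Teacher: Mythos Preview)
Your proposal has a genuine gap, and it is precisely the step you flag as ``the main obstacle'': the rank calculation $X\cap[\kappa,\lambda)=\varnothing$ cannot be salvaged along the lines you sketch. The problem is that $B$ is an \emph{arbitrary} structure of rank $\lambda$, not one built from indiscernibles, so Skolem terms with $B$ as a parameter can produce ordinals anywhere below $\lambda$. Concretely, if the language has a constant symbol $c$ and $c^B$ happens to be an ordinal $\gamma\in[\kappa,\lambda)$, then $\gamma$ is definable from $B$ and hence lies in $X$; similarly any element of $B$ definable in $B$ may have rank in $[\kappa,\lambda)$. Good indiscernibility of $I\setminus\lambda$ says nothing about such terms, since their values depend only on $B$ (and on parameters in $V_\kappa$), not on the indiscernibles. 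So $\pi(\lambda)$ need not equal $\kappa$, and $\pi(B)$ need not have rank $\kappa$.

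The paper avoids this difficulty by never putting a specific structure $B$ into the hull. Instead, working with $V_\delta=\LL_\delta[A]$ for a suitable $A\subseteq\delta$, it takes the hull $X=\mathrm{Hull}(\min(I)\cup(I\setminus\{\min(I)\}))$ inside $\langle\LL_\delta[A],\in,A\rangle$, so that indiscernibility alone guarantees $\min(I)\notin X$. The inverse collapse $j=\pi^{-1}:M\to V_\delta$ is then a single nontrivial elementary embedding with critical point $\min(I)$, and one \emph{defines} $\kappa=j(\min(I))$ and $\lambda=j(\kappa)$. The $\ESR$ principle is verified by contradiction: if some $B$ of rank $\lambda$ in a $\Sigma_1(\{z\})$-definable class $\Ce$ admits no embedding from a rank-$\kappa$ member of $\Ce$, reflect this failure through $j$ to obtain $z_0\in V_{\min(I)}$ and $B_0$ of rank $\kappa$ with the analogous property in $M$; since $j$ fixes $V_{\min(I)}$, $j(z_0)=z_0$, so $j(B_0)$ has the same property in $V_\delta$. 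But $\Sigma_1$-upward absoluteness from the transitive set $M$ puts $B_0$ in the relevant class, and $j\restriction B_0$ elementarily embeds $B_0$ into $j(B_0)$, a contradiction. The point is that one embedding handles all structures at once, so no per-$B$ rank calculation is needed.
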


\begin{proof}
 Fix $\xi<\delta$ and pick $A\subseteq\delta$ such that $V_\delta=\LL_\delta[A]$. 
 By our assumption, there exists a good set $I$ of indiscernibles for the structure $\langle\LL_\delta[A],\in,A\rangle$ (see {\cite[Section 1]{MR645907}}) that is unbounded in $\delta$ and satisfies $\min(I)>\xi$. 
 Set $$X ~ = ~ \mathrm{Hull}_{\langle\LL_\delta[A],\in,A\rangle}(\min(I)\cup(I\setminus\{\min(I)\}))$$ and let $\pi:X\to M$ denote the corresponding transitive collapse. 
 Since $I$ is unbounded in $\delta$, we know that $M\cap\On=\delta$. 
 Moreover,  indiscernibility ensures that $\min(I)\notin X$ and hence $\pi^{{-}1}:M\to V_\delta$ is a non-trivial elementary embedding with critical point $\min(I)$. Set $\kappa=\pi^{{-}1}(\min(I))\in M$ and $\lambda=\pi^{{-1}}(\kappa)\in M$. 
 Then $\kappa$ and $\lambda$ are both inaccessible cardinals greater than $\xi$.

 \begin{claim*}
  In $V_\delta$, the principle $\ESR_\Ce(\kappa,\lambda)$ holds for every class $\Ce$ of structures of the same type that is definable by a $\Sigma_1$-formula with parameters in  $V_\kappa$. 
 \end{claim*}
 
 \begin{proof}[Proof of the Claim]
  Assume, towards a contradiction, that there exists a $\Sigma_1$-formula $\varphi(v_0,v_1)$ with the property that for some $z\in V_\kappa$, the class $\Ce=\Set{A\in V_\delta}{\varphi(A,z)}$ consists of structures of the same type and there exists $B\in\Ce$ of rank $\lambda$ such that for all $A\in\Ce$ of rank $\kappa$, there is no elementary embedding of $A$ into $B$. %
  Using elementarity, we now know that, in $M$, there exist $z_0\in V_{\min(I)}$ and $B_0\in V_{\kappa+1}\setminus V_\kappa$ with the property that the class $\Ce_0=\Set{A\in M}{\varphi(A,z_0)}$ consists of structures of the same type, $B_0\in\Ce_0$ and for all $A\in\Ce_0$ of rank $\min(I)$, there is no elementary embedding of $A$ into $B_0$.
  Since we have $$\pi^{{-}1}\restriction V_{\min(I)}^M ~ = ~ \id_{V_{\min(I)}^M},$$ the elementarity of $\pi^{{-}1}$ implies that $\Ce_*=\Set{A\in   V_\delta}{\varphi(A,z_0)}$ consists of structures of the same type, $\pi^{{-}1}(B_0)\in\Ce_*$ and for all $A\in\Ce_*$ of rank $\kappa$, there is no elementary embedding of $A$ into $\pi^{{-}1}(B_0)$.
  But this yields a contradiction, because  the upwards absoluteness of $\Sigma_1$-formulas  implies that $B_0$ is a structure in $\Ce_*$ of rank $\kappa$ and $\pi^{{-}1}$ induces an elementary embedding of $B_0$ into $\pi^{{-}1}(B_0)$. 
 \end{proof}
 
 The above claim completes the proof of the lemma. 
\end{proof}

%%%%%%%%%%%%%%

\section{The $\Pi_n$-case for isomorphism-closed classes}

We now show that the structural reflection principles introduced in Definition \ref{definition:ESR} become very strong when they hold for more complex classes of structures.  
 In particular, the validity of these principles for  $\Pi_1$-definable classes $\Ce$ of structures closed under isomorphic copies at singular cardinals already implies non-trivial fragments of supercompactness, and the corresponding principles for a regular cardinal will turn out to imply the existence of many almost huge cardinals.

 Recall that a cardinal $\kappa$ is \emph{$\lambda$-supercompact} if  there is a transitive class $M$ closed under $\lambda$-sequences and an elementary embedding $j:V\to M$ with  $\crit{j}=\kappa$ and $j(\kappa)>\lambda$.

\begin{lemma}
\label{lemma:singularsupercompact}
 Let  $\kappa<\lambda$ be infinite cardinals such that $\kappa$ is singular and $\cof{\kappa}\leq\cof{\lambda}$. 
 If the interval $(\cof{\kappa},\kappa)$ contains  a $\beth_\lambda$-supercompact cardinal $\mu$, then $\Pi_1(V_\mu)^{ic}$-$\ESR{(\kappa,\lambda)}$ holds. 
\end{lemma}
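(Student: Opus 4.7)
The plan is to invoke Proposition \ref{proposition:CharClosedIso} to reduce $\ESR_{\Ce}(\kappa,\lambda)$ to a cardinality-controlled statement, and then witness that statement inside the ultrapower by a $\beth_\lambda$-supercompact measure at $\mu$. I fix a class $\Ce$ that is $\Pi_1$-definable by a formula $\varphi(x,z)$ with parameter $z\in V_\mu$ and closed under isomorphic copies, together with $B\in\Ce$ satisfying $|B|\in[\cof{\lambda},\beth_\lambda]$; by Proposition \ref{proposition:CharClosedIso}, it will suffice to produce $A\in\Ce$ with $|A|\in[\cof{\kappa},\beth_\kappa]$ and an elementary embedding of $A$ into $B$.

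I will let $j:V\to N$ witness the $\beth_\lambda$-supercompactness of $\mu$, so that $\crit{j}=\mu$, $j(\mu)>\beth_\lambda$, and ${}^{\beth_\lambda}N\subseteq N$. From this closure one deduces $H_{\beth_\lambda^+}^V\subseteq N$, hence $B\in N$ and $j\restriction B\in N$ as a $\leq\beth_\lambda$-sequence in $N$. Downward $\Pi_1$-absoluteness for the transitive model $N$, applied to $\varphi(\cdot,z)$ with $z$ fixed by $j$, then yields $N\models\varphi(B,z)$, i.e., $B\in j(\Ce)$. I will set $A:=j[B]$; the map $j\restriction B$ witnesses in $N$ both that $A\cong B$ and that $j\restriction B:B\to j(B)$ is elementary, so the inclusion $A\hookrightarrow j(B)$ is elementary, giving $A\preceq j(B)$. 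Since iso-closure of $\Ce$ is a first-order statement and $j$ is elementary, the class $j(\Ce)$ is iso-closed in $N$, and combined with $B\in j(\Ce)$ and $A\cong B$ this delivers $A\in j(\Ce)$.

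The crux will then be the cardinality computation. The bijection between $B$ and $|B|^V$ lies in $H_{\beth_\lambda^+}^V\subseteq N$, so $|B|^N=|B|^V$, and therefore $|A|^N=|B|^V\in[\cof{\lambda},\beth_\lambda]$. Using the singular hypothesis on $\kappa$ together with the key fact $\cof{\kappa}<\mu=\crit{j}$, I get $j(\cof{\kappa})=\cof{\kappa}$, whence $\cof{j(\kappa)}^N=\cof{\kappa}\leq\cof{\lambda}\leq|A|^N$ for the lower bound. For the upper bound, $|A|^N\leq\beth_\lambda<j(\mu)\leq j(\kappa)\leq\beth_{j(\kappa)}^N$. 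Thus $N$ satisfies ``there exists $A\in j(\Ce)$ with $|A|\in[\cof{j(\kappa)},\beth_{j(\kappa)}]$ admitting an elementary embedding into $j(B)$'', and elementarity of $j$ reflects this to the analogous statement in $V$ with $\Ce,\kappa,B$ in place of $j(\Ce),j(\kappa),j(B)$.

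The main obstacle I anticipate is precisely this cardinality transfer: one must be careful that $|B|^N$ does not shrink below $\cof{\kappa}$ (this is where $\beth_\lambda$-supercompactness, rather than mere $\lambda$-supercompactness, is genuinely used, since we need the bijection witnessing $|B|^V$ to be coded as a sequence of length $\leq\beth_\lambda$ in $N$) and that $\cof{\kappa}$ is fixed by $j$ (which is where the hypothesis $\cof{\kappa}<\mu$ enters in an essential way). Everything else — transitive $\Pi_1$-absoluteness, the first-order transfer of iso-closure, and verifying the elementarity of the inclusion $j[B]\hookrightarrow j(B)$ — should fall out routinely.
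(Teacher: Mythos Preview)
Your proof is correct and follows essentially the same approach as the paper: both invoke Proposition~\ref{proposition:CharClosedIso}, push $B$ into the target model of a $\beth_\lambda$-supercompactness embedding via closure and $\Pi_1$-downward absoluteness, and then reflect an existential witness along $j$. The only cosmetic difference is that the paper uses $B$ itself together with $j\restriction B:B\to j(B)$ as the witness in $M$ and the interval $[\cof{\kappa},j(\mu))$ (whose endpoints lie below $\crit{j}$ and are therefore fixed), whereas you pass to $A=j[B]$, invoke the transferred iso-closure, and work with the interval $[\cof{j(\kappa)},\beth_{j(\kappa)}]$; both formulations reflect to the same conclusion.
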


\begin{proof}
  Fix a $\Pi_1$-formula $\varphi(v_0,v_1)$ and an element $z$ of $V_\mu$ such that $\Ce=\Set{A}{\varphi(A,z)}$ is a class of structures of the same type that is closed under isomorphic copies. 
  Pick a structure $B$ in $\Ce$ whose cardinality is contained in the interval $[\cof{\lambda},\beth_\lambda]$ and whose domain is a subset of $\beth_\lambda$. 
  Now, fix  an elementary embedding $j:V\to M$ with $\crit{j}=\mu$, $j(\mu)>\beth_\lambda$ and ${}^{\beth_\lambda}M\subseteq M$. 
   Then the closure properties of $M$ ensure that $B$ is an element of $M$ and the elementary embedding of $B$ into $j(B)$ induced by $j$ is also contained in $M$. 
   Moreover, $\Pi_1$-downwards absoluteness for transitive classes implies that $\varphi(B,z)$ holds in $M$. Thus, $M$ satisfies that there exists a structure $A$  whose cardinality is  contained in the interval $[\cof{\kappa}, j(\mu))$ such that $\varphi(A,z)$ holds, and there exists an elementary embedding $e:A\to j(B)$, as this is witnessed by $B$ and $j\restriction B:B\to j(B)$. 
   Since $j(z)=z$ and $j(\cof{\kappa})=\cof{\kappa}$, the elementarity of $j$ yields an elementary embedding of a structure $A$ of the given type into $B$ such that $\varphi(A,z)$ holds and the cardinality of $A$  is contained in the interval $[\cof{\kappa},\mu)$. 
  This shows that there is an elementary embedding of a structure in $\Ce$ whose cardinality is contained in  the interval $[\cof{\kappa},\beth_\kappa]$ into $B$. 
   By Proposition \ref{proposition:CharClosedIso}, this proves the lemma. 
\end{proof}

\begin{corollary}\label{corollary:ESR-Pi-1-Iso}
 If $\kappa<\lambda$ are cardinals such that $\cof{\kappa}\leq\cof{\lambda}$ and $\kappa$ is a singular limit of $\beth_\lambda$-supercompact  cardinals, then  $\Pi_1(V_\kappa)^{ic}$-$\ESR{(\kappa,\lambda)}$ holds. 
 Hence,  if $\kappa$ is a singular limit of supercompact cardinals, then  $\Pi_1(V_\kappa)^{ic}$-$\ESR{(\kappa,\lambda)}$ holds for a proper class of cardinals $\lambda$. \qed 
\end{corollary}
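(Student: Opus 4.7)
The plan is to deduce both assertions directly from Lemma \ref{lemma:singularsupercompact}, with essentially the only ingredient being that any parameter from $V_\kappa$ sits inside $V_\mu$ for sufficiently large $\mu<\kappa$.

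For the first assertion, I would fix cardinals $\kappa<\lambda$ with $\cof{\kappa}\leq\cof{\lambda}$ and $\kappa$ a singular limit of $\beth_\lambda$-supercompact cardinals, and consider an arbitrary class $\Ce$ of structures of the same type, closed under isomorphic copies, defined by a $\Pi_1$-formula $\varphi(v_0,v_1)$ with some parameter $z\in V_\kappa$. Since $\kappa$ is a limit cardinal, there exists an ordinal $\mu_0<\kappa$ with $z\in V_{\mu_0}$. Using that $\kappa$ is singular (so $\cof{\kappa}<\kappa$) and that the $\beth_\lambda$-supercompact cardinals are unbounded in $\kappa$, one can then select a $\beth_\lambda$-supercompact cardinal $\mu$ with $\max\{\mu_0,\cof{\kappa}\}<\mu<\kappa$, so that $z\in V_\mu$ and $\mu\in(\cof{\kappa},\kappa)$. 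Lemma \ref{lemma:singularsupercompact} applied to $\mu$ then yields $\Pi_1(V_\mu)^{ic}$-$\ESR(\kappa,\lambda)$, which covers the class $\Ce$ and hence gives $\ESR_\Ce(\kappa,\lambda)$.

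For the second assertion, if $\kappa$ is a singular limit of supercompact cardinals, then for every cardinal $\lambda>\kappa$, every supercompact cardinal below $\kappa$ is in particular $\beth_\lambda$-supercompact, so $\kappa$ is automatically a singular limit of $\beth_\lambda$-supercompact cardinals. Choosing $\lambda$ to range over a proper class of cardinals with $\cof{\lambda}\geq\cof{\kappa}$ (for instance, all regular cardinals above $\kappa$), the first assertion then applies and yields $\Pi_1(V_\kappa)^{ic}$-$\ESR(\kappa,\lambda)$ for all such $\lambda$.

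Since Lemma \ref{lemma:singularsupercompact} already carries the real content, no genuine obstacle remains; the only point that needs a moment of care is the selection of $\mu$ strictly above both $\cof{\kappa}$ and $\mu_0$, which is precisely where singularity of $\kappa$ is used.
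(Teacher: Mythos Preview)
Your proof is correct and follows exactly the approach the paper intends: the corollary is marked with \qed\ and is meant to be an immediate consequence of Lemma \ref{lemma:singularsupercompact}, obtained just as you describe by choosing a $\beth_\lambda$-supercompact $\mu\in(\cof{\kappa},\kappa)$ large enough to contain the given parameter $z$. The second assertion is likewise the trivial observation you give, that supercompactness yields $\beth_\lambda$-supercompactness for every $\lambda$, so the first part applies to a proper class of $\lambda$ with $\cof{\lambda}\geq\cof{\kappa}$.
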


We continue by showing that, for all $n>1$,  analogous statements hold for $\Pi_{n}$-definable classes and $[\beth_\kappa, \beth_\lambda +1)$-$C^{(n-1)}$-extendible  cardinals.

\begin{lemma}
\label{lemma:singularextendible}
 Let $n>0$ be a natural number and let $\kappa<\lambda$ be infinite cardinals such that $\kappa$ is singular and $\cof{\kappa}\leq\cof{\lambda}$ holds. 
 If the interval $(\cof{\kappa},\kappa)$ contains a cardinal  $\delta$ that is $[\mu, \beth_\lambda +1)$-$C^{(n)}$-extendible for some ordinal $\delta\leq\mu\leq\beth_\kappa$, then $\Pi_{n+1}(V_\delta)^{ic}$-$\ESR{(\kappa,\lambda)}$ holds. 
\end{lemma}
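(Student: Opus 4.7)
The plan is to imitate the proof of Lemma \ref{lemma:singularsupercompact}, replacing the $\beth_\lambda$-supercompactness of $\mu$ by the weaker $[\mu,\beth_\lambda+1)$-$C^{(n)}$-extendibility of $\delta$, and replacing the single appeal to $\Pi_1$-downward absoluteness by a two-level absoluteness argument that uses $\nu\in C^{(n+1)}$ on the source side and $j(\nu)\in C^{(n)}$ on the target side. By Proposition \ref{proposition:CharClosedIso}, it suffices, given a $\Pi_{n+1}$-formula $\varphi(v_0,v_1)$ and $z\in V_\delta$ defining $\Ce$ together with an arbitrary $B\in\Ce$ of cardinality in $[\cof{\lambda},\beth_\lambda]$, to produce some $A\in\Ce$ of cardinality in $[\cof{\kappa},\beth_\kappa]$ with an elementary embedding $A\to B$. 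Using that $\Ce$ is closed under isomorphism, we may further assume, after a standard coding, that $B$ lies in $V_{\beth_\lambda+1}$.

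Let $\nu\in C^{(n+1)}\cap[\mu,\beth_\lambda+1)$ and $j:V_{\beth_\lambda+1}\to V_\eta$ witness the $[\mu,\beth_\lambda+1)$-$C^{(n)}$-extendibility of $\delta$, so that $\crit{j}=\delta$, $j(\mu)\geq\beth_\lambda+1$ and $j(\nu)\in C^{(n)}$. Since $\delta\in(\cof{\kappa},\kappa)$, both $z$ and $\cof{\kappa}$ lie below $\crit{j}$ and are therefore fixed by $j$. The bound $\rank{B}\leq\beth_\lambda<j(\mu)\leq j(\nu)$ places $B$ in $V_{j(\nu)}$, and $j\restriction B:B\to j(B)$ is an elementary embedding living in $V_\eta$. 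The key preliminary observation is that $V_{j(\nu)}\models\varphi(B,z)$: indeed, $j(\nu)\in C^{(n)}$ gives $V_{j(\nu)}\prec_{\Sigma_n}V$, and a routine check shows that any $\Pi_{n+1}$-statement with parameters in $V_{j(\nu)}$ that holds in $V$ also holds in $V_{j(\nu)}$.

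Consequently the assertion
\[
\exists A\,\exists e\,\bigl(A\in V_{j(\nu)}\,\wedge\,\cof{\kappa}\leq|A|<j(\mu)\,\wedge\,V_{j(\nu)}\models\varphi(A,z)\,\wedge\,e\colon A\to j(B)\text{ is elementary}\bigr)
\]
holds in $V_\eta$, witnessed by $A=B$ and $e=j\restriction B$. Being definable inside $V_{\beth_\lambda+1}$ from the parameters $\nu,\mu,\cof{\kappa},z,B$, the elementarity of $j$ yields in $V_{\beth_\lambda+1}$ some $A\in V_\nu$ of cardinality in $[\cof{\kappa},\mu)$ with $V_\nu\models\varphi(A,z)$ and an elementary embedding $A\to B$. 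Finally $\nu\in C^{(n+1)}$ gives $V_\nu\prec_{\Sigma_{n+1}}V$, so from $A,z\in V_\nu$ we obtain $V\models\varphi(A,z)$, hence $A\in\Ce$; since $|A|\in[\cof{\kappa},\mu)\subseteq[\cof{\kappa},\beth_\kappa]$, Proposition \ref{proposition:CharClosedIso} delivers $\ESR_\Ce(\kappa,\lambda)$. The main obstacle is precisely this two-level absoluteness bookkeeping: $j(\nu)\in C^{(n)}$ is what transfers $\varphi(B,z)$ downward into the target of $j$, while $\nu\in C^{(n+1)}$ is what lifts the resulting $V_\nu$-version of $\varphi(A,z)$ back to $V$ after pulling the reflection statement through $j$.
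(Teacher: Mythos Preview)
Your argument is correct and follows the paper's proof essentially step for step: the same reduction via Proposition~\ref{proposition:CharClosedIso}, the same use of $j(\nu)\in C^{(n)}$ to push $\varphi(B,z)$ down into $V_{j(\nu)}$, the same reflection through $j$ of the existential statement, and the same use of $\nu\in C^{(n+1)}$ to lift $\varphi(A,z)$ back to $V$. The only cosmetic difference is that the paper takes the domain of $B$ to be a subset of $\beth_\lambda$, whereas you phrase this as coding $B$ into $V_{\beth_\lambda+1}$; these amount to the same thing.
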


\begin{proof}
  Fix a $\Pi_{n+1}$-formula $\varphi(v_0,v_1)$ and an element $z$ of $V_\delta$ such that $\Ce=\Set{A}{\varphi(A,z)}$ is a class of structures of the same type that is closed under isomorphic copies. 
  Pick a structure $B$ in $\Ce$ whose cardinality is contained in the interval $[\cof{\lambda},\beth_\lambda]$ and whose domain is a subset of $\beth_\lambda$. 
  Let $\nu \in C^{(n+1)}\cap [\mu, \beth_\lambda +1)$ and let  $j:V_{\beth_\lambda +1}\to V_\eta$ be an elementary embedding with $\crit{j}=\delta$, $j(\mu)\geq \beth_\lambda +1$,    and $j(\nu)\in C^{(n)}$. 
   Then the fact that $\beth_\lambda<j(\mu)\leq j(\nu)$ implies that $B$ is an element of $V_{j(\nu)}$ and the elementary embedding of $B$ into $j(B)$ induced by $j$ is  contained in $V_{\eta}$.
   Moreover, since $j(\nu)\in C^{(n)}$, by $\Pi_{n+1}$-downwards absoluteness for $V_{j(\nu)}$, we have  that $\varphi(B,z)$ holds in $V_{j(\nu)}$. 
   Thus, $V_{\eta}$ satisfies that there exists a structure $A$  whose  cardinality is contained in the interval $[\cof{\kappa}, j(\mu))$ such that $\varphi(A,z)$ holds in $V_{j(\nu)}$, and there exists an elementary embedding $e:A\to j(B)$, as this is witnessed by $B$ and ${j\restriction B}:B\to j(B)$. 
   Since $j(z)=z$ and $j(\cof{\kappa})=\cof{\kappa}$, the elementarity of $j$ yields an elementary embedding of a structure $A$ of the given type into $B$ such that $\varphi(A,z)$ holds in $V_\nu$ and the cardinality of $A$  is contained in the interval $[\cof{\kappa},\mu)$.  But since $\nu\in C^{(n+1)}$, we know that $\varphi(A,z)$ also holds in $V$.
  This shows that there is an elementary embedding of a structure in $\Ce$ whose cardinality is contained in  the interval $[\cof{\kappa},\beth_\kappa]$ into $B$. 
   By Proposition \ref{proposition:CharClosedIso}, this proves the lemma. 
\end{proof}

The following observation shows that $C^{(n)}$- extendible cardinals provide natural examples of $[\mu,\infty)$-$C^{(n)}$-extendible cardinals.

\begin{proposition}\label{proposition:CnExtendibleIntervalExtendible}
$ $
\begin{enumerate}
    \item  If $\kappa$ is a  $\lambda$-$C^{(n)}$-extendible cardinal and  $C^{(n+1)}\cap[\kappa ,\lambda)\neq \emptyset$, 
  then $\kappa$ is $[\kappa, \lambda)$-$C^{(n)}$-extendible. 
  \item Every $C^{(n)}$-extendible cardinal $\kappa$ is $[\mu  ,\infty)$-$C^{(n)}$-extendible, for every $\mu \geq \kappa$.
  \end{enumerate}
\end{proposition}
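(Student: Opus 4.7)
The plan is to reduce part (1) to showing that the hypotheses force $\kappa\in C^{(n+1)}$, after which taking $\nu=\kappa$ in the definition of $[\kappa,\lambda)$-$C^{(n)}$-extendibility trivializes the conclusion using the original $\lambda$-$C^{(n)}$-extendibility embedding.

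In detail, let $j:V_\lambda\to V_{\eta_0}$ witness the $\lambda$-$C^{(n)}$-extendibility of $\kappa$, so $\crit{j}=\kappa$, $j(\kappa)>\lambda$, and $j(\kappa)\in C^{(n)}$; and fix some $\nu_0\in C^{(n+1)}\cap[\kappa,\lambda)$ supplied by the hypothesis. To establish $\kappa\in C^{(n+1)}$, I verify $\Sigma_{n+1}$-absoluteness between $V_\kappa$ and $V$ for parameters in $V_\kappa$. Given a $\Sigma_{n+1}$-formula $\exists y\,\psi(y,x)$ with $\psi\in\Pi_n$ and $x\in V_\kappa$, if $V\models\exists y\,\psi(y,x)$ then the reflection $V_{\nu_0}\prec_{\Sigma_{n+1}}V$ supplies some $y\in V_{\nu_0}$ with $V_{\nu_0}\models\psi(y,x)$; since $\nu_0\in C^{(n)}$ and $\psi\in\Pi_n$, it follows that $V\models\psi(y,x)$; and since $y\in V_{\nu_0}\subseteq V_\lambda\subseteq V_{j(\kappa)}$ with $V_{j(\kappa)}\prec_{\Sigma_n}V$, we conclude that $V_{j(\kappa)}\models\exists y\,\psi(y,x)$. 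The elementarity of $j$, together with the fact that $x$ is fixed by $j$ and that satisfaction of a formula in a set is absolute between $V_\lambda$ (containing $V_\kappa$) and $V_{\eta_0}$ (containing $V_{j(\kappa)}$), transfers this to $V_\kappa\models\exists y\,\psi(y,x)$. The reverse direction of $\Sigma_{n+1}$-absoluteness follows from $\kappa\in C^{(n)}$, which is proved by induction on $n$ using the same elementarity trick with $V_{j(\kappa)}\prec_{\Sigma_n}V$ in place of $V_{\nu_0}\prec_{\Sigma_{n+1}}V$. With $\kappa\in C^{(n+1)}$ established, the pair $(\nu=\kappa,\,j)$ witnesses $[\kappa,\lambda)$-$C^{(n)}$-extendibility, since $\kappa\in C^{(n+1)}\cap[\kappa,\lambda)$, $j(\kappa)\geq\lambda$, and $j(\nu)=j(\kappa)\in C^{(n)}$.

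For part (2), the case $\mu=\kappa$ follows immediately: $C^{(n)}$-extendibility provides $\lambda$-$C^{(n)}$-extendibility for all $\lambda>\kappa$, and the unboundedness of $C^{(n+1)}$ makes the hypothesis of part (1) available for a proper class of $\lambda$. For general $\mu\geq\kappa$, the plan is to choose, for each target $\lambda$, a much larger $\lambda^*\in C^{(n+1)}$ above $\mu$ and apply $\lambda^*$-$C^{(n)}$-extendibility to produce $j^*:V_{\lambda^*}\to V_{\eta^*}$. Then $j^*(\mu)\geq j^*(\kappa)>\lambda^*\geq\lambda$ yields the required inequality on $j^*(\mu)$; moreover, for any $\nu\in C^{(n+1)}\cap[\mu,\lambda)$, the assumption $\lambda^*\in C^{(n+1)}$ ensures that the property ``$\nu\in C^{(n+1)}$'' is correctly computed inside $V_{\lambda^*}$, so elementarity of $j^*$ yields $V_{j^*(\nu)}\prec_{\Sigma_{n+1}}V_{\eta^*}$; restricting $j^*$ to $V_\lambda$ then provides the desired witness once the external $\Sigma_n$-correctness of $j^*(\nu)$ has been obtained.

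The main obstacle is precisely this last step: upgrading the internal $\Sigma_{n+1}$-reflection of $j^*(\nu)$ inside $V_{\eta^*}$ to the external assertion $j^*(\nu)\in C^{(n)}$. This requires $\eta^*$ itself to carry enough $\Sigma_n$-absoluteness in $V$, which is not automatic from the definition of $\lambda^*$-$C^{(n)}$-extendibility. I expect this to be handled by exploiting the freedom in the choice of the target cardinal $\eta^*$ (for example, replacing $\eta^*$ by a suitable larger cardinal in $C^{(n)}$ while preserving elementarity of $j^*$), or by using the full strength of $C^{(n)}$-extendibility to arrange for $\eta^*\in C^{(n)}$ from the outset.
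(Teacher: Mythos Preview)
Your argument for part~(i) is correct and is exactly the paper's approach: show $\kappa\in C^{(n+1)}$ and then take $\nu=\kappa$ as the witness. The paper states this more tersely, but the content is the same.

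Part~(ii), however, has a genuine gap that you yourself flag but do not close. You need $j^*(\nu)\in C^{(n)}$ in $V$, and as you observe, the internal elementarity $V_{j^*(\nu)}\prec_{\Sigma_{n+1}}V_{\eta^*}$ is useless without knowing that $\eta^*\in C^{(n)}$. The definition of $\lambda^*$-$C^{(n)}$-extendibility gives you no control over $\eta^*$, and there is no ``freedom in the choice of $\eta^*$'' to exploit: the embedding $j^*$ determines its codomain. Your final suggestion---arranging $\eta^*\in C^{(n)}$ from the outset---is exactly what is needed, but it is a nontrivial theorem, not a routine step. The paper invokes precisely this: Tsaprounis's result that $C^{(n)}$-extendibility is equivalent to ${C^{(n)}}^+$-extendibility, meaning that for a proper class of $\lambda\in C^{(n)}$ there is $j:V_\lambda\to V_\eta$ with $\eta\in C^{(n)}$, $\crit{j}=\kappa$, $j(\kappa)>\lambda$, and $j(\kappa)\in C^{(n)}$. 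With both $\lambda$ and $\eta$ in $C^{(n)}$, the $\Pi_n$-statement ``$\nu\in C^{(n)}$'' transfers from $V$ down to $V_\lambda$, across via $j$ to $V_\eta$, and back up to $V$, giving $j(\nu)\in C^{(n)}$ directly. Without this external input (or a reproof of it), your argument for~(ii) is incomplete.
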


\begin{proof} 
(i):  Assume $\kappa$ is  $\lambda$-$C^{(n)}$-extendible and $\nu \in C^{(n+1)}\cap[\kappa ,\lambda )$. Since $\kappa\in C^{(n)}$, and since every true $\Sigma_{n+1}$ statement, with parameters in $V_\kappa$ is true in $V_\nu$, the assumption that $\kappa$ is $\lambda$-$C^{(n)}$-extendible easily yields that $\kappa \in C^{(n+1)}$. Then $\kappa$ itself witnesses the $[\kappa ,\lambda)$-$C^{(n)}$-extendibility of $\kappa$.

(ii):
 As  shown in \cite{Tsan}, a cardinal $\kappa$ is $C^{(n)}$-extendible if and only if it is ${C^{(n)}}^+$-extendible, i.e., for a proper class of  $\lambda\in C^{(n)}$ there exists an elementary embedding $j:V_\lambda \to V_\eta$ for some $\eta \in C^{(n)}$, with $\crit{j}=\kappa$,  $j(\kappa)>\lambda$ and $j(\kappa)\in C^{(n)}$. Thus, if $\kappa$ is $C^{(n)}$-extendible, then it is $[\mu ,\lambda)$-$C^{(n)}$-extendible, for every $\mu\geq \kappa$ and every $\lambda\in C^{(n)}$ such that $C^{(n+1)}\cap [\mu,\lambda)\ne \emptyset$. In particular, every $C^{(n)}$-extendible cardinal $\kappa$ is $[\mu ,\infty)$-$C^{(n)}$-extendible for every $\mu \geq \kappa$.
\end{proof}

\begin{corollary}\label{corollary:LimitsOfCnExtendibles}
  Let $n>0$ be a natural number, let  $\kappa$ be a singular cardinal and let $\lambda>\kappa$ be a cardinal with $\cof{\kappa}\leq\cof{\lambda}$. 
  \begin{enumerate}
      \item If $\kappa$ is a limit of cardinals $\delta$ that are $[\mu,{\beth_\lambda+1})$-$C^{(n)}$-extendible for some $\delta\leq\mu\leq\kappa$, then  $\Pi_{n+1}(V_\kappa)^{ic}$-$\ESR{(\kappa,\lambda)}$ holds.
      
      \item If $\kappa$ is a limit point of $C^{(n+1)}$ and a limit of $(\beth_\lambda +1)$-$C^{(n)}$-extendible cardinals, then $\Pi_{n+1}(V_\kappa)^{ic}$-$\ESR{(\kappa,\lambda)}$ holds. 
      
      \item If $\kappa$ is  a  limit of $C^{(n)}$-extendible cardinals, then    $\Pi_{n+1}(V_\kappa)^{ic}$-$\ESR{(\kappa,\lambda)}$ holds
  \end{enumerate}
\end{corollary}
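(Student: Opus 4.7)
The plan is to derive all three items directly from Lemma \ref{lemma:singularextendible}, using Proposition \ref{proposition:CnExtendibleIntervalExtendible} to supply, in each case, the specific form of extendibility required by the lemma. A mild nuisance is that Lemma \ref{lemma:singularextendible} yields instances of $\Pi_{n+1}(V_\delta)^{ic}$-$\ESR(\kappa,\lambda)$ for a witnessing cardinal $\delta<\kappa$, whereas we want $\Pi_{n+1}(V_\kappa)^{ic}$-$\ESR(\kappa,\lambda)$; this is handled by noting that any parameter $z\in V_\kappa$ lies in $V_\alpha$ for some $\alpha<\kappa$, so we may always choose $\delta$ above $\alpha$ and above $\cof{\kappa}$, and then $z\in V_\delta$ puts the class into $\Pi_{n+1}(V_\delta)^{ic}$.

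For item (i), given a $\Pi_{n+1}$-class of structures defined with a parameter $z\in V_\kappa$, pick $\alpha<\kappa$ with $z\in V_\alpha$, and use the singularity of $\kappa$ together with the limit hypothesis to select some $\delta\in(\max\{\cof{\kappa},\alpha\},\kappa)$ that is $[\mu,\beth_\lambda+1)$-$C^{(n)}$-extendible for some $\delta\leq\mu\leq\beth_\kappa$. The assumption $\cof{\kappa}\leq\cof{\lambda}$ is in force, so Lemma \ref{lemma:singularextendible} applied to this $\delta$ delivers $\Pi_{n+1}(V_\delta)^{ic}$-$\ESR(\kappa,\lambda)$, and hence the desired instance of reflection for the given class.

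For item (ii), fix again the class and a parameter $z\in V_\kappa$, and choose $\delta\in(\cof{\kappa},\kappa)$ with $z\in V_\delta$ that is $(\beth_\lambda+1)$-$C^{(n)}$-extendible. Since $\kappa$ is a limit point of $C^{(n+1)}$ and $\delta<\kappa\leq\beth_\lambda+1$, the set $C^{(n+1)}\cap[\delta,\beth_\lambda+1)$ is non-empty. Proposition \ref{proposition:CnExtendibleIntervalExtendible}(i) then upgrades $\delta$ to being $[\delta,\beth_\lambda+1)$-$C^{(n)}$-extendible, and as $\delta\leq\beth_\kappa$ we may invoke Lemma \ref{lemma:singularextendible} with $\mu=\delta$. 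For item (iii) we reduce to (ii) using the standard fact, from \cite[§3]{Ba:CC}, that every $C^{(n)}$-extendible cardinal is $\Sigma_{n+2}$-correct in $V$ and hence in particular lies in $C^{(n+1)}$. Therefore the hypothesis that $\kappa$ is a limit of $C^{(n)}$-extendible cardinals automatically entails that $\kappa$ is a limit point of $C^{(n+1)}$; and every $C^{(n)}$-extendible cardinal is trivially $(\beth_\lambda+1)$-$C^{(n)}$-extendible. Thus the hypotheses of (ii) hold and (iii) follows.

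The main obstacle is purely bookkeeping: one has to match up the three different flavours of extendibility (ordinary $\lambda$-$C^{(n)}$-extendibility, the interval form $[\mu,\lambda)$-$C^{(n)}$-extendibility used in Lemma \ref{lemma:singularextendible}, and plain $C^{(n)}$-extendibility) and verify at each step that the necessary element of $C^{(n+1)}\cap[\mu,\beth_\lambda+1)$ is available. No genuinely new idea beyond those of Lemma \ref{lemma:singularextendible} and Proposition \ref{proposition:CnExtendibleIntervalExtendible} is required.
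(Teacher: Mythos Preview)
Your proposal is correct and follows essentially the same route as the paper: part (i) is a direct application of Lemma~\ref{lemma:singularextendible}; part (ii) uses Proposition~\ref{proposition:CnExtendibleIntervalExtendible}(i) to pass from $(\beth_\lambda+1)$-$C^{(n)}$-extendibility to the interval form and then appeals to (i); and part (iii) reduces to (ii) via the fact from \cite{Ba:CC} that $C^{(n)}$-extendible cardinals lie in $C^{(n+2)}\subseteq C^{(n+1)}$. The only difference is cosmetic: you spell out the parameter-handling explicitly, while the paper leaves it implicit.
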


\begin{proof}
 The first statement follows directly from Lemma \ref{lemma:singularextendible}. 
 For the second statement, notice that our assumption implies that $\kappa$ is a limit of $(\beth_\lambda +1)$-$C^{(n)}$-extendible cardinals $\delta$ with the property that $C^{(n+1)}\cap(\delta,\kappa)\neq\emptyset$. By Proposition \ref{proposition:CnExtendibleIntervalExtendible}, this implies that $\kappa$ is a limit of cardinals $\delta$ that are $[\delta,{\beth_\lambda+1})$-$C^{(n)}$-extendible and therefore we can use the first part to derive the desired conclusion. 
 Finally, since {\cite[Proposition 3.4]{Ba:CC}} shows that all $C^{(n)}$-extendible cardinals are elements of  $C^{(n+2)}$, we can apply the second part of the corollary to prove the third statement.  
 \end{proof}

 We are now ready to show that Vop\v{e}nka's Principle can be characterized through principles of exact structural reflection.

\begin{proof}[Proof of Theorem \ref{theorem:ESR-VP}]
 First, assume that (i) holds and fix a natural number $n>0$. 
 Since {\cite[Corollary 6.9]{BCMR}} shows that our assumption implies the existence of a proper class of $C^{(n)}$-extendible cardinals, there exists a proper class of cardinals $\kappa$ of countable cofinality that are limits of $C^{(n)}$-extendible cardinals. The third part of Corollary \ref{corollary:LimitsOfCnExtendibles} now implies that for every such cardinal $\kappa$ and every $\lambda>\kappa$, the principle  $\Pi_{n+1}(V_\kappa)^{ic}$-$\ESR(\kappa,\lambda)$ holds. This shows that  (iii) holds in this case.  
 
 Now, assume that (ii)
 holds and let $\Ce_0$ be a proper class of structures of the same type. Let $\Ce$ denote the class of all structures of the given type that are isomorphic to a structure in $\Ce_0$ and define $C=\Set{\betrag{A}}{A\in\Ce}$. 
 By our assumptions, there exists a  cardinal $\kappa$ with  the property that $\ESR_\Ce(\kappa,\lambda)$ holds for all  $\lambda>\kappa$. 
 If $C$ is a proper class, then there exists a structure $B\in\Ce_0$ of cardinality greater than $\beth_\kappa$ and, since the principle  $\ESR_\Ce(\kappa,\vert B\vert)$ holds, we can use Proposition \ref{proposition:CharClosedIso} to find a structure $A\in\Ce_0$ of cardinality at most $\beth_\kappa$ and an elementary embedding of $A$ into $B$. 
 In particular, we know that Vop\v{e}nka's Principle for $\Ce$ holds in this case. 
 In the other case, namely if $C$ is a set, then we can find distinct  $A,B\in\Ce_0$ that are isomorphic and hence Vop\v{e}nka's Principle for $\Ce$ also holds in this case. This allows us to conclude that (i) holds. 
 
 This concludes the proof of the theorem, because (iii) obviously implies (ii).  
\end{proof}

 Similar results hold also for $\Sigma_n(V_\kappa)$-definable classes closed under isomorphic copies, assuming $\kappa$ is  a singular limit of supercompact cardinals, in the case $n=2$,  or a singular limit of $[\kappa,\infty)$-$C^{(n-2)}$-extendible cardinals, in the case $n>2$. 
 %This can be proved similarly as in Lemmas \ref{lemma:singularsupercompact} and \ref{lemma:singularextendible}, the key point being that supercompact cardinals are in $C^{(2)}$ and $C^{(n-2)}$-extendible cardinals are in $C^{(n)}$.

\begin{corollary}\label{corollary:ESRfromSRatSingulars}
 Let $\kappa$ be a singular cardinal and let $\lambda>\kappa$ be a cardinal with $\cof{\kappa}\leq\cof{\lambda}$. 
 \begin{enumerate}
     \item If $\kappa$ is a limit of supercompact cardinals, then $\Sigma_2(V_\kappa)^{ic}$-$\ESR(\kappa,\lambda)$ holds. 
     
     \item If $n>0$   and $\kappa$ is a limit of $[\kappa, \infty)$-$C^{(n)}$-extendible cardinals, then $\Sigma_{n+2}(V_\kappa)^{ic}$-$\ESR(\kappa,\lambda)$ holds. 
 \end{enumerate}
\end{corollary}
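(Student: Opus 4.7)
The proof will adapt the arguments of Lemmas \ref{lemma:singularsupercompact} and \ref{lemma:singularextendible}, extending them from $\Pi$-definable to $\Sigma$-definable isomorphism-closed classes by ensuring the reflection embedding captures not only the target structure but also a witness for its outer existential quantifier. I will give the argument for part (ii); part (i) is analogous, with supercompactness playing the role of $[\kappa,\infty)$-$C^{(n)}$-extendibility. Fix a $\Sigma_{n+2}$-formula $\varphi(v_0,v_1)\equiv \exists y\,\psi(v_0,y,v_1)$ with $\psi$ being $\Pi_{n+1}$, and a parameter $z\in V_\kappa$, so that $\Ce=\{A:\varphi(A,z)\}$ is closed under isomorphic copies. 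By Proposition \ref{proposition:CharClosedIso}, it suffices to produce, for any $B\in\Ce$ whose cardinality lies in $[\cof{\lambda},\beth_\lambda]$ and whose domain is a subset of $\beth_\lambda$, a structure $A\in\Ce$ of cardinality in $[\cof{\kappa},\beth_\kappa]$ admitting an elementary embedding into $B$. Fix such a $B$ and choose a set $y_B$ with $\psi(B,y_B,z)$.

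Choose a $[\kappa,\infty)$-$C^{(n)}$-extendible $\delta<\kappa$ with $\delta>\max(\cof{\kappa},\rank{z})$, and use the unboundedness of targets witnessing $[\kappa,\infty)$-$C^{(n)}$-extendibility to select $\lambda^*>\max(\beth_\lambda,\rank{y_B})$ so that $\delta$ is $[\kappa,\lambda^*)$-$C^{(n)}$-extendible. Take corresponding $\nu\in C^{(n+1)}\cap[\kappa,\lambda^*)$ and an elementary embedding $j:V_{\lambda^*}\to V_\eta$ with $\crit{j}=\delta$, $j(\kappa)\geq\lambda^*$, and $j(\nu)\in C^{(n)}$. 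Since $z$ and $\cof{\kappa}$ lie in $V_\delta$, $j$ fixes them, and since $y_B\in V_{\lambda^*}\subseteq V_{j(\kappa)}\subseteq V_{j(\nu)}$, $\Pi_{n+1}$-downwards absoluteness between $V$ and $V_{j(\nu)}$ yields $V_{j(\nu)}\models\psi(B,y_B,z)$, hence $V_{j(\nu)}\models\varphi(B,z)$. Therefore $V_\eta$ satisfies that there exists a structure of cardinality in $[\cof{\kappa},j(\kappa))$ for which $\varphi(\cdot,z)$ holds in $V_{j(\nu)}$ and which admits an elementary embedding into $j(B)$, witnessed by $B$ itself and the embedding $j\restriction B$. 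Pulling this back along $j$ via elementarity yields an $A\in V_{\lambda^*}$ of cardinality in $[\cof{\kappa},\kappa)\subseteq[\cof{\kappa},\beth_\kappa]$ for which $V_\nu\models\varphi(A,z)$ and which admits an elementary embedding into $B$. Since $\nu\in C^{(n+1)}$, $\Pi_{n+1}$-upwards absoluteness transfers any $V_\nu$-witness of $\varphi(A,z)$ to $V$, so $A\in\Ce$ in $V$, as required.

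Part (i) follows by the same blueprint: pick a supercompact $\mu<\kappa$ with $\mu>\max(\cof{\kappa},\rank{z})$, take a $\theta$-supercompactness embedding $j:V\to M$ for $\theta$ large enough that $V_\theta\subseteq M$ contains $B$ and $y_B$ (standard for $\theta$-supercompactness with $\theta$ a strong limit), and apply $\Pi_1$-downwards absoluteness to $\psi(B,y_B,z)$ to obtain $M\models\varphi(B,z)$; the rest mirrors Lemma \ref{lemma:singularsupercompact}. The main obstacle in both parts is the new step of capturing the existential witness $y_B$ inside the target of the reflection embedding; this is what forces us to enlarge the target beyond $\beth_\lambda+1$, and it is precisely the freedom to choose $\lambda^*$ (respectively $\theta$) arbitrarily large---granted by $[\kappa,\infty)$-$C^{(n)}$-extendibility (respectively full supercompactness)---that makes the argument go through.
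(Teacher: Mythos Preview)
Your proof is correct and follows essentially the same approach as the paper: both arguments extend Lemmas \ref{lemma:singularsupercompact} and \ref{lemma:singularextendible} by ensuring that a witness for the outer existential quantifier of the $\Sigma_{n+2}$-formula lands inside the relevant target model. The only cosmetic difference is that the paper, in part (ii), arranges this by choosing the domain of the extendibility embedding to have rank in $C^{(n+2)}$ (so that $\varphi(B,z)$ is automatically reflected down), whereas you explicitly pick the witness $y_B$ first and then enlarge $\lambda^*$ to contain it; these are interchangeable implementations of the same idea.
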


\begin{proof}
(i)  Assume that $\kappa$ is a limit of supercompact cardinals and fix $z\in V_\kappa$. Let $\varphi(v_0,v_1)$ be a $\Sigma_2$-formula such that $\Ce=\Set{A}{\varphi(A,z)}$ is a class of structures of the same type that is closed under isomorphic copies. Pick a structure $B$ in $\Ce$ whose cardinality is contained in the interval $[\cof{\lambda},\beth_\lambda]$.  We can use our assumption to  find cardinals $\mu<\kappa$ and $\theta\geq\beth_\lambda$ such that $z\in V_\mu$ and there exists a transitive class $M$ containing $B$ and closed under $\theta$-sequences, and an elementary embedding $j:V\to M$ with $\crit{j}=\mu$, $j(\mu)>\theta$ and the property that $\varphi(B,z)$ holds in $M$. 
 In this situation, we can repeat the proof of Lemma \ref{lemma:singularsupercompact} to find a structure $A$ in $\Ce$ whose cardinality is contained in the interval $[\cof{\kappa},\beth_\kappa]$ and an elementary embedding of $A$ into $B$. 
 %also works for $\Sigma_2(V_\kappa)^{ic}$-definable classes. For, arguing as in \ref{lemma:singularsupercompact}, and assuming $\mu$ is supercompact, we can pick $M$ to be sufficiently closed so that it contains a witness to the $\Sigma_2$-sentence $\varphi(B,z)$. Then the proof proceeds as in \ref{lemma:singularsupercompact}, using the fact that $V_\mu$ is $\Sigma_2$-correct in $V$.

(ii) Now, assume that $\kappa$ is a limit of $[\kappa,\infty)$-$C^{(n)}$-extendible cardinals. Fix a $\Sigma_{n+2}$-formula $\varphi(v_0,v_1)$ and $z\in V_\kappa$ that define  a class $\Ce$ of structures of the same type closed under isomorphic copies. 
 If we now pick a structure $B\in\Ce$ whose cardinality is contained in the interval $[\cof{\lambda},\beth_\lambda]$, then there is a cardinal $\beth_\lambda<\theta\in C^{(n+2)}$, a cardinal $\nu\in C^{(n+1)}\cap[\kappa,\theta)$ and an elementary embedding $j:V_\theta\to V_\eta$ for some $\eta$ such that $z\in V_{\crit{j}}$,  $j(\kappa)\geq\theta$ and $j(\nu)\in C^{(n)}$. 
  Using the fact that $\theta\in C^{(n+2)}$, we can now continue as in the proof of Lemma \ref{lemma:singularextendible} to obtain the desired elementary embedding. 
 %, the proof of Lemma \ref{lemma:singularextendible} works also for $\Sigma_{n+2}(V_\kappa)^{ic}$-definable classes, for, if we take $\mu$ as in \ref{lemma:singularextendible}  being $[\kappa,\infty)$-$C^{(n)}$-extendible, then we can pick an embedding $j$ with $j(\mu)$ large enough so that $V_{j(\mu)}$  contains some  witness to the $\Sigma_{n+2}$-sentence $\varphi(B,z)$. The proof then continues as in \ref{lemma:singularextendible}, using the fact that $\mu \in C^{(n+2)}$.
\end{proof}

We shall next prove several results that will allow us to derive high lower bounds for the consistency strength of the principle $\Pi_1^{ic}$-$\ESR$. 
 In particular, these results will show that passing from $\Sigma_1$-definable to $\Pi_1$-definable classes of structures drastically increases the strength of the principle $\ESR$.

\medskip

Given a set $z$ and a natural number $n>0$, we let $\mathcal{W}_n(z)$ denote the class of all structures (in the language $\calL_\in$ of set theory extended by five constant symbols and two unary function symbols) of the form  $\langle D,E,a,b,c,d,e, f,g\rangle$ with the property  that the relation $E$ is well-founded and extensional, and, if $\pi:\langle D,E\rangle\to\langle M,\in\rangle$ denotes the corresponding transitive collapse, then the following statements hold: 
  \begin{enumerate}
      \item $\pi(a),\pi(b),\pi(c), \pi(d) \in M\cap\On$,  and $V_{\pi(c)}\subseteq M$. 
      
      \item If $\pi(b)>0$, then $\pi(b)\in C^{(n)}$. 
      
      \item $\pi(e)=z\in V_{\pi(a)}$. 
      
      \item $M$ is $\Pi_n(V_{\pi(c)+1})$-upwards correct. 
      
      \item The map $\pi\circ f\circ\pi^{{-}1}$ induces a bijection between $V_{\pi(c)}$ and $\pi(d)$.  
      
      \item The map $g\circ\pi^{{-}1}$ restricts to a bijection between $\pi(d)$ and $D$. 
  \end{enumerate}
  Then it is easy to see that the class $\mathcal{W}_n(z)$ is closed under isomorphic copies and  is definable by a $\Pi_n$-formula with parameter $z$.

\begin{lemma}\label{lemma:EmbFromESRisoC}
 Let $\mu<\lambda$ be cardinals such that  $\ESR_{\mathcal{W}_n(z)}(\mu,\lambda)$ holds for some element  $z$ of $V_\mu$ and some natural number $n>0$. 
 Given a cardinal $\lambda'\in C^{(n)}$ greater than $\beth_\lambda$, there exist 
 \begin{itemize}
     \item  a cardinal $\kappa\leq\mu$ with $z\in V_\kappa$ and   $\cof{\mu}\leq\beth_\kappa$, 
     
     \item a transitive, $\Pi_n(V_{\kappa+1})$-upwards correct set $M$ with $V_\kappa\cup\{\beth_\kappa\}\subseteq M$, and 
     
     \item an elementary embedding $j:M\to H_{\lambda'}$ with $\crit{j}<\kappa$, $j(\crit{j})\leq\mu$, $j(z)=z$, $\mu\in\ran{j}$ and  $j(\kappa)=\lambda$. 
 \end{itemize}
 In addition, for every $\nu\in C^{(n)}\cap[\mu,\lambda)$, we can find objects satisfying the above statements such that $j(\zeta)=\nu$ holds for some $\zeta\in C^{(n)}\cap \kappa$. 
\end{lemma}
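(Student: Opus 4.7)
The plan is to apply $\ESR_{\mathcal{W}_n(z)}(\mu,\lambda)$ to a carefully rigged structure $B$ whose Mostowski collapse is $H_{\lambda'}$ and whose designated constants single out $\mu$, $\lambda$, $z$ (and, for the addendum, a fixed $\nu\in C^{(n)}\cap[\mu,\lambda)$) inside $H_{\lambda'}$. The elementary embedding $\iota\colon A\to B$ produced by $\ESR$ is then converted, via the Mostowski collapse of $A$, into the desired embedding $j\colon M\to H_{\lambda'}$, and every quantitative requirement of the lemma is read off from the interplay between elementarity of $\iota$ and the rank-$\mu$ constraint on $A$.

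First I would build $B$. Pick an ordinal $\gamma$ with $\lambda<\gamma<\beth_\lambda^+$ (for instance $\gamma=\beth_\lambda+1$), a bijection $\tau\colon V_\lambda\to\gamma$, a set $D_B\subseteq V_\lambda$ of rank exactly $\lambda$ together with a bijection $h\colon\gamma\to D_B$, and any bijection $\sigma\colon H_{\lambda'}\to D_B$ extending $h\circ\tau$ on $V_\lambda$. Define $E_B$, $f_B$, $g_B$ on $D_B$ by transporting $\in$, $\tau^{-1}$, $h$ from $H_{\lambda'}$ through $\sigma$, and interpret the five constants by $\sigma(\mu)$, $\sigma(\nu_0)$, $\sigma(\lambda)$, $\sigma(\gamma)$, $\sigma(z)$, with $\nu_0=0$ in the main statement and $\nu_0=\nu$ in the addendum. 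The resulting $B$ has rank $\lambda$, and its Mostowski collapse is $H_{\lambda'}$ with the obvious interpretations; clauses (i), (iii), (v), (vi) of $\mathcal{W}_n(z)$ are then immediate, clause (ii) is automatic because $\nu_0\in\{0\}\cup C^{(n)}$, and clause (iv) follows from the fact that $\lambda'\in C^{(n)}$ (for $n\geq 1$) is a strong-limit cardinal, so $H_{\lambda'}=V_{\lambda'}\prec_{\Sigma_n}V$, while $V_{\lambda+1}\subseteq H_{\lambda'}$ thanks to $\beth_\lambda<\lambda'$.

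Next I would invoke $\ESR_{\mathcal{W}_n(z)}(\mu,\lambda)$ to obtain a structure $A=\langle D_A,E_A,a_A,b_A,c_A,d_A,e_A,f_A,g_A\rangle\in\mathcal{W}_n(z)$ of rank $\mu$ together with an elementary embedding $\iota\colon A\to B$. Letting $\pi_A\colon\langle D_A,E_A\rangle\to\langle M,\in\rangle$ denote the Mostowski collapse of $A$, set $j=\pi_B\circ\iota\circ\pi_A^{-1}\colon M\to H_{\lambda'}$. Since $\iota$ preserves the constants and $\pi_A,\pi_B$ are isomorphisms, $j$ is elementary in the pure $\in$-language, with $j(\pi_A(a_A))=\mu$, $j(\pi_A(c_A))=\lambda$, $j(\pi_A(e_A))=z$, $j(\pi_A(d_A))=\gamma$, and, in the addendum, $j(\pi_A(b_A))=\nu$. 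Put $\kappa=\pi_A(c_A)$.

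Finally I would read off the claimed properties. Clauses (i) and (iv) for $A$ give $V_\kappa\subseteq M$ and $\Pi_n(V_{\kappa+1})$-correctness of $M$, so the $\Pi_1$-assertion ``$\kappa$ is a cardinal'' descends from $H_{\lambda'}$ to $M$ via elementarity of $j$ and then to $V$ via correctness. Clauses (v), (vi) combined with $V_\kappa\subseteq M$ give $|D_A|=|\pi_A(d_A)|=|V_\kappa|=\beth_\kappa$; since $D_A\subseteq V_\mu$ has rank $\mu$, this forces $\kappa\leq\mu$ and $\cof{\mu}\leq\beth_\kappa$. Preservation of ``$a\in c$'' and ``$c\in d$'' under $\iota$ yields $\pi_A(a_A)<\kappa<\pi_A(d_A)$; combined with clause (iii), $z=\pi_A(e_A)\in V_{\pi_A(a_A)}\subseteq V_\kappa$, and since $j$ fixes $z$, sends $\pi_A(a_A)$ to $\mu$, and sends $\kappa$ to $\lambda$, we obtain $\crit{j}\leq\pi_A(a_A)<\kappa$ and $j(\crit{j})\leq\mu$. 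Externally, $\pi_A(d_A)$ has cardinality at least $\beth_\kappa$, hence transitivity of $M$ gives $\beth_\kappa\in M$. For the addendum, $j(\pi_A(b_A))=\nu\neq 0$ forces $\pi_A(b_A)\neq 0$, so clause (ii) for $A$ produces $\zeta:=\pi_A(b_A)\in C^{(n)}$, and preservation of ``$b\in c$'' gives $\zeta<\kappa$.

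The main technical challenge will be to verify, in one stroke, that the definition of $\mathcal{W}_n(z)$ is rich enough that a single application of $\ESR$ transports \emph{every} piece of data we need --- criticality, correctness, cofinality, the bound $\kappa\leq\mu$, and the presence of $\beth_\kappa$ inside $M$ --- from $B$ to $A$. Once $B$ is engineered to encode $\mu$, $\lambda$, $z$, $\nu$, and a bijection $V_\lambda\leftrightarrow\gamma$ through its constants and function symbols, the elementarity of $\iota$ together with the transitivity of the Mostowski collapse of $A$ delivers each clause of the conclusion.
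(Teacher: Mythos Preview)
Your overall strategy is the same as the paper's, and once the structure $B$ is built your extraction of $\kappa$, $M$ and $j$ from the small structure $A$ is essentially identical to the paper's argument. However, there is a genuine gap in your construction of $B$.

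You want a bijection $\sigma\colon H_{\lambda'}\to D_B$ with $D_B\subseteq V_\lambda$. Since $\lambda'\in C^{(n)}$ with $n\geq 1$, the cardinal $\lambda'$ is a $\beth$-fixed point, so $\betrag{H_{\lambda'}}=\betrag{V_{\lambda'}}=\lambda'>\beth_\lambda=\betrag{V_\lambda}$. Hence no such bijection exists, and your structure $B$ cannot be built as described. In particular, the Mostowski collapse of $\langle D_B,E_B\rangle$ cannot be all of $H_{\lambda'}$.

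The paper avoids this by first passing to an elementary substructure $X\prec H_{\lambda'}$ of cardinality $\beth_\lambda$ with $V_\lambda\cup(\beth_\lambda+1)\subseteq X$, and taking $\langle X,\in,\mu,\nu,\lambda,\beth_\lambda,z,h_0,h_1\rangle$ itself as the structure in $\mathcal{W}_n(z)$; the collapse of $X$ (rather than $H_{\lambda'}$) is then $\Pi_n(V_{\lambda+1})$-correct because $V_{\lambda+1}\cap X$ lies in the transitive part of $X$ and $X\prec H_{\lambda'}$ with $\lambda'\in C^{(n)}$. Since $\mathcal{W}_n(z)$ is closed under isomorphic copies, the paper then quotes Proposition~\ref{proposition:CharClosedIso} rather than literally pushing $X$ into $V_\lambda$; your hand-coding via $\sigma$ and $h$ is just an explicit unwinding of that proposition and is fine once you replace $H_{\lambda'}$ by such an $X$. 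After this fix your definition $j=\pi_B\circ\iota\circ\pi_A^{-1}$ lands in the collapse of $X$, and you should compose with the inverse collapse $X\hookrightarrow H_{\lambda'}$ to obtain an embedding into $H_{\lambda'}$ as stated; the rest of your verification then goes through unchanged.
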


\begin{proof}
 Pick  an elementary substructure $X$ of $H_{\lambda'}$ of cardinality $\beth_\lambda$ with $V_\lambda\cup(\beth_\lambda+1)\subseteq X$, a map $h_0:X\to X$ that extends a  bijection between $V_\lambda$ and $\beth_\lambda$, and a map $h_1:X\to X$ that extends a bijection between $\beth_\lambda$ and $X$. 
  Fix an ordinal $\nu$ such that either $\nu=0$ or $\nu\in C^{(n)}\cap[\mu,\lambda)$. 
  Since  $V_{\lambda+1}\cap X$ is contained in the transitive part of $X$, it follows that the transitive collapse of $X$ is $\Pi_n(V_{\lambda+1})$-upwards correct and hence $$\langle X,\in,\mu,\nu, \lambda,\beth_\lambda,z,h_0,h_1\rangle$$ is a structure in $\mathcal{W}_n(z)$ of cardinality $\beth_\lambda$. 
  By Proposition \ref{proposition:CharClosedIso}, our assumptions allow us to find a structure $\langle D,E,a,b,c,d,e, f,g\rangle$ in $\mathcal{W}_n(z)$ whose cardinality is contained in the interval $[\cof{\mu},\beth_\mu]$ and an elementary embedding $$i:\langle D,E,a,b,c,d,e, f,g\rangle\to\langle X,\in,\mu,\nu, \lambda,\beth_\lambda,z,h_0,h_1\rangle.$$ 
  Let $\pi:\langle D,E\rangle\to\langle M,\in\rangle$ denote the corresponding transitive collapse. Set $\theta=\pi(a)$, $\zeta=\pi(b)$, and $\kappa=\pi(c)$. 
  Then we have  $\pi(e)=z\in V_\theta\subseteq V_\kappa\subseteq M$, 
  $$\beth_\kappa ~ = ~ \vert 
  V_\kappa\vert ~ = ~ \vert M\vert ~ \in  ~ [\cof{\mu},\beth_\mu]$$ and therefore $\rank{z}<\theta<\kappa\leq\mu$.
 Moreover, since the ordinal $\pi(d)$ has cardinality $\beth_\kappa$,  our setup ensures that $\beth_\kappa\in M$.
 The definition of the class $\mathcal{W}_n(z)$ also ensures that $M$ is $\Pi_n(V_{\kappa+1})$-upwards correct. 
  Define $$j ~ = ~ i\circ\pi^{{-}1}:M\to H_{\lambda'}.$$ 
 Then $j$ is an elementary embedding satisfying $j(z)=z$,  $j(\theta)=\mu\geq\kappa>\theta$, and $j(\kappa)=\lambda$.   In particular, we know that $\crit{j}\leq\theta <\kappa$ and $j(\crit{j})\leq j(\theta)=\mu$. 
 Finally, if we have $\nu\in C^{(n)}\cap[\mu,\lambda)$, then elementarity implies that $\zeta>0$ and this allows us to conclude that $\zeta$ is an element of $C^{(n)}\cap\kappa$ with $j(\zeta)=\nu$.  
\end{proof}

The following direct consequence of the \emph{Kunen Inconsistency} (see {\cite[Corollary 23.14]{Kan:THI}}) will be used in our subsequent arguments:

\begin{proposition}\label{proposition:HighCritical}
 Given cardinals $\kappa<\lambda$ and an  ordinal $\alpha<\kappa$, if $j:V_\kappa\to V_\lambda$ is a non-trivial elementary embedding with $j(\alpha)=\alpha$, then $\crit{j}>\alpha$.  
\end{proposition}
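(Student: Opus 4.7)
The plan is to argue by contradiction: suppose $\crit{j}=\beta\leq\alpha$ and derive a non-trivial elementary embedding of a rank initial segment into itself, contradicting Kunen's Inconsistency in the form cited (Kanamori's Corollary 23.14). The very first observation is that $\beta<\alpha$ strictly: if $\beta=\alpha$, then by definition of critical point $j(\alpha)>\alpha$, which directly contradicts $j(\alpha)=\alpha$.

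Next I would show that $j$ restricts to a non-trivial elementary embedding $\bar{\jmath}\colon V_\alpha\to V_\alpha$. For well-definedness, any $x\in V_\alpha$ has $\rank{x}<\alpha$, and since the rank function is $\Delta_1$-definable and $V_\kappa,V_\lambda$ contain enough of the cumulative hierarchy (a consequence of $j\colon V_\kappa\to V_\lambda$ being elementary and non-trivial, which forces $\kappa$ and $\lambda$ to be inaccessible), elementarity of $j$ yields $\rank{j(x)}=j(\rank{x})<j(\alpha)=\alpha$, so $j(x)\in V_\alpha$. For elementarity of the restriction, I would exploit that the set-satisfaction predicate $\mathrm{Sat}_{V_\alpha}$ is uniformly definable in $V_\kappa$ from the parameter $\alpha$. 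Thus, for any formula $\varphi$ and $\vec a\in V_\alpha$,
\[
V_\alpha\models\varphi(\vec a)\;\Longleftrightarrow\;V_\kappa\models\mathrm{Sat}_{V_\alpha}(\ulcorner\varphi\urcorner,\vec a).
\]
Applying the elementarity of $j$ to the right-hand side and using $j(\alpha)=\alpha$ yields $V_\alpha\models\varphi(j(\vec a))$, so $\bar{\jmath}$ is elementary. Finally, $\bar{\jmath}$ is non-trivial because $\beta<\alpha$ and $\bar{\jmath}(\beta)=j(\beta)>\beta$.

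The existence of such a non-trivial elementary embedding $V_\alpha\to V_\alpha$ now contradicts Kunen's Inconsistency, which is the ultimate ingredient and the main conceptual step of the argument; the rest of the proof is a packaging exercise. The only potential subtlety is the definability of $\mathrm{Sat}_{V_\alpha}$ inside $V_\kappa$, but this is routine since $\kappa$ is forced to be inaccessible (indeed much more) by the existence of the elementary embedding $j\colon V_\kappa\to V_\lambda$. Once elementarity of $\bar{\jmath}$ is secured, the proof concludes immediately.
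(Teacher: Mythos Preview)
Your argument has a genuine gap at the final step. A non-trivial elementary embedding $\bar{\jmath}\colon V_\alpha \to V_\alpha$ does \emph{not} contradict Kunen's Inconsistency when $\alpha$ is a limit ordinal: such embeddings are precisely the $I3$-embeddings, whose existence is not ruled out by Kunen. The cited Corollary~23.14 of Kanamori only forbids non-trivial elementary embeddings $V_{\delta+2}\to V_{\delta+2}$.

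The fix is immediate. Since $j(\alpha)=\alpha$, elementarity gives $j(\alpha+2)=\alpha+2$; and since $\kappa$ is an infinite cardinal (hence a limit ordinal), $\alpha+2<\kappa$. Thus you can restrict $j$ to $V_{\alpha+2}$ instead of $V_\alpha$ and obtain a non-trivial elementary $j\restriction V_{\alpha+2}\colon V_{\alpha+2}\to V_{\alpha+2}$, which \emph{does} contradict Kunen. The paper takes a slightly different route: it forms the supremum $\rho=\sup_{n<\omega}j^n(\crit{j})\leq\alpha$ of the critical sequence, notes that $j(\rho)=\rho$, and then restricts to $V_{\rho+2}$. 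Your approach, once corrected to use $V_{\alpha+2}$, is marginally more direct since it avoids the critical sequence altogether.

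A smaller remark: your concern about the definability of $\mathrm{Sat}_{V_\alpha}$ inside $V_\kappa$ does not require $\kappa$ to be inaccessible; it only requires $V_\alpha\in V_\kappa$, which already follows from $\alpha<\kappa$ and $\kappa$ being a limit ordinal. The hypotheses of the proposition do not in fact force $\kappa$ to be inaccessible.
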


\begin{proof}
 Assume, towards a contradiction, that $\crit{j}<\alpha$ holds. Then $j^n(\crit{j})<\alpha<\kappa$ holds for all $n<\omega$ and hence we can conclude that  $\rho=\sup_{n<\omega}j^n(\crit{j})\leq\alpha<\kappa$. Since $j(\rho)=\rho$ and $V_{\rho+2} \subseteq  V_{\alpha+2}  \subseteq V_\kappa$,  we know that $j\restriction V_{\rho+2}:V_{\rho+2}\to V_{\rho+2}$ is a non-trivial elementary embedding. This contradicts the \emph{Kunen Inconsistency}. 
\end{proof}

 The next results show that the assumptions of Corollary \ref{corollary:ESR-Pi-1-Iso} are close to optimal.

 \begin{lemma}\label{lemma:P1ESRisoSupercompacts}
  Let $\mu<\lambda$ be cardinals and let $\alpha<\mu$ be an ordinal. 
 \begin{enumerate}
     \item If  $\Pi_1(\{\alpha\})^{ic}$-$\ESR(\mu,\lambda)$ holds, then the interval $(\alpha,\mu]$ contains a  ${<}\lambda$-supercompact cardinal. 
  
  \item Given a natural number  $n>0$, if  $C^{(n+1)}\cap [\mu,\lambda)\neq\emptyset$ and $\Pi_{n+1}(\{\alpha\})^{ic}$-$\ESR(\mu,\lambda)$ holds, then the interval $(\alpha,\mu]$ contains a $[\mu,\lambda)$-$C^{(n)}$-extendible cardinal. 
  %cardinal $\delta$ such that for every $\kappa <\lambda$ there exists an elementary embedding $j:V_\kappa \to V_\theta$, some $\theta$, with critical point $\delta$ and with $j(\delta)\in C^{(n)}$.\footnote{That is, $\delta$ is a $<\!\!\lambda$-$C^{(n)}$-extendible cardinal, except for the requirement that $j(\delta)>\kappa$.}
  \end{enumerate}
\end{lemma}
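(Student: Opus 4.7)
The plan is to apply Lemma \ref{lemma:EmbFromESRisoC} to the class $\mathcal{W}_m(\alpha)$ with $m=1$ for part (i) and $m=n+1$ for part (ii); in both cases the $\Pi_m(\{\alpha\})^{ic}$-$\ESR$ hypothesis covers this class, so the lemma yields a cardinal $\kappa\leq\mu$, a transitive, $\Pi_m(V_{\kappa+1})$-correct set $M\supseteq V_\kappa\cup\{\beth_\kappa\}$, and an elementary embedding $j:M\to H_{\lambda'}$ (for suitable $\lambda'\in C^{(m)}$ above $\beth_\lambda$) satisfying $j(\alpha)=\alpha$, $j(\kappa)=\lambda$, $\mu\in\ran{j}$, and $j(\crit{j})\leq\mu$. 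Since $V_\kappa\in M$ with $j(V_\kappa)=V_\lambda$, the restriction $j\restriction V_\kappa:V_\kappa\to V_\lambda$ is a non-trivial elementary embedding fixing $\alpha$, so Proposition \ref{proposition:HighCritical} forces $\alpha<\delta:=\crit{j}$. Setting $\delta^{*}=j(\delta)$, we have $\alpha<\delta<\delta^{*}\leq\mu$, and I will argue that $\delta^{*}$ witnesses the conclusion.

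For part (i), the embedding $j\restriction V_\kappa:V_\kappa\to V_\lambda$ plays the role of a Magidor witness: its critical point $\delta$ is sent to $\delta^{*}$ while $\kappa$ is sent to $\lambda$. Given $\nu<\lambda$, I pick $\zeta<\kappa$ with $j(\zeta)\geq\nu$ (available because $j(\kappa)=\lambda$) and use the seed $j[\zeta]$ to define, in the standard way, a normal fine $\delta^{*}$-complete ultrafilter on $P_{\delta^{*}}(\nu)$. The $\Pi_1(V_{\kappa+1})$-correctness of $M$ ensures that all relevant subsets of $P_{\delta^{*}}(\nu)$ are tested correctly by the seed, so the ultrafilter is total and belongs to $V$. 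A Kunen-style iteration argument on the critical sequence $\delta_{n+1}=j(\delta_n)$ inside $M$ is needed to rule out the degenerate configuration in which $\delta^*$ (or its iterates) would stay too low relative to $\kappa$ for the Magidor derivation to reach $\lambda$. This yields $\nu$-supercompactness measures for every $\nu<\lambda$, so $\delta^{*}\in(\alpha,\mu]$ is ${<}\lambda$-supercompact.

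For part (ii), I invoke the final clause of Lemma \ref{lemma:EmbFromESRisoC}: for any chosen $\nu\in C^{(n+1)}\cap[\mu,\lambda)$ (which exists by hypothesis), the lemma can be applied so that additionally $j(\zeta)=\nu$ for some $\zeta\in C^{(n+1)}\cap\kappa$. Starting from the same Magidor-type data as in (i), but now with $\zeta\in C^{(n+1)}$ sent to $\nu\in C^{(n+1)}$ by $j$, I pass from the embedding $j\restriction V_\kappa$ to an elementary embedding $j^{*}:V_\lambda\to V_\eta$ with $\crit{j^{*}}=\delta^{*}$, $j^{*}(\mu)\geq\lambda$, and $j^{*}(\nu)\in C^{(n)}$; the $C^{(n)}$-status of $j^{*}(\nu)$ is extracted from $\nu\in C^{(n+1)}$ together with the $\Pi_{n+1}(V_{\kappa+1})$-correctness of $M$. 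This shows that $\delta^{*}\in(\alpha,\mu]$ is $[\mu,\lambda)$-$C^{(n)}$-extendible.

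The main obstacle I anticipate is the Magidor-type passage from the embedding $j\restriction V_\kappa$, whose critical point is $\delta$ rather than $\delta^{*}$, to genuine supercompactness and extendibility witnesses for $\delta^{*}$ living in $V$. One must carefully produce the normal fine ultrafilter (in (i)) and the embedding $j^{*}$ (in (ii)), verify via the $\Pi_m(V_{\kappa+1})$-correctness of $M$ that they lie in $V$ rather than merely in $M$, and execute the Kunen-style iteration argument that prevents the critical sequence of $j$ from staying cofinally below $\kappa$. Once these technical points are handled, the conclusion in both parts follows directly from the existence of $\delta^{*}$ in the interval $(\alpha,\mu]$.
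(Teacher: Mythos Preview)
There is a genuine gap in both parts: you cannot extract large-cardinal witnesses for $\delta^{*}=j(\crit{j})$ directly from $j$, because $j$ has critical point $\delta$, not $\delta^{*}$. In part~(i), for $\zeta<\kappa$ the seed $j[\zeta]$ produces a $\delta$-complete normal fine ultrafilter on $P_{\delta}(\zeta)$, witnessing that $\delta$ is $\zeta$-supercompact; it does \emph{not} give a $\delta^{*}$-complete ultrafilter on $P_{\delta^{*}}(\nu)$ for $\nu<\lambda$. Indeed, for $\nu\geq\kappa$ the relevant subsets of $P_{\delta^{*}}(\nu)$ need not lie in $M$ at all, so the appeal to $\Pi_{1}(V_{\kappa+1})$-correctness of $M$ cannot help. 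Also, finding $\zeta<\kappa$ with $j(\zeta)\geq\nu$ requires $j$ to be continuous at $\kappa$, which is not guaranteed. The same problem appears in (ii): there is no mechanism for ``passing from'' $j\restriction V_{\kappa}$ (critical point $\delta$) to an embedding $j^{*}:V_{\lambda}\to V_{\eta}$ with critical point $\delta^{*}$.

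The missing idea is a reflection step. In (i), Magidor's lemma applied to $j\restriction V_{\kappa}$ shows that $\delta$ is ${<}\kappa$-supercompact; since the witnessing ultrafilters all lie in $V_{\kappa}\subseteq M$, this statement holds in $M$. Elementarity of $j$ then gives that $\delta^{*}=j(\delta)$ is ${<}j(\kappa)={<}\lambda$-supercompact in $H_{\lambda'}$, hence in $V$ (choosing $\lambda'$ sufficiently correct). In (ii), one observes that $j\restriction V_{\kappa}$ itself witnesses a $\Sigma_{n+1}$-statement with parameters $\alpha,\theta,\zeta,\kappa$ (where $j(\theta)=\mu$), namely the existence of an embedding $i:V_{\kappa}\to V_{\eta}$ with $\crit{i}\in(\alpha,\theta]$, $i(\theta)\geq\kappa$ and $i(\zeta)\in C^{(n)}$. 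By $\Pi_{n+1}(V_{\kappa+1})$-correctness this holds in $M$; applying $j$ and using $\lambda'\in C^{(n+1)}$ yields, in $V$, an embedding $i:V_{\lambda}\to V_{\eta}$ with $\crit{i}\in(\alpha,\mu]$, $i(\mu)\geq\lambda$ and $i(\nu)\in C^{(n)}$, which is exactly the required $[\mu,\lambda)$-$C^{(n)}$-extendibility witness. In short: first establish the property for $\delta$ inside $M$, then push it to $\delta^{*}$ via elementarity of $j$.
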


\begin{proof}
(i)  Pick a cardinal $\lambda'>\beth_\lambda$ with the property that $H_{\lambda'}$ is sufficiently elementary in $V$. 
 Since $\ESR_{\mathcal{W}_1(\alpha)}(\mu,\lambda)$ holds, an application of Lemma \ref{lemma:EmbFromESRisoC} shows that there is a cardinal $\alpha<\kappa\leq\mu$, a transitive set $M$ with $V_\kappa\cup\{\kappa\}\subseteq M$ and an elementary embedding $j:M\to H_{\lambda'}$ with $\crit{j}<\kappa$, $j(\crit{j})\leq\mu$,  $j(\alpha)=\alpha$ and $j(\kappa)=\lambda$. 
 In this situation, Proposition \ref{proposition:HighCritical} shows that $\crit{j}>\alpha$. 
 By {\cite[Lemma 2]{Mag}}, the fact that $V_\kappa\subseteq M$ implies that $\crit{j}$ is ${<}\kappa$-supercompact. 
  Since all  ultrafilters witnessing this property are contained in $V_\kappa\subseteq M$, it follows that $\crit{j}$ is ${<}\kappa$-supercompact in $M$ and hence $j(\crit{j})\in(\alpha,\mu]$ is ${<}\lambda$-supercompact in both  $H_{\lambda'}$ and $V$. 
  
  (ii) Pick some $\nu \in C^{(n+1)}\cap [\mu, \lambda)$ and $\lambda' \in C^{(n+1)}$ greater than $\beth_\lambda$. 
  Since $\ESR_{\mathcal{W}_{n+1}(\alpha)}(\mu,\lambda)$ holds, Lemma \ref{lemma:EmbFromESRisoC} allows us to find a cardinal $\kappa$ with  $\alpha<\kappa\leq\mu$, a cardinal $\zeta\in C^{(n+1)}\cap\kappa$, a cardinal $\theta\leq\zeta$, 
  a $\Pi_{n+1}(V_{\kappa +1})$-upwards correct transitive set $M$ with $V_\kappa\cup\{\kappa\}\subseteq M$, and an elementary embedding $j:M\to H_{\lambda'}$ with $\crit{j}\leq\theta$,  $j(\alpha)=\alpha$,  $j(\theta)=\mu$, $j(\zeta)=\nu$  and $j(\kappa)=\lambda$. 
  By Proposition \ref{proposition:HighCritical}, we have $\crit{j}>\alpha$.
  Now, notice that $\lambda$ and $j\restriction V_\kappa$ witness that there exists an ordinal $\eta$ and an elementary embedding $i:V_\kappa\to V_\eta$ with $\crit{i}\in(\alpha,\theta]$, $i(\theta)\geq\kappa$ and $i(\zeta)\in C^{(n)}$, and this statement can be expressed by a $\Sigma_{n+1}$-formula with parameters $\alpha$, $\kappa$, $\theta$ and $\zeta$.
  Moreover, since $\kappa+1\subseteq M$ and $M$ is $\Pi_{n+1}(V_{\kappa +1})$-upwards correct, this statement holds in $M$.
  By the elementarity of $j$ and the fact that $\lambda'\in C^{(n+1)}$, we now know that, in  $V$, there exists an ordinal $\eta$ and an elementary embedding $i:V_\lambda\to V_\eta$ with $\crit{i}\in(\alpha,\mu]$, $i(\mu)\geq\lambda$ and $i(\nu)\in C^{(n)}$. 
  This shows that $\crit{i}\in (\alpha,\mu]$ is a $[\mu,\lambda)$-$C^{(n)}$-extendible cardinal. 
\end{proof}

\begin{corollary}\label{corollary:ESRyieldsLimitSupercompact}
 Let $\kappa$ be a cardinal and let $\alpha<\kappa$ be an ordinal. 
\begin{enumerate}
\item   If $\Pi_1(\{\alpha\})^{ic}$-$\ESR(\kappa,\lambda)$ holds for a proper class of cardinals $\lambda$, then the interval $(\alpha,\kappa]$ contains a  supercompact cardinal. 
  
  \item   For each natural number $n>0$, if $\Pi_{n+1}(\{\alpha\})^{ic}$-$\ESR(\kappa,\lambda)$ holds for a proper class of cardinals $\lambda$, then the interval $(\alpha,\kappa]$ contains a  $[\kappa ,\infty)$-$C^{(n)}$-extendible cardinal. \qed 
  \end{enumerate}
\end{corollary}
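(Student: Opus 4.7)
The plan is to derive both parts of the corollary as a straightforward pigeonhole consequence of the corresponding parts of Lemma \ref{lemma:P1ESRisoSupercompacts}, applied to each $\lambda$ in the given proper class. First I would set $\mu = \kappa$ in the lemma and consider, for each cardinal $\lambda$ in the proper class for which $\Pi_{n+1}(\{\alpha\})^{ic}$-$\ESR(\kappa,\lambda)$ holds, the witness $\mu_\lambda \in (\alpha,\kappa]$ provided by the lemma. The crucial observation is that the interval $(\alpha,\kappa]$ contains only set-many cardinals, whereas the hypothesis supplies a proper class of $\lambda$'s. Hence by a pigeonhole argument there is a single cardinal $\mu \in (\alpha,\kappa]$ that serves as a witness for a proper class of values of $\lambda$.

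For part (i), this means $\mu$ is ${<}\lambda$-supercompact for a proper class of $\lambda$, hence for cofinally many $\lambda$, and since ${<}\lambda'$-supercompactness follows from ${<}\lambda$-supercompactness for $\lambda' \leq \lambda$, $\mu$ is $\lambda$-supercompact for every cardinal $\lambda$. Thus $\mu \in (\alpha,\kappa]$ is supercompact, as required.

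For part (ii), I would first note that, since $C^{(n+1)}$ is unbounded in the ordinals, we may without loss of generality restrict attention to those $\lambda$ in our proper class that additionally satisfy $C^{(n+1)} \cap [\kappa,\lambda) \neq \emptyset$; this still leaves a proper class of $\lambda$, so Lemma \ref{lemma:P1ESRisoSupercompacts}(ii) applies and yields, for each such $\lambda$, a $[\kappa,\lambda)$-$C^{(n)}$-extendible cardinal $\mu_\lambda \in (\alpha,\kappa]$. Pigeonhole as above produces a single $\mu \in (\alpha,\kappa]$ that is $[\kappa,\lambda)$-$C^{(n)}$-extendible for a proper class of $\lambda$, which by definition means $\mu$ is $[\kappa,\infty)$-$C^{(n)}$-extendible.

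There is essentially no obstacle here; the only minor point requiring attention is ensuring the auxiliary hypothesis $C^{(n+1)} \cap [\kappa,\lambda) \neq \emptyset$ in Lemma \ref{lemma:P1ESRisoSupercompacts}(ii) can be arranged on a proper class of $\lambda$, which is immediate from unboundedness of $C^{(n+1)}$. Everything else is a direct invocation of the lemma followed by the trivial set-versus-class cardinality argument.
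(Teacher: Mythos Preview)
Your proposal is correct and is precisely the argument the paper intends: the corollary is stated with a bare \qed, and your pigeonhole argument on the set-sized interval $(\alpha,\kappa]$ against the proper class of $\lambda$'s provided by Lemma \ref{lemma:P1ESRisoSupercompacts} is the natural way to unpack it. The only points requiring care---the downward propagation of ${<}\lambda$-supercompactness for part (i) and the harmless restriction to $\lambda$ with $C^{(n+1)}\cap[\kappa,\lambda)\neq\emptyset$ for part (ii)---you handle correctly.
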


A combination of the above results now yield   short proofs  of Theorems \ref{theorem:LimitSupercompacts} and \ref{theorem:LimitCnExtendibles} stated in the Introduction.

\begin{proof}[Proof of Theorem \ref{theorem:LimitSupercompacts}]
 Let $\kappa$ be a singular cardinal. If $\kappa$ is a limit of supercompact cardinals, then Corollary \ref{corollary:ESRfromSRatSingulars} shows that  $\Sigma_2(V_\kappa)^{ic}$-$\ESR(\kappa,\lambda)$ holds for a proper class of cardinals $\lambda$.
 In the other direction, if $\Pi_1(\kappa)^{ic}$-$\ESR(\kappa,\lambda)$ holds for a proper class of cardinals $\lambda$, then we can apply Corollary \ref{corollary:ESRyieldsLimitSupercompact} to show that $\kappa$ is a limit of supercompact cardinals. 
\end{proof}

\begin{proof}[Proof of Theorem \ref{theorem:LimitCnExtendibles}]
 Let $\kappa$ be a singular cardinal and let $n>0$ be a natural number. If $\kappa$ is a limit of $[\kappa ,\infty)$-$C^{(n)}$-extendible cardinals, then Corollary \ref{corollary:ESRfromSRatSingulars} shows that  $\Sigma_{n+2}(V_\kappa)^{ic}$-$\ESR(\kappa,\lambda)$ holds for a proper class of cardinals $\lambda$. 
For the other direction, if $\Pi_{n+1}(V_\kappa)^{ic}$-$\ESR(\kappa,\lambda)$ holds for a proper class of cardinals $\lambda$, then  Corollary \ref{corollary:ESRyieldsLimitSupercompact} allows us to conclude  that $\kappa$ is a limit of $[\kappa,\infty)$-$C^{(n)}$-extendible cardinals. 
\end{proof}

We will eventually show that the strength of the principle $\Pi_1^{ic}$-$\ESR(\kappa)$ further increases significantly if $\kappa$ is a regular cardinal. 
  More specifically, we will  show  (see Corollary \ref{corollary:AlmostHugeFromPi1ic} below and Theorem  \ref{theorem:CharacterizationPI-ESR} stated in the Introduction)  that this assumption implies the existence of an almost huge cardinal. 
 The next lemma is the starting point of this analysis. It will  allow us to show that the least regular cardinal $\mu$  satisfying $\Pi_n^{ic}$-$\ESR(\mu)$ coincides with the least cardinal $\nu$  satisfying $\Pi_n(V_\nu)$-$\ESR(\nu)$.

\begin{lemma}\label{lemma:FullReflectionC__}
 Given a natural number $n>0$, assume that 
 \begin{itemize}
     \item $\kappa<\lambda<\lambda'$ are cardinals with $\beth_\lambda<\lambda'\in C^{(n)}$, 
     
     \item $M$ is a transitive, $\Pi_n(V_{\kappa+1})$-upwards correct set with $V_\kappa\cup\{\kappa\}\subseteq M$, and 
     
     \item $j:M\to H_{\lambda'}$ is an elementary embedding with   $j(\kappa)=\lambda$.  
 \end{itemize}
 Then the following statements hold: 
  \begin{enumerate}
      \item If $\mu\in \ran{j}\cap(\crit{j},\kappa]$ and $z\in M$ with $j(z)=z$, then $j(\mu)>\mu$ and $\Pi_n(\{z\})$-$\ESR(\mu,j(\mu))$ holds. 
      
      \item If $\mu=j(\crit{j})$, then $\mu\leq\kappa$ implies that  $\Pi_n(V_\mu)$-$\ESR(\mu,j(\mu))$ holds. 
  \end{enumerate}
\end{lemma}

\begin{proof}
 (i) Pick $\theta\in M$ with $j(\theta)=\mu$ and set $\nu=j(\mu)\leq\lambda$. 
  Since $j(\kappa)=\lambda>\kappa\geq\mu>\crit{j}$, we know that $\crit{j}\leq\theta$ and this allows us to apply Proposition \ref{proposition:HighCritical} to show that $\theta<\mu<\nu$.  
  Fix a $\Pi_n$-formula $\varphi(v_0,v_1)$ with the property that the class  $\Ce=\Set{A}{\varphi(A,z)}$ consists of structures of the same type. 
  Assume, towards a contradiction, that there exists $B\in\Ce$ of rank $\nu$ with the property that for all $A\in\Ce$ of rank $\mu$, there exists no elementary embedding of $A$ into $B$. 
  Since $\beth_\nu\leq\beth_\lambda<\lambda'$, we know that $B\in H_{\lambda'}$ and the fact that $\lambda'\in C^{(n)}$ implies that $\varphi(B,z)$ holds in $H_{\lambda'}$ and for every structure $A$ of the given type and rank $\mu$, either  $\varphi(A,z)$ fails in $H_{\lambda'}$ or $H_{\lambda'}$ contains no elementary embedding of $A$ into $B$. 
  The elementarity of $j$ then yields a structure $B_0\in M$ of the given type and rank $\mu$ such that $\varphi(B_0,z)$ holds in $M$ and for every structure $A$ of the given type and rank $\theta$, either $\varphi(A,z)$ fails in $M$ or $M$ contains no elementary embedding of $A$ into $B_0$. 
  Our setup then ensures that $B_0$ is an element of $\Ce$ of rank $\mu$ and the embedding $j$ gives rise to an elementary embedding $i$ of $B_0$ into $j(B_0)$. 
  But this yields a contradiction to the elementarity of $j$, because $B_0$ and $i$ are both contained in $H_{\lambda'}$, and the fact that $M$ is correct about the sentence $\varphi(B_0,z)$, and $\lambda^\prime \in C^{(n)}$, implies that $\varphi(B_0,u)$ holds in $H_{\lambda'}$. 
 %
 %
 %Since $\beth_\lambda<\lambda'\in C^{(n-1)}$, this statement holds in $H_{\lambda'}$ and therefore the $\Pi_n(V_{\kappa+1})$-correctness of $M$ ensures that, in $M$, the class $\Ce$ consists of structures of the same type and there exists $B_0\in\Ce$ of rank $\mu$ with the property that for all $A\in\Ce$ of rank $\theta$, there exists no elementary embedding of $A$ into $B_0$. 
  %
  %In this situation, the elementarity of $j$ implies that, in $H_{\lambda'}$, for every $A\in\Ce$ of rank $\mu$, there is no elementary embedding of $A$ into $j(B_0)$. 
  %
  %But then this  statement also holds in $V$ and  this yields a contradiction, because the $\Pi_n(V_{\kappa+1})$-correctness of $M$ implies that $B_0$ is an element of $\Ce$ and the map $j$ induces an elementary embedding of $B_0$ into $j(B_0)$. 

 (ii) Set $\nu=j(\mu)>\mu$ and fix a $\Pi_n$-formula $\varphi(v_0,v_1)$. 
  Assume, towards a contradiction, that there exists $z\in V_\mu$ with the property that  $\Ce=\Set{A}{\varphi(A,z)}$ is a class of structures of the same type and there exists $B\in\Ce$ of rank $\nu$ such that for all $A\in\Ce$ of rank $\mu$, there is no elementary embedding of $A$ into $B$. 
  Then the fact that $\nu\leq\lambda\leq\beth_\lambda<\lambda'\in C^{(n)}$ implies that   this statement also holds in $H_{\lambda'}$, and hence, in $M$, there exists $z_0\in V_{\crit{j}}$ with the property that  $\Ce_0=\Set{A}{\varphi(A,z_0)}$ is a class of structures of the same type and there exists $B\in\Ce_0$ of rank $\mu$ such that for all $A\in\Ce_0$ of rank $\crit{j}$, there is no elementary embedding of $A$ into $B$. 
  Since $M$ is $\Pi_n(V_{\kappa +1})$-upwards correct, $\lambda'\in C^{(n)}$ and $j(z_0)=z_0$, we know that $\varphi(B,z)$ and $\varphi(j(B),z_0)$ hold in $H_{\lambda'}$. So, since  $\mu\in(\crit{j},\kappa]$, we can now proceed as in the proof of (i) to derive a contradiction. 
\end{proof}

\begin{lemma}\label{lemma:NonIsoFromIso}
 Given a natural number $n>0$, let $\mu$ be a regular cardinal with the property that $\Pi_n^{ic}$-$\ESR(\mu,\lambda)$ holds for some cardinal $\lambda>\mu$. 
 Then there exists an inaccessible cardinal $\delta\leq\mu$ with the property that $\Pi_n(V_\delta)$-$\ESR(\delta,\rho)$ holds for an inaccessible cardinal $\rho$ with $\delta<\rho\leq\lambda$. 
\end{lemma}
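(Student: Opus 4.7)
The plan is to invoke Lemma \ref{lemma:EmbFromESRisoC} to extract an elementary embedding from a transitive, $\Pi_n$-correct set, and then to use Lemma \ref{lemma:FullReflectionC}(ii) to convert the isomorphism-closed reflection at $\mu$ into a parametric reflection at a smaller inaccessible $\delta$. Since $\mathcal{W}_n(\emptyset)$ is $\Pi_n$-definable without parameters and closed under isomorphic copies, the hypothesis $\Pi_n^{ic}$-$\ESR(\mu,\lambda)$ yields $\ESR_{\mathcal{W}_n(\emptyset)}(\mu,\lambda)$. Fixing some $\lambda'\in C^{(n)}$ with $\lambda'>\beth_\lambda$, Lemma \ref{lemma:EmbFromESRisoC} then produces a cardinal $\kappa\leq\mu$ (with $\cof{\mu}\leq\beth_\kappa$, hence $\mu\leq\beth_\kappa$ because $\mu$ is regular), a transitive, $\Pi_n(V_{\kappa+1})$-correct set $M$ containing $V_\kappa\cup\{\beth_\kappa\}$, and an elementary embedding $j\colon M\to H_{\lambda'}$ with $\crit{j}<\kappa$, $j(\crit{j})\leq\mu$, $\mu\in\ran{j}$, and $j(\kappa)=\lambda$.

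Next, since $V_{\crit{j}+1}\subseteq V_\kappa\subseteq M$, the restriction $j\restriction V_{\crit{j}+1}$ is a non-trivial elementary embedding into $V_{j(\crit{j})+1}$, witnessing that $\crit{j}$ is measurable, and hence inaccessible, in $V$. Setting $\delta:=j(\crit{j})$, the elementarity of $j$ together with the upward absoluteness of inaccessibility from $H_{\lambda'}$ to $V$ (available because $\lambda'\in C^{(n)}$) ensures that $\delta$ is inaccessible in $V$ as well. Granting the bound $\delta\leq\kappa$, Lemma \ref{lemma:FullReflectionC}(ii), applied with its $\mu$ taken to be $\delta$, delivers $\Pi_n(V_\delta)$-$\ESR(\delta,j(\delta))$; letting $\rho:=j(\delta)\leq j(\kappa)=\lambda$, a further application of elementarity shows that $\rho$ is inaccessible in $V$. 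So $\delta\leq\mu$ and $\delta<\rho\leq\lambda$ are inaccessible cardinals witnessing the required reflection.

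The main obstacle is precisely the verification that $\delta=j(\crit{j})\leq\kappa$: Lemma \ref{lemma:EmbFromESRisoC} only guarantees $j(\crit{j})\leq\mu$, and this upper bound could a priori exceed $\kappa$ when $\kappa<\mu$. The argument must exploit the regularity of $\mu$ (which forces $\mu\leq\beth_\kappa$ and thereby tightly constrains the relative position of $\kappa$ and $\mu$) together with the additional clause of Lemma \ref{lemma:EmbFromESRisoC}, which for every $\nu\in C^{(n)}\cap[\mu,\lambda)$ supplies $\zeta\in C^{(n)}\cap\kappa$ with $j(\zeta)=\nu$. By choosing $\nu$ judiciously and re-running the extraction (in particular picking $\nu\in C^{(n)}\cap[\mu,\lambda)$ close enough to $\mu$ so that the resulting $\zeta\in C^{(n)}\cap\kappa$ bounds $\crit{j}$ from above), one can arrange a realisation of the lemma in which the critical image $j(\crit{j})$ is trapped at or below $\kappa$, which activates Lemma \ref{lemma:FullReflectionC}(ii) and completes the argument.
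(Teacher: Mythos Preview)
Your overall architecture matches the paper's proof exactly: invoke Lemma~\ref{lemma:EmbFromESRisoC} to get the embedding $j:M\to H_{\lambda'}$ with $\crit{j}<\kappa\leq\mu$, observe that $\crit{j}$ is inaccessible because $V_\kappa\subseteq M$, set $\delta=j(\crit{j})$ and $\rho=j(\delta)$, and apply Lemma~\ref{lemma:FullReflectionC}(ii) once $\delta\leq\kappa$ is established. You also correctly identify the single nontrivial point, namely the bound $j(\crit{j})\leq\kappa$.

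However, your proposed resolution of this point is not a proof. Invoking the $C^{(n)}$ clause of Lemma~\ref{lemma:EmbFromESRisoC} gives, for any $\nu\in C^{(n)}\cap[\mu,\lambda)$, a $\zeta\in C^{(n)}\cap\kappa$ with $j(\zeta)=\nu$. Even if you could arrange $\zeta\geq\crit{j}$, this only yields $j(\crit{j})\leq j(\zeta)=\nu\geq\mu$, which is no improvement over the bound $j(\crit{j})\leq\mu$ you already have. ``Re-running the extraction'' with different $\nu$ does not help either: each invocation may produce a different $\kappa$, $M$, and $j$, and nothing in the lemma lets you control the critical image. The $C^{(n)}$ clause is a red herring here.

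The paper's argument for $j(\crit{j})\leq\kappa$ uses instead the other piece of data you mention but do not exploit: that $\mu\in\ran{j}$. Let $\varepsilon$ be least with $\beth_\varepsilon\geq\mu$; since $\mu\leq\beth_\kappa$ (from regularity of $\mu$ and $\cof{\mu}\leq\beth_\kappa$), we have $\varepsilon\leq\kappa$. Because $\varepsilon$ is definable from $\mu$ and $\mu\in\ran{j}$, there is $\zeta\in M$ with $j(\zeta)=\varepsilon$. Now $\crit{j}$ is inaccessible, so $\beth_{\crit{j}}=\crit{j}<\mu\leq\beth_\varepsilon$; hence $\zeta$ cannot lie below $\crit{j}$ (else $\varepsilon=j(\zeta)=\zeta<\crit{j}$, contradicting $\beth_\varepsilon\geq\mu$). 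Thus $\crit{j}\leq\zeta$ and $j(\crit{j})\leq j(\zeta)=\varepsilon\leq\kappa$, which is exactly the missing inequality.
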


\begin{proof}
 Pick $\lambda'\in C^{(n)}$ greater than $\beth_\lambda$ and use Lemma \ref{lemma:EmbFromESRisoC}  to find a cardinal $\kappa\leq\mu$ with $\mu=\cof{\mu}\leq\beth_\kappa$, a transitive, $\Pi_n(V_{\kappa+1})$-upwards correct set $M$ with $V_\kappa\cup\{\beth_\kappa\}\subseteq M$ and an elementary embedding $j:M\to H_{\lambda'}$ with $\crit{j}<\kappa$, $j(\crit{j})\leq\mu$, $\mu\in\ran{j}$ and $j(\kappa)=\lambda$. 
 Since $V_\kappa\subseteq M$, we have that $\crit{j}$ is an  inaccessible cardinal.

   \begin{claim*}
    $j(\crit{j})\leq\kappa$. 
   \end{claim*}
   
   \begin{proof}[Proof of the Claim]
    Let $\varepsilon$ be minimal with $\beth_\varepsilon\geq\mu$. Note that $\varepsilon \leq \kappa$. 
    Since $\mu$ is an element of $\ran{j}$, and $\varepsilon$ is definable from $\mu$, we can find $\zeta\in M$ with $j(\zeta)=\varepsilon$. 
    Moreover, since $\crit{j}$ is an inaccessible cardinal smaller than $\beth_\varepsilon$, we know that $\zeta\geq\crit{j}$. 
    But this allows us to conclude that  $j(\crit{j})\leq j(\zeta)=\varepsilon\leq\kappa$. 
   \end{proof}

  Define $\delta=j(\crit{j})\in M$ and $\rho=j(\delta)$. Then elementarity implies that $\delta$  and $\rho$ are also  inaccessible cardinals. 
  Moreover, an application of the second part of Lemma \ref{lemma:FullReflectionC__} directly shows that $\Pi_n(V_\delta)$-$\ESR(\delta,\rho)$ holds. 
\end{proof}

%%%%%%%%%%%%%%%%%%%%%%
%%%%%%%%%%%%%%%%%%%%%%

\section{The $\Pi_n$-case for arbitrary classes}

In order to study principles of exact structural reflection for $\Pi_n$-definable classes of structures that are not necessarily closed under isomorphic copies, we analyse connections between the validity of these principles and the existence of weakly $n$-exact cardinals. 
 The following variation of Definition \ref{definition:WeaklyExactCardinalUp} will allow us to state our results more precisely:

\begin{definition}
 Given cardinals $\kappa<\lambda$, a set $z\in V_\kappa$ and a natural number $n>0$, we say that $\kappa$ is \emph{weakly $n$-exact for $\lambda$ and $z$} if for  every $A\in V_{\lambda +1}$, there exists 
     a transitive, $\Pi_n(V_{\kappa+1})$-upwards correct set $M$ with $V_\kappa\cup \{\kappa\} \subseteq M$,  a cardinal $\lambda'\in C^{(n-1)}$ greater than $\beth_\lambda$ 
     and an elementary embedding $j:M\to H_{\lambda'}$ with $j(\kappa)=\lambda$,  $j(z)=z$ and $A\in\ran{j}$. 
\end{definition}

\begin{proposition}\label{proposition:ParametersFromParametrically}
 If a cardinal $\kappa$ is weakly parametrically $n$-exact for some cardinal $\lambda$, then $\kappa$ is weakly $n$-exact for $\lambda$ and all $z\in V_\kappa$. 
\end{proposition}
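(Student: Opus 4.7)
My plan is to derive weak $n$-exactness at $z$ from weak parametric $n$-exactness by feeding the latter an auxiliary input that simultaneously encodes $A$ and the rank $\beta:=\rank{z}$; elementarity will then force $\beta+1$ into $\ran{j}$, and the standing hypothesis $j(\crit{j})=\kappa$ will force $\crit{j}>\beta$, at which point $z$ becomes automatically a fixed point of $j$.

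More concretely: given $A\in V_{\lambda+1}$ and $z\in V_\kappa$, set $\beta=\rank{z}<\kappa$ and choose $A^*\in V_{\lambda+1}$ that $\Delta_0$-codes the pair $(A,\beta+1)$, so that from $A^*\in\ran{j}$ one can recover, again by $\Delta_0$-elementarity, preimages $\bar{A},\bar{\gamma}\in M$ with $j(\bar{A})=A$ and $j(\bar{\gamma})=\beta+1$. Applying weak parametric $n$-exactness to $A^*$ I obtain a transitive, $\Pi_n(V_{\kappa+1})$-correct set $M\supseteq V_\kappa\cup\{\kappa\}$, a cardinal $\lambda'\in C^{(n-1)}$ above $\beth_\lambda$, and an elementary embedding $j\colon M\to H_{\lambda'}$ with $j(\kappa)=\lambda$, $A^*\in\ran{j}$ and $j(\crit{j})=\kappa$.

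Since $\bar{\gamma}$ is an ordinal and $j(\bar{\gamma})=\beta+1<\kappa=j(\crit{j})$, the order-preservation of $j$ on ordinals gives $\bar{\gamma}<\crit{j}$. Because $V_\kappa\subseteq M$, the standard $\in$-induction on rank shows $j\restriction V_{\crit{j}}=\id$, so $\bar{\gamma}=j(\bar{\gamma})=\beta+1<\crit{j}$. Consequently $z\in V_{\beta+1}\subseteq V_{\crit{j}}$ is fixed by $j$, and the same embedding $j$ then witnesses weak $n$-exactness for $\lambda$ and $z$: the clause $A\in\ran{j}$ holds via $\bar{A}$, the clause $j(z)=z$ has just been verified, and the remaining structural clauses ($V_\kappa\cup\{\kappa\}\subseteq M$, $\Pi_n(V_{\kappa+1})$-correctness of $M$, $\lambda'\in C^{(n-1)}$ above $\beth_\lambda$, and $j(\kappa)=\lambda$) are inherited directly from the parametric version.

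The only delicate step is the initial coding: the naive Kuratowski pair $\langle A,\beta+1\rangle$ has rank $\max(\rank{A},\beta+1)+2$, so in the edge case $\rank{A}=\lambda$ a small concrete tweak (for instance tagging $\beta+1$ as a distinguished element of a graph-style reformulation of $A$) is needed to keep $A^*$ inside $V_{\lambda+1}$. I expect this to be routine bookkeeping rather than a genuine obstacle; the conceptual heart of the argument is the short ``a fixed ordinal below $\kappa$ forces its preimage under $j$ to lie below $\crit{j}$'' manoeuvre, which is essentially the same trick already used in Proposition~\ref{proposition:HighCritical} and in the decoding step of Lemma~\ref{lemma:EmbFromESRisoC}.
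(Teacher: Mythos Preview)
Your proof is correct and follows essentially the same approach as the paper. The only cosmetic difference is that the paper encodes $z$ itself (rather than $\beta+1=\rank{z}+1$) alongside $A$, obtains a preimage $z_0\in M$ with $j(z_0)=z$, and then argues exactly as you do: $j(\rank{z_0})=\rank{z}<\kappa=j(\crit{j})$ forces $\rank{z_0}<\crit{j}$, so $z_0\in V_{\crit{j}}$ and $z_0=j(z_0)=z$. Your coding worry is legitimate but, as you suspected, routine: since $\lambda$ is a limit ordinal and $A\subseteq V_\lambda$, a tagged disjoint union such as $\{\langle 0,a\rangle : a\in A\}\cup\{\langle 1,z\rangle\}$ (or $\langle 1,\beta+1\rangle$ in your version) lies in $V_{\lambda+1}$; the paper simply suppresses this bookkeeping.
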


\begin{proof}
 Given $z\in V_\kappa$ and $A\in V_{\lambda+1}$, our assumptions yield a transitive, $\Pi_n(V_{\kappa+1})$-upwards correct set $M$ with $V_\kappa\cup \{\kappa\} \subseteq M$,  a cardinal $\lambda'\in C^{(n-1)}$ greater than $\beth_\lambda$ and an elementary embedding $j:M\to H_{\lambda'}$ with $j(\kappa)=\lambda$, $j(\crit{j})=\kappa$ and $A,z\in\ran{j}$. 
 Pick $z_0\in M$ with $j(z_0)=z$. Since we have $j(\rank{z_0})=\rank{z}<\kappa=j(\crit{j})$, it follows that $z_0\in M\cap V_{\crit{j}}$ and hence $z_0=j(z_0)=z$.  
\end{proof}

\begin{proposition}\label{proposition:ESRFromExact}
  If  $\kappa$ is weakly $n$-exact for $\lambda$ and $z$, then $\Pi_{n}(\{z\})$-$\ESR(\kappa ,\lambda)$  holds. 
  In particular, if $\kappa$ is weakly parametrically $n$-exact for $\lambda$, then $\Pi_{n}(V_\kappa)$-$\ESR(\kappa ,\lambda)$   holds. 
\end{proposition}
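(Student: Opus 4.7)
The plan is to apply the definition of weak $n$-exactness to the witness structure $B$ itself, pull it back via the embedding, and then use the $\Pi_n$-correctness properties to verify both that the preimage lies in $\Ce$ and that it maps elementarily to $B$.

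First I would fix a $\Pi_n$-formula $\varphi(v_0,v_1)$ such that $\Ce=\Set{A}{\varphi(A,z)}$ is a class of structures of the same type, and pick a structure $B\in\Ce$ of rank $\lambda$. Applying the weak $n$-exactness of $\kappa$ to the set $B\in V_{\lambda+1}$ produces a transitive, $\Pi_n(V_{\kappa+1})$-correct $M$ with $V_\kappa\cup\{\kappa\}\subseteq M$, a cardinal $\lambda'\in C^{(n-1)}$ above $\beth_\lambda$, and an elementary embedding $\map{j}{M}{H_{\lambda'}}$ with $j(\kappa)=\lambda$, $j(z)=z$, and $B\in\ran{j}$. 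Let $B_0\in M$ be such that $j(B_0)=B$. Since ranks are absolute for transitive sets and elementarity gives $M\models \rank{B_0}=\kappa$ (using $j(\kappa)=\lambda=\rank{B}$), the structure $B_0$ genuinely has rank $\kappa$ in $V$; in particular $B_0\in V_{\kappa+1}\cap M$.

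The core step is verifying $\varphi(B_0,z)$ in $V$. I would first argue $\varphi(B,z)$ holds in $H_{\lambda'}$: when $n=1$ this is automatic downward $\Pi_1$-absoluteness for the transitive set $H_{\lambda'}$ (noting that $B\in H_{\lambda'}$ since $|\mathrm{TC}(B)|\leq\beth_\lambda<\lambda'$); when $n\geq 2$, since $\lambda'\in C^{(n-1)}\subseteq C^{(1)}$ forces $\lambda'$ to be a strong limit, we have $H_{\lambda'}=V_{\lambda'}$ and $\Pi_n$-formulas are downward absolute from $V$ to $V_{\lambda'}$ by unpacking $\Pi_n$ as $\forall\Sigma_{n-1}$ and applying $V_{\lambda'}\prec_{\Sigma_{n-1}}V$. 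Elementarity of $j$, together with $j(B_0)=B$ and $j(z)=z$, then yields $\varphi^M(B_0,z)$. Finally, since $B_0,z\in V_{\kappa+1}\cap M$, the $\Pi_n(V_{\kappa+1})$-correctness of $M$ transfers this up: $\varphi(B_0,z)$ holds in $V$, so $B_0\in\Ce$.

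It remains to produce the elementary embedding. The restriction of $j$ to the domain of $B_0$ gives an elementary embedding $\map{j\restriction B_0}{B_0}{j(B_0)}=B$, since $B_0\in M$ and $j$ is elementary. This witnesses $\ESR_{\Ce}(\kappa,\lambda)$ for this class, completing the proof of the first assertion. For the "in particular" clause, if $\kappa$ is weakly parametrically $n$-exact for $\lambda$, then Proposition \ref{proposition:ParametersFromParametrically} shows $\kappa$ is weakly $n$-exact for $\lambda$ and every $z\in V_\kappa$; applying the first part for each such $z$ yields $\Pi_n(\{z\})$-$\ESR(\kappa,\lambda)$, and taking the union over $z\in V_\kappa$ gives $\Pi_n(V_\kappa)$-$\ESR(\kappa,\lambda)$. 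The only subtle point is the $\Pi_n$-downward absoluteness to $H_{\lambda'}$ with only $\lambda'\in C^{(n-1)}$, which is precisely why the parameter set in the correctness hypothesis on $M$ had to be strengthened to $V_{\kappa+1}$ (so as to include $B_0$ of rank exactly $\kappa$) — once that bookkeeping is in place the rest of the argument is a straightforward chase through the two absoluteness steps.
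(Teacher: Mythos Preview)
Your proof is correct. The paper takes a slightly different organizational route: instead of applying weak $n$-exactness once for each target structure $B$, it applies it a single time with $A=\kappa$ to obtain one embedding $j$ with $\kappa\in\ran{j}$, and then invokes the earlier abstract Lemma~\ref{lemma:FullReflectionC}(i) (with $\mu=\kappa$) to conclude $\Pi_n(\{z\})$-$\ESR(\kappa,\lambda)$. That lemma proceeds by contradiction --- assuming some $B$ witnesses failure, reflecting this existential statement down to $M$, and then observing that the resulting $B_0$ embeds via $j$ into $j(B_0)$, which itself is a counterexample in $H_{\lambda'}$. Your version unpacks essentially the same absoluteness steps but applies the definition per structure and argues directly rather than by contradiction; this is arguably cleaner for a standalone proof, while the paper's route has the advantage of reusing the general Lemma~\ref{lemma:FullReflectionC}, which is needed elsewhere. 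The handling of the ``in particular'' clause via Proposition~\ref{proposition:ParametersFromParametrically} is identical to the paper's.
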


\begin{proof}
 Fix a $\Pi_n$-formula $\varphi(v_0,v_1)$ with the property that the class $\Ce=\Set{A}{\varphi(A,z)}$ consists of structures of the same type and $B\in\Ce$ of rank $\lambda$. 
  By our assumptions,  there exists a transitive, $\Pi_n(V_{\kappa+1})$-upwards correct set $M$ with $V_\kappa\cup \{\kappa\} \subseteq M$, a cardinal $\lambda'\in C^{(n-1)}$ greater than $\beth_\lambda$ and an elementary embedding $j:M\to H_{\lambda'}$ with $j(\kappa)=\lambda$,  $j(z)=z$ and $B\in\ran{j}$. 
 Pick $A\in M$ with $j(A)=B$. 
 Since all $\Pi_n$-formulas with parameters in $H_{\lambda'}$ are downwards absolute from $V$ to $H_{\lambda'}$, we know that $\varphi(B,z)$ holds in $H_{\lambda'}$ and the structure $B$ has rank $\lambda$ in $H_{\lambda'}$. 
 But this means that, in $M$, the statement  $\varphi(A,z)$ holds and $A$ has rank $\kappa$.
 The fact that $M$ is $\Pi_n(V_{\kappa+1})$-upwards correct implies that the statement $\varphi(A,z)$ holds in $V$ and hence we can conclude that $A$ is a structure in $\Ce$ of rank $\kappa$. Moreover, we know that the embedding $j$ induces an elementary embedding of $A$ into $B$. 
 The second part of the proposition follows directly from Proposition \ref{proposition:ParametersFromParametrically}. 
\end{proof}

For each natural number $n>0$ and every set $z$, let $\mathcal{E}_n(z)$ denote the  class of structures  $\langle D, E, a,b,c \rangle$  (in the language of set theory with three additional constant symbols) with the property that $\rank{D}\subseteq D$, $E$ is a  well-founded and extensional relation on $D$ and,  if $\pi: \langle D,E\rangle \to \langle M,\in\rangle$ is the corresponding transitive collapse, then $V_{\rank{D}}\cup\{\rank{D}\}\subseteq M$, $M$ is $\Pi_n(V_{\rank{D}+1})$-upwards correct,  $\pi(b)=\rank{D}$, $\pi(c)=z$ and $\pi^{{-}1}\restriction\rank{D}=\id_{\rank{D}}$. 
 Note that the class $\mathcal{E}_n(z)$ is definable by a $\Pi_{n}$-formula with parameter $z$.

\begin{lemma}\label{lemma:WeakExactFromESR}
 Let $\kappa<\lambda$ be cardinals, let $z\in V_\kappa$ and let $n>0$ be a natural number with the property that $\ESR_{\mathcal{E}_n(z)}(\kappa,\lambda)$ holds. 
 Given $B\in V_{\lambda +1}$ and a cardinal $\lambda'\in C^{(n)}$ greater than $\lambda,$\footnote{Since the class $C^{(1)}$ consists of all cardinals $\mu$ with the property that $H_\mu=V_\mu$, the given assumption ensures that $\lambda'$ is greater than $\beth_\lambda$.} there exists a transitive, $\Pi_n(V_{\kappa+1})$-upwards correct set $M$ with $V_\kappa\cup\{\kappa\}\subseteq M$ and an elementary embedding $j:M\to H_{\lambda'}$ with $j(\kappa)=\lambda$, $j(z)=z$ and $B\in\ran{j}$. 
  In particular, the cardinal $\kappa$ is weakly $n$-exact for $\lambda$ and $z$. 
\end{lemma}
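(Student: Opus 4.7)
The plan is to apply $\ESR_{\mathcal{E}_n(z)}(\kappa,\lambda)$ to a specific structure of rank $\lambda$ that codes an elementary substructure of $H_{\lambda'}$ containing $B$. Since $\lambda'\in C^{(n)}\subseteq C^{(1)}$, we have $H_{\lambda'}=V_{\lambda'}$ and $\lambda'=\beth_{\lambda'}>\beth_\lambda$. I will pick an elementary substructure $X\prec H_{\lambda'}$ of cardinality $\beth_\lambda$ with $V_\lambda\cup\{\lambda,B,z\}\subseteq X$, and let $\sigma:X\to N$ be its Mostowski collapse. Because $V_\lambda\subseteq X$ is transitive, $\sigma$ is the identity on $V_\lambda$, and hence on all of $V_{\lambda+1}\cap X=V_{\lambda+1}\cap N$; in particular $B$, $z$ and $\lambda$ lie in $N$ and are fixed by $\sigma$. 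Moreover, $N$ will be $\Pi_n(V_{\lambda+1})$-correct: given parameters $\bar a\in N\cap V_{\lambda+1}$ one has $\sigma^{{-}1}(\bar a)=\bar a$, so $\Pi_n$-statements about $\bar a$ transfer from $N$ to $V_{\lambda'}=H_{\lambda'}$ by elementarity of $\sigma^{{-}1}$ and from $V_{\lambda'}$ to $V$ via $\lambda'\in C^{(n)}$.

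Next, I will recode $N$ as a structure of rank exactly $\lambda$. Since $\betrag{N\setminus\lambda}\leq\beth_\lambda=\betrag{V_\lambda\setminus\lambda}$, I can choose an injection $h:N\setminus\lambda\hookrightarrow V_\lambda\setminus\lambda$ and set $D=\lambda\cup\ran{h}$. Define $\pi:D\to N$ to be the identity on $\lambda$ and $h^{{-}1}$ on $\ran{h}$, and let $E$ be the pullback of $\in$ along $\pi$; then $\pi$ is precisely the Mostowski collapse of $\langle D,E\rangle$. From $\lambda\subseteq D\subseteq V_\lambda$ we get $\rank{D}=\lambda$ and $\pi^{{-}1}\restriction\lambda=\id_\lambda$. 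Picking $a',b',c'\in D$ with $\pi(a')=B$, $\pi(b')=\lambda$, $\pi(c')=z$, the tuple $\mathcal{B}:=\langle D,E,a',b',c'\rangle$ is then a structure in $\mathcal{E}_n(z)$ of rank $\lambda$.

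Finally, $\ESR_{\mathcal{E}_n(z)}(\kappa,\lambda)$ will yield a structure $\mathcal{A}=\langle D^*,E^*,a^*,b^*,c^*\rangle$ in $\mathcal{E}_n(z)$ of rank $\kappa$ together with an elementary embedding $e:\mathcal{A}\to\mathcal{B}$. Let $\tau:\langle D^*,E^*\rangle\to\langle M,\in\rangle$ be the Mostowski collapse; by the definition of $\mathcal{E}_n(z)$, the set $M$ is transitive and $\Pi_n(V_{\kappa+1})$-correct, $V_\kappa\cup\{\kappa\}\subseteq M$, $\tau(b^*)=\kappa$ and $\tau(c^*)=z$. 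Setting $j:=\sigma^{{-}1}\circ\pi\circ e\circ\tau^{{-}1}:M\to H_{\lambda'}$ gives an elementary embedding as a composition of elementary maps. Chasing constants through the diagram, $j(\kappa)=\sigma^{{-}1}(\pi(e(b^*)))=\sigma^{{-}1}(\pi(b'))=\sigma^{{-}1}(\lambda)=\lambda$; similarly $j(z)=z$; and for $m:=\tau(a^*)\in M$ one obtains $j(m)=\sigma^{{-}1}(\pi(a'))=\sigma^{{-}1}(B)=B$, so $B\in\ran{j}$. The ``in particular'' clause then follows by applying the first statement with $A$ in place of $B$ and any $\lambda'\in C^{(n)}\subseteq C^{(n-1)}$ above $\lambda$. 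The only delicate point is the $\Pi_n(V_{\lambda+1})$-correctness of $N$, which hinges on $V_\lambda\subseteq X$ being transitive so that parameters in $N\cap V_{\lambda+1}$ are collapsed trivially; the rest is a standard Mostowski recoding followed by a routine diagram chase through the composed collapses.
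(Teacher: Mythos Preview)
Your proof is correct and follows essentially the same approach as the paper: encode an elementary submodel of $H_{\lambda'}$ containing $B$ as a structure in $\mathcal{E}_n(z)$ of rank $\lambda$, apply $\ESR$, and compose the resulting maps with the relevant collapses. The only cosmetic difference is that you factor the coding through the transitive collapse $N$ of $X$ before recoding onto a subset of $V_\lambda$, whereas the paper bijects $X$ directly with $V_\lambda$ (fixing $\lambda$ pointwise); the two constructions produce the same embedding $j$.
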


\begin{proof}
 Let $X$ be an elementary submodel of $H_{\lambda'}$  of cardinality $\beth_\lambda$ with  $V_\lambda \cup \{ \lambda , B\}\subseteq Y$.  
 Pick a bijection $f:X\to V_\lambda$ with $f\restriction\lambda=\id_\lambda$ and let $R$ be the induced binary relation on $V_\lambda$. 
 Since the transitive collapse of $\langle V_\lambda,R\rangle$ is the composition of $f^{{-}1}$ and the transitive collapse of $\langle X,\in\rangle$ and all $\Sigma_n$-formulas using parameters from the transitive part of $X$ are absolute between $V$ and the transitive collapse of $X$, it follows that the structure $\langle V_\lambda,R,f(B),f(\lambda),f(z)\rangle$ is an element of $\mathcal{E}_n(z)$ of rank $\lambda$. 
 
 By our assumptions, we can find a  structure $\langle D,E,a,b,c\rangle$ of rank $\kappa$ in $\mathcal{E}_n(z)$ such that there exists an elementary embedding $$i:\langle D,E,a,b,c\rangle\to\langle V_\lambda,R,f(B),f(\lambda),f(z)\rangle.$$
 Let $\pi:\langle D,E\rangle\to\langle M,\in\rangle$ denote the corresponding transitive collapse. Set $A=\pi(a)$ and $$j ~ = ~ f^{{-}1}\circ i\circ\pi^{{-}1}:M\to H_{\lambda'}.$$ 
  Then $M$ is a transitive set with $V_\kappa\cup\{\kappa\}\subseteq M$, $A\in M\cap V_{\kappa+1}$ with the property that $M$ is $\Pi_n(V_{\kappa+1})$-upwards correct and $j$ is an elementary embedding with $j(\kappa)=\lambda$, $j(A)=B$ and $j(z)=z$. 
  % Let $\langle N, E, a,b,c\rangle$ be such that ${\rm rk}(N)=\lambda$ and there is an isomorphism $i:\langle N,E\rangle \to \langle Y,\in\rangle$ with $i(a)=A$,  $i(b)=\kappa$, and $i(c)=\lambda$. We claim that $\langle N,E,a,b,c\rangle \in \mathcal{E}_n$. Clearly $E$ is well-founded and extensional. Now let $$\pi:\langle N,E\rangle\to \langle M,\in\rangle$$ be the transitive collapse. Then $\pi(a)=i(a)=A$, and therefore, for every $\Pi_n$ formula $\varphi(x)$,  $\varphi(\pi(a))$ holds in $M$ iff $\varphi(a)$  holds in $N$ iff $\varphi(i(a))$ holds in $Y$ iff $\varphi(\pi(a))$ holds in $V_{\lambda'}$, and since $\lambda'\in C^{(n)}$, iff it holds in $V$.
\end{proof}

A combination of Proposition \ref{proposition:ESRFromExact} and Lemma \ref{lemma:WeakExactFromESR} now yields the following equivalence:

\begin{corollary}\label{corollary:WeakExactStronger}
 The following statements are equivalent for all natural numbers $n>0$, all cardinals $\kappa<\lambda$ and all $z\in V_\kappa$: 
 \begin{enumerate}
     \item $\Pi_n(\{z\})$-$\ESR(\kappa,\lambda)$. 
     
     \item $\kappa$ is weakly  $n$-exact for $\lambda$ and $z$. 
     
     \item For all $A\in V_{\lambda +1}$ and $\lambda<\lambda'\in C^{(n)}$, there exists a transitive, $\Pi_n(V_{\kappa+1})$-upwards correct set $M$ with $V_\kappa\cup\{\kappa\}\subseteq M$ and an elementary embedding $j:M\to H_{\lambda'}$ satisfying  $j(\kappa)=\lambda$, $j(z)=z$ and $A\in\ran{j}$. \qed 
 \end{enumerate}
\end{corollary}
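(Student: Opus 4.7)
The plan is to establish the cycle (i)~$\Rightarrow$~(iii)~$\Rightarrow$~(ii)~$\Rightarrow$~(i). Each implication is a short consequence of results already in hand, so I do not expect any genuine obstacle; the only point requiring care is the coordination of the two levels $C^{(n)}$ and $C^{(n-1)}$ and the switch between $H_{\lambda'}$ and $V_{\lambda'}$.

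For (i)~$\Rightarrow$~(iii), I would exploit the class $\mathcal{E}_n(z)$ defined just before Lemma~\ref{lemma:WeakExactFromESR}. Since $\mathcal{E}_n(z)$ is $\Pi_n$-definable with parameter $z$ and consists of structures of a single type, the principle $\Pi_n(\{z\})$-$\ESR(\kappa,\lambda)$ specialises at once to $\ESR_{\mathcal{E}_n(z)}(\kappa,\lambda)$. Lemma~\ref{lemma:WeakExactFromESR} then delivers, for every $A\in V_{\lambda+1}$ and every $\lambda'\in C^{(n)}$ above $\lambda$, a transitive $\Pi_n(V_{\kappa+1})$-correct $M\supseteq V_\kappa\cup\{\kappa\}$ and an elementary $j:M\to H_{\lambda'}$ with $j(\kappa)=\lambda$, $j(z)=z$ and $A\in\ran{j}$. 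Since $C^{(n)}\subseteq C^{(1)}$ and every $\lambda'\in C^{(1)}$ satisfies $V_{\lambda'}=H_{\lambda'}$, the map $j$ actually targets $V_{\lambda'}$, which is exactly~(iii).

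For (iii)~$\Rightarrow$~(ii), given $A\in V_{\lambda+1}$ I would simply pick some $\lambda'\in C^{(n)}$ with $\lambda'>\beth_\lambda$, which exists because $C^{(n)}$ is a proper class of cardinals. Applying (iii) to this $\lambda'$ yields the required $M$ and $j:M\to V_{\lambda'}=H_{\lambda'}$; as $C^{(n)}\subseteq C^{(n-1)}$, this is a witness for the weak $n$-exactness of $\kappa$ for $\lambda$ and $z$ in the sense of the definition preceding Proposition~\ref{proposition:ParametersFromParametrically}. The implication (ii)~$\Rightarrow$~(i) is precisely Proposition~\ref{proposition:ESRFromExact}, whose proof applies the first clause of Lemma~\ref{lemma:FullReflectionC} at $\mu=\kappa\in\ran{j}$ to extract $\Pi_n(\{z\})$-$\ESR(\kappa,\lambda)$ from the embedding provided by weak $n$-exactness. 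Thus the cycle closes and the three statements are equivalent.
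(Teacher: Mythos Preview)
Your proposal is correct and follows essentially the same route as the paper: the corollary is recorded there as an immediate consequence of Lemma~\ref{lemma:WeakExactFromESR} (giving (i)~$\Rightarrow$~(iii)) together with Lemma~\ref{lemma:FullReflectionC} via Proposition~\ref{proposition:ESRFromExact} (giving (ii)~$\Rightarrow$~(i)), with (iii)~$\Rightarrow$~(ii) trivial. Your attention to the identity $H_{\lambda'}=V_{\lambda'}$ for $\lambda'\in C^{(1)}$ and the inclusion $C^{(n)}\subseteq C^{(n-1)}$ is exactly the bookkeeping that makes the cycle close.
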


Arguments contained in the proofs of the above results also allow us to prove the following parametrical version of Corollary \ref{corollary:WeakExactStronger} that will be needed later on.

\begin{lemma}\label{lemma:WeakParaExactStronger}
 Let $n>0$ be a natural number, and let $\kappa$ be weakly parametrically $n$-exact for some cardinal $\lambda>\kappa$. If $\lambda<\lambda'\in C^{(n)}$ and $B\in V_{\lambda+1}$, then there exists a transitive, $\Pi_n(V_{\kappa+1})$-upwards correct set $M$ with $V_\kappa\cup\{\kappa\}\subseteq M$ and an elementary embedding $j:M\to H_{\lambda'}$ satisfying $j(\crit{j})=\kappa$,   $j(\kappa)=\lambda$ and $B\in\ran{j}$.
\end{lemma}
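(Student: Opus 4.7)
The plan is to mirror the proof of Lemma~\ref{lemma:WeakExactFromESR}, but to invoke the weakly parametric $n$-exactness hypothesis directly in place of an $\ESR$-type reflection principle, so that the condition $j_0(\crit{j_0}) = \kappa$ built into parametric exactness propagates to $j(\crit{j}) = \kappa$. First I pick an elementary submodel $X \prec H_{\lambda'}$ of cardinality $\beth_\lambda$ containing $V_\lambda \cup \{\lambda, B\}$, a bijection $f : X \to V_\lambda$ with $f \restriction \lambda = \id_\lambda$, and let $R$ be the binary relation on $V_\lambda$ induced by $f$, so that $f^{-1}$ becomes an isomorphism between $\langle V_\lambda, R \rangle$ and $\langle X, \in \rangle$. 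Since $\lambda$ is a limit ordinal, the coded tuple $A = \langle R, f(B), f(\lambda) \rangle$ lies in $V_{\lambda+1}$.

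Next I apply weakly parametric $n$-exactness for $\lambda$ to $A$, obtaining a transitive $\Pi_n(V_{\kappa+1})$-correct set $M_0$ with $V_\kappa \cup \{\kappa\} \subseteq M_0$, a cardinal $\lambda'' \in C^{(n-1)}$ with $\lambda'' > \beth_\lambda$, and an elementary embedding $j_0 : M_0 \to H_{\lambda''}$ satisfying $j_0(\kappa) = \lambda$, $j_0(\crit{j_0}) = \kappa$, and $A \in \ran{j_0}$. By elementarity the preimage $A_0 = j_0^{-1}(A)$ has the form $\langle R_0, B_0, \mu_0 \rangle$ with $R_0$ a well-founded extensional binary relation on $V_\kappa$. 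Letting $\pi : \langle V_\kappa, R_0 \rangle \to \langle M, \in \rangle$ be the transitive collapse, $\Delta_1$-absoluteness of Mostowski collapses yields $j_0(M) = \bar X$ (the collapse of $X$), so $j_0 \restriction M$ is an elementary embedding into $\bar X$. Composing with $\pi_X^{-1} : \bar X \to H_{\lambda'}$, which is elementary since $X \prec H_{\lambda'}$, produces the candidate map $j = \pi_X^{-1} \circ (j_0 \restriction M) : M \to H_{\lambda'}$.

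The inclusion $V_\kappa \cup \{\kappa\} \subseteq M$ and the identities $j(\kappa) = \lambda$, $j(\crit{j}) = \kappa$ (via $\crit{j} = \crit{j_0}$, using that $\pi_X^{-1}$ fixes all ordinals $\leq \lambda$) and $B \in \ran{j}$ (witnessed by $\pi(B_0) \in M$) all follow by direct computation, using that $\pi_X$ fixes the elements of $V_\lambda \cup \{\lambda, B\}$. I expect the main obstacle to be the $\Pi_n(V_{\kappa+1})$-correctness of $M$: by the $\Pi_n(V_{\kappa+1})$-correctness of $M_0$ and the elementarity of $j_0$, this reduces to showing that $\bar X$ is $\Pi_n(V_{\lambda+1})$-correct, and then via $X \prec H_{\lambda'}$ together with $\lambda' \in C^{(n)}$ (so $H_{\lambda'} = V_{\lambda'} \prec_{\Sigma_n} V$) it further reduces to verifying $\pi_X^{-1}(\vec x) = \vec x$ for every $\vec x \in \bar X \cap V_{\lambda+1}$. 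The crux is the following rank-induction inside $X$: if $v \in X$ has $\pi_X(v) \in V_{\lambda+1}$, then $v \cap X \subseteq V_\lambda$, so that $\pi_X(v) = v \cap V_\lambda \in X$ by closure of $X$ under intersection with $V_\lambda$; the injectivity of $\pi_X$ then forces $v = v \cap V_\lambda$, placing $\vec x$ itself in $X$ as a fixed point of $\pi_X$.
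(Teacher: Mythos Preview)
Your proposal is correct and follows essentially the same route as the paper: both pick $X\prec H_{\lambda'}$ with $V_\lambda\cup\{\lambda,B\}\subseteq X$, code it via a bijection $f:X\to V_\lambda$ fixing $\lambda$, apply weak parametric $n$-exactness to the coded data, collapse the reflected relation $R_0$ on $V_\kappa$ to obtain $M$, and read off $j$ as the composite $M\to X\hookrightarrow H_{\lambda'}$. The only difference is packaging: the paper phrases the correctness step as membership of the coded structures in the $\Pi_n$-definable class $\mathcal{E}_n(\emptyset)$, whereas you argue directly that $j_0(M)=\bar X$ and reduce $\Pi_n(V_{\kappa+1})$-correctness of $M$ to $\Pi_n(V_{\lambda+1})$-correctness of $\bar X$; your careful verification that $\pi_X$ fixes $\bar X\cap V_{\lambda+1}$ is exactly what is needed, and the remaining step (that the $\Pi_n$ statement ``$\bar X$ is $\Pi_n(V_{\lambda+1})$-correct'' passes down from $V$ to $H_{\lambda''}$ because $\lambda''\in C^{(n-1)}$, and then reflects to $M_0$ and up to $V$ using the correctness of $M_0$) is routine.
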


\begin{proof}
 Pick  an elementary submodel $X$ of $H_{\lambda'}$  of cardinality $\lambda$ with  $V_\lambda \cup \{\lambda, B\}\subseteq X$ and a bijection $f:X\to V_\lambda$ with $f\restriction\lambda=\id_\lambda$. 
 Let  $\langle V_\lambda,R,f(B),f(\lambda),\emptyset\rangle$ denote the corresponding  structure in  $\mathcal{E}_n(\emptyset)$ of rank $\lambda$ constructed in the proof of Lemma \ref{lemma:WeakExactFromESR}. 
 By our assumptions, we can find a transitive, $\Pi_n(V_{\kappa+1})$-upwards correct set $N$ with $V_\kappa\cup\{\kappa\}\subseteq N$, a cardinal $\beth_\lambda<\eta\in C^{(n-1)}$ and an elementary embedding $i:N\to H_\eta$ with $i(\crit{i})=\kappa$, $i(\kappa)=\lambda$ and $R,f(B),f(\lambda)\in\ran{i}$. 
 Pick $R_0,a_0,b_0\in N$ with $i(R_0)=R$, $i(a_0)=f(B)$ and  $i(b_0)=f(\lambda)$. 
 Then the elementarity of $i$, the $\Pi_n(V_{\kappa+1})$-upwards correctness of $N$ and the fact that $V_\lambda$ is contained in $H_\eta$ ensure that $\langle V_\kappa,R_0,a_0,b_0,\emptyset\rangle$ is a structure in $\mathcal{E}_n(\emptyset)$ of rank $\kappa$ and $$i\restriction V_\kappa:\langle V_\kappa,R_0,a_0,b_0,\emptyset\rangle\to\langle V_\lambda,R,f(B),f(\lambda),\emptyset\rangle$$ is an elementary embedding. 
 Let $\pi:\langle V_\kappa,R_0\rangle\to\langle M,\in\rangle$ denote the corresponding transitive collapse. Then $M$ is a  $\Pi_n(V_{\kappa+1})$-upwards correct set with $V_\kappa\cup\{\kappa\}\subseteq M$,  $\pi^{{-}1}\restriction\kappa=\id_\kappa$ and $\pi(b_0)=\kappa$. 
 If we now define $$j ~ = ~ f^{{-}1}\circ i\circ \pi^{{-}1}:M\to H_{\lambda'},$$ then $j$ is a non-trivial  elementary embedding between transitive structures with $j\restriction\crit{i}=\id_{\crit{i}}$, $j(\crit{i})=\kappa>\crit{i}=\crit{j}$, $j(\kappa)=\lambda$ and $B\in\ran{j}$. 
\end{proof}

%\begin{theorem}\label{thm4}
%Suppose $\kappa$  is the least cardinal such that $\ESR_{\mathcal{E}_n}(\kappa, \lambda)$ holds, for some   $\lambda$. Then  $\kappa$ is  weakly parametrically $n$-exact for $\lambda$.
%\end{theorem}
%
%\begin{proof}
%By $\ESR_{\mathcal{E}_n}(\kappa, \lambda)$, let $$k:\langle N_0, E_0,a_0,b_0, c_0\rangle \to \langle N,E,a,b,c\rangle$$be an elementary embedding with $\langle N_0, E_0,a_0,b_0, c_0\rangle\in \mathcal{E}_n$ and ${\rm rk}(N_0)=\kappa$. So, letting $$\pi_0:\langle N_0, E_0\rangle \to \langle X,\in\rangle$$ be the transitive collapse isomorphism, we have that  $X$ is $\Pi_n(\{ \pi_0(a)\})$-correct. Note that $\pi_0(c_0)=\kappa$. Let $\pi_0(a_0)=B$ and $\pi_0(b_0)=\bar{\kappa}$. Thus, $B\in V_{\kappa+1}$. Then, $$j:=\pi \circ k\circ \pi_0^{-1}:\langle X,\in \rangle \to \langle Y,\in\rangle$$ is elementary,  with $j(\bar{\kappa})=\kappa$, $j(\kappa)=\lambda$, and $j(B)=A$. 
%Similarly as in the proof of Theorem \ref{thm3}, we have $\bar{\kappa}={\rm crit}(j)$. Now ${\rm Id}_Y\circ j$ yields  an elementary embedding $j:X\to V_{\lambda'}$ such that  $\lambda'\in C^{(n-1)}$, $A$  is in the range of $j$, $X$ is transitive and $\Pi_n(\{ j^{-1}(A)\})$-correct,   $j({\rm crit}(j))=\kappa$, and $j(\kappa)=\lambda$.
%\end{proof}

In the remainder of this section, we prove that for all natural numbers $n>0$, exact structural reflection for $\Pi_n$-definable classes at cardinals in $C^{(n)}$ is strictly stronger than exact structural reflection for $\Sigma_n$-definable classes at such cardinals. This will follow from Corollary \ref{corollary:WeakExactStronger} and the next Lemma.

\begin{lemma}%\label{lemma:WeakESRn+1ESRn}
 Let $n>0$ be a natural number and let $\kappa\in C^{(n)}$ be weakly parametrically $n$-exact for some cardinal $\lambda>\kappa$. 
 Then the set of cardinals $\mu<\kappa$ such that $\Sigma_n(V_\mu)$-$\ESR(\mu,\kappa)$ holds is stationary in $\kappa$. 
 %
 %In particular, if $n>1$, then there exists a cardinal $\mu<\kappa$  that is  parametrically $(n-1)$-exact for some cardinal $\nu>\mu$. 
\end{lemma}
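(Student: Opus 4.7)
The plan is to argue by contradiction, using Lemma \ref{lemma:WeakParaExactStronger} to pull the failure data back through an elementary embedding. Since weak parametric $n$-exactness implies weak parametric $1$-exactness, $\kappa$ is inaccessible, so $\betrag{V_{\alpha+1}}<\kappa$ for each $\alpha<\kappa$. Suppose the set $E$ of $\mu<\kappa$ for which $\Sigma_n(V_\mu)$-$\ESR(\mu,\kappa)$ fails is stationary, and pick, for each $\mu\in E$, a witness $(\varphi_\mu,z_\mu,B_\mu)$: a $\Sigma_n$-formula $\varphi_\mu$, a parameter $z_\mu\in V_\mu$, and a rank-$\kappa$ structure $B_\mu$ in the class defined by $\varphi_\mu(\cdot,z_\mu)$ into which no rank-$\mu$ structure in that class embeds elementarily. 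Pigeonhole over the countable set of $\Sigma_n$-formulas, Fodor's lemma on $\mu\mapsto\rank{z_\mu}$, and a further partition of $V_{\alpha+1}$ into $<\kappa$ pieces refine $E$ to a stationary $S$ on which $\varphi_\mu=\varphi$ and $z_\mu=z$ are fixed, yielding a counterexample function $F:S\to V_{\kappa+1}$, $F(\mu)=B_\mu$, which lies in $V_{\lambda+1}$.

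Applying Lemma \ref{lemma:WeakParaExactStronger} with parameter $(S,z,F)$ produces $\lambda'\in C^{(n)}$ above $\lambda$, a transitive, $\Pi_n(V_{\kappa+1})$-correct set $M\supseteq V_\kappa\cup\{\kappa\}$, and an elementary embedding $j:M\to H_{\lambda'}$ with $j(\crit{j})=\kappa$, $j(\kappa)=\lambda$, and $(S,z,F)\in\ran{j}$. Setting $\mu^*=\crit{j}$ and $(S_0,z_0,F_0)=j^{{-}1}(S,z,F)\in M$, we have $S_0\cap\mu^*=S\cap\mu^*$ (since $j$ fixes the ordinals below $\mu^*$) and, by elementarity together with the absoluteness of stationarity computed from $V_{\mu^*+1}\subseteq M$, this set is stationary in $\mu^*$; as every element of $S$ exceeds $\rank{z}$, so does $\mu^*$, giving $j(z)=z=z_0$. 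Elementarity combined with $\Pi_n(V_{\kappa+1})$-correctness of $M$ then ensures that, for each $\beta\in S\cap\mu^*$, $F_0(\beta)\in M$ is a rank-$\mu^*$ structure in the class $\Set{A}{\varphi(A,z)}$ into which no rank-$\beta$ structure in the class embeds, while $j$ itself induces an elementary embedding $F_0(\beta)\hookrightarrow B_\beta$.

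The contradiction will come from a reflection of $\Sigma_n$-ESR. By Propositions \ref{proposition:ESRFromExact} and \ref{corollary:Pi_nImpliesSigma_n}, $V$ satisfies $\Sigma_n(\{z\})$-$\ESR(\kappa,\lambda)$ for $\varphi$; this is $\Pi_{n+1}$ in the parameters $\kappa,\lambda,z$ and hence downward absolute to $H_{\lambda'}$ via $H_{\lambda'}\prec_{\Sigma_n}V$, so by elementarity of $j$ we obtain $M\models\Sigma_n(\{z\})$-$\ESR(\mu^*,\kappa)$. Applied inside $M$ to a suitable rank-$\kappa$ structure built from the pulled-back data $(S_0,F_0)$ — one whose $j$-image encodes the counterexample scheme at the $\beta\in S\cap\mu^*$ — and transferred back through the $\Sigma_n$-correctness of $M$ for parameters in $V_{\kappa+1}\cap M$, this reflected ESR should exhibit a rank-$\beta$ in-class elementary substructure of $B_\beta$ for some $\beta\in S\cap\mu^*$, contradicting the choice of $B_\beta$ as a counterexample at level $\beta$. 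The hardest step will be exactly this last construction: because $B_\beta=j(F_0(\beta))$ itself need not lie in $M$, the argument must package $F_0$ and $S_0$ into a single rank-$\kappa$ object of $M$ so that the reflected ESR inside $M$ delivers the required embedding at a specific $\beta\in S\cap\mu^*$, rather than merely providing a rank-$\mu^*$ embedding into $B_\beta$ (which, having rank $\mu^*>\beta$, does not itself contradict the counterexample property of $B_\beta$).
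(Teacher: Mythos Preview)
Your proposal has a genuine gap, and you identify it yourself: the final step, where the reflected $\Sigma_n$-$\ESR(\mu^*,\kappa)$ inside $M$ is supposed to produce a rank-$\beta$ elementary substructure of $B_\beta$ for some $\beta\in S\cap\mu^*$, is not carried out, and it is not clear that it can be. The reflected principle only yields rank-$\mu^*$ structures embedding into rank-$\kappa$ structures; extracting a rank-$\beta$ embedding from a cleverly packaged rank-$\kappa$ object would require an additional argument that you do not supply. Moreover, your initial reduction is slightly off: negating the conclusion gives that the bad set $E$ contains a club, not merely that $E$ is stationary, though this is a minor point.

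The paper's argument avoids all of this by a much more direct route, and the key idea you are missing is that \emph{no advance choice of counterexamples is needed}. One fixes an arbitrary club $K\subseteq\kappa$ and applies Lemma~\ref{lemma:WeakParaExactStronger} with $K$ in the range of $j$, so that $\mu^*=\crit{j}\in K$. Now suppose $\Sigma_n(V_{\mu^*})$-$\ESR(\mu^*,\kappa)$ fails, witnessed by some $\Sigma_n$-formula $\varphi$, parameter $z\in V_{\mu^*}$, and structure $B$ of rank $\kappa$. The crucial observation is that the statement ``there exists a rank-$\kappa$ structure $B$ with $\varphi(B,z)$ such that no rank-$\mu^*$ structure satisfying $\varphi(\cdot,z)$ embeds into $B$'' is itself $\Sigma_n$ with parameters $z,\mu^*,V_\kappa\in V_{\kappa+1}$ (the inner quantifiers are bounded by $V_\kappa$ since $\kappa$ is inaccessible). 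By $\Pi_n(V_{\kappa+1})$-correctness of $M$, this statement holds in $M$, so $M$ contains a witness $B_0$. Elementarity of $j$ then says that in $H_{\lambda'}$ no rank-$\kappa$ structure in the class embeds into $j(B_0)$; but $B_0$ has rank $\kappa$, $\varphi(B_0,z)$ holds in $H_{\lambda'}$ (as $\lambda'\in C^{(n)}$), and $j\restriction B_0:B_0\to j(B_0)$ is such an embedding lying in $H_{\lambda'}$. This is the contradiction.

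In short: rather than fixing a stationary family of counterexamples and trying to reflect a global ESR principle, one should argue locally at the single point $\crit{j}$ and use the $\Sigma_n$-correctness of $M$ to pull a \emph{single} counterexample into $M$, then push it forward with $j$.
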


\begin{proof}
 Fix a closed unbounded subset $K$ of $\kappa$. 
 By Lemma \ref{lemma:WeakParaExactStronger}, there exists a transitive, $\Pi_n(V_{\kappa+1})$-upwards correct set $M$ with $V_\kappa\cup\{\kappa\}\subseteq M$, cardinals $\kappa<\lambda<\lambda'\in C^{(n)}$ and an elementary embedding $j:M\to H_{\lambda'}$ with $j(\crit{j})=\kappa$, $j(\kappa)=\lambda$ and $K\in\ran{j}$. 
  Then $\crit{j}\in K$. 
  Assume, towards a contradiction, that $\Sigma_n(V_{\crit{j}})$-$\ESR(\crit{j},\kappa)$ fails. 
   Then there exists a $\Sigma_n$-formula $\varphi(v_0,v_1)$, $z\in V_{\crit{j}}$ and a structure $B$ of rank $\kappa$ with the property that the class $\Ce=\Set{A}{\varphi(A,z)}$ consists of structures of the same type, $\varphi(B,z)$ holds and for all $A\in\Ce$ of rank $\crit{j}$, there is no elementary embedding of $A$ into $B$. 
  Since $\kappa$ is inaccessible and therefore every elementary embedding of a structure of rank less than $\kappa$ into a structure of rank $\kappa$ is an element of $V_\kappa$, and moreover $\kappa\in C^{(n)}$, there is a $\Sigma_n$-formula with parameters in $V_\kappa\cup\{V_\kappa\}$ that states that there exists a structure $B$ of rank $\kappa$ with the property that $\varphi(B,z)$ holds and for every structure $A\in V_\kappa$ of rank $\crit{j}$ such that $\varphi(A,z)$ holds in $V_\kappa$, there is no   elementary embedding from $A$ into $B$ in $V_\kappa$.  
  Since $M$ is $\Pi_n(V_{\kappa+1})$-upwards correct and $V_\kappa\cup\{V_\kappa\}\subseteq M$, this yields a structure $B_0$ of rank $\kappa$ in $M$ with the property that $\varphi(B_0,z)$ holds in $M$ and for every structure  $A$ of rank $\crit{j}$ such that $\varphi(A,z)$ holds in $V_\kappa$, there is no elementary embedding of $A$ into $B_0$ in $V_\kappa$. 
  Moreover, since $M$ is $\Pi_n(V_{\kappa+1})$-upwards correct and $\kappa$ is an inaccessible cardinal in $C^{(n)}$, it follows that for every structure  $A$ of rank $\crit{j}$ such that $\varphi(A,z)$ holds in $M$, there is no elementary embedding of $A$ into $B_0$ in $M$. 
  Then, in $H_{\lambda'}$, for every structure  $A$ of rank $\kappa$ such that $\varphi(A,z)$ holds $H_{\lambda'}$, there is no elementary embedding of $A$ into $j(B_0)$ in $V_\lambda$. 
  Since $\kappa<\lambda<\lambda'\in C^{(n)}$, it now follows that $\varphi(B_0,z)$ holds in $H_{\lambda'}$ and the map $j\restriction B_0:B_0\to j(B_0)$ is an element of $H_{\lambda'}$. 
  But this yields a contradiction, because $j\restriction B_0$ is an elementary embedding of $B_0$ into $j(B_0)$ in $H_{\lambda'}$.  
\end{proof}

%%%%%%%%%%%%%%%%%%%%%%
%%%%%%%%%%%%%%%%%%%%%%

\section{The $\Sigma_{n+1}$-case}

Analogously to the theory developed in the previous section, we now analyse the relationship between the principle $\Sigma_{n+1}$-$\ESR$ and $n$-exact cardinals. First, let us consider the following variation of Definition \ref{defnexact}.

\begin{definition}
 Given cardinals $\kappa<\lambda$, a set $z\in V_\kappa$ and $n<\omega$, the cardinal $\kappa$ is \emph{$n$-exact for $\lambda$ and $z$} if for every $A\in V_{\lambda +1}$, there exists a cardinal   $\kappa'\in C^{(n)}$ greater than $\beth_\kappa$, a cardinal $\lambda'\in C^{(n+1)}$ greater than $\lambda$, an elementary submodel $X$ of $H_{\kappa'}$ with  $V_\kappa \cup \{\kappa\}   \subseteq X$, and an elementary embedding $j:X\to H_{\lambda'}$ with  $j(\kappa)=\lambda$, $j(z)=z$ and   $A\in\ran{j}$. 
\end{definition}

\begin{proposition}\label{proposition:ExactForZ}
 If a cardinal $\kappa$ is parametrically $n$-exact for some cardinal $\lambda$ (see Definition \ref{defnexact}), then $\kappa$ is  $n$-exact for $\lambda$ and all $z\in V_\kappa$. \qed 
\end{proposition}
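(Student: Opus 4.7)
The plan is to mimic the argument of Proposition~\ref{proposition:ParametersFromParametrically}, adapted to the non-transitive setting of Definition~\ref{defnexact}. Given $z\in V_\kappa$ and $A\in V_{\lambda+1}$, I would apply the parametric $n$-exactness of $\kappa$ not to $A$ itself but to the coded object $A'=\{A,z\}$, which still lies in $V_{\lambda+1}$ since $z\in V_\kappa\subseteq V_{\lambda+1}$. This produces cardinals $\kappa'\in C^{(n)}$ above $\beth_\kappa$ and $\lambda'\in C^{(n+1)}$ above $\lambda$, an elementary submodel $X\preceq H_{\kappa'}$ with $V_\kappa\cup\{\kappa\}\subseteq X$, and an elementary embedding $j:X\to H_{\lambda'}$ with $j(\kappa)=\lambda$, $j(\crit{j})=\kappa$, and $A'\in\ran{j}$. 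Elementarity applied to the preimage $B\in X$ of $A'=\{A,z\}$ yields that $B$ has at most two elements, each mapping under $j$ into $\{A,z\}$, so both $A$ and $z$ belong to $\ran{j}$.

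The heart of the argument is then to upgrade the fact that $z\in\ran{j}$ to $j(z)=z$. I would pick $z_0\in X$ with $j(z_0)=z$ and observe that
\[
j(\rank{z_0})\;=\;\rank{z}\;<\;\kappa\;=\;j(\crit{j}),
\]
forcing $\rank{z_0}<\crit{j}$. Since $V_\kappa\subseteq X$ and $\crit{j}<\kappa$, this places $z_0$ inside $V_{\crit{j}}\subseteq X$. It then suffices to know that $j\restriction V_{\crit{j}}=\id_{V_{\crit{j}}}$, from which $z_0=j(z_0)=z$ and hence $j(z)=z$. The quadruple $\kappa',\lambda',X,j$ then witnesses the $n$-exactness of $\kappa$ for $\lambda$ and $z$.

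The one mildly delicate point, which I expect to be the only real obstacle, is verifying the pointwise fixing of $V_{\crit{j}}$ given that $X$ need not be transitive. I would handle this by induction on rank: ordinals below $\crit{j}$ are fixed by definition of the critical point, and for $y\in V_\alpha$ with $\alpha<\crit{j}$ one enumerates $y$ by some $f\colon\betrag{y}\to y$ with $f\in V_\kappa\subseteq X$; since $j$ fixes $\betrag{y}<\crit{j}$ pointwise and, by the inductive hypothesis, $j(f(\gamma))=f(\gamma)$ for every $\gamma<\betrag{y}$, elementarity gives $j(y)=\ran{j(f)}=y$. This closes the induction and completes the argument.
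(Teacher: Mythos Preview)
Your proposal is correct and follows precisely the approach the paper intends: the proposition is marked with \qed\ and no proof, as the authors view it as the obvious analogue of Proposition~\ref{proposition:ParametersFromParametrically}, whose proof you have faithfully adapted. Your extra care in verifying $j\restriction V_{\crit{j}}=\id_{V_{\crit{j}}}$ for the non-transitive domain $X$ is appropriate diligence; note that the enumeration argument tacitly uses that $\crit{j}$ is inaccessible (so that $\betrag{y}<\crit{j}$ for $y\in V_{\crit{j}}$), which follows by elementarity from the inaccessibility of $\kappa=j(\crit{j})$ already observed after Definition~\ref{defnexact}.
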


\begin{proposition}\label{proposition:ESRfromExact}
  If  $\kappa$ is  $n$-exact for $\lambda$ and $z$, then $\Sigma_{n+1}(\{z\})$-$\ESR(\kappa ,\lambda)$   holds. In particular, if $\kappa$ is  parametrically $n$-exact for $\lambda$, then $\Sigma_{n+1}(V_\kappa)$-$\ESR(\kappa ,\lambda)$   holds. 
\end{proposition}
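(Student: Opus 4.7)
The plan is to mimic the argument used in Proposition \ref{proposition:ESRFromExact}, but transferred from the transitive-model setting of weak $n$-exactness to the elementary-submodel setting of $n$-exactness, and raised one complexity level to $\Sigma_{n+1}$. Fix a $\Sigma_{n+1}$-formula $\varphi(v_0,v_1)$ such that $\Ce=\Set{A}{\varphi(A,z)}$ is a class of structures of the same type, and fix $B\in\Ce$ of rank $\lambda$. The goal is to produce $B_0\in\Ce$ of rank $\kappa$ together with an elementary embedding of $B_0$ into $B$.

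First, I would apply the hypothesis of $n$-exactness with parameter $A:=B\in V_{\lambda+1}$ to obtain a cardinal $\kappa'\in C^{(n)}$ with $\kappa'>\beth_\kappa$, a cardinal $\lambda'\in C^{(n+1)}$ with $\lambda'>\lambda$, an elementary submodel $X\prec H_{\kappa'}$ with $V_\kappa\cup\{\kappa\}\subseteq X$, and an elementary embedding $j\colon X\to H_{\lambda'}$ satisfying $j(\kappa)=\lambda$, $j(z)=z$, and $B\in\ran{j}$. Let $B_0\in X$ be the unique preimage of $B$ under $j$.

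The main verification splits into two parts. To show that $B_0\in\Ce$: since $\lambda'\in C^{(n+1)}\subseteq C^{(1)}$, one has $V_{\lambda'}=H_{\lambda'}$ and $\Sigma_{n+1}$-absoluteness between $V$ and $V_{\lambda'}$ for parameters in $V_{\lambda'}$, so $\varphi(B,z)$ holds in $H_{\lambda'}$. Elementarity of $j$ together with $j(z)=z$ then gives that $\varphi(B_0,z)$ holds in $X$, and, by $X\prec H_{\kappa'}$, also in $H_{\kappa'}$. Writing $\varphi=\exists x\,\psi$ with $\psi\in\Pi_n$, either the transitivity of $H_{\kappa'}$ alone (in the case $n=0$, since $\psi$ is then $\Sigma_0$) or the membership $\kappa'\in C^{(n)}$ together with $V_{\kappa'}=H_{\kappa'}$ (in the case $n\geq 1$) yields $\varphi(B_0,z)$ in $V$, so $B_0\in\Ce$. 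To show that $B_0$ has rank $\kappa$: since rank is $\Delta_1$, elementarity of $j$ gives $j(\rank{B_0})=\rank{B}=\lambda=j(\kappa)$, and injectivity of $j$ on ordinals forces $\rank{B_0}=\kappa$. Consequently the domain of $B_0$ is contained in $V_\kappa\subseteq X$, so $j$ restricts to the domain of $B_0$, and elementarity of $j$ applied to the $\Delta_1$ satisfaction predicate for first-order structures shows that $j\restriction B_0$ is an elementary embedding of $B_0$ into $j(B_0)=B$.

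The ``in particular'' clause then follows at once from Proposition \ref{proposition:ExactForZ}: parametric $n$-exactness of $\kappa$ for $\lambda$ entails $n$-exactness of $\kappa$ for $\lambda$ and every $z\in V_\kappa$, and applying the first part for each such $z$ yields $\Sigma_{n+1}(V_\kappa)$-$\ESR(\kappa,\lambda)$. The only delicate point is pushing the $\Sigma_{n+1}$ statement $\varphi$ through the non-transitive submodel $X$ in both directions; this is handled precisely by the memberships $\kappa'\in C^{(n)}$ and $\lambda'\in C^{(n+1)}$ demanded by the definition of $n$-exactness, together with $V_\kappa\cup\{\kappa\}\subseteq X$ and $j(z)=z$.
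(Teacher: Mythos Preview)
Your proof is correct and follows essentially the same route as the paper's own argument: apply the definition of $n$-exactness with $A=B$, pull $B$ back to some $B_0\in X$ via $j$, use $\lambda'\in C^{(n+1)}$ to reflect $\varphi(B,z)$ down into $H_{\lambda'}$, transfer to $X$ by elementarity of $j$, and then push $\varphi(B_0,z)$ back out to $V$ using $\kappa'\in C^{(n)}$; the rank computation and the fact that $V_\kappa\subseteq X$ give the elementary embedding of $B_0$ into $B$, and the second clause follows from Proposition~\ref{proposition:ExactForZ}. The paper's proof is more terse (it simply says ``our setup ensures that $\varphi(A,z)$ holds''), whereas you spell out the absoluteness chain explicitly and handle the boundary case $n=0$ separately, but the content is the same.
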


\begin{proof}
 Pick a $\Sigma_{n+1}$-formula $\varphi(v_0,v_1)$ with the property that the class $\Ce=\Set{A}{\varphi(A,z)}$ consists of structures of the same type and fix $B\in\Ce$ of rank $\lambda$. 
 By our assumptions, there exists a cardinal $\beth_\kappa<\kappa'\in C^{(n)}$, a cardinal $\lambda<\lambda'\in C^{(n+1)}$, an elementary submodel $X$ of $H_{\kappa'}$ with $V_\kappa\cup\{\kappa\}\subseteq X$ and an elementary embedding $j:X\to H_{\lambda'}$ with $j(\kappa)=\lambda$, $j(z)=z$ and $B\in\ran{j}$. 
 Pick $A\in X$ with $j(A)=B$. Then our setup ensures that $\varphi(A,z)$ holds and hence $A$ is a structure in $\Ce$ of rank $\kappa$. Moreover, the map $j$ induces an elementary embedding of $A$ into $B$. 
 These computations yield the first part of the proposition. The second part follows directly from a combination of the first part and Proposition \ref{proposition:ExactForZ}.
\end{proof}

 For each natural number $n>0$ and every set $z$, we let $\De_n(z)$ denote the  class of structures  $\langle D, E, a,b,c \rangle$  (in the language of set theory with three additional constant symbols)
 with the property that for some cardinal $\theta \in C^{(n)}$ greater than $\beth_{\rank{D}}$, 
  there exists an elementary submodel $X$ of $H_\theta$ with $V_{\rank{D}}\cup\{\rank{D}\}\subseteq X$
  and an isomorphism   $\pi:\langle D,E\rangle \to \langle X,\in\rangle$  with $\pi(b)=z$ and $\pi(c)=\rank{D}$.
 Note that the class $\De_n(z)$ is definable by a $\Sigma_{n+1}$-formula with parameter $z$.

 \begin{lemma}\label{lemma:ExactFromESR_}
 Let $\kappa<\lambda$ be cardinals, let $z\in V_\kappa$ and let $n>0$ be a natural number with the property that $\ESR_{\De_n(z)}(\kappa,\lambda)$ holds. 
 Then $\kappa$ is  $n$-exact for $\lambda$ and $z$. 
\end{lemma}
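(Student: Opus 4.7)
The plan is to encode an arbitrary $A \in V_{\lambda+1}$ as a constant in a suitably chosen structure of rank $\lambda$ in $\De_n(z)$, apply the assumed instance of $\ESR_{\De_n(z)}(\kappa,\lambda)$ to pull this structure down to rank $\kappa$, and then read off the data required by the definition of $n$-exactness directly from the fact that the two structures both belong to $\De_n(z)$ and are related by an elementary embedding.

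Concretely, I would first fix a cardinal $\lambda' \in C^{(n+1)}$ greater than $\beth_\lambda$ (so in particular $\lambda' \in C^{(n)}$), and pick an elementary submodel $Y \prec H_{\lambda'}$ of cardinality $\beth_\lambda$ with $V_\lambda \cup \{\lambda, z, A\} \subseteq Y$. Choose a bijection $f : Y \to V_\lambda$ with $f \restriction \lambda = \id_\lambda$ and transfer the $\in$-relation to a binary relation $E$ on $V_\lambda$. Then $f^{-1}: \langle V_\lambda, E\rangle \to \langle Y, \in\rangle$ is an isomorphism onto an elementary submodel of $H_{\lambda'}$, and since $f^{-1}(f(\lambda)) = \lambda = \rank{V_\lambda}$ and $f^{-1}(f(z)) = z$, the structure
\[
B ~ = ~ \langle V_\lambda, E, f(A), f(z), f(\lambda)\rangle
\]
witnesses membership in $\De_n(z)$ at rank $\lambda$, with witnessing cardinal $\theta = \lambda'$.

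Applying $\ESR_{\De_n(z)}(\kappa,\lambda)$ to $B$ will yield a structure $\langle D', E', a', b', c'\rangle \in \De_n(z)$ of rank $\kappa$ together with an elementary embedding $e$ into $B$. Unpacking membership in $\De_n(z)$ on the $\kappa$-side produces exactly the objects needed: a cardinal $\kappa' \in C^{(n)}$ greater than $\beth_\kappa$, an elementary submodel $X \prec H_{\kappa'}$ with $V_\kappa \cup \{\kappa\} \subseteq X$, and an isomorphism $\pi': \langle D', E'\rangle \to \langle X, \in\rangle$ satisfying $\pi'(b') = z$ and $\pi'(c') = \kappa$. I would then define
\[
j ~ = ~ f^{-1} \circ e \circ (\pi')^{-1} : X \longrightarrow Y,
\]
which, by elementarity of $Y$ in $H_{\lambda'}$, is an elementary embedding $j : X \to H_{\lambda'}$.

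A short computation then checks $j(\kappa) = f^{-1}(e(c')) = f^{-1}(f(\lambda)) = \lambda$, $j(z) = f^{-1}(e(b')) = z$, and $A = f^{-1}(f(A)) = f^{-1}(e(a')) = j(\pi'(a')) \in \ran{j}$, which together with $X$, $\kappa'$, $\lambda'$ gives precisely the data in the definition of $\kappa$ being $n$-exact for $\lambda$ and $z$. The whole argument is essentially bookkeeping, once the class $\De_n(z)$ is set up so that membership in it delivers an elementary submodel of some $H_\theta$ with $\theta \in C^{(n)}$; I do not anticipate a real obstacle. The one delicate point is to choose $\lambda'$ in $C^{(n+1)}$ (not merely $C^{(n)}$) from the very beginning, because the definition of $n$-exactness requires this stronger correctness on the $\lambda$-side even though $\De_n(z)$ itself only asks for $C^{(n)}$.
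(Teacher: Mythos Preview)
Your proposal is correct and follows essentially the same approach as the paper's own proof: build a rank-$\lambda$ structure in $\De_n(z)$ by collapsing an elementary submodel $Y\prec H_{\lambda'}$ onto $V_\lambda$ via a bijection $f$, apply $\ESR_{\De_n(z)}(\kappa,\lambda)$, and compose the witnessing isomorphism on the $\kappa$-side with $f^{-1}\circ e$ to obtain $j$. The only cosmetic differences are that the paper does not bother to require $f\restriction\lambda=\id_\lambda$ (it is unnecessary here, since the definition of $\De_n(z)$ does not demand that $\pi^{-1}$ fix ordinals below the rank), and your explicit remark about needing $\lambda'\in C^{(n+1)}$ rather than merely $C^{(n)}$ is a point the paper leaves implicit.
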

 
\begin{proof}
 Fix $A\in V_{\lambda+1}$, $\lambda'\in C^{(n+1)}$ greater than $\lambda$ and an elementary submodel $Y$ of $H_{\lambda'}$ of cardinality $\beth_\lambda$ with $V_\lambda\cup\{A,\lambda\}\subseteq Y$. 
 Pick a bijection $f:Y\to V_\lambda$ and let $R$ be the binary relation on $V_\lambda$ induced by $f$ and $\in$. 
  Then $\lambda'$ and $f^{{-}1}$ witness that $\langle V_\lambda,R,f(A),f(z),f(\lambda)\rangle$ is a structure of rank $\lambda$ in $\De_n(z)$. 
  By our assumptions, there exists a structure $\langle D,E,a,b,c\rangle$ of rank $\kappa$ in $\De_n(z)$ and an elementary embedding $$i:\langle D,E,a,b,c\rangle\to\langle V_\lambda,R,f(A),f(z),f(\lambda)\rangle.$$
  Pick a cardinal $\kappa'\in C^{(n)}$, an elementary submodel $X$ of $H_{\kappa'}$ and an isomorphism  $\pi:\langle D,E\rangle\to\langle X,\in\rangle$ witnessing that $\langle D,E,a,b,c\rangle$ is an element of $\De_n(z)$. 
   Then $\pi(b)=z$, $\pi(c)=\kappa$, $\beth_\kappa<\kappa'$ and  $V_\kappa\cup\{\kappa\}\subseteq X$. Define $$j ~ = ~ f^{{-}1}\circ i\circ\pi^{{-}1}:X\to H_{\lambda'}.$$ 
   Then $j$ is an elementary embedding with $j(\kappa)=\lambda$, $j(z)=z$ and $A=j(\pi(a))\in \ran{j}$. 
\end{proof}

%%%%%%%%%%%%%%%%%%%%%%%%%%%%%%%

 \section{Proofs of the main theorems}
 
 We shall now give a proof of Theorems \ref{theorem:CharacterizationPI-ESR} and \ref{theorem:CharacterizationSigma-ESR}.
 
 \begin{proof}[Proof of Theorem \ref{theorem:CharacterizationPI-ESR}]
 Fix a cardinal $\kappa$ and a natural number $n>0$. 
 
 The next two claims, together with  Proposition \ref{proposition:ESRFromExact} and Corollary \ref{corollary:WeakExactStronger}, will  allow us to conclude  that all  statements (i)-(v)  listed in the theorem are equivalent.
 
 \begin{claim*}
   If $\kappa$ is the least regular cardinal with the property that $\Pi_n^{ic}$-$\ESR(\kappa)$ holds, then $\kappa$ is weakly parametrically $n$-exact for some $\lambda>\kappa$. 
 \end{claim*}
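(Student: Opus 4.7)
The approach is to combine the descent from Lemma \ref{lemma:NonIsoFromIso} with an augmentation of the class $\mathcal{W}_n(\emptyset)$ designed to place an arbitrary $A\in V_{\rho+1}$ in the range of the collapse embedding while simultaneously forcing $j(\crit{j})\leq\kappa$, after which the minimality of $\kappa$ promotes this inequality to the required equality $j(\crit{j})=\kappa$.

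Fix $\lambda>\kappa$ witnessing $\Pi_n^{ic}$-$\ESR(\kappa,\lambda)$. Applying Lemma \ref{lemma:NonIsoFromIso} yields inaccessible cardinals $\delta\leq\kappa<\rho\leq\lambda$ with $\Pi_n(V_\delta)$-$\ESR(\delta,\rho)$. The first job is to argue $\delta=\kappa$: Corollary \ref{corollary:WeakExactStronger} makes $\delta$ weakly $n$-exact for $\rho$ and every $z\in V_\delta$, and by folding the parameter of an arbitrary isomorphism-closed $\Pi_n$-definable class into the witnessing element and pulling the class through the resulting embeddings, one upgrades this to $\Pi_n^{ic}$-$\ESR(\delta,\rho)$, contradicting minimality unless $\delta=\kappa$; hence $\Pi_n(V_\kappa)$-$\ESR(\kappa,\rho)$ holds. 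Given now $A\in V_{\rho+1}$, define $\mathcal{W}'_n$ as the class of structures $\langle D,E,a,b,c,d,e,f,g,h\rangle$ with $\langle D,E,a,b,c,d,e,f,g\rangle\in\mathcal{W}_n(\emptyset)$ and $h\in D$: this is still $\Pi_n$-definable and isomorphism-closed, and any prescribed element can be flagged by the extra constant. Choose $\lambda'\in C^{(n)}$ sufficiently large, an elementary submodel $X\prec H_{\lambda'}$ containing $V_\rho\cup(\beth_\rho+1)\cup\{A\}$, and build the structure $\langle X,\in,\kappa,0,\rho,\beth_\rho,\emptyset,h_0,h_1,A\rangle$ as in the proof of Lemma \ref{lemma:EmbFromESRisoC} with $A$ in the new slot. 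Applying $\Pi_n(V_\kappa)^{ic}$-$\ESR(\kappa,\rho)$ via Proposition \ref{proposition:CharClosedIso} produces an elementary embedding $i$ from a small structure in $\mathcal{W}'_n$ into this one; letting $\pi$ be its transitive collapse and setting $j=i\circ\pi^{-1}\colon M\to H_{\lambda'}$, the $\mathcal{W}_n$-architecture delivers transitivity and $\Pi_n(V_{\kappa+1})$-correctness of $M$, $V_\kappa\cup\{\kappa\}\subseteq M$, $j(\kappa)=\rho$, and, via the new constant, $A=j(\pi(h))\in\ran{j}$. The constant $a$, whose $\pi$-image is an ordinal strictly below $\kappa$ mapped by $j$ to $\kappa$, forces $\crit{j}<\kappa$ and $j(\crit{j})\leq\kappa$. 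Since $V_\kappa\subseteq M$ and $\crit{j}<\kappa$, the critical point is measurable in $M$, so $\mu:=j(\crit{j})$ is inaccessible; Lemma \ref{lemma:FullReflectionC}(ii) then gives $\Pi_n(V_\mu)$-$\ESR(\mu,j(\mu))$, and the same minimality argument as in the first step forces $\mu=\kappa$.

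The main obstacle is the bridge $\Pi_n(V_\mu)$-$\ESR(\mu,\cdot)\Rightarrow\Pi_n^{ic}$-$\ESR(\mu)$ at inaccessible $\mu$, which the minimality argument requires in both the descent and the final step. The two principles impose a priori incomparable restrictions—parameters from $V_\mu$ with no isomorphism-closure on one side versus arbitrary parameters with isomorphism-closure on the other—so the reduction has to be carried out semantically through the weakly $n$-exact embeddings supplied by Corollary \ref{corollary:WeakExactStronger}, arranging the parameter of the target class to lie in the range of the embedding by absorbing it into the witnessing element. Once that bridge is in place, the rest of the argument is a direct adaptation of the proofs of Lemmas \ref{lemma:EmbFromESRisoC} and \ref{lemma:NonIsoFromIso}.
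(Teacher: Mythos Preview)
Your proof is correct, but the ``main obstacle'' you identify is a phantom caused by misreading the notation. In this paper, $\Pi_n^{ic}$-$\ESR(\mu)$ (with no parameter class displayed) means reflection for \emph{parameter-free} $\Pi_n$-definable classes that are isomorphism-closed; it does not allow arbitrary parameters. Thus $\Pi_n(V_\mu)$-$\ESR(\mu,\nu)\Rightarrow\Pi_n$-$\ESR(\mu,\nu)\Rightarrow\Pi_n^{ic}$-$\ESR(\mu,\nu)$ are trivial restrictions, and both of your minimality steps (first for $\delta$, then for $\mu=j(\crit{j})$) go through immediately without any ``semantic bridge'' via weakly exact embeddings. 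This is exactly how the paper argues: Lemma \ref{lemma:NonIsoFromIso} plus minimality give $\delta=\kappa$ directly.

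With that clarified, your argument and the paper's follow the same outline, differing only in how the witnessing embedding is produced once $\Pi_n(V_\kappa)$-$\ESR(\kappa,\rho)$ is in hand. The paper simply invokes Lemma \ref{lemma:WeakExactFromESR}, which uses the \emph{non}-isomorphism-closed class $\mathcal{E}_n(z)$ to obtain $j:M\to H_{\lambda'}$ with $V_\kappa\cup\{\kappa\}\subseteq M$, $j(\kappa)=\lambda$, and $A,\kappa\in\ran{j}$; the presence of $\kappa$ in the range immediately gives $j(\crit{j})\leq\kappa$, and Lemma \ref{lemma:FullReflectionC}(ii) plus minimality finish. You instead stay within the isomorphism-closed machinery, augmenting $\mathcal{W}_n(\emptyset)$ by an extra constant to carry $A$. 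This works, but only because $\kappa$ is inaccessible: the bijection conditions in $\mathcal{W}_n$ force $\beth_{\pi(c)}=\betrag{D}\in[\cof{\kappa},\beth_\kappa]=\{\kappa\}$, whence $\pi(c)=\kappa$; without inaccessibility you would only get $\pi(c)\leq\kappa$, which is why the paper's route through the rigid class $\mathcal{E}_n$ is cleaner. Both approaches then conclude identically via Lemma \ref{lemma:FullReflectionC}(ii).
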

 
 \begin{proof}[Proof of the Claim]
  By Lemma \ref{lemma:NonIsoFromIso} and the minimality of $\kappa$, we know that $\kappa$ is an inaccessible cardinal with the property that the principle  $\Pi_n(V_\kappa)$-$\ESR(\kappa,\lambda)$ holds for some inaccessible cardinal $\lambda>\kappa$. 
  Assume, towards a contradiction, that $\kappa$ is not weakly parametrically $n$-exact for $\lambda$. 
  Then there exists $A\in V_{\lambda+1}$ with the property that $j(\crit{j})\neq\kappa$ holds whenever $M$ is a transitive, $\Pi_n(V_{\kappa+1})$-upwards correct  set  with $V_\kappa\cup \{\kappa\} \subseteq M$, $\lambda'\in C^{(n-1)}$ is a cardinal  greater than $\lambda$      and $j:M\to H_{\lambda'}$ is an elementary embedding  with $j(\kappa)=\lambda$ and $A\in\ran{j}$.  
  An application of Lemma \ref{lemma:WeakExactFromESR} now allows us to find a transitive, $\Pi_n(V_{\kappa+1})$-upwards correct  set $M$ with $V_\kappa\cup \{\kappa\} \subseteq M$, a cardinal $\lambda'$ with $\lambda<\lambda'\in C^{(n)}$ and an elementary embedding $j:M\to H_{\lambda'}$ with $j(\kappa)=\lambda$ and $A,\kappa\in\ran{j}$. 
  Then $j(\crit{j})\leq\kappa$ and hence  $j(\crit{j})<\kappa$. Set $\mu=j(\crit{j})\in M\cap\kappa$ and $\nu=j(\mu)$. 
  We can now apply Lemma \ref{lemma:FullReflectionC__} to conclude that $\Pi_n(V_\mu)$-$\ESR(\mu,\nu)$ holds, contradicting the minimality of $\kappa$. 
 \end{proof}

 \begin{claim*}
  If $\kappa$ is the least cardinal that is weakly $n$-exact for some cardinal $\lambda$, then $\kappa$ is regular. 
 \end{claim*}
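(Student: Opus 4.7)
The plan is to argue by contradiction: suppose $\kappa$ is singular and is the least cardinal weakly $n$-exact for some $\lambda>\kappa$. Let $\tau:=\cof{\kappa}<\kappa$ and fix a cofinal function $f:\tau\to\kappa$; then $f\in V_{\kappa+1}\subseteq V_{\lambda+1}$. Applying weak $n$-exactness to $A=f$ produces a transitive, $\Pi_n(V_{\kappa+1})$-correct set $M\supseteq V_\kappa\cup\{\kappa\}$, a cardinal $\lambda'\in C^{(n-1)}$ above $\beth_\lambda$, and an elementary embedding $j:M\to H_{\lambda'}$ with $j(\kappa)=\lambda$ and $f\in\ran{j}$. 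Write $f=j(f_0)$ with $f_0\in M$. By elementarity, $\tau=\operatorname{dom}(f)=j(\operatorname{dom}(f_0))$ and $\kappa=\sup\ran{f}=j(\sup\ran{f_0})$, so both $\tau_0:=j^{-1}(\tau)\in M$ and $\mu_0:=j^{-1}(\kappa)=\sup\ran{f_0}\in M$ exist. Strict monotonicity of $j$ on ordinals together with $j(\mu_0)=\kappa<\lambda=j(\kappa)$ gives $\mu_0<\kappa$, while $j(\mu_0)>\mu_0$ forces $\crit{j}\leq\mu_0$.

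Split into two cases. If $\crit{j}<\mu_0$ strictly, set $\mu:=j(\crit{j})$; then $\mu\in\ran{j}\cap(\crit{j},\kappa)$, using $j(\crit{j})<j(\mu_0)=\kappa$. Lemma \ref{lemma:FullReflectionC}(i) applied with $z=\emptyset$ then yields $\Pi_n(\{\emptyset\})$-$\ESR(\mu,j(\mu))$, and Corollary \ref{corollary:WeakExactStronger} makes $\mu<\kappa$ weakly $n$-exact for $j(\mu)$, contradicting the minimality of $\kappa$. Otherwise $\crit{j}=\mu_0$, so $j(\crit{j})=\kappa$ and, by elementarity, $f_0:\tau_0\to\crit{j}$ is cofinal in $\crit{j}$. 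If $\tau_0<\crit{j}$, then $\tau=j(\tau_0)=\tau_0$, and for every $\alpha<\tau$ the value $f_0(\alpha)<\crit{j}$ is fixed by $j$; hence $f=j(f_0)=f_0$ as functions, giving $\sup\ran{f}=\sup\ran{f_0}\leq\crit{j}<\kappa$, contrary to $f$ being cofinal in $\kappa$. If instead $\tau_0\geq\crit{j}$, then $j(\tau_0)\geq j(\crit{j})=\kappa$, contradicting $j(\tau_0)=\tau<\kappa$. In every branch we reach a contradiction, so $\kappa$ must be regular.

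The key insight is that feeding a single cofinal function $f$ into weak $n$-exactness simultaneously drags both $\tau$ and $\kappa$ into $\ran{j}$ via the elementarity-preserved \emph{domain} and \emph{supremum of range}, bypassing any ad-hoc coding of $\kappa$ into the reflected parameter. Once this is in place, the strictly-less-than branch is closed by Lemma \ref{lemma:FullReflectionC}(i) combined with the minimality of $\kappa$, while the equality branch replays the cofinal-sequence contradiction from the observation following Definition \ref{definition:WeaklyExactCardinal}, using the parametricity $j(\crit{j})=\kappa$ derived for the specific witness at hand rather than assumed up front.
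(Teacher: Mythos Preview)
Your proof is correct, but it takes a more elaborate route than the paper's. The paper simply applies weak $n$-exactness with $A=\kappa$ to obtain $\kappa\in\ran{j}$, and then observes that $\mu:=j(\crit{j})\leq\kappa$ is \emph{regular}: since $V_\kappa\subseteq M$, the critical point $\crit{j}$ is regular, and elementarity transfers this to $\mu$. As $\kappa$ is assumed singular, this forces $\mu<\kappa$ outright, and a single application of Lemma~\ref{lemma:FullReflectionC}(ii) together with Corollary~\ref{corollary:WeakExactStronger} finishes the argument. Your approach instead encodes a cofinal function $f:\tau\to\kappa$ to pull both $\tau$ and $\kappa$ into $\ran{j}$, and then splits on whether $\crit{j}<\mu_0$ or $\crit{j}=\mu_0$. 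Your equality branch is essentially a by-hand rediscovery of the regularity of $\crit{j}$: the cofinal-sequence contradiction you derive is precisely the standard proof that the critical point of such an embedding cannot be singular when $V_\kappa\subseteq M$. Thus what you gain in self-containedness you lose in economy; the paper's one-line regularity observation collapses your case split entirely. (Note also that once $\mu_0<\kappa$ is known, elementarity of $j$ applied to the statement $\tau<\kappa$ already gives $\tau_0<\mu_0=\crit{j}$, so your Sub-case~2b is vacuous.)
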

 
 \begin{proof}[Proof of the Claim]
  Assume, towards a contradiction, that $\kappa$ is singular.  
  Then Proposition \ref{proposition:ESRFromExact} yields a cardinal $\lambda>\kappa$ with the property that $\Pi_n$-$\ESR(\kappa,\lambda)$ holds. 
  In this situation, we can apply Lemma \ref{lemma:WeakExactFromESR} to find a cardinal $\lambda'$ with  $\beth_\lambda<\lambda'\in C^{(n)}$, a transitive, $\Pi_n(V_{\kappa+1})$-upwards correct set $M$ with $V_\kappa\cup\{\kappa\} \subseteq M$ and an elementary embedding $j:M\to H_{\lambda'}$ with $j(\kappa)=\lambda$ and $\kappa\in\ran{j}$. 
  Set $\mu=j(\crit{j})$. Then elementarity implies that $\mu$ is regular and, since the fact that $\kappa\in j(\kappa)\cap\ran{j}$ ensures that   $\mu\leq\kappa$, we know that   $\mu<\kappa$. 
  In this situation, the second part of Lemma \ref{lemma:FullReflectionC__} implies that $\Pi_n$-$\ESR(\mu,j(\mu))$ holds and this allows us to apply Corollary \ref{corollary:WeakExactStronger} to conclude that $\mu$ is weakly $n$-exact for $j(\mu)$,   contradicting the minimality of $\kappa$.  
 \end{proof}

    By the first claim above,  the cardinal that satisfies (i) of the Theorem is greater than or equal to the cardinal that satisfies (v), which, by  Proposition \ref{proposition:ESRFromExact},  
    %and the two Claims above, 
    is greater than or equal to the cardinal that satisfies (iii).  %(iii). 
    Moreover, the cardinal satisfying (iii) is obviously greater than or equal to the cardinal that satisfies (ii), 
    and an application of Corollary \ref{corollary:WeakExactStronger} then shows that the cardinal satisfying (ii) is greater than or equal to the cardinal that satisfies (iv). 
    Our second claim then shows that the cardinal satisfying (iv) is regular and this allows us to use Corollary \ref{corollary:WeakExactStronger} again to conclude that it is greater than or equal to the cardinal satisfying (i). 
    % Again by Proposition \ref{proposition:ESRFromExact}, the latter is greater than or equal to the cardinal that satisfies (i), which by the first Claim above and Proposition \ref{proposition:ESRFromExact} is greater than or equal to the cardinal that satisfies (ii). Finally, by the second Claim, the latter is greater than or equal to the cardinal that satisfies (i). 
    This shows that all of these cardinals are equal. 
     \end{proof}

\medskip
 
 \begin{proof}[Proof of Theorem \ref{theorem:CharacterizationSigma-ESR}]
  Fix a cardinal $\kappa$ and a natural number $n>0$. 
  
  The next two claims, together with  Proposition \ref{proposition:ESRfromExact}, will  allow us to conclude  that all  statements (i)-(iv)  listed in the theorem are equivalent.
 
 \begin{claim*}
  If $\kappa$ is the least  cardinal with the property that $\Sigma_{n+1}$-$\ESR(\kappa)$ holds, then $\kappa$ is parametrically $n$-exact for some $\lambda>\kappa$. 
 \end{claim*}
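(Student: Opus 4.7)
The plan is to follow the template used for the analogous claim in Theorem \ref{theorem:CharacterizationPI-ESR}, with the $\Pi_n$-inputs replaced by their $\Sigma_{n+1}$-counterparts. Fix $\lambda>\kappa$ witnessing $\Sigma_{n+1}$-$\ESR(\kappa)$. Since the class $\De_n(\emptyset)$ is $\Sigma_{n+1}$-definable without parameters, $\ESR_{\De_n(\emptyset)}(\kappa,\lambda)$ holds, so Lemma \ref{lemma:ExactFromESR_} yields that $\kappa$ is $n$-exact for $\lambda$ and $\emptyset$. Assume, towards a contradiction, that $\kappa$ is not parametrically $n$-exact for $\lambda$; then there is $A_0 \in V_{\lambda+1}$ such that $j(\crit{j})\neq\kappa$ holds in every witness $(X,j,\kappa',\lambda')$ to the $n$-exactness of $\kappa$ for $\lambda$ and $\emptyset$ with $A_0 \in \ran{j}$.

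Applying the $n$-exactness of $\kappa$ for $\lambda$ and $\emptyset$ to a suitable element of $V_{\lambda+1}$ encoding the pair $\langle A_0,\kappa\rangle$, I obtain a witness $(X,j,\kappa',\lambda')$ with both $A_0,\kappa\in\ran{j}$. Writing $\kappa=j(\alpha)$ for some $\alpha\in X$, the equality $j(\kappa)=\lambda>\kappa$ together with the order-preservation of $j$ on ordinals forces $\alpha<\kappa$, so $\crit{j}\leq\alpha$ and $j(\crit{j})\leq j(\alpha)=\kappa$. Our hypothesis on $A_0$ then gives $\mu:=j(\crit{j})<\kappa$; set $\nu:=j(\mu)$. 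The key step is now to establish the $\Sigma_{n+1}$-analog of Lemma \ref{lemma:FullReflectionC}(ii), namely that from this data the principle $\Sigma_{n+1}(V_\mu)$-$\ESR(\mu,\nu)$ must hold. Since $\mu<\kappa$, this at once contradicts the minimality of $\kappa$.

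The proof of this analog would mirror the proof of Lemma \ref{lemma:FullReflectionC}(ii). Given a putative failure witnessed by a $\Sigma_{n+1}$-formula $\varphi(v_0,v_1)$, a parameter $z\in V_\mu$ and a structure $B\in V_{\nu+1}$ of rank $\nu$ in $\Ce=\{A:\varphi(A,z)\}$, I would bound the existential over $B$ by $V_{\nu+1}$ and the universal over potential $A$ by $V_{\mu+1}$, transfer the resulting bounded statement from $V$ to $H_{\lambda'}$ using $\lambda'\in C^{(n+1)}$, reflect it into $X$ via the elementarity of $j$, and use $X\prec H_{\kappa'}$ together with $\kappa'\in C^{(n)}$ and $V_\kappa\subseteq X$ to push the reflected failure statement back into $V$. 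The desired contradiction then follows by exhibiting the elementary embedding $j\restriction B_0:B_0\to j(B_0)$ for a witness $B_0\in X$. The principal obstacle is the complexity bookkeeping in this transfer chain: the failure statement genuinely combines a $\Sigma_{n+1}$-conjunct (the existence of the offending $B$ in $\Ce$) and a $\Pi_{n+1}$-conjunct (the universal quantifier over candidate reflecting structures), so one must verify that once boundedness is applied, the statement lies in a class preserved by each of the absoluteness links used in the transfer.
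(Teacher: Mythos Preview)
Your approach is correct and essentially the same as the paper's; the paper simply argues directly for each $D\in V_{\lambda+1}$ (rather than by contradiction on parametric exactness) and only establishes the parameter-free principle $\Sigma_{n+1}$-$\ESR(\mu,\nu)$, which already suffices for the minimality contradiction. The complexity bookkeeping you flag resolves cleanly: since $\lambda'\in C^{(n+1)}$, both the $\Sigma_{n+1}$ and the $\Pi_{n+1}$ conjuncts of the failure statement transfer to $H_{\lambda'}$, and after reflecting via $j$ to $X$ and fixing a witness $B_0$, only the $\Sigma_{n+1}$-assertion $\varphi(B_0)$ needs to return from $X$ to $V$, which is upward-absolute via $X\prec H_{\kappa'}$ with $\kappa'\in C^{(n)}$.
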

 
 \begin{proof}[Proof of the Claim]
  Fix $D\in V_{\lambda+1}$. 
  Using Lemma \ref{lemma:ExactFromESR_}, we find a cardinal $\beth_\kappa<\kappa'\in C^{(n)}$, cardinals $\kappa<\lambda<\lambda'\in C^{(n+1)}$, an elementary submodel $X$ of $H_{\kappa'}$ with $V_\kappa\cup\{\kappa\}\subseteq X$ and an elementary embedding $j:X\to V_{\lambda'}$ with $j(\kappa)=\lambda$ and $D,\kappa\in\ran{j}$. 
  Set $\mu=j(\crit{j})\leq\kappa\in X$ and $\nu=j(\mu)$.   Assume, towards a contradiction, that $\Sigma_{n+1}$-$\ESR(\mu,\nu)$ fails. 
  Then we can find a $\Sigma_{n+1}$-formula $\varphi$ with the property that the class  $\Ce=\Set{A}{\varphi(A)}$ consists of structures of the same type and $B\in\Ce$ of rank $\nu$ with the property that for all $A\in \Ce$ of rank $\mu$, there is no elementary embedding of $A$ into $B$. 
  Since $\lambda'\in C^{(n+1)}$, this statement also holds in $H_{\lambda'}$ and therefore the elementarity of $j$ allows us to find $B_0\in X$ of rank $\mu$ with the property that $\varphi(j(B_0))$ holds in $H_{\lambda'}$ and for all $A\in V_{\mu+1}\setminus V_\mu$ such that $\varphi(A)$ holds, there is no elementary embedding of $A$ into $j(B_0)$ in $H_{\lambda'}$. 
  But this yields a contradiction, because our setup ensures that $\varphi(B_0)$ holds in $V$ and the fact that $V_\kappa\subseteq X$ implies that $j$ induces an elementary embedding of $B_0$ into $j(B_0)$ that is an element of $H_{\lambda'}$. 
  In this situation, the minimality of $\kappa$ implies that $\kappa=\mu$ and $\nu=\lambda$. In particular, we can conclude that $\kappa$ is parametrically $n$-exact for $\lambda$. 
 \end{proof}

 Since Proposition \ref{proposition:ESRfromExact} shows that $\Sigma_{n+1}(V_\kappa)$-$\ESR(\kappa)$ holds whenever $\kappa$ is parametrically $n$-exact for some cardinal $\lambda>\kappa$ and therefore  all statements listed in the theorem imply that $\Sigma_{n+1}$-$\ESR(\kappa)$ holds, the above claim allows us to conclude that all of the listed statements are equivalent. 
 \end{proof}
 
 \medskip
 
 In the remainder of this section, we  show that, for all $n>0$, exact structural reflection for $\Sigma_{n+1}$-definable classes is strictly stronger than exact structural reflection for $\Pi_n$-definable classes.  
  In combination with the equivalences provided by Theorems \ref{theorem:CharacterizationPI-ESR} and \ref{theorem:CharacterizationSigma-ESR}, 
  the following lemma shows that, in general, the principle $\Pi_n(V_\kappa)$-$\ESR(\kappa)$ does not imply the principle $\Sigma_{n+1}$-$\ESR(\kappa)$. 
  This statement should be compared with the results of {\cite[Section 4]{Ba:CC}}, showing that the validity of the principle $\mathrm{SR}$ for $\Pi_n$-definable classes of structures is equivalent to the validity of this principle for $\Sigma_{n+1}$-definable classes.

\begin{lemma}\label{lemma:CharESREmbSigma2}
 Let $n>0$ be a natural number and let $\kappa$ be parametrically $n$-exact for some cardinal $\lambda>\kappa$. 
 Then the set of cardinals $\mu<\kappa$ with  the property that $\Pi_n(V_\mu)$-$\ESR(\mu,\kappa)$ holds is stationary in $\kappa$. 
 In particular, there exists a cardinal $\mu<\kappa$  that is weakly parametrically $n$-exact for some cardinal $\nu>\mu$. 
 %   Let $\kappa<\lambda$ be cardinals and let $z\in\VV_\kappa$. 
  %  Assume that $n>0$ and $\kappa$ is the least cardinal for which $\Sigma_{n+1}(\{ z\})$-$\ESR {(\kappa,\lambda)}$ holds. 
  % Then the set of all cardinals $\bar{\kappa}<\kappa$ with $z\in\VV_{\bar{\kappa}}$ such that $\Pi_n(\{ z\})$-$\ESR {(\bar{\kappa},\kappa)}$ holds   is stationary in $\kappa$. 
\end{lemma}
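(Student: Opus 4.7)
The plan is to mimic the proof of Lemma \ref{lemma:WeakESRn+1ESRn}, with parametric $n$-exactness replacing weak parametric $n$-exactness and with the elementarity of $X\prec H_{\kappa'}$ (together with $\kappa'\in C^{(n)}$ and $\lambda'\in C^{(n+1)}$) playing the role previously occupied by the $\Pi_n(V_{\kappa+1})$-correctness of $M$. Fix a club $K\subseteq\kappa$. First, I would apply the parametric $n$-exactness of $\kappa$ for $\lambda$ with $A=K$ to obtain $\kappa'\in C^{(n)}$, $\lambda'\in C^{(n+1)}$, an elementary submodel $X\prec H_{\kappa'}$ containing $V_\kappa\cup\{\kappa\}$, and an elementary embedding $j:X\to H_{\lambda'}$ with $j(\crit{j})=\kappa$, $j(\kappa)=\lambda$ and $K\in\ran{j}$. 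Setting $\mu=\crit{j}$ and picking $K_0\in X$ with $j(K_0)=K$, elementarity forces $K_0$ to be a club in $\mu$; since $j$ fixes all ordinals below $\mu$, $K_0\subseteq K$, and closure of $K$ yields $\mu\in K$.

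The main step is to show that $\Pi_n(V_\mu)$-$\ESR(\mu,\kappa)$ holds. Arguing by contradiction, a failure furnishes a $\Pi_n$-formula $\varphi$, a parameter $z\in V_\mu$, and a structure $B\in V_{\kappa+1}\setminus V_\kappa$ with $\varphi(B,z)$ such that no member of $\Ce\cap(V_{\mu+1}\setminus V_\mu)$ admits an elementary embedding into $B$, where $\Ce=\Set{A}{\varphi(A,z)}$. The sentence expressing this failure,
\[
\exists B\in V_{\kappa+1}\setminus V_\kappa\,\bigl[\varphi(B,z)\wedge\forall A\in V_{\mu+1}\setminus V_\mu\,\forall\pi\bigl(\varphi(A,z)\to \pi\text{ is not an elementary embedding }A\to B\bigr)\bigr],
\]
is $\Sigma_{n+1}$ in parameters $z,\mu,V_\mu,\kappa,V_\kappa$: the bounded quantifiers over $A$ and (after bounding by a fixed $V_{\kappa+\omega}$) over $\pi$ preserve the $\Sigma_n$-character of the body, and conjoining with the $\Pi_n$-formula $\varphi(B,z)$ yields a matrix in $\Delta_{n+1}$. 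Using $\kappa'\in C^{(n)}$ and the fact that all relevant objects lie bounded inside $H_{\kappa'}$, this failure transfers from $V$ to $H_{\kappa'}$ and hence, by $X\prec H_{\kappa'}$, to $X$.

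Next I would apply $j$ to the failure statement in $X$; since $V_\mu\subseteq X$ forces $j\restriction V_\mu=\id_{V_\mu}$ (so $j(z)=z$) and elementarity gives $j(V_\mu)=V_\kappa$ and $j(V_\kappa)=V_\lambda$, the image asserts in $H_{\lambda'}$ the existence of $B\in V_{\lambda+1}\setminus V_\lambda$ in $\Ce$ admitting no elementary embedding from any element of $\Ce\cap(V_{\kappa+1}\setminus V_\kappa)$. Because $\lambda'\in C^{(n+1)}$, this $\Sigma_{n+1}$ statement is absolute between $H_{\lambda'}$ and $V$, so $\Pi_n(\{z\})$-$\ESR(\kappa,\lambda)$ fails in $V$. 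This contradicts Proposition \ref{proposition:ESRfromExact}, which applied to the parametric $n$-exactness of $\kappa$ for $\lambda$ yields $\Sigma_{n+1}(V_\kappa)$-$\ESR(\kappa,\lambda)$ and therefore, since $\Pi_n\subseteq\Sigma_{n+1}$, also $\Pi_n(\{z\})$-$\ESR(\kappa,\lambda)$.

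The ``In particular'' clause will follow at once: the stationarity just proved shows that the least cardinal $\mu_0$ satisfying $\Pi_n(V_{\mu_0})$-$\ESR(\mu_0)$ lies strictly below $\kappa$, and Theorem \ref{theorem:CharacterizationPI-ESR} then identifies $\mu_0$ as weakly parametrically $n$-exact for some $\nu>\mu_0$. The hard part will be the complexity bookkeeping: carefully verifying that the failure formula really is $\Sigma_{n+1}$ in the stated parameters so that bounded absoluteness sends it into $H_{\kappa'}$ and $\Sigma_{n+1}$-absoluteness sends its $j$-image out of $H_{\lambda'}$ back to $V$, alongside confirming that $j$ behaves correctly on $V_\mu$ despite the non-transitivity of $X$.
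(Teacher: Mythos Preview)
Your proposal is correct and follows essentially the same route as the paper: obtain the embedding $j:X\to H_{\lambda'}$ with $j(\crit{j})=\kappa$ and $K\in\ran{j}$, show $\crit{j}\in K$, and derive a contradiction from a putative failure of $\Pi_n(V_{\crit{j}})$-$\ESR(\crit{j},\kappa)$ by pushing it through $j$ using the $C^{(n)}$/$C^{(n+1)}$ correctness of $\kappa'$ and $\lambda'$. The only cosmetic difference is in the last step: where you transfer the full failure sentence to $H_{\lambda'}$ and then to $V$, invoking Proposition~\ref{proposition:ESRfromExact} for the contradiction, the paper instead picks a concrete witness $B\in X$ for the failure and observes directly that $j\restriction B:B\to j(B)$ is the forbidden elementary embedding---this avoids your absoluteness bookkeeping for the $\Sigma_{n+1}$ sentence, but both arguments are sound and rest on the same ideas.
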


\begin{proof}
 Fix a closed unbounded subset $K$ of $\kappa$. 
 Using our assumptions, we can find a cardinal  $\kappa<\kappa'\in C^{(n)}$, a cardinal $\lambda<\lambda'\in C^{(n+1)}$, an elementary submodel $X$ of $H_{\kappa'}$ with $V_\kappa\cup\{\kappa\}\subseteq X$ and an elementary embedding $j:X\to H_{\lambda'}$ with $j(\kappa)=\lambda$, $j(\crit{j})=\kappa$ and $K\in\ran{j}$. 
 Then $\crit{j}$ is an element of $K$. 
 
 Now, assume towards a contradiction, that there is a $\Pi_n$-formula $\varphi(v_0,v_1)$ and $z\in V_\crit{j}$ such that the class $\Ce=\Set{A}{\varphi(A,z)}$ consists of structures of the same type and there exists $B\in\Ce$ of rank $\kappa$ such that for all $A\in\Ce$ of rank $\crit{j}$, there is no elementary embedding from $A$ into $B$. 
 Since $\kappa<\kappa'\in C^{(n)}$, these statements hold in $H_{\kappa'}$ and we can find $B\in\Ce\cap X$ of rank $\kappa$ with the property that  for every $A\in\Ce$ of rank $\crit{j}$, there is no elementary embedding of $A$ into $B$. 
 In this situation, since $\lambda<\lambda'\in C^{(n+1)}$, elementarity implies that $j(B)$ is an element of $\Ce$ and for every $A\in\Ce$ of rank $\kappa$, there is no elementary embedding  of $A$ into $j(B)$. 
 But this yields a contradiction, because the fact that $V_\kappa$ is a subset of $X$ implies that $j$ induces an elementary embedding of $B$ into $j(B)$. % Following the proof of Theorem \ref{thm3} (using $\ESR_{\mathcal{D}_n(z)}(\kappa,\lambda)$) we can  find $\kappa'\in C^{(n)}$ greater than $\kappa$ and $\lambda'\in C^{(n+1)}$ greater than $\lambda$, and   an elementary embedding $j:X\to V_{\lambda'}$, for some $X\preceq  V_{\kappa'}$ with  $V_\kappa\cup \{ \kappa\}    \subseteq X$, such that $j(\kappa)=\lambda$, $j({\rm crit}(j))=\kappa$, $j(z)=z$, and   $K$ is in the range of $j$. 
 % Then $\bar{\kappa}:=j^{-1}(\kappa) \in K$. 
 % Assume, towards a contradiction, that $\ESR_{\Ce}{(\bar{\kappa},\kappa)}$ fails for some class $\Ce$ of structures of the same type that is definable by a $\Pi_n$-formula $\varphi(v_0,v_1)$ with parameter $z$. 
 % Since the latter holds in $V_{\kappa'}$ and $X\preceq V_{\kappa'}$,  we can find a structure $B$ of the given type of rank $\kappa$ in $X$  with the property that $\varphi(B,z)$ holds in $X$ and there is no structure $A$ of the given type of rank $\bar{\kappa}$ in $X$ such that  $\varphi(A,z)$ holds in $X$ and there exists an elementary embedding of $A$ into $B$ in $X$. 
 % But then, by the elementarity of $j$ and the fact that $V_{\lambda'}$ is $\Pi_n$-correct in $V$, we have that $j(B)$ is a structure in $\Ce$ of rank $\lambda$ with the property that there exists no embedding of a structure of rank $\kappa$ in $\Ce$ into $j(B)$, contradicting the fact that $ESR_{\Ce}{(\kappa,\lambda)}$ holds. 
 
 The above computations yield the first part of the lemma. The second part follows directly from a combination of the first part with Theorem \ref{theorem:CharacterizationPI-ESR}. 
\end{proof}

%%%%%%%%%%%%

\section{The strength of exact cardinals}\label{section:strength}

In this section, we measure the strength of the principles of exact structural reflection introduced above by positioning exact and weakly exact cardinals in the  hierarchy of large cardinals. 
 We start by deriving lower bounds for their consistency strength by showing that the existence of such cardinals implies the existence of many \emph{almost huge} cardinals below them. 
 
 Recall that a cardinal $\kappa$ is \emph{almost huge} if there exists a transitive class $M$ and a non-trivial elementary embedding $j:V\to M$ with  $\crit{j}=\kappa$ and ${}^{{<}j(\kappa)}M\subseteq M$.
We then say that a 
cardinal $\kappa$ is almost huge  with \emph{target $\lambda$} if there exists an embedding $j$ witnessing the hugeness of $\kappa$ with $j(\kappa)=\lambda$. 

 The following standard argument will allow us to prove these implications:

\begin{lemma}\label{lemma:AlmostHugeFromEmb}
 Let $\kappa<\lambda$ be cardinals with the property that there exists a non-trivial elementary embedding $j:V_\kappa\to V_\lambda$ with  $j(\crit{j})=\kappa$. Then $\crit{j}$ is almost huge with target $\kappa$.  
\end{lemma}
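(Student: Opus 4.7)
The plan is to use $j$ to manufacture a coherent tower of normal fine ultrafilters indexed below $\kappa$ and take its direct-limit ultrapower; this is the standard route from a $V_\kappa\to V_\lambda$ embedding with $j(\crit{j})=\kappa$ to an almost-huge embedding.

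Setting $\mu=\crit{j}$ (so $j(\mu)=\kappa$), I would define, for each $\eta$ with $\mu\le\eta<\kappa$,
$$U_\eta \;=\; \{X\subseteq\mathcal{P}_\mu(\eta) \;:\; j[\eta]\in j(X)\}.$$
This is well-formed: $\mathcal{P}_\mu(\eta)\in V_\kappa$, so $j(X)$ makes sense and lies in $V_\lambda$, while $j[\eta]\subseteq j(\eta)$ has cardinality $\eta<\kappa=j(\mu)$, placing it in $j(\mathcal{P}_\mu(\eta))=\mathcal{P}_\kappa(j(\eta))$. A routine elementarity argument then verifies that each $U_\eta$ is a $\mu$-complete normal fine ultrafilter on $\mathcal{P}_\mu(\eta)$. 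Since $j$ is order-preserving on ordinals, $j[\eta]\cap j(\eta')=j[\eta']$ for $\eta'<\eta<\kappa$, which yields the coherence relation
$$X\in U_{\eta'} \;\Longleftrightarrow\; \{x\in\mathcal{P}_\mu(\eta):x\cap\eta'\in X\}\in U_\eta$$
for all $X\subseteq\mathcal{P}_\mu(\eta')$.

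Next, I would form the direct limit $(M,i)$ of the system $(\mathrm{Ult}(V,U_\eta),j_{U_\eta})_{\mu\le\eta<\kappa}$ under the canonical factor maps induced by coherence. The factor maps $k_\eta:\mathrm{Ult}(V,U_\eta)\to V_\lambda$ defined by $k_\eta([f]_{U_\eta})=j(f)(j[\eta])$ satisfy $k_\eta\circ j_{U_\eta}\restriction V_\kappa = j\restriction V_\kappa$, and standard ultrapower computations then give that $i:V\to M$ is elementary with $\crit{i}=\mu$ and $i(\mu)=\kappa$.

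The main obstacle is the almost-hugeness closure $M^{<\kappa}\subseteq M$. Given $\eta<\kappa$ and a sequence $\langle x_\alpha:\alpha<\eta\rangle$ of elements of $M$, each term is represented as $x_\alpha=[f_\alpha]_{U_{\eta_\alpha}}$ for some $\eta_\alpha<\kappa$; choosing $\bar\eta\in[\eta,\kappa)$ above every $\eta_\alpha$, coherence lets me reinterpret each $f_\alpha$ as a function on $\mathcal{P}_\mu(\bar\eta)$. Fineness of $U_{\bar\eta}$ together with a routine coding then represents the full sequence by a single function $\mathcal{P}_\mu(\bar\eta)\to V$ whose $U_{\bar\eta}$-class is the required element of $M$ coding $\langle x_\alpha:\alpha<\eta\rangle$. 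Once this closure is in hand, $i$ witnesses that $\mu$ is almost huge with target $\kappa$.
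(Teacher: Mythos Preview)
Your setup is exactly the paper's: define the tower $U_\gamma=\{A\subseteq\mathcal{P}_\mu(\gamma):j[\gamma]\in j(A)\}$ for $\mu\le\gamma<\kappa$ and verify coherence. The divergence is in the endgame. The paper does not argue closure directly; instead it proves a single directedness condition --- for every $\gamma\le\alpha<i_\gamma(\mu)$ there exists $\delta<\kappa$ with $k_{\gamma,\delta}(\alpha)=\delta$ --- via the computation $\delta:=j(f)(j[\gamma])$ for $\alpha=[f]_{U_\gamma}$, and then invokes Kanamori's Theorem~24.11, which packages well-foundedness of the direct limit, $i(\mu)=\kappa$, and ${<}\kappa$-closure all at once.

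Your direct route works, but the two steps you flag as ``standard'' and ``routine'' are precisely where the content lies. First, $i(\mu)=\kappa$ is not automatic: it needs the factor-map $k$ (induced by your $k_\eta$'s on classes $[f]$ with $f\in V_\kappa$) to map $i(\mu)$ order-preservingly \emph{onto} $\kappa$; surjectivity comes from the fact that each $\delta\in[\mu,\kappa)$ equals $[x\mapsto\mathrm{ot}(x\cap\delta)]_{U_{\bar\eta}}$ at any level $\bar\eta\ge\delta$, with $k_{\bar\eta}$ sending it back to $\mathrm{ot}(j[\delta])=\delta$. Second, your closure argument needs the direct-limit injection $M_{\bar\eta}\to M$ to fix all ordinals $\le\bar\eta$ (otherwise the encoded sequence has domain $\iota_{\bar\eta}(\eta)$, not $\eta$); this again reduces to $k_{\bar\eta}(\delta)=\delta$ for $\delta\le\bar\eta$, the same computation. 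Once these are made explicit your argument goes through, but note that what you are really proving is the paper's Claim in disguise --- so the two proofs are doing the same work, with the paper's organization being shorter because it outsources the consequences to the cited theorem.
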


\begin{proof}
 Set $\mu=\crit{j}$.   Given $\mu\leq\gamma<\kappa$, define $$U_\gamma ~ = ~  \Set{A\subseteq\mathcal{P}_\mu(\gamma)}{j[\gamma]\in j(A)}.$$
  Then it is easy to see that for every $\mu\leq\gamma<\kappa$, the collection $U_\gamma$ is a normal ultrafilter over $\mathcal{P}_\mu(\gamma)$. 
  Moreover, this definition directly ensures that $$U_\gamma ~ = ~  \Set{\Set{a\cap\gamma}{a\in A}}{A\in U_\delta}$$ holds for all $\mu\leq\gamma\leq\delta<\kappa$. 
  
  Now, given $\mu\leq\gamma<\kappa$, we let $i_\gamma :V \to M_\gamma$ denote the ultrapower embedding induced by $U_\gamma$. 
  In addition, for all $\mu\leq\gamma\leq\delta<\kappa$, we let $k_{\gamma,\delta}: M_\gamma \to M_\delta$ denote the canonical embedding satisfying $i_\delta=k_{\gamma,\delta}\circ i_\gamma$ (see {\cite[p. 333]{Kan:THI}}).

 \begin{claim*}
  If $\mu\leq\gamma<\kappa$ and $\gamma\leq\alpha<i_\gamma(\mu)$, then there exists $\gamma\leq\delta<\kappa$ with $k_{\gamma,\delta}(\alpha)=\delta$.  
 \end{claim*}
 
 \begin{proof}[Proof of the Claim]
  Pick a function $f:\mathcal{P}_\mu(\gamma)\to \mu$ with $[f]_{U_\gamma}=\alpha$ and define $$\delta ~ = ~ j(f)(j[\gamma]) ~ < ~ \kappa.$$  
  Since normality allows us to conclude  that $[a\mapsto{\rm ot}(a)]_{U_\gamma}=\gamma$, we know that $$\Set{a\in\mathcal{P}_\mu(\gamma)}{{\rm ot}(a)\leq f(a)} ~ \in ~  U_\gamma$$ and hence $$\gamma ~ = ~ {\rm ot}(j[\gamma]) ~ \leq ~ j(f)(j[\gamma]) ~ = ~  \delta.$$ 
  Moreover, we have $$j(f)(j(\gamma)\cap j[\delta]) ~ = ~  j(f)(j[\gamma]) ~ = ~  \delta  ~ = ~ {\rm ot}(j[\delta])$$ and hence $\Set{a\in\mathcal{P}_\mu(\gamma)}{f(a\cap\gamma)={\rm ot}(a)}\in U_\delta$. 
  But then 
  \begin{equation*}
   k_{\gamma,\delta}(\alpha) ~  = ~  k_{\gamma,\delta}([f]_{U_\gamma}) ~ = ~  [a\mapsto f(a\cap\gamma)]_{U_\delta} ~  = ~ [a\mapsto {\rm ot}(a)]_{U_\delta} ~ = ~    \delta. \qedhere
  \end{equation*}
 \end{proof}

  By {\cite[Theorem 24.11]{Kan:THI}}, this shows that $\mu$ is almost huge with target $\kappa$.
\end{proof}

\begin{corollary}\label{corollary:AlmostHugeFromPi1ic}
 Let $\kappa<\lambda$ be cardinals with the property that $\kappa$ is either parametrically $0$-exact for $\lambda$ or weakly parametrically $1$-exact for $\lambda$. Then the set of cardinals $\mu<\kappa$ with the property that $\mu$ is almost huge with target $\kappa$ is stationary in $\kappa$. 
 \end{corollary}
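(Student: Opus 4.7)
The plan is, given an arbitrary club $C \subseteq \kappa$, to produce some $\mu \in C$ that is almost huge with target $\kappa$, so that the desired set meets every club. Lemma \ref{lemma:AlmostHugeFromEmb} reduces this to exhibiting an elementary embedding $V_\kappa \to V_\lambda$ whose critical point lies in $C$ and is sent to $\kappa$.

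First I would feed $C$ as a parameter into the hypothesis. If $\kappa$ is parametrically $0$-exact for $\lambda$, applying Definition \ref{defnexact} with $A = C \in V_{\lambda+1}$ yields cardinals $\beth_\kappa < \kappa'$ and $\lambda<\lambda' \in C^{(1)}$, an elementary submodel $X$ of $H_{\kappa'}$ with $V_\kappa \cup \{\kappa\} \subseteq X$, and an elementary embedding $j : X \to H_{\lambda'}$ with $j(\crit j)=\kappa$, $j(\kappa)=\lambda$, and $C\in \ran j$. If instead $\kappa$ is weakly parametrically $1$-exact for $\lambda$, then Lemma \ref{lemma:WeakParaExactStronger} with $B = C$ delivers an analogous embedding $j : M \to H_{\lambda'}$, now with $M$ transitive and $\Pi_1(V_{\kappa+1})$-correct. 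In both cases I would set $\mu = \crit j$ and choose $C_0$ in the domain with $j(C_0) = C$.

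Next I would check that $\mu \in C$. Since $j(\mu) = \kappa$ and $j(C_0) = C \subseteq \kappa$, elementarity applied to the $\Delta_0$ statement ``$C_0 \subseteq \mu$'' (together with absoluteness between $H_{\kappa'}$ and $V$ in the $X$-case) gives $C_0 \subseteq \mu$, and similarly that $C_0$ is club in $\mu$. Because $j$ fixes every ordinal below $\mu$, the pointwise image satisfies $j[C_0] = C_0$, and from $\alpha \in C_0 \Rightarrow j(\alpha) \in j(C_0)$ we obtain $C_0 \subseteq j(C_0) = C$. Hence $C \cap \mu$ contains a club of $\mu$, and closedness of $C$ forces $\mu \in C$.

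Finally, since $V_\kappa$ is definable from $\kappa$ and $\kappa$ lies in the domain, $V_\kappa$ is in the domain and $j(V_\kappa) = V_{j(\kappa)} = V_\lambda$ by elementarity. Thus $j \upharpoonright V_\kappa : V_\kappa \to V_\lambda$ is a non-trivial elementary embedding with $\crit{j \upharpoonright V_\kappa} = \mu$ and $(j \upharpoonright V_\kappa)(\mu) = \kappa$, and Lemma \ref{lemma:AlmostHugeFromEmb} yields that $\mu$ is almost huge with target $\kappa$. The step I expect to require the most care is the middle one: one has to carefully distinguish the elementary image $j(C_0)$ from the pointwise image $j[C_0]$, using the former to place $C_0$ below $\mu$ and the latter to embed $C_0$ into $C$; only then does the closure of $C$ deliver $\mu \in C$.
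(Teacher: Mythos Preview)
Your proposal is correct and follows exactly the paper's strategy: feed the club $C$ into the exactness hypothesis as the distinguished element of $V_{\lambda+1}$, observe that the resulting critical point lies in $C$, restrict the embedding to $V_\kappa$, and invoke Lemma \ref{lemma:AlmostHugeFromEmb}. The paper's proof is much terser---it simply asserts that both hypotheses yield an elementary $j:V_\kappa\to V_\lambda$ with $j(\crit{j})=\kappa$ and $C\in\ran{j}$, and that then $\crit{j}\in C$---whereas you carefully unpack both of these claims.
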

 
 \begin{proof}
  Let $C$ be a closed unbounded subset of $\kappa$. By definition, both of the listed assumption imply the existence of a non-trivial elementary embedding $j:V_\kappa\to V_\lambda$ with $j(\crit{j})=\kappa$ and $C\in\ran{j}$. %
  Then $\crit{j}$ is an element of $C$ and Lemma \ref{lemma:AlmostHugeFromEmb} shows that $\crit{j}$ is almost huge with target $\kappa$. 
 \end{proof}

 \begin{corollary}
 \label{corollary:almosthuge}
  Let $\kappa$ be a cardinal that is parametrically $0$-exact for some cardinal $\lambda>\kappa$. Then $\kappa$ is almost huge with target $\lambda$. 
 \end{corollary}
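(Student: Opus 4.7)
The plan is to apply parametric $0$-exactness to produce an elementary embedding and then to reflect the almost-hugeness of its critical point up to $\kappa$ itself, via the elementarity of that embedding.

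By parametric $0$-exactness (invoked with any $A \in V_{\lambda+1}$, e.g.\ $A = \emptyset$), we obtain a cardinal $\kappa' > \beth_\kappa$, a cardinal $\lambda' \in C^{(1)}$ with $\lambda' > \lambda$, an elementary submodel $X \prec H_{\kappa'}$ with $V_\kappa \cup \{\kappa\} \subseteq X$, and an elementary embedding $j: X \to H_{\lambda'}$ satisfying $j(\kappa) = \lambda$ and $j(\crit{j}) = \kappa$. Setting $\mu = \crit{j}$, the restriction $j \restriction V_\kappa : V_\kappa \to V_\lambda$ is a non-trivial elementary embedding with $j(\mu) = \kappa$, so Lemma \ref{lemma:AlmostHugeFromEmb} yields that $\mu$ is almost huge with target $\kappa$. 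By the tower characterization in \cite[Theorem 24.11]{Kan:THI}, this is witnessed by a coherent tower $W = \seq{U_\gamma}{\mu \leq \gamma < \kappa}$ of normal fine ultrafilters on $\mathcal{P}_\mu(\gamma)$; a direct cardinality computation shows $W \in H_{\kappa'}$.

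The heart of the argument is then a $\Sigma_1$-reflection. Let $\varphi(x,y)$ be the formula expressing ``there exists a coherent tower of normal fine ultrafilters on $\mathcal{P}_x(\gamma)$ for $x \leq \gamma < y$ witnessing that $x$ is almost huge with target $y$''. The existence of $W$ gives $H_{\kappa'} \models \varphi(\mu, \kappa)$; since $\mu, \kappa \in X$ and $X \prec H_{\kappa'}$, this descends to $X \models \varphi(\mu, \kappa)$. Applying the elementarity of $j$, together with $j(\mu) = \kappa$ and $j(\kappa) = \lambda$, yields $H_{\lambda'} \models \varphi(\kappa, \lambda)$. Any witness $W' \in H_{\lambda'}$ to this statement is, by absoluteness of the defining properties of a tower (normality, fineness, coherence) between $H_{\lambda'}$ and $V$, a genuine coherent tower of normal fine ultrafilters in $V$ witnessing that $\kappa$ is almost huge with target $\lambda$.

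The main technical obstacle is the cardinality bookkeeping needed to make the $\Sigma_1$-reflection rigorous: one must check that all the relevant towers and power sets truly lie in $H_{\kappa'}$ and $H_{\lambda'}$, so that the inner quantifiers over potential witnesses agree with those in $V$. On the $\kappa'$-side this is immediate from $\kappa' > \beth_\kappa$, which forces $\kappa' \geq \beth_\kappa^+$ and hence places $V_\kappa$ together with the tower $W$ inside $H_{\kappa'}$. On the $\lambda'$-side, the condition $\lambda' \in C^{(1)}$ forces $\lambda'$ to be a beth-fixed point strictly above $\lambda$, so in particular $\lambda' > \beth_\lambda$; this guarantees that a tower of length $\lambda$ with components of hereditary cardinality at most $\beth_\lambda$ belongs to $H_{\lambda'}$, and that its tower-witnessing properties transfer back to $V$.
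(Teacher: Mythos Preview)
Your proof is correct and follows essentially the same approach as the paper's: obtain the embedding $j$ from parametric $0$-exactness, apply Lemma~\ref{lemma:AlmostHugeFromEmb} to conclude that $\crit{j}$ is almost huge with target $\kappa$, then use elementarity of $X$ in $H_{\kappa'}$ and of $j$ to push the witnessing tower up to $H_{\lambda'}$, and finally invoke $\lambda'\in C^{(1)}$ to transfer the conclusion to $V$. Your explicit cardinality bookkeeping for the absoluteness steps is more detailed than the paper's, but the structure and key ideas are identical.
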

 
 \begin{proof}
  By definition, there exist a cardinal   $\kappa'>\kappa$, a cardinal  $\lambda<\lambda'\in C^{(1)}$, an elementary submodel $X$ of $H_{\kappa'}$ with $V_\kappa\cup\{\kappa\}\subseteq X$ and an elementary embedding $j:X\to H_{\lambda'}$ with $j(\kappa)=\lambda$ and $j(\crit{j})=\kappa$. 
  Then Lemma \ref{lemma:AlmostHugeFromEmb} implies that $\crit{j}$ is almost huge with target $\kappa$. Since the system of filters witnessing this statement is contained in $H_{\kappa'}$, the model  $X$ also contains such a system. 
  But then the elementarity of $j$ implies that, in $H_{\lambda'}$, there is a system of ultrafilters witnessing that $\kappa$ is almost huge with target $\lambda$. Since $\lambda<\lambda'\in C^{(1)}$, this statement  also holds in $V$.  
 \end{proof}

Recall that a cardinal $\kappa$ is \emph{huge} if there exists a transitive class $M$ and a non-trivial elementary embedding $j:V\to M$ with  $\crit{j}=\kappa$ and ${}^{j(\kappa)}M\subseteq M$.
We then say that a 
cardinal $\kappa$ is huge  with \emph{target $\lambda$} if there exists an embedding $j$ witnessing the hugeness of $\kappa$ with $j(\kappa)=\lambda$. 
It is well-known that $\kappa$ is huge with target $\lambda$ if and only if there exists a $\kappa$-complete normal ultrafilter $\mathcal{U}$ over $\mathcal{P}(\lambda)$ such that $\Set{ x\in \mathcal{P}(\lambda)} {\otp{x}=\kappa}\in \mathcal{U}$ (see {\cite[Theorem 24.8]{Kan:THI}}).

 \begin{proposition}\label{proposition:ESRfromHuge}
  If $\kappa$ is huge with target $\lambda$, then $\kappa$ is weakly parametrically $1$-exact for $\lambda$. 
\end{proposition}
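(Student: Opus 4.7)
I plan to proceed by an elementarity reflection through the huge embedding. Let $i:V\to N$ witness that $\kappa$ is huge with target $\lambda$, so $\crit{i}=\kappa$, $i(\kappa)=\lambda$, and ${}^\lambda N\subseteq N$. Fix $A\in V_{\lambda+1}$ and a cardinal $\lambda'>\beth_\lambda$ (any such works, since $C^{(0)}=\On$). Let $\Phi(\alpha,\beta,B,\theta)$ abbreviate the statement ``there exist $\bar\alpha<\alpha$, a transitive $\Pi_1(V_{\alpha+1})$-correct set $M$ with $V_\alpha\cup\{\alpha\}\subseteq M$, and an elementary embedding $j:M\to H_\theta$ with $j(\bar\alpha)=\alpha$, $j(\alpha)=\beta$, and $B\in\ran{j}$''. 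By the elementarity of $i$, the target statement $V\models\Phi(\kappa,\lambda,A,\lambda')$ is equivalent to $N\models\Phi(\lambda,i(\lambda),i(A),i(\lambda'))$; crucially, on the $N$-side the existentially quantified $\bar\alpha<\lambda$ may be chosen to be $\kappa$.

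To verify the reflected statement inside $N$, I would rely on two basic consequences of hugeness. First, $V_{\lambda+1}^V=V_{\lambda+1}^N$: every subset of $V_\lambda$ has cardinality at most $\lambda$ and therefore belongs to the $\lambda$-closed $N$, so in particular $A\in N$ and $V_\lambda^V=V_\lambda^N$. Second, the restriction $e=i\restriction V_\lambda$ is a set of size $\lambda$ and hence belongs to $N$, with $\crit{e}=\kappa<\lambda$ and $e(\kappa)=\lambda$. The $N$-side witness would then be built inside $N$ from these ingredients: a transitive $\Pi_1$-correct set $M_N$ of cardinality $\lambda$ containing $V_\lambda^N\cup\{\lambda,i(A)\}$, together with an elementary embedding $j_N:M_N\to H_{i(\lambda')}^N$ satisfying $\crit{j_N}=\kappa$, $j_N(\kappa)=\lambda$, $j_N(\lambda)=i(\lambda)$, and $i(A)\in\ran{j_N}$. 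The availability of $e$ in $N$ realizes the shift $\kappa\mapsto\lambda$ internally, and a carefully chosen Skolem hull together with a transitive collapse inside $H_{i(\lambda')}^N$ is meant to produce the simultaneous shifts $\crit{j_N}\mapsto\kappa$ and $\kappa\mapsto\lambda$, while the $\Pi_1$-correctness of $M_N$ within $N$ follows from its being a transitive collapse of an elementary submodel of $H_{i(\lambda')}^N$.

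The main obstacle is the tension between two of the requirements: on the one hand, $V_\lambda^N\subseteq M_N$ pressures the collapse to act as the identity on ordinals below $\lambda$, and in particular to fix $\kappa$; on the other hand, $j_N(\kappa)=\lambda$ demands that the ordinal named $\kappa$ inside $M_N$ be sent externally to $\lambda$. A straightforward Skolem-hull/transitive-collapse construction cannot reconcile these, since including $V_\lambda^N$ in the hull forces $\kappa$ into the transitive part where it is fixed by the collapse. The plan to circumvent this is to avoid placing $e$ directly into the hull (its presence forces $\lambda=e(\kappa)\in X$ as well) and instead to build $M_N$ using an ultrapower-style coding of the map $e$, so that the ordinal internally named $\kappa$ inside $M_N$ is obtained via this coding rather than by literal identification with the external ordinal $\kappa$. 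Once this $N$-internal witness is successfully engineered, the elementarity of $i$ transports it back to $V$ and, since $A$ was arbitrary, establishes that $\kappa$ is weakly parametrically $1$-exact for $\lambda$.
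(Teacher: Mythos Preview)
Your overall strategy---reflect the desired statement through the huge embedding $i:V\to N$ and verify the image statement inside $N$---is exactly the paper's approach. The problem is in how you propose to construct the witness inside $N$. You attempt to build the embedding $j_N$ as the inverse of a transitive collapse of a hull of $H_{i(\lambda')}^N$, and then correctly observe that such a map cannot move $\kappa$ once $V_\lambda$ lies in the hull. But this obstacle is self-inflicted: the witness embedding need not be an inverse collapse at all.

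The paper's construction is much more direct. Take a size-$\lambda$ elementary submodel $X$ of $H_{\lambda^+}$ (not of $H_{i(\lambda')}^N$) with $V_\lambda\cup\{\lambda,A\}\subseteq X$, and let $M_N$ be its transitive collapse via $\pi$. Since $V_\lambda\subseteq X$, the collapse fixes $V_{\lambda+1}\cap X$ pointwise, so $M_N$ is $\Pi_1(V_{\lambda+1})$-correct. Now simply set $j_N = (i\restriction X)\circ\pi^{-1}:M_N\to H_{i(\lambda)^+}^N$. This map has $\crit{j_N}=\kappa$, $j_N(\kappa)=\lambda$, $j_N(\lambda)=i(\lambda)$, and $i(A)=j_N(\pi(A))\in\ran{j_N}$. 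Both $M_N$ and $j_N$ lie in $N$ by $\lambda$-closure, so the reflected statement holds in $N$ and elementarity finishes the proof. There is no tension: the collapse $\pi$ fixes $\kappa$, but the embedding $j_N$ is $i\circ\pi^{-1}$, not $\pi^{-1}$, and $i$ is precisely what moves $\kappa$ to $\lambda$. Your ``ultrapower-style coding'' workaround is therefore unnecessary.
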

 
 \begin{proof}
  Let $M$ be an inner model with ${}^\lambda M\subseteq M$ and let $j:V\to M$ be an elementary embedding with $\crit{j}=\kappa$ and $j(\kappa)=\lambda$.
  Fix $A\in V_{\lambda +1}$ and let $N$ be an elementary submodel of $H_{\lambda^+}$  of cardinality $\lambda$ with $V_\lambda\cup\{A,\lambda \}\subseteq N$. 
  We then have $N\in M$ and, since $H_{\lambda^+}=H_{\lambda^+}^M$,   $\Sigma_1$-absoluteness implies that $N$ is $\Pi_1(V_{\lambda +1})$-upwards correct in $M$. 
  Set $j_0=j\restriction N:N\to H_{j(\lambda)^+}^M$. 
  Then $j_0$ is an elementary embedding that is an element of $M$. 
   Thus, in $M$, there exists a transitive, $\Pi_1(V_{j(\kappa)+1})$-upwards correct set $K$ with $V_{j(\kappa)}\cup\{j(\kappa)\}\subseteq K$ (namely N) and an elementary embedding $k:K\to H_{j(\lambda^+)}$ with $k(\crit{k})=j(\kappa)$, $k(j(\kappa))=j(\lambda)$ and $j(A)\in\ran{k}$ (namely $j_0$). 
   Hence, the elementarity of $j$ implies that, in $V$, there exists a transitive, $\Pi_1(V_{\kappa+1})$-upwards correct set $K$ with $V_{\kappa}\cup\{\kappa\}\subseteq K$ and an elementary embedding $k:K\to H_{\lambda^+}$ with $k(\crit{k})=\kappa$, $k(\kappa)=\lambda$ and $A\in\ran{k}$.  
 \end{proof}

 %\begin{corollary}
 %If $\kappa$ is superhuge, then $\kappa$ is weakly parametrically $1$-exact. \qed 
 %\end{corollary}

 This result also allows us to show that the consistency strength of huge cardinals is strictly larger than the consistency strength of weakly $1$-exact cardinals.

\begin{corollary}\label{corollary:HUGEstrongerthanPI1ESR}
  If $\kappa$ is huge with target $\lambda$, then there is an inaccessible cardinal $\rho<\kappa$ such that $\Pi_1(V_\rho)$-$\ESR(\rho,\kappa)$ holds in $V_\lambda$. 
\end{corollary}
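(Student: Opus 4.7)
The plan is to apply Proposition \ref{proposition:ESRfromHuge} inside $V_\lambda$. Concretely, I would first show that $V_\lambda$ itself satisfies ``$\kappa$ is huge'' (necessarily with some target $\lambda'' < \lambda$ and with a witnessing ultrafilter that already lives in $V_\lambda$). Once this is established, Proposition \ref{proposition:ESRfromHuge}, applied inside $V_\lambda$ (which is a model of ZFC because $\lambda$ is inaccessible), yields $V_\lambda \models \kappa$ is weakly parametrically $1$-exact for $\lambda''$; Proposition \ref{proposition:ESRFromExact}, likewise applied inside $V_\lambda$, then delivers $V_\lambda \models \Pi_1(V_\kappa)$-$\ESR(\kappa,\lambda'')$, and hence $V_\lambda \models \Pi_1(V_\kappa)$-$\ESR(\kappa)$.

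The reflection of hugeness down to $V_\lambda$ rests on two ingredients. First, by the characterization recalled just above the corollary, ``$\kappa$ is huge'' is equivalent to $\exists \lambda^*\, \exists \mathcal{U}\,[\mathcal{U}$ is a $\kappa$-complete normal ultrafilter on $\mathcal{P}(\lambda^*)$ with $\{x : \otp{x}=\kappa\} \in \mathcal{U}]$. Once $\lambda^*$ and $\mathcal{U}$ are existentially quantified, the remaining clauses (the ultrafilter axioms, $\kappa$-completeness, normality, and concentration) are $\Delta_0$ in the parameters $\mathcal{U}, \lambda^*, \kappa$, so the whole assertion is $\Sigma_1$ in $\kappa$. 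Second, since $\kappa$ is huge with target $\lambda$, the cardinal $\lambda$ is inaccessible, so $V_\lambda = H_\lambda$, and the standard L\"owenheim--Skolem plus Mostowski-collapse argument shows that $H_\lambda \prec_{\Sigma_1} V$, i.e.\ $\lambda \in C^{(1)}$. Combining these, $V \models \kappa$ is huge transfers to $V_\lambda \models \kappa$ is huge, with witnessing target $\lambda'' < \lambda$ and ultrafilter in $V_\lambda$.

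The main technical care lies precisely in these two ingredients: the $\Sigma_1$-complexity analysis of hugeness (in particular, verifying that normality of $\mathcal{U}$ is bounded in $\mathcal{U}$ and $\lambda^*$, which it is because diagonal intersections of $\lambda^*$-sequences of subsets of $\mathcal{P}(\lambda^*)$ are computed within a set definable from $\lambda^*$) and the verification that $V_\lambda \prec_{\Sigma_1} V$ (where the Mostowski collapse of a small elementary submodel containing the $\Sigma_1$-parameter and a $V$-witness produces the desired witness inside $H_\lambda$). Both are essentially folklore; once they are in hand, the internal applications of Propositions \ref{proposition:ESRfromHuge} and \ref{proposition:ESRFromExact} are immediate, as both are ZFC-theorems and $V_\lambda$ is a ZFC-model.
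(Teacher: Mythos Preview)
Your approach has a genuine gap: the assertion that ``$\kappa$ is huge'' is $\Sigma_1$ is incorrect. The clauses you list---$\kappa$-completeness and normality of $\mathcal{U}$---are not $\Delta_0$ in the parameters $\mathcal{U},\lambda^*,\kappa$. For $\kappa$-completeness you must quantify over all sequences $s\in{}^{\gamma}\mathcal{U}$ for $\gamma<\kappa$, and for normality over all $\lambda^*$-sequences from $\mathcal{U}$ (or all regressive functions on sets in $\mathcal{U}$); none of these objects lie in the transitive closure of $\{\mathcal{U},\lambda^*,\kappa\}$, so the quantifiers are unbounded. The property ``$\mathcal{U}$ is a $\kappa$-complete normal ultrafilter on $\mathcal{P}(\lambda^*)$ with $\{x:\otp{x}=\kappa\}\in\mathcal{U}$'' is $\Pi_1$, and hence hugeness is $\Sigma_2$, not $\Sigma_1$. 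Your parenthetical justification (``diagonal intersections \ldots\ are computed within a set definable from $\lambda^*$'') confuses where the \emph{output} of the diagonal intersection lives with where the \emph{input sequences} live.

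This is fatal for the strategy, because $\lambda\in C^{(1)}$ does not suffice to reflect $\Sigma_2$ statements, and there is no reason to expect $\lambda\in C^{(2)}$. Concretely: if $\kappa$ is the least huge cardinal and $\lambda$ its least target, then $\kappa$ has no huge target below $\lambda$; since $\lambda$ is inaccessible, any $\mathcal{U}\in V_\lambda$ witnessing hugeness with target $\lambda''<\lambda$ would, by absoluteness of the $\Pi_1$ conditions (all relevant functions on $\mathcal{P}(\lambda'')$ lie in $V_\lambda$), also witness hugeness in $V$---contradiction. So in this case $V_\lambda\not\models\text{``}\kappa\text{ is huge''}$, and your argument cannot get off the ground.

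The paper proceeds quite differently. It does \emph{not} try to make $V_\lambda$ see that $\kappa$ is huge. Instead it works inside $V_{j(\lambda)}^N$ (where $j:V\to N$ is the huge embedding): given a $\Pi_1$-class $\Ce$ and a structure $B\in\Ce$ of rank $\lambda$ there, one transfers $B\in\Ce$ to $V$ via $\Sigma_1$-absoluteness through $H_{\lambda^+}\subseteq V_{j(\lambda)}^N$, applies Proposition~\ref{proposition:ESRfromHuge} in $V$ to produce $A\in\Ce$ of rank $\kappa$ and an embedding $A\to B$, and then observes that $A$, the embedding, and the truth of $\varphi(A,z)$ all live in $V_{j(\lambda)}^N$. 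This gives $\Pi_1(V_\kappa)$-$\ESR(\kappa,\lambda)$ in $V_{j(\lambda)}^N$, and elementarity of $j$ then pulls back $\Pi_1(V_\kappa)$-$\ESR(\kappa)$ to $V_\lambda$.
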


\begin{proof}
Let  $j:V\to N$ be an elementary embedding with critical point $\kappa$ such that $j(\kappa)=\lambda$ and $N$ is closed under $\lambda$-sequences. Note that $H_{\lambda^+}\in V_{j(\lambda)}^N$. 
 Fix a $\Pi_1$-formula $\varphi(v_0,v_1)$ and $z\in V_\kappa$ with the property that, in $V_{j(\lambda)}^N$, the class $\Set{A}{\varphi(A,z)}$ consists of structures of the same type. 
 Pick a structure $B$ of rank $\lambda$ with the property that $\varphi(B,z)$ holds in $V_{j(\lambda)}^N$. Since $H_{\lambda^+}\subseteq V_{j(\lambda)}^N$, $\Sigma_1$-absoluteness implies that $\varphi(B,z)$ also holds in $V$. 
 By our assumptions, we can now apply Proposition \ref{proposition:ESRfromHuge} to find a structure $A$ of rank $\kappa$ with the property that $\varphi(A,z)$ holds in $V$ and an elementary embedding $i$ of $A$ into $B$. But then $A$ is contained in $V_{j(\lambda)}^N$, $\varphi(A,z)$  holds in $V_{j(\lambda)}^N$ and the map $i$ is an element of $V_{j(\lambda)}^N$. 
 
 These computations show that the principle $\Pi_1(V_\kappa)$-$\ESR(\kappa,\lambda)$ holds in $V_{j(\lambda)}^N$. Using the elementarity of $j$, we can now conclude that, in $V_\lambda$, there is an inaccessible cardinal $\rho<\kappa$ with the property that  $\Pi_1(V_\rho)$-$\ESR(\rho,\kappa)$ holds . 
\end{proof}

 We now show that the implication given by Proposition \ref{proposition:ESRfromHuge}  is optimal.

 \begin{proposition}\label{proposition:LeastHugeFailure}
  If $\kappa$ is the least huge cardinal, then $\kappa$ is not $1$-exact for any cardinal $\lambda>\kappa$. 
 \end{proposition}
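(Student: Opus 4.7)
The plan is to derive a contradiction from the assumption that $\kappa$ is the least huge cardinal and that $\kappa$ is $1$-exact for some $\lambda>\kappa$. The key observation is that the statement ``$\beta$ is huge'' is $\Sigma_2$, being expressible, via the standard ultrafilter characterization (see \cite[Theorem~24.8]{Kan:THI}), as the existence of a normal $\beta$-complete ultrafilter $\mathcal{U}$ on some $\mathcal{P}(\mu)$ concentrating on sets of order type $\beta$. Consequently, the reflection afforded by $1$-exactness should force the existence of a huge cardinal below $\lambda$ to reflect down to the existence of one below $\kappa$, contradicting the minimality of $\kappa$.

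Concretely, I would carry out three steps. First, I would extract a reflection principle from $1$-exactness by invoking Proposition~\ref{proposition:ESRfromExact} with $n=1$ and the trivial parameter $z=\emptyset$, obtaining $\Sigma_2$-$\ESR(\kappa,\lambda)$. Second, I would introduce the class
\[
\Ce ~=~ \Set{\langle V_\alpha,\in\rangle}{\alpha\in\On\text{ and there is a huge cardinal }\beta<\alpha}
\]
of $\in$-structures, and verify that it is definable by a $\Sigma_2$-formula without parameters and consists of structures of the same (single binary-relation) type. Third, since $\kappa<\lambda$ is huge, the structure $B=\langle V_\lambda,\in\rangle$ has rank $\lambda$ and belongs to $\Ce$; so $\Sigma_2$-$\ESR(\kappa,\lambda)$ applied to $B$ yields some $A\in\Ce$ of rank $\kappa$. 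Since the rank of a structure equals the rank of its domain, this forces $A=\langle V_\kappa,\in\rangle$, and the defining property of $\Ce$ then produces a huge cardinal $\beta<\kappa$, contradicting the choice of $\kappa$.

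The only genuinely delicate point is the complexity calculation in the second step: one has to confirm that the conditions of being an ultrafilter, of being $\beta$-complete, and of being normal all sit at the $\Pi_1$-level (the universal quantifier is really over subsets of $\mathcal{P}(\mu)$, hence $\Pi_1$; the quantifier ``for every $f:\delta\to\mathcal{U}$'' is bounded once $\mathcal{U}$ is fixed), so that existential quantification over $\mu$ and $\mathcal{U}$ leaves ``$\beta$ is huge'' at $\Sigma_2$, and in turn keeps $\Ce$ $\Sigma_2$-definable. Interestingly, the elementary embedding produced by $\ESR$ plays no role in the argument; only the existence of a structure of smaller rank in $\Ce$ is needed.
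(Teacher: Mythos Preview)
Your proposal is correct. Both arguments hinge on the $\Sigma_2$-complexity of hugeness, but the paper takes a more direct route: it unpacks the definition of $1$-exactness to obtain an embedding $j:X\to H_{\lambda'}$ with $X\prec H_{\kappa'}$, $\kappa'\in C^{(1)}$ and $\lambda'\in C^{(2)}$, observes that ``\emph{there is a huge cardinal below $\lambda$}'' is $\Sigma_2$ and hence true in $H_{\lambda'}$, pulls this back via $j$ to $X\prec H_{\kappa'}$, and then uses $\kappa'\in C^{(1)}$ (so that $\Pi_1$-statements are absolute, hence $\Sigma_2$-statements transfer upward) to obtain a huge cardinal below $\kappa$. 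Your argument instead passes through Proposition~\ref{proposition:ESRfromExact} to extract $\Sigma_2$-$\ESR(\kappa,\lambda)$ and applies it to the tailored class $\Ce=\Set{\langle V_\alpha,\in\rangle}{\text{some }\beta<\alpha\text{ is huge}}$. The paper's route is shorter and sidesteps the definability check for $\Ce$ (where one has to note that ``the domain equals some $V_\alpha$'' is $\Pi_1$, so the conjunction with the $\Sigma_2$ hugeness clause stays $\Sigma_2$); your route, on the other hand, shows cleanly that the contradiction already follows from the reflection principle itself and---as you point out---never uses the elementary embedding that $\ESR$ produces.
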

 
 \begin{proof}
  Assume, towards a contradiction, that there exists a cardinal $\kappa<\kappa'\in C^{(1)}$,  cardinals $\lambda<\lambda'\in C^{(2)}$, an elementary submodel $X$ of $H_{\kappa'}$ with $V_\kappa\cup\{\kappa\}\subseteq X$ and an elementary embedding $j:X\to H_{\lambda'}$ with $j(\kappa)=\lambda$.  
  %and  $\kappa\in\ran{j}$. 
  %
  Note that the statement ``\emph{There exists a huge cardinal smaller than $\lambda$}" can be formulated by a $\Sigma_2$-formula with parameter $\lambda$ and, since $\lambda<\lambda'\in C^{(2)}$, this statement holds in $H_{\lambda'}$. 
  But then the elementarity of $j$ and the fact that $\kappa'\in C^{(1)}$ allow us to conclude that there exists a huge cardinal smaller than $\kappa$, a contradiction. 
 \end{proof}

 The next proposition gives a consistency upper bound for the existence of a parametrically exact cardinal.
Recall that an \emph{$I 1$-embedding} is a non-trivial elementary embedding $j:V_{\delta +1}\to V_{\delta +1}$ for some limit ordinal $\delta$. Also, an \emph{$I2$-embedding} is an elementary embedding $j:V\to M$ for some  transitive class $M$ such that $V_\delta \subseteq M$ for some limit ordinal $\delta >\crit{j}$ satisfying $j(\delta)=\delta$.  Finally, an \emph{$I3$-embedding} is a non-trivial elementary embedding $j:V_{\delta}\to V_{\delta}$, for some limit ordinal $\delta$ (see \cite[\S 24]{Kan:THI}). 
 Note that, if $j:V_\delta\to V_\delta$ is an $I3$-embedding with critical point $\kappa$, then $V_\delta$ is a model of \rm{ZFC} and the sequence $\seq{j^m(\kappa)}{m<\omega}$  is cofinal in $\delta$.

 \begin{proposition}
 \label{proposition:upperbound}
  Assume that $\kappa$ is the critical point of an $I3$-embedding $j:V_\delta\to V_\delta$. If  $l,m,n<\omega$, then, in $V_\delta$, the  cardinal $j^l(\kappa)$ is parametrically $n$-exact for $j^{l+m+1}(\kappa)$.    
 \end{proposition}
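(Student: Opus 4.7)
My plan is to first reduce to the case $l=0$. The map $j^l\colon V_\delta\to V_\delta$ is an elementary embedding sending $\kappa$ to $j^l(\kappa)$ and $j^{m+1}(\kappa)$ to $j^{l+m+1}(\kappa)$, so by $j^l$-elementarity, $V_\delta\models$ ``$\kappa$ is parametrically $n$-exact for $j^{m+1}(\kappa)$'' if and only if $V_\delta\models$ ``$j^l(\kappa)$ is parametrically $n$-exact for $j^{l+m+1}(\kappa)$''. It therefore suffices to show that, in $V_\delta$, $\kappa$ is parametrically $n$-exact for $\lambda:=j^{m+1}(\kappa)$.

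Set $k:=j^{m+1}$, so $\crit{k}=\kappa$, $k(\kappa)=\lambda$, and $k(\lambda)=j^{2(m+1)}(\kappa)$. Let $\varphi(A,x,y)$ denote the formula asserting that $x$ is parametrically $n$-exact for $y$ witnessed by $A$. By the elementarity of $k\colon V_\delta\to V_\delta$, for any $A\in V_\delta$,
\[
V_\delta\models \varphi(A,\kappa,\lambda) \;\Longleftrightarrow\; V_\delta\models \varphi\bigl(k(A),\lambda,k(\lambda)\bigr).
\]
Given $A\in V_{\lambda+1}$, it therefore suffices to produce witnesses for the right-hand side. I will choose $\kappa'\in (C^{(n+1)})^{V_\delta}$ above $\beth_\lambda$, set $\lambda'':=k(\kappa')$ (which lies in $(C^{(n+1)})^{V_\delta}$ above $\beth_{k(\lambda)}$ by the elementarity of $k$ within $V_\delta$), take an elementary submodel $N\preceq H_{\kappa'}^{V_\delta}$ of cardinality $\beth_\lambda$ containing $V_\lambda\cup\{A,\lambda\}$, and define $h':=k\restriction N$.

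The key verifications are then that $h'\colon N\to H_{\lambda''}^{V_\delta}$ is elementary (since $k$ is elementary on $V_\delta$ and $N\preceq H_{\kappa'}^{V_\delta}$); that $\crit{h'}=\kappa$ (as $\kappa\in V_\lambda\subseteq N$ is the least ordinal of $N$ moved by $k$), whence $h'(\crit{h'})=h'(\kappa)=\lambda$; that $h'(\lambda)=k(\lambda)$; and that $k(A)=h'(A)\in\ran{h'}$. These are exactly the conditions for $(\kappa',\lambda'',N,h')$ to witness that $V_\delta$ satisfies ``$\lambda$ is parametrically $n$-exact for $k(\lambda)$ with respect to $k(A)$''. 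By the displayed equivalence, this yields the desired statement for $\kappa$, $\lambda$, and $A$. The main technical point is matching the complexity conditions: since the definition of parametric $n$-exactness requires $\kappa'\in C^{(n)}$ and $\lambda''\in C^{(n+1)}$, I choose $\kappa'$ directly in the stronger class $(C^{(n+1)})^{V_\delta}$, so that $\lambda''=k(\kappa')$ is automatically in $(C^{(n+1)})^{V_\delta}$ by elementarity of $k$; no large-cardinal assumption beyond the $I3$-embedding itself is invoked.
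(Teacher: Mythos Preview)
Your argument is correct and follows essentially the same route as the paper's proof. Both reduce to $l=0$ by elementarity of $j^l$, construct an explicit witnessing embedding at a shifted level by restricting an iterate of $j$, and then transfer back by elementarity; the only cosmetic differences are that the paper takes the full $H_{\kappa_{m+1}}$ as its domain and uses that every $j^m(\kappa)$ lies in $(C^{(n)})^{V_\delta}$ for all $n$, whereas you pass to a small elementary submodel $N\preceq H_{\kappa'}$ and pick $\kappa'\in (C^{(n+1)})^{V_\delta}$ so that $k(\kappa')$ automatically has the required correctness.
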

 
 \begin{proof}
  Given $0<m<\omega$, set $\kappa_m=j^m(\kappa)$. Then, in $V_\delta$, every $\kappa_m$ is inaccessible and belongs to $C^{(n)}$, for all $n<\omega$. 
  Pick $0<m<\omega$, set $\lambda =\kappa_m$ and fix $A\in V_{\lambda +1}$. 
  Then, in $V_\delta$, the map $j^m\restriction H_{\kappa_{m+1}}:H_{\kappa_{m+1}}\to H_{\kappa_{2m+1}}$ witnesses that there exists an elementary embedding $i:H_{j^m(\kappa_1)}\to H_{j^m(\kappa_{m+1})}$ with $j^m(A)\in\ran{i}$, $i(j^m(\kappa))=j^m(\lambda)$ and $i(\crit{i})=j^m(\kappa)$.
  But then the elementarity of $j^m:V_\delta\to V_\delta$ implies that, in $V_\delta$, there exists an elementary embedding $i:H_{\kappa_1}\to H_{\kappa_{m+1}}$ with $A\in\ran{i}$, $i(\kappa)=\lambda$ and  $i(\crit{i})=\kappa$. 
  Since $\kappa<\kappa_1\in(C^{(n)})^{V_\delta}$ and $\lambda<\kappa_{m+1}\in (C^{(n)})^{V_\delta}$ for all $n<\omega$, these computations show that, in $V_\delta$, the cardinal  $\kappa$ is parametrically $n$-exact for $\lambda$ for all $n<\omega$. 
  %
  %But now that fact that the sequence $\seq{\kappa_m}{0<m<\omega}$  is cofinal in $\delta$ allows us to also conclude that $\kappa$ is parametrically exact  in $V_\delta$. 
  %
  By elementarity of the iterated embedding $j^l$, this yields the statement of the proposition.  
 % Let $Y$ be an elementary submodel of $V_\theta$  of cardinality $\lambda$ with $V_\lambda \cup \{A\}\subseteq Y$. Set  $j_0=j^m\restriction Y$. The map   $j_0:Y\to V_{j^m(\theta)}$ is an elementary embedding  with critical point $\kappa$, $j_0(\kappa)=j^m(\kappa)=\lambda$,  $j_0(\lambda)=j^m(\lambda)$, and $j_0(A)=j^m(A)$. 
 %Thus,  in $V_\delta$, there exists $\kappa'\in C^{(n)}$ greater than $j^m(\kappa)$ (namely $\theta$), an elementary submodel $X$ of $V_{\kappa'}$ with $V_{j^m(\kappa)}\cup\{j^m(\kappa)\}\subseteq X$ (namely $Y$) and a non-trivial elementary embedding $k:X\to V_{j^m(\theta)}$ with $k(\crit{k})=j^m(\kappa)$, $k(j^m(\kappa))=j^m(\lambda)$ and $j^m(A)\in\ran{k}$ (namely $j_0$). 
 % Hence, the elementarity of $j^m$ implies that, in $V_\delta$,  there is a cardinal $\kappa'\in C^{(n)}$ greater than $\kappa$, an elementary submodel $X$ of $V_{\kappa'}$ with $V_\kappa\cup\{\kappa\}\subseteq X$ and a non-trivial embedding $k:X\to V_\theta$ with $k(\crit{k})=\lambda$, $k(\kappa)=\lambda$ and $A\in\ran{k}$.  We have thus shown that, in $V_\delta$, $\kappa$ is parametrically $n$-exact for $\lambda=\kappa_m$. 
   %Since the same holds for all $\kappa_m$,  $\geq 1$, and all $n$, we have that $\kappa$ is parametrically exact.
 \end{proof}

 In the following, we will derive a much lower  upper bound for the consistency strength of the existence of a cardinal $\kappa$ that is weakly parametrically $n$-exact for some cardinal $\lambda$ for all $n<\omega$.

\begin{definition}
 Given a natural number $n>0$,  a cardinal $\kappa$ is \emph{$n$-superstrong} if there exists a transitive class $M$ and an elementary embedding $j:V\to M$ with $\crit{j}=\kappa$ and  $V_{j^n(\kappa)}\subseteq M$.
 If, moreover, ${}^{j(\kappa)}V_{j^n(\kappa)}\subseteq M$, then we say that $\kappa$ is \emph{hugely $n$-superstrong}. 
\end{definition}

Notice that, given an elementary embedding $j:V\to M$ and a natural number $n>1$, the embedding $j$ witnesses that $\kappa$ is hugely $n$-superstrong if and only if it witnesses that $\kappa$ is $n$-superstrong and $\cof{j^n(\kappa)}>j(\kappa)$. 
Also note that every huge cardinal is hugely $1$-superstrong and, for $n>1$, every  almost $n$-huge cardinal\footnote{Recall that $\kappa$ is \emph{almost  $n$-huge} if there exists an elementary embedding $j:V\to M$, with $\crit{j}=\kappa$, and with $M$ transitive and closed under ${<}j^n(\kappa)$-sequences.} is hugely $n$-superstrong.

 \begin{proposition}\label{proposition:ESRfromHugely2Exact}
  If $\kappa$ is a hugely $2$-superstrong cardinal, then there exists an inaccessible cardinal $\lambda>\kappa$ and a cardinal $\rho>\lambda$ such that $V_\rho$ is a model of \rm{ZFC} and, in $V_\rho$, the cardinal $\kappa$ is weakly parametrically $n$-exact for $\lambda$ for all natural numbers $n>0$. 
 \end{proposition}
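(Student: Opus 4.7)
Let $j:V\to M$ witness that $\kappa$ is hugely $2$-superstrong, and set $\lambda = j(\kappa)$ and $\rho = j^2(\kappa)$. The plan is to verify that these choices work. By elementarity of $j$, both $\lambda$ and $\rho$ are inaccessible in $M$. Since $V_{\lambda+1}\subseteq V_\rho\subseteq M$, the inaccessibility of $\lambda$ transfers to $V$; since $V_\rho = V_\rho^M$, the inaccessibility of $\rho$ in $M$ yields $V_\rho\models\mathrm{ZFC}$.

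The crucial observation is that $\tilde j:=j\restriction V_\lambda$ lies in $V_\rho$. The hypothesis ${}^{j(\kappa)}V_{j^2(\kappa)}\subseteq M$ places $\tilde j$ in $M$ (as a $\lambda$-sequence of elements of $V_\rho$), and since $\rho$ is inaccessible in $M$ with $|j[\lambda]|^M=\lambda$, the ordinal $\rho_0:=\sup j[\lambda]$ is strictly below $\rho$, so $\tilde j$ has rank below $\rho$. Hence inside $V_\rho$ the set $\tilde j$ is an elementary embedding $V_\lambda\to V_{\rho_0}$ with critical point $\kappa$ and $\tilde j(\kappa)=\lambda$.

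The main step is to show that in $V_\rho$ the cardinal $\kappa$ is weakly parametrically $n$-exact for $\lambda$ for every $n>0$. The idea is to imitate the proof of Proposition \ref{proposition:ESRfromHuge} inside $V_\rho$, with $\tilde j$ playing the role of the huge embedding $j:V\to M$. Given $A\in V_{\lambda+1}$ and $n\geq 1$, pick $\lambda'\in (C^{(n-1)})^{V_\rho}$ with $\rho_0<\lambda'<\rho$, form an elementary submodel $N\prec H_{\lambda'}^{V_\rho}$ of cardinality $\lambda$ in $V_\rho$ with $V_\lambda\cup\{A,\lambda\}\subseteq N$, and let $\pi:N\to K$ be its transitive collapse, which fixes $V_\lambda\cup\{A,\lambda,\kappa\}$. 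Using that $\tilde j\in V_\rho$ together with the $\lambda$-sized closure inside $V_\rho$, the restriction $\tilde j\restriction N$ lies in $V_\rho$, so one obtains inside $V_\rho$ an elementary embedding $\tilde j\circ\pi^{-1}:K\to H_{\tilde j(\lambda')}^{V_\rho}$ with critical point $\kappa$ mapping $\kappa\to\lambda$ and with $\tilde j(A)$ in its range. A reflection via $\tilde j$, analogous to the $j$-reflection that passes from $M$ to $V$ in the proof of Proposition \ref{proposition:ESRfromHuge}, then produces the desired $k:K\to H_{\lambda'}^{V_\rho}$ satisfying $k(\crit{k})=\kappa$, $k(\kappa)=\lambda$ and $A\in\ran{k}$; the $\Pi_n(V_{\kappa+1})$-correctness of $K$ comes from $\lambda'\in (C^{(n-1)})^{V_\rho}$, elementarity of $N\prec H_{\lambda'}^{V_\rho}$ and a standard absoluteness argument.

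I expect the main obstacle to be this final $\tilde j$-reflection step. In Proposition \ref{proposition:ESRfromHuge} the reflection used the full class embedding $j:V\to M$, which reinterpreted $j(\kappa)=\lambda$ as $\kappa$; but inside $V_\rho$ the only analogous tool is the set-sized elementarity of $\tilde j:V_\lambda\to V_{\rho_0}$, and since $A$ need not lie in $V_\lambda$ one cannot naively apply $\tilde j$-elementarity to the existence statement. The way around this is to form the $(\kappa,\lambda)$-extender $E$ derived from $\tilde j$ (which is an element of $V_\rho$) and to apply the $E$-ultrapower inside $V_\rho$ to an appropriate $H_\theta^{V_\rho}$ containing $A$, thereby producing a $k$ with the required two-jump structure $\crit{k}\mapsto\kappa\mapsto\lambda$.
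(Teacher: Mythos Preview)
Your proposal has a genuine gap, and the extender workaround you suggest does not close it.

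First, the map you write as $\tilde j\circ\pi^{-1}:K\to H_{\tilde j(\lambda')}^{V_\rho}$ is not well-defined: $\tilde j=j\restriction V_\lambda$ is only defined on $V_\lambda$, but $K$ contains $\lambda$ and $A\in V_{\lambda+1}\setminus V_\lambda$, and $\lambda'>\rho_0\geq\lambda$ lies outside the domain of $\tilde j$ as well. So there is no obvious way to push $K$ forward using $\tilde j$ alone. More seriously, your proposed fix via the $(\kappa,\lambda)$-extender $E$ derived from $\tilde j$ cannot produce the required two-jump structure: any $E$-ultrapower embedding $k$ has $\crit{k}=\kappa$, whereas weak parametric $n$-exactness demands $\crit{k}<\kappa$ with $k(\crit{k})=\kappa$ and $k(\kappa)=\lambda$. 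A single extender ultrapower simply cannot manufacture a critical point strictly below $\kappa$.

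The paper's argument avoids working internally in $V_\rho$ with $\tilde j$. Instead it builds the witness one level up, at $(\lambda,\rho)$ rather than $(\kappa,\lambda)$, and then reflects down using the full class embedding $j:V\to M$. Concretely: take $X\prec V_\rho$ of size $\lambda$ containing $V_\lambda\cup\{A,\lambda\}$, collapse to $N$, and code $X$ into $\lambda$ via a bijection $b:X\to\lambda$. Then $j\restriction\lambda:\langle\lambda,E\rangle\to\langle\rho,j(E)\rangle$ is an elementary map lying in $M$ by closure, and the composite $i=j(b^{-1})\circ(j\restriction\lambda)\circ b\circ\pi^{-1}:N\to V^M_{j(\rho)}$ is an embedding in $M$ with $i(\crit{i})=\lambda$, $i(\lambda)=\rho$, and $j(A)\in\ran{i}$. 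Using $V_\rho\preceq V^M_{j(\rho)}$, this shows that in $V^M_{j(\rho)}$ the cardinal $\lambda=j(\kappa)$ has the required property relative to $\rho=j(\lambda)$ and $j(A)$; elementarity of $j$ then yields the same for $\kappa$, $\lambda$, $A$ in $V_\rho$. The coding via $b$ is precisely what lets one apply $j$ to the structure $X$ even though $X\not\subseteq V_\lambda$, and the final reflection uses $j:V\to M$, not $\tilde j$.
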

 
 \begin{proof}
 Let $j:V\to M$ with be an elementary embedding with $M$ transitive, $\crit{j}=\kappa$,   $V_{j^2(\kappa)}\subseteq M$, and $^{j(\kappa)}V_{j^2(\kappa)}\subseteq M$. Set $\lambda=j(\kappa)$ and $\rho=j^2(\kappa)$. Then our assumptions ensure that $\lambda$ is an inaccessible cardinal and $\rho$ is a cardinal with the property that $V_\rho$ is a model of \rm{ZFC}. 
  Notice that, since $V_\kappa \preceq V_{j(\kappa)}=V_\lambda$,  elementarity implies that  $V_\lambda=V_{j(\kappa)}\preceq V_{j(\lambda)} = V_\rho$ and therefore also $V_\rho =V_{j(\lambda)} \preceq V^M_{j(\rho)}$.
  
  Now, fix $A\in V_{\lambda+1}$ and, in $M$,  pick an elementary submodel $X$ of $V_\rho$ of cardinality $\lambda$ with $V_\lambda\cup\{A,\lambda\}\subseteq X$. Let $\pi:X\to N$ denote the corresponding transitive collapse.  Then $V_\lambda\cup\{A,\lambda\}\subseteq N$. %
 %Morevoer,   the fact that $\cof{\rho}=\kappa^+\neq\lambda$ implies that \todo{???} $N\in V_\rho$. 
 %
 Moreover, since $\pi\restriction(V_{\lambda+1}\cap X)=\id_{V_{\lambda+1}\cap X}$, it follows that  $N$ is $\Pi_n(V_{\lambda+1})$-upwards correct in $V_\rho$ for all $n<\omega$. But  since $V_\rho \preceq V^M_{j(\rho)}$,  the set $N$ is also $\Pi_n(V_{\lambda+1})$-upwards correct in $V^M_{j(\rho)}$ for all $n<\omega$. 
 Finally, pick a bijection $b:X\to\lambda$ with $b(\lambda)=0$ and $b(\gamma)=\omega\cdot(1+\gamma)$ for all $\gamma<\lambda$.  Set $$E ~ = ~ \Set{\langle b(x_0),b(x_1)\rangle}{x_0,x_1\in X, ~ x_0\in x_1} ~ \in ~ M.$$ Then the map $j\restriction\lambda:\langle\lambda,E\rangle\to\langle\rho,j(E)\rangle$ is an elementary embedding of $\calL_\in$-structures and, since it is a subset of $V_\rho$ of cardinality $\lambda$, the closure properties of $M$ ensure that this map is an element of $M$. 
  
  We now have that, in $M$, the map  $$i ~ = ~  j(b^{{-1}}) \circ({j\restriction\lambda})\circ b\circ\pi^{{-}1}:N\longrightarrow V^M_{j(\rho)}$$ is an elementary embedding with  $i\restriction\kappa=\id_\kappa$, $i(\kappa)=j(\kappa)=\lambda$,  $i(\lambda)=j(\lambda)=\rho$ and $j(A)\in\ran{i}$. 
   %
   %In addition, since $X\subseteq V_\rho$ has size $\lambda$ in $M$, and belongs to $M$, and since in $M$ the cofinality of $\rho$ is greater than $\lambda$ (because $\rho$  is regular in $M$), we have that $X\in V_\rho$. Hence, since $M$ is closed under $\lambda$-sequences of elements of $V_{\rho}$, we have $j\restriction X\in M$, and therefore $i$ also belongs to $M$. 
   %
   Finally, we know that $N\in V_\rho$, because $N$ is a subset of $V_\rho$ of cardinality $\lambda$ in $M$ and $\rho$ is inaccessible in $M$.  
    Since the closure properties of $M$ imply that $\rho$ is a limit cardinal of cofinality greater than $\lambda$ in $V$, 
   %   $j(N)$ belongs to $V_{j(\rho)}^M$, and 
   %the fact that $V_{j(\rho)}^M$ is a model of \rm{ZFC}
   %, with $j(\rho)$ having uncountable cofinality,  
   we can find an $M$-cardinal  $\rho <\eta<j(\rho)$ with $j(N)\subseteq H^M_\eta\prec  V^M_{j(\rho)}$. 
   
   Fix $0<n<\omega$. The above computations now show that, in $V_{j(\rho)}^M$, there exists a cardinal $\rho<\eta\in C^{(n)}$, a transitive $\Pi_n(V_{\lambda+1})$-upwards correct set $N$ with $V_\lambda\cup\{\lambda\}\subseteq N$ and a non-trivial elementary embedding $i:N\to H_\eta$ with $i(\crit{i})=\lambda$, $i(\lambda)=\rho$ and $j(A)\in\ran{i}$. 
   In this situation, the elementarity of $j$ implies that, in $V_\rho$, there exists a cardinal $\lambda<\lambda'\in C^{(n)}$, a transitive $\Pi_n(V_{\kappa+1})$-upwards correct set $N_0$ with $V_\kappa\cup\{\kappa\}\subseteq N_0$ and a non-trivial elementary embedding $i_0:N_0\to H_{\lambda'}$ with $i_0(\crit{i_0})=\kappa$, $i_0(\kappa)=\lambda$ and $A\in\ran{i_0}$. 
 \end{proof}

%%%%%%%%%%%%%%%%%%%%%%%%%%%%%%%
%%%%%%%%%%%%%%%%%%%%%%%%%%%%%%%
%%%%%%%%%%%%%%%%%%%%%%%%%%%%%%%

\section{Beyond huge reflection}\label{section:Beyond}

In this section, we introduce a generalization of the principle $\ESR_{\Ce}(\kappa,\lambda)$ to  sequences of cardinals in order to obtain principles of structural reflection that imply the existence of even stronger large cardinals. The following definition is motivated by the formulation of  \emph{Chang's Conjecture}.

\begin{definition}
 Let $0<\eta\leq\omega$  and  let $\calL$ be a first-order language containing unary predicate symbols $\vec{P}=\seq{\dot{P}_i}{i<\eta}$. 
 \begin{enumerate}
     \item Given a sequence $\vec{\mu}=\seq{\mu_i}{i<\eta}$ of cardinals with supremum $\mu$, an $\calL$-structure $A$ has \emph{type $\vec{\mu}$ (with respect to $\vec{P}$)} if the universe  of $A$ has rank $\mu$ and $\rank{\dot{P}_i^{A}}=\mu_i$ for all $i<\eta$. 
     
     \item Given a class $\Ce$ of $\calL$-structures and  a strictly increasing sequence $\vec{\lambda}=\seq{\lambda_i}{i<1+\eta}$ of cardinals, we let $\ESR_{\Ce}(\vec{\lambda})$ denote the statement that for every structure $B$ in $\Ce$ of type  $\seq{\lambda_{i+1}}{i<\eta}$, there exists an elementary embedding of a structure $A$ in $\Ce$ of type $\seq{\lambda_i}{i<\eta}$ into $B$. 
     
     \item Given a definability class $\Gamma$ and a class $P$, we let $\Gamma(P)$-$\ESR(\vec{\lambda})$ denote the statement that $\ESR_\Ce (\vec{\lambda})$ holds for every class $\Ce$ of structures of the same type that is $\Gamma$-definable with parameters in $P$.  
 \end{enumerate} 
\end{definition}

In order to determine the large cardinal strength of the above principles, we consider the following \emph{sequential} versions of $n$-exact and weakly $n$-exact cardinals:

\begin{definition}
 Let $0<\eta\leq\omega$ and let $\vec{\lambda}=\seq{\lambda_m}{m<\eta}$ be a strictly increasing sequence of  cardinals with supremum $\lambda$. 
 \begin{enumerate}
     \item Given $n<\omega$, a cardinal $\kappa<\lambda_0$ is \emph{$n$-exact for $\vec{\lambda}$}  if for every $A\in V_{\lambda +1}$, there exists a cardinal $\rho$,\footnote{Note that, in both parts of this definition, the listed requirements ensure that there is a unique cardinal $\rho$ with these  properties. If $\eta=1$, then $\lambda=\lambda_0$ and  $\rho=\kappa$. Next, if $1<\eta<\omega$, then $\lambda=\lambda_{\eta-1}$ and $\rho=\lambda_{\eta-2}$. Finally, if $\eta=\omega$, then $\lambda=\rho$.}  a cardinal $\kappa'\in C^{(n)}$ greater than $\beth_\rho$, a cardinal $\lambda' \in C^{(n+1)}$ greater than $\lambda$,  an  elementary submodel $X$ of $H_{\kappa'}$ with $V_{\rho}\cup \{\rho\}\subseteq X$, and 
 an elementary embedding $j:X\to H_{\lambda'}$ with $A\in\ran{j}$, $j(\rho)=\lambda$, $j(\kappa)=\lambda_0$ and $j(\lambda_{m-1})=\lambda_m$  for all $0<m<\eta$. 
   If we further require that $j(\crit{j})=\kappa$, then we say that \emph{$\kappa$ is parametrically $n$-exact for $\vec{\lambda}$}. 
   
   \item Given $0<n<\omega$, a cardinal $\kappa <\lambda_0$ is \emph{weakly $n$-exact for $\vec{\lambda}$} if for every $A\in V_{\lambda+1}$, there exists a cardinal $\rho$, a  transitive,   $\Pi_n(V_{\rho+1})$-upwards correct set $M$ with $V_\rho \cup \{\rho\}\subseteq M$, a cardinal $\lambda'\in C^{(n-1)}$ greater than $\beth_\lambda$    and an elementary embedding $j:M\to H_{\lambda'}$ with  $A\in\ran{j}$, $j(\rho)=\lambda$,  $j(\kappa)=\lambda_0$ and  $j(\lambda_{m-1})=\lambda_m$ for all $0<m<\eta$. 
   If we further require that $j(\crit{j})=\kappa$, then we say that $\kappa$ is \emph{weakly parametrically  $n$-exact for $\vec{\lambda}$}.
 \end{enumerate}
\end{definition}

Note that, if $n>0$ is a natural number and $\seq{\lambda_i}{i\leq n}$ is a strictly increasing sequence of cardinals such that  $\lambda_0$ is weakly $1$-exact for $\seq{\lambda_{i+1}}{i<n}$, then the instance $$(\lambda_n,\ldots,\lambda_1,\lambda_0) ~ \twoheadrightarrow ~ (\lambda_{n-1},\ldots,\lambda_0,{<}\lambda_0)$$ of Chang's Conjecture (see, for example, {\cite[p. 914]{MR2768692}}) holds true. 
 Analogous implications hold true for sequences of cardinals of length $\omega$.

We  show next that the large cardinal notions introduced above are located in the uppermost regions of the large cardinal hierarchy. 
 Recall that, given a natural number $n>0$, a cardinal $\kappa$ is \emph{$n$-huge}
 if there exists a transitive class $M$ and an elementary embedding $j:V\to M$ with $\crit{j}=\kappa$ and ${}^{j^n(\kappa)}M\subseteq M$.

\begin{proposition}
\label{proposition:SequentialWeaklyExactImplynHuge}
  Let $n>0$ be a natural number. 
 \begin{enumerate}
  \item If $\kappa$ is an $n$-huge cardinal, witnessed by an elementary embedding $j:V\to M$, then  $\kappa$ is weakly parametrically $1$-exact for the sequence $\seq{j^{m+1}(\kappa)}{m<n}$. 
  
  \item If $\kappa$ is a cardinal and $\vec{\lambda}=\seq{\lambda_m}{m\leq n}$ is a sequence of cardinals such that $\kappa$ is either weakly $1$-exact for $\vec{\lambda}$ or $0$-exact for $\vec{\lambda}$, then  some cardinal less than $\kappa$ is $n$-huge.
  \end{enumerate}
\end{proposition}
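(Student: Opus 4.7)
My plan is to generalize the argument of Proposition~\ref{proposition:ESRfromHuge} to the sequential setting. Set $\lambda_m = j^{m+1}(\kappa)$ for $m<n$ and $\lambda = \lambda_{n-1} = j^n(\kappa)$. Given $A \in V_{\lambda+1}$, I will take an elementary submodel $N\prec H_{\lambda^+}$ of cardinality $\lambda$ containing $V_\lambda \cup \{A,\lambda_0,\ldots,\lambda_{n-1}\}$. Since $|N|=\lambda=j^n(\kappa)$ and ${}^{j^n(\kappa)}M\subseteq M$, both $N$ and $j\restriction N$ belong to $M$, and $H_{\lambda^+}=H_{\lambda^+}^M$. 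After taking the transitive collapse $\pi:N\to\bar N$, which fixes $V_\lambda\cup\{\lambda\}$, the composite $k_0 := (j\restriction N)\circ \pi^{-1}:\bar N\to H_{j(\lambda)^+}^M$ is an elementary embedding in $M$ with $\crit{k_0}=\kappa$, $k_0(\kappa)=\lambda_0$, $k_0(\lambda_m)=\lambda_{m+1}$ for $m<n-1$, $k_0(\lambda_{n-1})=j(\lambda)$, and $k_0(A)=j(A)$. Inside $M$, the pair $(\bar N,k_0)$ witnesses the existence of a transitive $\Pi_1(V_{\lambda+1})$-correct set and an embedding with exactly the properties required for the weak parametric $1$-exactness of $j(\kappa)$ for the sequence $j(\vec\lambda) = \langle\lambda_1,\ldots,\lambda_{n-1},j(\lambda)\rangle$ at the element $j(A)$. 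Invoking the elementarity of $j:V\to M$ transfers this statement, as in the proof of Proposition~\ref{proposition:ESRfromHuge}, to the corresponding statement for $\kappa$ and $\vec\lambda$ at $A$ in $V$. Since $A$ was arbitrary, $\kappa$ will be weakly parametrically $1$-exact for $\vec\lambda$.

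\textbf{Part (ii).} My plan is to manufacture an $n$-huge ultrafilter below $\kappa$ directly from the embedding provided by the hypothesis. Applying the definition (weakly $1$-exact or $0$-exact) with the parameter $A = V_\kappa \in V_{\lambda_n+1}$ will yield an elementary embedding $j:K \to H_{\lambda'}$ (for some $\lambda'>\lambda_n$), where $K$ contains $V_{\lambda_{n-1}}\cup\{\lambda_{n-1}\}$, $j(\kappa)=\lambda_0$, $j(\lambda_m)=\lambda_{m+1}$ for $0\leq m<n$, and $V_\kappa\in\ran{j}$. Since $j(V_\gamma) = V_{j(\gamma)}$, the requirement $V_\kappa\in\ran{j}$ forces some $\gamma < \kappa$ with $j(\gamma)=\kappa$; in particular $\crit{j}<\kappa$. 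Writing $\mu_i = j^i(\crit{j})$, a standard Kunen-style iteration argument via Proposition~\ref{proposition:HighCritical} (applied to $j\restriction V_\kappa:V_\kappa\to V_{\lambda_0}$) rules out $\sup_i\mu_i<\kappa$, so some $\mu_l \geq \kappa$. A straightforward case analysis using $j(\mu_k)<\lambda_0$ for $\mu_k<\kappa$ then shows $\mu_n < \lambda_{n-1}$; in particular $\mathcal{P}(\mu_n)\subseteq V_{\lambda_{n-1}} \subseteq K$, so $j$ acts on $\mathcal{P}(\mu_n)$.

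With this in hand I define the ultrafilter $U = \{B \subseteq \mathcal{P}(\mu_n) : j[\mu_n]\in j(B)\}$ and verify by the usual computations that $U$ is a normal, $\crit{j}$-complete ultrafilter on $\mathcal{P}(\mu_n)$ concentrating on the sets $\{x:\otp{x\cap\mu_{i+1}}=\mu_i\}$ for $0\leq i<n$; by the standard characterization of $n$-huge cardinals (see {\cite[Theorem~24.8]{Kan:THI}}), this will witness that $\crit{j}$ is $n$-huge, and $\crit{j}<\kappa$ gives the conclusion. The main obstacle is the critical-point control in part~(ii): since we do not assume the parametric version of weakly $1$-exact or $0$-exact, the embedding $j$ need not a priori send its critical point to $\kappa$, and its iterates might in principle accumulate below $\kappa$ or overshoot; the choice $A=V_\kappa$, combined with the iteration argument via Proposition~\ref{proposition:HighCritical}, is precisely what brings the non-parametric case under control and guarantees that all $n+1$ relevant iterates $\mu_0,\ldots,\mu_n$ are in the domain of $j$ and form the correct hierarchy for an $n$-hugeness witness.
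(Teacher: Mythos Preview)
Your approach to both parts is essentially the same as the paper's, and Part~(i) is fine as written. In Part~(ii), however, you insert an unnecessary and somewhat garbled detour that should be excised.

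Once you have chosen $A=V_\kappa$ and deduced $\mu_0=\crit{j}<\kappa$ (which the paper leaves implicit), the bound $\mu_n<\lambda_{n-1}$ follows by a one-line induction: from $\mu_0<\kappa$ and $j(\kappa)=\lambda_0$, $j(\lambda_{m})=\lambda_{m+1}$ you get $\mu_1<\lambda_0$, $\mu_2<\lambda_1$, \ldots, $\mu_n<\lambda_{n-1}$. That is exactly what the paper does (its line ``an easy induction shows $j^{m+2}(\mu)\le\lambda_m$'' is this computation, with a harmless off-by-one in the exponent), and it is all you need for $\mathcal{P}(\mu_n)\subseteq V_{\lambda_{n-1}}\subseteq K$.

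The ``Kunen-style iteration argument via Proposition~\ref{proposition:HighCritical}'' you invoke to force some $\mu_l\ge\kappa$ is both superfluous and not justified by that proposition: Proposition~\ref{proposition:HighCritical} concerns a single \emph{fixed} point $j(\alpha)=\alpha$ and says nothing about $\sup_i\mu_i$. Proving $\sup_i\mu_i\not<\kappa$ would require showing $j$ fixes that supremum, which need not follow since the sequence $\seq{\mu_i}{i<\omega}$ is not known to lie in the domain. Fortunately you never use this conclusion, since the ``case analysis'' you sketch afterwards reduces to the direct induction above. Simply delete the Kunen paragraph; the rest of your construction of the $n$-hugeness ultrafilter on $\mathcal{P}(\mu_n)$ is correct and is precisely the content of {\cite[Theorem~24.8]{Kan:THI}}, which the paper cites rather than reproves.
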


\begin{proof}
$(i)$ Set $\rho=j^{n-1}(\kappa)$ and $\lambda=j^n(\kappa)$. Fix $A\in V_{\lambda +1}$. Let $N$ be an elementary submodel of $H_{\lambda^+}$ of size $\lambda$  with $V_\lambda \cup \{ \lambda, A\}\subseteq N$.
 Then $N$ is an element of $M$ and $N$ is $\Pi_1(V_{\lambda+1})$-upwards correct in $M$. 
 Since the map $j\restriction N:N\to H_{j(\lambda)^+}^M$ is also contained in $M$, elementarity allows us to conclude that, in $V$, there exists a transitive, $\Pi_1(V_{\rho+1})$-upwards correct set $K$ with $V_\rho\cup\{\rho\}\subseteq K$ and an elementary embedding $k:K\to H_{\lambda^+}$ with $A\in\ran{k}$, $k(\rho)=\lambda$,  $k(\crit{k})=\kappa$ and $k(j^m(\kappa))=j^{m+1}(\kappa)$ for all $m<n$. 
  %We can find  $\langle B, E\rangle$    of rank $\lambda$ together with an isomorphism   $\tau: \langle B, E\rangle\to \langle N,\in\rangle$   such that  $\tau(\lambda_m)=\lambda_{m+1}$ for all $m\leq n$. Let $\pi :\langle B,E\rangle \to \langle M, \in\rangle$ be the transitive collapse, and let $k:j[B]\to N$ be such that $\tau =k\circ j\restriction B$. Then $M$, together with the map   $$i:=k \circ j\circ \pi^{-1}:M\to H_{\lambda^+}$$ witnesses that $\kappa$ is weakly $1$-exact for $\langle j^m(\kappa): 0<m\leq n\rangle$. 

  (ii) Set $\lambda=\lambda_n$ and $\rho=\lambda_{n-1}$.  
  Both of our assumptions then yield a cardinal $\lambda'>\lambda$, a set $X$ with $V_\rho\cup\{\rho\}\subseteq X$ and an elementary embedding $j:X\to H_{\lambda'}$ with  $\kappa\in\ran{j}$, $j(\kappa)=\lambda_0$ and $j(\lambda_{m-1})=\lambda_m$ for all $0<m\leq n$. 
  Set $\mu=\crit{j}<\kappa$. An easy induction then shows that $j^{m+2}(\mu)\leq\lambda_m$ holds for all $m\leq n$. 
  In particular, we have $j^n(\mu)<\rho$ and therefore $i=j\restriction V_\rho:V_\rho\to V_\lambda$ is an elementary embedding with $\crit{i}=\mu$ and $i^n(\mu)<\rho$. Using results of Kanamori (see {\cite[Theorem 24.8]{Kan:THI}}), we can now conclude that $\mu$ is $n$-huge. 
\end{proof}

\begin{proposition}
 Let $\vec{\lambda}=\seq{\lambda_m}{m<\omega}$ be a strictly increasing sequence of cardinals with supremum $\lambda$ and let $\kappa<\lambda_0$ be a cardinal.
 \begin{enumerate}
     \item If $\kappa$ is either   weakly $1$-exact  for $\vec{\lambda}$ or  $0$-exact for $\vec{\lambda}$, then there exists an  $I3$-embedding $j:V_\lambda\to V_\lambda$. 
     
     \item If $\kappa$ is either weakly parametrically  $1$-exact  for $\vec{\lambda}$ or parametrically $0$-exact for $\vec{\lambda}$, then the set of critical points of $I3$-embeddings is stationary in $\kappa$. 
 \end{enumerate}
\end{proposition}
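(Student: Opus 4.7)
The plan is to prove (i) by restricting the hypothesized embedding to $V_\lambda$, exploiting the key feature that when $\eta=\omega$ the definitions set $\rho=\lambda$, so the relevant embedding $j$ (from either a transitive $M$ or an elementary submodel $X\prec H_{\kappa'}$) contains $V_\lambda\cup\{\lambda\}$ in its domain and satisfies $j(\lambda)=\lambda$. Part (ii) will combine this restriction with the standard club-reflection trick, using the additional clause $j(\crit{j})=\kappa$ provided by parametricity.

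For (i), I apply the definition to an arbitrary $A\in V_{\lambda+1}$ to obtain $j:M\to H_{\lambda'}$ (or $j:X\to H_{\lambda'}$) with $j(\lambda)=\lambda$, $j(\kappa)=\lambda_0$, and $j(\lambda_{m-1})=\lambda_m$ for all $0<m<\omega$. Set $i = j\restriction V_\lambda$. For $x\in V_\lambda$ one has $\rank{x}<\lambda_m$ for some $m<\omega$, so elementarity of $j$ gives $\rank{i(x)}<j(\lambda_m)=\lambda_{m+1}<\lambda$, hence $i:V_\lambda\to V_\lambda$. To check elementarity of $i$, I note that $V_\lambda$ lies in the domain of $j$: since $\lambda\in\ran{j}$ (because $j(\lambda)=\lambda$) and $V_\lambda$ is definable from $\lambda$, the preimage $j^{{-}1}(V_\lambda)$ equals $V_\lambda$ itself. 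Then for a formula $\varphi$ and parameters $\vec{x}\in V_\lambda$, the statement ``$V_\lambda\models\varphi(\vec{x})$'' is captured inside the domain of $j$ by a formula with parameter $V_\lambda$, and elementarity of $j$ together with $j(\lambda)=\lambda$ transfers it to ``$V_\lambda\models\varphi(j(\vec{x}))$''. Non-triviality is immediate from $i(\kappa)=\lambda_0>\kappa$, so $i$ is an $I3$-embedding.

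For (ii), given a club $C\subseteq\kappa$, I apply the parametric hypothesis with $A=C$ to obtain $j$ as above with the additional property $j(\mu)=\kappa$ for $\mu:=\crit{j}<\kappa$. By part (i), $i=j\restriction V_\lambda$ is an $I3$-embedding with $\crit{i}=\mu$, so it suffices to show $\mu\in C$. Let $C_0\in\mathrm{dom}(j)$ with $j(C_0)=C$. By elementarity, transferring ``$C$ is a club in $j(\mu)=\kappa$'', we obtain $C_0\subseteq\mu$ and $C_0$ is a club in $\mu$. Since $j\restriction\mu=\id_\mu$, we have $j(C_0)\cap\mu = C_0$, hence $C\cap\mu = C_0$; so $C\cap\mu$ is unbounded in $\mu$, and closedness of $C$ in $\kappa>\mu$ forces $\mu\in C$. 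This shows $\mu$ is the critical point of an $I3$-embedding and $\mu\in C$, proving stationarity.

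There is no real obstacle in this plan — the whole argument is direct restriction plus one standard club-reflection step. The only point requiring care is verifying elementarity of $i$ in both the weakly $1$-exact (transitive $M$, $\Pi_1(V_{\lambda+1})$-correct) and the $0$-exact ($X\prec H_{\kappa'}$ with $\kappa'>\beth_\lambda$) cases; in both, the fact that $V_\lambda$ is an element of the domain of $j$ and is fixed by $j$ makes the transfer of truth across $i$ immediate.
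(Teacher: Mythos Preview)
Your proposal is correct and follows essentially the same approach as the paper: restrict the given embedding to $V_\lambda$ to obtain the $I3$-embedding, and for (ii) apply the hypothesis with $A=C$ so that $C\in\ran{j}$, then use $j(\crit{j})=\kappa$ to conclude $\crit{j}\in C$. The paper's proof is much terser (it simply asserts that $j\restriction V_\lambda$ is an $I3$-embedding and that $\crit{j}\in C$), whereas you spell out why $V_\lambda$ lies in the domain and is fixed by $j$, and give the standard club-reflection argument explicitly; but the underlying ideas are identical.
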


\begin{proof}
  (i) Both of our assumptions ensure that there exists a cardinal $\lambda'>\lambda$, a set $X$ with $V_\lambda\cup\{\lambda\}\subseteq X$ and an elementary embedding $j:X\to H_{\lambda'}$ with   $j(\lambda)=\lambda$ and $j(\lambda_{m-1})=\lambda_m$ for all $0<m<\omega$. 
  Then $j\restriction V_\lambda$ is an  $I3$-embedding. 
  
  (ii)  Fix a closed unbounded subset $C$ of $\kappa$.
  By our assumptions, there exists a cardinal $\lambda'>\lambda$, a set $X$ with $V_\lambda\cup\{\lambda\}\subseteq X$ and an elementary embedding $j:X\to H_{\lambda'}$ with $C\in\ran{j}$, $j(\crit{j})=\kappa$, $j(\lambda)=\lambda$ and $j(\lambda_{m-1})=\lambda_m$ for all $0<m<\omega$. 
  Then $\crit{j}\in C$ and the map $j\restriction V_\lambda:V_\lambda\to V_\lambda$ is an $I3$-embedding with critical point $\crit{j}$. 
\end{proof}

\begin{proposition}\label{proposition:I1impliesSeqESR}
 If $\kappa$ is the critical point of an I1-embedding $j:V_{\lambda+1}\to V_{\lambda+1}$ and $k>0$ is a natural number, then $\kappa$ is weakly parametrically $1$-exact for $\seq{j^{k(m+1)}(\kappa)}{m<\omega}$. 
 % , and  Define $\vec{\lambda}=\seq{\lambda_n}{n<\omega}$ to be the unique sequence with $\lambda_n=j^{n+k}(\kappa)$ for all  $n<\omega$. Then  $\Pi_1(V_\kappa)$-$\ESR_{\Ce}(\vec{\lambda})$ holds. 
\end{proposition}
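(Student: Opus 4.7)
The plan is to exploit the canonical extension of $j$ to an elementary embedding $\tilde{j}:H_{\lambda^+}\to H_{\lambda^+}$, which exists because $j$ is $I1$; then to construct an appropriate elementary submodel of $H_{\lambda^+}$, and finally to transfer the desired witness via the elementarity of $\tilde{j}^{k}$. Write $\mu_m := j^{k(m+1)}(\kappa)$ and $\vec{\mu} := \seq{\mu_m}{m<\omega}$. Note that $\lambda=\sup_{m} j^m(\kappa)$ is a strong limit cardinal, since every $j^m(\kappa)$ is inaccessible; by a standard L\"owenheim--Skolem/collapse argument, $H_{\lambda^+}$ is therefore $\Pi_1(V_{\lambda+1})$-correct in $V$. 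Since $\tilde{j}^k$ has critical point $\kappa>\omega$ and thus fixes every finite ordinal, one checks that $\tilde{j}^k(\kappa)=\mu_0$, $\tilde{j}^k(\mu_m)=\mu_{m+1}$, $\tilde{j}^k(\lambda)=\lambda$, and $\tilde{j}^k(\vec\mu)=\vec\mu'$, where $\vec\mu'$ is the one-step left shift $m\mapsto\mu_{m+1}$.

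Given $A\in V_{\lambda+1}$, I would choose an elementary submodel $X\prec H_{\lambda^+}$ of cardinality $\lambda$ with $V_\lambda\cup\{\lambda,A,\vec\mu\}\subseteq X$, let $\pi:X\to\bar{X}$ be its Mostowski collapse, and set $k':=\tilde{j}^k\circ\pi^{-1}:\bar{X}\to H_{\lambda^+}$. Since $V_\lambda\subseteq X$, the map $\pi$ is the identity on $V_\lambda\cup\{\lambda,A,\vec\mu\}$, so $\bar X$ is transitive with $V_\lambda\cup\{\lambda\}\subseteq\bar X$ and $\Pi_1(V_{\lambda+1})$-correct in $H_{\lambda^+}$, and hence in $V$ by correctness of $H_{\lambda^+}$. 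One then checks $\crit{k'}=\kappa$, $k'(\kappa)=\mu_0$, $k'(\mu_m)=\mu_{m+1}$, $k'(\lambda)=\lambda$, $k'(\vec\mu)=\vec\mu'$, and $k'(A)=j^k(A)$. Thus $(\bar X,k')$ witnesses, inside $H_{\lambda^+}$, the assertion $\psi(\mu_0,\vec\mu',\lambda,j^k(A))$, where $\psi(\alpha,f,\gamma,Y)$ is the (schematic) statement that there exist a transitive $\Pi_1(V_{\gamma+1})$-correct $K\supseteq V_\gamma\cup\{\gamma\}$ and an elementary $k:K\to H_{\gamma^+}$ satisfying $Y\in\ran{k}$, $k(\alpha)=f(0)$, $k(f(m))=f(m+1)$ for all $m<\omega$, $k(\gamma)=\gamma$, and $k(\crit{k})=\alpha$.

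Since $\tilde{j}^k$ maps the tuple $(\kappa,\vec\mu,\lambda,A)$ to $(\mu_0,\vec\mu',\lambda,j^k(A))$, its elementarity then gives $H_{\lambda^+}\models\psi(\kappa,\vec\mu,\lambda,A)$. Extracting witnesses $K_0,k_0\in H_{\lambda^+}$ and setting $\lambda':=\lambda^+\in C^{(0)}$ produces exactly the transitive $\Pi_1(V_{\lambda+1})$-correct set and the elementary embedding required by the definition of $\kappa$ being weakly parametrically $1$-exact for $\vec\mu$ at $A$; in particular, $k_0(\crit{k_0})=\kappa$ forces $\crit{k_0}<\kappa$. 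Since $A$ was arbitrary, this completes the proof.

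The main obstacle will be the absoluteness bookkeeping: one must verify that $\Pi_1(V_{\lambda+1})$-correctness at the $H_{\lambda^+}$-level lifts to the global $V$-level (using the strong-limit-ness of $\lambda$), and that the infinite conjunction ``$k(\mu_m)=\mu_{m+1}$ for all $m<\omega$'' can be packaged as a single first-order condition $k\circ\vec\mu=\vec\mu'$ involving $\vec\mu$ as a single parameter, so that $\tilde{j}^k$ reflects it in one go. The ``$k$ is elementary'' clause of $\psi$ is likewise schematic, but each of its instances is reflected separately by $\tilde{j}^k$, so the overall reflection goes through.
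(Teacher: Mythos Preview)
Your overall strategy is the same as the paper's: extend $j$ to $\tilde j:H_{\lambda^+}\to H_{\lambda^+}$, take an elementary submodel of $H_{\lambda^+}$ of size $\lambda$, collapse it, compose with $\tilde j^k$, and then reflect via the elementarity of $\tilde j^k$. Your handling of the transitive collapse, of the sequence $\vec\mu$ as a single parameter, and of the $\Pi_1$-correctness bookkeeping is fine.

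There is, however, a real gap in the reflection step. Your formula $\psi(\alpha,f,\gamma,Y)$ asserts the existence of an elementary $k:K\to H_{\gamma^+}$. When you evaluate this inside $H_{\lambda^+}$ with $\gamma=\lambda$, the class $H_{\gamma^+}$ is the entire model; so ``$k$ is elementary into $H_{\gamma^+}$'' is full truth in $H_{\lambda^+}$, which by Tarski is not expressible by a single first-order formula there. Your proposed remedy --- reflecting each $\varphi$-elementarity instance separately --- does not work: for each $\varphi$ you obtain \emph{some} pair $(K_\varphi,k_\varphi)$, but nothing forces these to coincide, and you need a single $(K_0,k_0)$ that is fully elementary.

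The fix, which is exactly what the paper does, is to replace the class codomain by a \emph{set} codomain. Since $X\in H_{\lambda^+}$, also $\tilde j^k(X)\in H_{\lambda^+}$ and $\tilde j^k(X)\prec H_{\lambda^+}$; moreover $\tilde j^k[X]\subseteq\tilde j^k(X)$, so your map $k'$ is an elementary embedding $\bar X\to\tilde j^k(X)$. Now ``there exist a transitive $\Pi_1(V_{\lambda+1})$-correct $K\supseteq V_\lambda\cup\{\lambda\}$ and an elementary $k:K\to\tilde j^k(X)$ with $j^k(A)\in\ran{k}$, $k(\crit{k})=\mu_0$, $k\circ\vec\mu'=\vec\mu''$'' \emph{is} a single first-order statement (elementarity between sets is expressible via the satisfaction relation), with parameters $\tilde j^k(X),\lambda,V_\lambda,j^k(A),\mu_0,\vec\mu'$. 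Reflecting via $\tilde j^k$ yields $K_0$ and an elementary $k_0:K_0\to X$ with $A\in\ran{k_0}$, $k_0(\crit{k_0})=\kappa$, and $k_0\circ\vec\mu=\vec\mu'$. Finally, since $X\prec H_{\lambda^+}$, the map $k_0$ is also elementary as a map $K_0\to H_{\lambda^+}$, and $\Sigma_1$-absoluteness transfers $\Pi_1(V_{\lambda+1})$-correctness of $K_0$ from $H_{\lambda^+}$ to $V$. With this modification your argument goes through.
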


\begin{proof}
By standard coding arguments, our assumptions yields an elementary embedding $i:H_{\lambda^+}\to H_{\lambda^+}$ with $i\restriction V_\lambda=j\restriction V_\lambda$ and therefore $i\restriction V_\kappa=\id_{V_\kappa}$. 
  Fix $A\in V_{\lambda+1}$ and pick an elementary submodel $N$ of $H_{\lambda^+}$ of cardinality $\lambda$ with $\lambda\cup\{A,\lambda\}\subseteq N$. Then $\Sigma_1$-absoluteness implies that $N$ is $\Pi_1(V_{\lambda+1})$-upwards correct. 
  In this situation, the set $N$ and the map $i^k\restriction N:N\to i^k(N)$ witness that, in $H_{\lambda^+}$, there exists a transitive, $\Pi_1(V_{\lambda+1})$-upwards correct set $M$ with $V_\lambda\cup\{\lambda\}\subseteq M$ and an elementary embedding $l:M\to i^k(N)$ with $i^k(A)\in\ran{l}$, $l(\crit{l})=i^k(\kappa)$, $l(i^k(\kappa))=i^k(j^k(\kappa))$ and $l(i^k(j^{k m}(\kappa)))=i^k(j^{k(m+1)}(\kappa))$ for all $0<m<\omega$. 
  Using the elementarity of $i^k:V_{\lambda+1}\to V_{\lambda+1}$, $\Sigma_1$-absoluteness and the fact that $N$ is an elementary submodel of $H_{\lambda^+}$, we can now conclude that  there exists  a transitive, $\Pi_1(V_{\lambda+1})$-upwards correct set $M$ with $V_\lambda\cup\{\lambda\}\subseteq M$ and an elementary embedding $l:M\to H_{\lambda^+}$ with $A\in\ran{l}$, $l(\crit{l})=\kappa$, $l(\kappa)=j^k(\kappa)$ and $l(j^{k m}(\kappa))=j^{k(m+1)}(\kappa)$ for all $0<m<\omega$.
  %
  %  inductively construct a sequence $\seq{Y_n}{n<\omega}$ of elementary submodels of $H_{\lambda^+}$ of cardinality $\lambda$ with the property that $\lambda\cup\{A,\lambda\}\subseteq Y_0$ and $Y_n,i(Y_n)\in Y_{n+1}$ for all $n<\omega$. 
 % Then $Y=\bigcup_{n<\omega}Y_n$ is an elementary submodel of $H_{\lambda^+}$ of cardinality $\lambda$ with the property that $i[Y]\subseteq i(Y)\subseteq Y$ and this implies that the map $\map{i^k\restriction Y}{Y}{i^k(Y)}$ is an elementary embedding with $k(i(y))=i(y)$,  $i(\lambda_0),i(z)\in\ran{k}$,  and $$k(i(\lambda_n)) ~ = ~ k(\lambda_{n+1}) ~ = ~ \lambda_{n+2} ~ = ~  i(\lambda_{n+1})$$ for all $i<\omega.$ 
 % Since $k$ is an element of $H_{\lambda^+}$, we can use the  elementarity of $i$ to find  a $\Sigma_1$-elementary submodel $X$ of $H_{\lambda^+}$ of cardinality $\lambda$ with $(\lambda+1)\cup\{\vec{\lambda}\}\subseteq X$ and an elementary embedding $\map{l}{X}{Y}$ with $l(y)=y$,  $\lambda_0,z\in\ran{l}$ and  $l(\lambda_n)=\lambda_{n+1}$ for all $n<\omega$. Notice that $X$ is transitive and $\Pi_n$-correct. So, $X$ witnesses (ii) of Theorem \ref{theorem:EmbCharESRPi1seq} for $z$ and the sequence $\vec{\lambda}$. Hence, the Theorem yields that $\ESR_\Ce}(\vec{\lambda})$ holds for every class $\Ce$ of structures of the same type that is definable by a $\Pi_1$-formula with parameters in $\VV_\kappa$.
\end{proof}

In the remainder of this section, we show how the validity of the principle $\ESR(\vec{\lambda})$ is connected to the existence of cardinals that are exact or weakly exact for certain sequences of cardinals.

\begin{lemma}\label{lemma:EmbCharESRPi1seq}
 The following statements are equivalent for every natural number $n>0$, every ordinal $0<\eta\leq\omega$, every strictly increasing sequence $\vec{\lambda}=\seq{\lambda_i}{i<1+\eta}$ of uncountable cardinals with supremum $\lambda$, and all $z\in V_{\lambda_0}$: 
 \begin{enumerate}
     \item $\Pi_n(\{z\})$-$\ESR_{\Ce}(\vec{\lambda})$.  
     
     \item For every $A\in V_{\lambda+1}$, there exists a cardinal $\rho$, a  transitive, $\Pi_n(V_{\rho+1})$-upwards correct set $M$ with $V_\rho\cup\{\rho\}\subseteq M$, a cardinal $\lambda<\lambda'\in C^{(n-1)}$      and an elementary embedding $j:M\to H_{\lambda'}$ such that $A\in\ran{j}$, $j(z)=z$, $j(\rho)=\lambda$ and  $j(\lambda_i)=\lambda_{i+1}$ for all $i<\eta$.   
     
     \item For all cardinals  $\lambda<\lambda'\in C^{(n)}$ and every $A\in V_{\lambda+1}$, there exists a cardinal $\rho$, a  transitive, $\Pi_n(V_{\rho+1})$-upwards correct set $M$ with $V_\rho\cup\{\rho\}\subseteq M$ and an elementary embedding $j:M\to H_{\lambda'}$ such that $A\in\ran{j}$, $j(z)=z$, $j(\rho)=\lambda$ and  $j(\lambda_i)=\lambda_{i+1}$ for all $i<\eta$.  
 \end{enumerate}
\end{lemma}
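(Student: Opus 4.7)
The proof follows the scheme of Corollary \ref{corollary:WeakExactStronger} adapted to the sequential setting, cycling (iii) $\Rightarrow$ (ii) $\Rightarrow$ (i) $\Rightarrow$ (iii). The implication (iii) $\Rightarrow$ (ii) is immediate since $C^{(n)}\subseteq C^{(n-1)}$. For (ii) $\Rightarrow$ (i), fix a $\Pi_n$-formula $\varphi(v_0,v_1)$ defining $\Ce=\Set{A}{\varphi(A,z)}$, and suppose toward a contradiction that some $B\in\Ce$ of type $\seq{\lambda_{i+1}}{i<\eta}$ admits no elementary embedding from a $\Ce$-structure of type $\seq{\lambda_i}{i<\eta}$. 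Applying (ii) to $A=B$ yields $\rho$, a transitive $\Pi_n(V_{\rho+1})$-correct set $M\supseteq V_\rho\cup\{\rho\}$, $\lambda'\in C^{(n-1)}$ with $\lambda'>\lambda$, and an elementary embedding $j:M\to H_{\lambda'}$ with $j(\rho)=\lambda$, $j(\lambda_i)=\lambda_{i+1}$ for $i<\eta$, and $B=j(B_0)$ for some $B_0\in M$. Elementarity of $j$, combined with $j(\lambda_i)=\lambda_{i+1}$ and $V_\rho\subseteq M$ (so the relevant predicate ranks agree between $M$ and $V$), forces $B_0$ to have type $\seq{\lambda_i}{i<\eta}$; combining $\Pi_n$-downward absoluteness from $V$ to $H_{\lambda'}$, elementarity of $j$, and $\Pi_n(V_{\rho+1})$-correctness of $M$ yields $\varphi(B_0,z)$ in $V$, so $B_0\in\Ce$. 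Then $j\restriction B_0:B_0\to B$ is the forbidden elementary embedding, a contradiction, exactly as in Lemma \ref{lemma:FullReflectionC}(i).

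For (i) $\Rightarrow$ (iii), I would define a sequential analogue $\mathcal{E}^{\mathrm{seq}}_n(z)$ of the class $\mathcal{E}_n(z)$ from Lemma \ref{lemma:WeakExactFromESR}: structures $\langle D,E,a,b,c,(q_i)_{i<\eta},(\dot P_i)_{i<\eta}\rangle$ in the countable language with binary $E$, constants $a,b,c,q_i$, and unary predicates $\dot P_i$, required to satisfy all the conditions defining $\mathcal{E}_n(z)$ (so, letting $\pi$ be the collapse, $V_{\rank{D}}\cup\{\rank{D}\}\subseteq M$, $M$ is $\Pi_n(V_{\rank{D}+1})$-correct, $\pi(b)=\rank{D}$, $\pi(c)=z$, and $\pi^{-1}\restriction\rank{D}=\id_{\rank{D}}$) together with: for each $i<\eta$, $\pi(q_i)$ is an ordinal $\leq\rank{D}$ and $\dot P_i^D$ equals that ordinal, viewed as a subset of $\rank{D}\subseteq D$. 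This class is $\Pi_n(\{z\})$-definable. Given $A\in V_{\lambda+1}$ and $\lambda<\lambda'\in C^{(n)}$, I mimic the construction of Lemma \ref{lemma:WeakExactFromESR}: pick an elementary submodel $X$ of $H_{\lambda'}$ of cardinality $\beth_\lambda$ with $V_\lambda\cup\{A,\lambda\}\subseteq X$, a bijection $f:X\to V_\lambda$ with $f\restriction\lambda=\id_\lambda$, and let $R$ be the induced relation on $V_\lambda$; interpret $a^B=f(A)$, $b^B=f(\lambda)$, $c^B=f(z)$, $q_i^B=f(\lambda_{i+1})$, and $\dot P_i^B=f[\lambda_{i+1}]$. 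Then $B=\langle V_\lambda,R,a^B,b^B,c^B,(q_i^B)_{i<\eta},(\dot P_i^B)_{i<\eta}\rangle$ lies in $\mathcal{E}^{\mathrm{seq}}_n(z)$ and has type $\seq{\lambda_{i+1}}{i<\eta}$.

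Applying $\Pi_n(\{z\})$-$\ESR_{\mathcal{E}^{\mathrm{seq}}_n(z)}(\vec\lambda)$ then yields $A'\in\mathcal{E}^{\mathrm{seq}}_n(z)$ of type $\seq{\lambda_i}{i<\eta}$ with an elementary embedding $i_0:A'\to B$; letting $\pi_{A'}$ denote the collapse of the underlying frame $\langle D,E'\rangle$ of $A'$ and $M$ its transitive image, set $j=f^{-1}\circ i_0\circ\pi_{A'}^{-1}:M\to H_{\lambda'}$. The defining clauses of $\mathcal{E}^{\mathrm{seq}}_n(z)$ guarantee that $M$ is $\Pi_n(V_{\rho+1})$-correct with $V_\rho\cup\{\rho\}\subseteq M$ (where $\rho=\rank{D}$); the standard chases through the distinguished constants yield $j(\rho)=\lambda$ (via $b$), $j(z)=z$ (via $c$), and $A\in\ran{j}$ (via $a$), exactly as in Lemma \ref{lemma:WeakExactFromESR}. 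The type condition $\rank(\dot P_i^{A'})=\lambda_i$, combined with the class-imposed linkage $\dot P_i^{A'}=\pi_{A'}(q_i^{A'})$ (as a subset), forces $\pi_{A'}(q_i^{A'})=\lambda_i$; similarly $\pi(q_i^B)=\lambda_{i+1}$. Elementarity of $i_0$ then gives $i_0(q_i^{A'})=q_i^B$, and composing through $f^{-1}$ and $\pi_{A'}^{-1}$ delivers $j(\lambda_i)=\lambda_{i+1}$ (with the edge case $\lambda_{\eta-1}=\rank{D}$ in the finite case handled by identifying $q_{\eta-1}$ with $b$). The main obstacle is exactly this bookkeeping: the auxiliary constants $q_i$ must be coupled tightly to the predicates $\dot P_i$ so that the type condition $\rank(\dot P_i^{A'})=\lambda_i$ translates, via elementarity of $i_0$, into the precise ordinal-mapping requirement $j(\lambda_i)=\lambda_{i+1}$ demanded by (iii); without such coupling, elementarity would only supply the much weaker inclusion $i_0[\lambda_i]\subseteq\lambda_{i+1}$.
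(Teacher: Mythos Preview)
Your proposal is correct and follows essentially the same strategy as the paper's proof: the cycle (iii)$\Rightarrow$(ii)$\Rightarrow$(i)$\Rightarrow$(iii), with (ii)$\Rightarrow$(i) handled by pulling back the target structure along $j$ and invoking $\Pi_n$-correctness, and (i)$\Rightarrow$(iii) handled by building a $\Pi_n(\{z\})$-definable class in which constant symbols are tied to the ranks of the auxiliary predicates so that the type condition forces $j(\lambda_i)=\lambda_{i+1}$. The paper's class uses the clause $\pi(d_i)=\rank{P_i}$ rather than your ``$\dot P_i$ equals the ordinal $\pi(q_i)$'', and it drops the identity requirement $\pi^{-1}\restriction\rank{D}=\id_{\rank{D}}$ you inherit from $\mathcal{E}_n(z)$, but these are cosmetic variants of the same bookkeeping device; both buy exactly the linkage you identify as the main obstacle.
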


\begin{proof}
  First, assume that (ii) holds. 
 Fix a $\Pi_n$-formula $\varphi(v_0,v_1)$ with the property that $\Ce=\Set{A}{\varphi(A,z)}$ is a suitable class of structures and pick a structure $B$ in $\Ce$ of type  $\seq{\lambda_{i+1}}{i<\eta}$. 
 Since $B\in V_{\lambda +1}$, we can find a cardinal $\rho$, a  transitive, $\Pi_n(V_{\rho+1})$-upwards correct set $M$ with $V_\lambda\cup\{\lambda\}\subseteq M$, a cardinal $\lambda<\lambda'\in C^{(n-1)}$ and an elementary embedding $j:M\to H_{\lambda'}$ such that $A,\vec{\lambda}\in\ran{j}$, $j(z)=z$, $j(\rho)=\lambda$ and  $j(\lambda_i)=\lambda_{i+1}$ for all $i<\eta$. 
 Elementarity then implies that $\rho$ is the supremum of the sequence $\seq{\lambda_i}{i<\eta}$. 
  Moreover, the fact that $\lambda'\in C^{(n-1)}$  implies  that $\Pi_n$-statements are downwards absolute from $V$ to $H_{\lambda'}$, and this allows us to conclude  that $\varphi(B,z)$ holds in $H_{\lambda'}$. 
  Pick $A\in M\cap V_{\rho+1}$ with $j(A)=B$. 
  In this situation, the elementarity of $j$ and the fact that  $j(\lambda_i)=\lambda_{i+1}$ holds for all $i<\eta$ cause $\varphi(A,z)$ to hold in $M$ and, by the $\Pi_n(V_{\rho+1})$-upwards correctness of $M$, this shows that $A$ is a structure of type $\seq{\lambda_i}{i<\eta}$ in $\Ce$. 
  Finally, since $A$ has rank $\rho$ and $V_\rho$  is a subset of $M$, the map $j$ induces an elementary embedding of $A$ into $B$. This shows that (i) holds in this case.

  Next, assume that (i) holds and we shall prove (iii). So fix a cardinal  $\lambda<\lambda'\in C^{(n)}$ and  $A\in V_{\lambda+1}$. 
  Define $\calL$ to be the first-order language extending $\calL_\in$ by predicate symbols $\vec{P}=\seq{\dot{P}_i}{i<\eta}$ and constant symbols $\dot{A}$, $\dot{\lambda}$,  $\dot{z}$ and $\seq{\dot{\lambda}_i}{i<\eta}$.  
  Define $\Ce$ to be the class of all $\calL$-structures $\langle D,E,\vec{P},a,b,c,\vec{d}\rangle$ with the property that $E$ is a well-founded and extensional relation on $D$ and, if $\vec{P}=\seq{P_i}{i<\eta}$, $\vec{d}=\seq{d_i}{i<\eta}$ and  $\pi:\langle D,E\rangle\to\langle M,\in\rangle$ is the induced transitive collapse, then the following statements hold: 
  \begin{itemize}
      \item $\rank{D}$ is a cardinal and $V_{\rank{D}}\cup\{\rank{D}\}\subseteq M$. 
      
      \item $M$ is $\Pi_n(V_{\rank{D}+1})$-upwards correct. 
      
      \item $\pi(b)=\rank{D}$, $\pi(c)=z$ and $\pi(d_i)=\rank{P_i}$ for all $i<\eta$. 
      
      \item  $\seq{\rank{P_i}}{i<\eta}$ is a strictly increasing sequence of cardinals with supremum $\rank{D}$. 
  \end{itemize}
 Then $\Ce$ is definable by a $\Pi_n$-formula with parameter $z$. 
 
 Now, let $X$ be an elementary substructure of $V_{\lambda'}$ of cardinality $\beth_\lambda$ with $V_\lambda\cup\{A,\lambda\}\subseteq X$. 
 Pick a bijection $f:X\to V_\lambda$ 
 %with $f\restriction\lambda=\id_\lambda$ 
 and let $R$ denote the induced binary relation on $V_\lambda$.  These choices ensure that the transitive collapse of $\langle V_\lambda,R\rangle$ is $\Pi_n(V_{\lambda+1})$-upwards correct and this allows us to conclude that $$\langle V_\lambda,R,\seq{\lambda_{i+1}}{i<\eta},f(A),f(\lambda),f(z),\seq{f(\lambda_{i+1})}{i<\eta}\rangle$$ is a structure in $\Ce$ of type $\seq{\lambda_{i+1}}{i<\eta}$. 
 By our assumptions, there exists an elementary embedding $i$ of a structure $$\langle D,E,\seq{P_i}{i<\eta},a,b,c,\seq{d_i}{i<\eta}\rangle$$ of type $\seq{\lambda_i}{i<\eta}$ in $\Ce$ into the above structure. 
 Let $\pi:\langle D,E\rangle\to\langle M,\in\rangle$ denote the corresponding transitive collapse and set $$j ~ = ~ f^{{-}1}\circ i\circ\pi^{{-}1}:M\to H_{\lambda'}.$$ 
 Set $\rho=\pi(b)=\rank{D}=\sup_{i<\eta}\lambda_i$. 
 Then $M$ is a $\Pi_n(V_{\rho+1})$-upwards correct set with $V_\rho\cup\{\rho\}\subseteq M$  and $j$ is an elementary embedding with  $j(\rho)=\lambda$, $j(z)=z$ and $A\in\ran{j}$. 
 Moreover, given $i<\eta$, we now have $\lambda_i=\rank{P_i}=\pi(d_i)$ and this allows us to conclude that $$j(\lambda_i) ~ = ~ (f^{{-}1}\circ i)(d_i) ~ = ~ \lambda_{i+1}.$$ This shows that (iii) holds in this case. 
 
 Since (iii) obviously implies (ii), this concludes the proof of the lemma.  
\end{proof}

\begin{corollary}\label{corollary:SeqWeaklyExactYieldsReflcetion}
 Let $0<n<\omega$, let $0<\eta\leq\omega$ and let $\vec{\lambda}=\seq{\lambda_i}{i<1+\eta}$ be a strictly increasing sequence of cardinals. 
  \begin{enumerate}
   \item The cardinal $\lambda_0$ is weakly $n$-exact for $\seq{\lambda_{i+1}}{i<\eta}$ if and only if  $\Pi_n$-$\ESR(\vec{\lambda})$ holds. 
   
   \item If $\lambda_0$ is weakly parametrically $n$-exact for $\seq{\lambda_{i+1}}{i<\eta}$, then $\Pi_n(V_{\lambda_0})$-$\ESR(\vec{\lambda})$ holds. \qed 
  \end{enumerate}
\end{corollary}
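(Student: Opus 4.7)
The plan is to reduce both assertions to Lemma \ref{lemma:EmbCharESRPi1seq}, which already expresses the principle $\Pi_n(\{z\})$-$\ESR(\vec{\lambda})$ for a fixed parameter $z\in V_{\lambda_0}$ as the existence of exactly the sort of embeddings that appear in the definition of weak $n$-exactness for a sequence.

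For part (i), I will invoke Lemma \ref{lemma:EmbCharESRPi1seq} with the trivial parameter $z=\emptyset$, for which the clause $j(z)=z$ is automatic. After the index shift $\mu_i=\lambda_{i+1}$, the embedding conditions appearing in ``$\lambda_0$ is weakly $n$-exact for $\seq{\lambda_{i+1}}{i<\eta}$'' (namely $j(\kappa)=\mu_0$ and $j(\mu_{m-1})=\mu_m$ for $0<m<\eta$) combine to give $j(\lambda_i)=\lambda_{i+1}$ for all $i<\eta$, matching condition (ii) of the lemma; the implication (ii)$\Rightarrow$(i) then yields $\Pi_n$-$\ESR(\vec{\lambda})$. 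Conversely, (i)$\Rightarrow$(iii) of the lemma produces, for every cardinal $\lambda'\in C^{(n)}$ above $\lambda$ (in particular above $\beth_\lambda$), a witnessing embedding, which is exactly what weak $n$-exactness demands.

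For part (ii), fix $z\in V_{\lambda_0}$; I need to establish condition (ii) of Lemma \ref{lemma:EmbCharESRPi1seq} for this $z$. Given $A\in V_{\lambda+1}$, I form the pair $A^*=\langle A,z\rangle\in V_{\lambda+1}$ and apply weak parametric $n$-exactness to $A^*$, obtaining $\rho, M, \lambda', j$ as in the definition, with $A^*\in\ran{j}$ and the extra clause $j(\crit{j})=\lambda_0$. Both $A$ and $z$ are then in $\ran{j}$; picking $z_0\in M$ with $j(z_0)=z$ and mirroring the argument of Proposition \ref{proposition:ParametersFromParametrically}, the elementarity of $j$ gives $j(\rank{z_0})=\rank{z}<\lambda_0=j(\crit{j})$, so $\rank{z_0}<\crit{j}$. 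Since $j(\crit{j})=\lambda_0<\lambda=j(\rho)$ forces $\crit{j}<\rho$, we have $V_{\crit{j}}\subseteq V_\rho\subseteq M$ and $j$ is the identity there, whence $z_0=z$ and thus $j(z)=z$. Applying (ii)$\Rightarrow$(i) of Lemma \ref{lemma:EmbCharESRPi1seq} now gives $\Pi_n(\{z\})$-$\ESR(\vec{\lambda})$, and since $z\in V_{\lambda_0}$ was arbitrary, the principle $\Pi_n(V_{\lambda_0})$-$\ESR(\vec{\lambda})$ follows.

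The only non-cosmetic step is the verification of $j(z)=z$ in part (ii), which rests on the strict inequalities $\rank{z}<j(\crit{j})$ and $\crit{j}<\rho$ together with the transitivity of $M$ above $V_{\crit{j}}$. All remaining ingredients are notational bookkeeping matching the sequential definitions to the clauses of Lemma \ref{lemma:EmbCharESRPi1seq}.
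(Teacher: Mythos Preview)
Your approach is correct and coincides with the paper's intended (but unwritten) argument: the corollary is marked with a \qed\ precisely because both parts are meant to be read off from Lemma~\ref{lemma:EmbCharESRPi1seq} together with the parameter-fixing trick of Proposition~\ref{proposition:ParametersFromParametrically}, exactly as you do.

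One small technical slip in part~(ii): the Kuratowski pair $\langle A,z\rangle$ need not lie in $V_{\lambda+1}$, since $A$ may have rank exactly $\lambda$, in which case $\{A\}$ already has rank $\lambda+1$. To stay inside $V_{\lambda+1}$ you should encode the two objects as a single \emph{subset} of $V_\lambda$, for instance $A^*=(\{0\}\times A)\cup\{\langle 1,z\rangle\}$; since $A\subseteq V_\lambda$ and $z\in V_{\lambda_0}\subseteq V_\lambda$, this $A^*$ is contained in $V_\lambda$ and hence lies in $V_{\lambda+1}$. From $A^*\in\ran{j}$ one recovers $A,z\in\ran{j}$ because $\ran{j}\prec H_{\lambda'}$. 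After this adjustment your argument goes through verbatim.
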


 In the case of sequences of finite length, we can now generalize Theorem \ref{theorem:CharacterizationPI-ESR} to principles of the form  $\Pi_n$-$\ESR(\vec{\lambda})$.

\begin{theorem}\label{theorem:SeqEquivalenceWeaklyExactReflection}
 The following statements are equivalent for every cardinal $\kappa$ and all natural numbers $n,\eta>0$: 
  \begin{enumerate}
      \item $\kappa$ is the least cardinal such that there exists a strictly increasing sequence $\bar{\lambda}=\seq{\lambda_i}{i<\eta+1}$ of cardinals with $\lambda_0=\kappa$ and the property that $\Pi_n$-$\ESR(\vec{\lambda})$ holds. 

      \item $\kappa$ is the least cardinal such that there exists a strictly increasing sequence $\bar{\lambda}=\seq{\lambda_i}{i<\eta+1}$ of cardinals with $\lambda_0=\kappa$ and the property that $\Pi_n(V_\kappa)$-$\ESR(\vec{\lambda})$ holds. 
      
     \item $\kappa$ is the least cardinal that is weakly $n$-exact for some strictly increasing sequence of cardinals greater than $\kappa$ of length $\eta$. 
      
    \item $\kappa$ is the least cardinal that is weakly parametrically $n$-exact for some strictly increasing sequence of cardinals greater than $\kappa$ of length $\eta$.
  \end{enumerate}
\end{theorem}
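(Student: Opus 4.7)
The plan is to mimic the proof of Theorem~\ref{theorem:CharacterizationPI-ESR}, adapted to the sequential setting. Corollary~\ref{corollary:SeqWeaklyExactYieldsReflcetion}(i) shows that, for the same cardinal $\kappa$, conditions (i) and (iii) are equivalent (the length-$(\eta+1)$ sequence in (i) with $\lambda_0=\kappa$ corresponds to its length-$\eta$ tail in (iii)), so the least cardinals witnessing (i) and (iii) coincide. Corollary~\ref{corollary:SeqWeaklyExactYieldsReflcetion}(ii) gives (iv)$\Rightarrow$(ii) at the same cardinal, and trivially $\Pi_n$-$\ESR\subseteq\Pi_n(V_\kappa)$-$\ESR$ yields (ii)$\Rightarrow$(i). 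Hence $\mathrm{least(iii)}=\mathrm{least(i)}\leq\mathrm{least(ii)}\leq\mathrm{least(iv)}$, and the remaining task is the reverse inequality $\mathrm{least(iv)}\leq\mathrm{least(iii)}$, i.e.\ that the least cardinal satisfying (iii) is already weakly parametrically $n$-exact for some length-$\eta$ sequence.

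To establish this, let $\kappa$ be the least cardinal satisfying (iii), witnessed by a strictly increasing $\vec\mu=\seq{\mu_i}{i<\eta}$ of cardinals above~$\kappa$, and assume toward contradiction that $\kappa$ is not weakly parametrically $n$-exact for $\vec\mu$. Pick a counter-witness $A\in V_{\mu_{\eta-1}+1}$. Using Lemma~\ref{lemma:EmbCharESRPi1seq}(iii) with the set $\{A,\kappa\}$ and some cardinal $\lambda'\in C^{(n)}$ above $\beth_{\mu_{\eta-1}}$, I obtain a transitive $\Pi_n(V_{\rho+1})$-correct set $M$ with $V_\rho\cup\{\rho\}\subseteq M$ and an elementary embedding $j:M\to H_{\lambda'}$ satisfying $A,\kappa\in\ran j$, $j(\kappa)=\mu_0$ and $j(\mu_{m-1})=\mu_m$ for $0<m<\eta$. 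Because $\kappa\in\ran j$ and $j(\kappa)>\kappa$, order-preservation of $j$ forces $j(\crit j)\leq\kappa$, and the choice of~$A$ makes the inequality strict. Set $\mu=j(\crit j)<\kappa$.

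The heart of the argument will be a sequential analogue of Lemma~\ref{lemma:FullReflectionC}(ii): the principle $\Pi_n$-$\ESR(\langle\mu,j(\mu),j^2(\mu),\ldots,j^\eta(\mu)\rangle)$ holds. Given this, Corollary~\ref{corollary:SeqWeaklyExactYieldsReflcetion}(i) witnesses that $\mu$ is weakly $n$-exact for the length-$\eta$ sequence $\seq{j^{i+1}(\mu)}{i<\eta}$ of cardinals above~$\mu$, contradicting the minimality of~$\kappa$. To prove the sequential reflection, I first verify that the iterates behave: since $\mu<\kappa$, one has $j^i(\mu)<j^i(\kappa)=\mu_{i-1}$ for $0<i\leq\eta$, so $j^i(\mu)\in V_\rho\subseteq M$ whenever $i\leq\eta-1$, and $j^\eta(\mu)$ is well-defined in $H_{\lambda'}$. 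Now suppose toward contradiction that there is a $\Pi_n$-definable class $\Ce$ and $B\in\Ce$ of type $\langle j(\mu),\ldots,j^\eta(\mu)\rangle$ admitting no embedding from any $\Ce$-structure of the small type $\langle\mu,j(\mu),\ldots,j^{\eta-1}(\mu)\rangle$. All parameters in this failure lie in $\ran j$, with preimages $\crit j,\mu,j(\mu),\ldots,j^{\eta-1}(\mu)$ forming the diagonal shift; exactly as in the proof of Lemma~\ref{lemma:FullReflectionC}(ii) the failure statement transfers to $H_{\lambda'}$ (using $\lambda'\in C^{(n)}$) and then by elementarity of~$j$ to $M$, producing $B_0\in M$ whose type in $M$, and by absoluteness of ranks also in~$V$, is exactly $\langle\mu,j(\mu),\ldots,j^{\eta-1}(\mu)\rangle$. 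Since $B_0\in V_\rho$, the $\Pi_n(V_{\rho+1})$-correctness of~$M$ yields $B_0\in\Ce$ in~$V$, and $j\restriction B_0:B_0\to j(B_0)=B$ is an elementary embedding from a structure of precisely the forbidden small type into~$B$, the desired contradiction.

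The main obstacle is identifying the correct sequence $\vec\tau$ for which $\mu$ must be shown weakly $n$-exact. Naive candidates such as $\vec\mu$ itself or the downward shift $\langle\kappa,\mu_0,\ldots,\mu_{\eta-2}\rangle$ fail because under these choices $j$-preimages misalign the small type with the type of any structure that $j$ can act on. The correct choice is the diagonal iterate tower $\langle j(\mu),\ldots,j^\eta(\mu)\rangle$, which shifts in lockstep with the underlying action $\langle\crit j,\kappa,\mu_0,\ldots,\mu_{\eta-2}\rangle\mapsto\langle\mu,\mu_0,\ldots,\mu_{\eta-1}\rangle$. With this choice, the small reflection type coincides exactly with the absolute type of the preimage structure $B_0$, closing the pull-back diagram just as in the single-cardinal case.
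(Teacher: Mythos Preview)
Your argument is essentially the paper's own: assuming the least $\kappa$ for (i)/(iii) is not parametrically exact, you invoke Lemma~\ref{lemma:EmbCharESRPi1seq} to produce an embedding with $j(\crit j)<\kappa$, then use the diagonal iterate tower $\seq{j^{i+1}(\crit j)}{i\leq\eta}$ and a pull-back contradiction exactly as the paper does. The only slip is the claim $j(B_0)=B$: the witness $B_0\in M$ is obtained by pulling back the \emph{existential} failure statement, so $j(B_0)$ is merely some structure of the big type satisfying the no-embedding clause in $H_{\lambda'}$, not necessarily the original~$B$; the contradiction still goes through with $j(B_0)$ in place of~$B$.
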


\begin{proof}
 Let $\kappa$ be the least cardinal such that there exists a strictly increasing sequence $\vec{\lambda}=\seq{\lambda_i}{i<\eta+1}$ of cardinals with $\lambda_0=\kappa$ and the property that $\Pi_n$-$\ESR(\vec{\lambda})$ holds. Set $\lambda=\lambda_\eta$ and $\rho=\lambda_{\eta-1}$. Pick $\lambda'>\lambda$ with the property that $V_{\lambda'}$ is sufficiently elementary in $V$. 
  
  \begin{claim*}
   $\kappa$ is weakly parametrically $n$-exact for $\seq{\lambda_{i+1}}{i<\eta}$. 
  \end{claim*}
  
  \begin{proof}[Proof of the Claim]
   Assume, towards a contradiction, that $A\in V_{\lambda+1}$ witnesses   that $\kappa$ is not weakly parametrically $n$-exact for $\seq{\lambda_{i+1}}{i<\eta}$.
   Using Lemma \ref{lemma:EmbCharESRPi1seq}, we can find a transitive, $\Pi_n(V_{\rho+1})$-upwards correct set $M$ with $V_\rho\cup\{\rho\}\subseteq M$ and an elementary embedding $j:M\to V_{\lambda'}$ with  $A,\kappa\in\ran{j}$  and   $j(\lambda_i)=\lambda_{i+1}$ for all $i<\eta$. 
   Our setup then ensures that $j(\crit{j})\neq\kappa$ and, since $\kappa\in\ran{j}$ and $j(\kappa)=\lambda_1>\kappa$, this implies that $j(\crit{j})<\kappa$. 
   Given $i<\eta+2$, set $\mu_i=j^i(\crit{j})$. Then $\mu_{i+1}\leq\lambda_i$ for all $i<\eta+1$. 
   Since $\mu_1<\kappa$, the minimality of $\kappa$ yields a $\Pi_n$-formula $\varphi(v)$ with the property that the class $\Ce=\Set{A}{\varphi(A)}$ consists of structures of the same type and there exists a structure $B$ of type $\seq{\mu_{i+2}}{i<\eta}$ in $\Ce$ such that for every structure $A$ of type $\seq{\mu_{i+1}}{i<\eta}$ in $\Ce$, there exists no elementary embedding of $A$ into $B$. 
   By the choice of $\lambda'$, these statements are absolute between $V$ and $V_{\lambda'}$. 
   Elementarity now implies that,  in $M$, there exists a structure $B_0$ of type $\seq{\mu_{i+1}}{i<\eta}$ with the property that $\varphi(B_0)$ holds and for every structure $A$ of type $\seq{\mu_i}{i<\eta}$ such that $\varphi(A)$ holds, there is no elementary embedding of $A$ into $B_0$. 
   Since $M$ is $\Pi_n(V_{\rho+1})$-upwards correct and $V_{\lambda'}$ is sufficiently elementary in $V$, it follows that both $\varphi(B_0)$ and $\varphi(j(B_0))$ hold in $V_{\lambda'}$. But we can now use the elementarity of $j$ to derive a contradiction, because  $j(B_0)$ is a structure of type $\seq{\mu_{i+2}}{i<\eta}$, $B_0$ is a structure of type  $\seq{\mu_{i+1}}{i<\eta}$ and the map $j$ induces an elementary embedding of $B_0$ into $j(B_0)$ that is an element of $V_{\lambda'}$. 
  \end{proof}
  
  With the help of the above claim, we can apply Lemma \ref{lemma:EmbCharESRPi1seq} and Corollary \ref{corollary:SeqWeaklyExactYieldsReflcetion} to conclude that all statements listed in the theorem are equivalent. 
\end{proof}

 Note that the above proof cannot be directly generalized to sequences of cardinals of length $\omega$, because, if $\kappa$ is the least cardinal with the property that $\Pi_n$-$\ESR(\vec{\lambda})$ holds for some strictly increasing sequence $\vec{\lambda}=\seq{\lambda_i}{i<\omega}$ of cardinals with $\lambda_0=\kappa$ and supremum $\lambda$, we assume that $\kappa$
 is not weakly parametrically $n$-exact for $\seq{\lambda_{i+1}}{i<\omega}$ and we repeat the above construction to obtain a  transitive, $\Pi_n(V_{\lambda+1})$-upwards correct set $M$ with $V_\lambda\cup\{\lambda\}\subseteq M$ and an elementary embedding $j:M\to V_{\lambda'}$ with   $j(\crit{j})<\kappa$ and   $j(\lambda_i)=\lambda_{i+1}$ for all $i<\omega$, then we do not know wheter the sequence $\seq{j^{i+1}(\crit{j})}{i<\omega}$ is contained in the range of $j$ and this stops us from repeating the above minimality argument.

 Analogously to the above results, the statement of Theorem \ref{theorem:CharacterizationSigma-ESR} can be generalized to the context of this section.

\begin{lemma}\label{lemma:SeqEquivESR}
 The following statements are equivalent for every natural number $n>0$, every ordinal $0<\eta\leq\omega$, every strictly increasing sequence $\vec{\lambda}=\seq{\lambda_i}{i<1+\eta}$ of uncountable cardinals with supremum  $\lambda$, and all $z\in V_{\lambda_0}$: 
 \begin{enumerate}
     \item $\Sigma_{n+1}(\{z\})$-$\ESR_{\Ce}(\vec{\lambda})$.  
     
     \item For every $A\in V_{\lambda +1}$, there exists a cardinal $\rho$, a cardinal $\kappa'\in C^{(n)}$ greater than $\rho$, a cardinal $\lambda' \in C^{(n+1)}$ greater than $\lambda$,  an  elementary submodel $X$ of $H_{\kappa'}$ with $V_{\rho}\cup \{\rho\}\subseteq X$, and 
 an elementary embedding $j:X\to H_{\lambda'}$ with $A\in\ran{j}$, $j(z)=z$,  $j(\rho)=\lambda$  and $j(\lambda_i)=\lambda_{i+1}$  for all $i<\eta$.   
 \end{enumerate}
\end{lemma}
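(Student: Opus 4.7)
The plan is to mimic the proof of Lemma \ref{lemma:EmbCharESRPi1seq}, replacing its class built from $\mathcal{E}_n(z)$-style structures by one built from $\De_n(z)$-style structures (elementary submodels of $H_\theta$ for $\theta\in C^{(n)}$), in analogy with how Lemma \ref{lemma:ExactFromESR_} stands to Lemma \ref{lemma:WeakExactFromESR}. Specifically, let $\calL$ extend $\calL_\in$ by unary predicate symbols $\vec{\dot P}=\seq{\dot P_i}{i<\eta}$ together with constants $\dot a,\dot b,\dot c,\seq{\dot d_i}{i<\eta}$, and let $\Ce$ be the class of $\calL$-structures $\langle D,E,\vec P,a,b,c,\vec d\rangle$ for which there exist a cardinal $\theta\in C^{(n)}$ with $\theta>\beth_{\rank D}$, an elementary submodel $X$ of $H_\theta$ with $V_{\rank D}\cup\{\rank D\}\subseteq X$, and an isomorphism $\pi:\langle D,E\rangle\to\langle X,\in\rangle$ with $\pi(b)=\rank D$, $\pi(c)=z$, $\pi(d_i)=\rank P_i$ for all $i<\eta$, and such that $\seq{\rank P_i}{i<\eta}$ is a strictly increasing sequence of cardinals with supremum $\rank D$. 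As in the case of $\De_n(z)$, the class $\Ce$ is $\Sigma_{n+1}$-definable with parameter $z$, and every element of $\Ce$ has type $\seq{\rank P_i}{i<\eta}$ with respect to $\vec{\dot P}$.

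For (ii) $\Rightarrow$ (i), fix a $\Sigma_{n+1}$-formula $\varphi(v_0,v_1)$ defining a class $\Ce'=\Set{A}{\varphi(A,z)}$ of structures of the same type, and pick $B\in\Ce'$ of type $\seq{\lambda_{i+1}}{i<\eta}$. Apply (ii) with $A=B$ to obtain $\rho$, $\kappa'\in C^{(n)}$, $\lambda'\in C^{(n+1)}$, $X\preceq H_{\kappa'}$ with $V_\rho\cup\{\rho\}\subseteq X$, and $j:X\to H_{\lambda'}$ with $B\in\ran j$, $j(\rho)=\lambda$, $j(z)=z$ and $j(\lambda_i)=\lambda_{i+1}$ for $i<\eta$. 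Pull $B$ back to $B_0\in X$ with $j(B_0)=B$. Since $\lambda'\in C^{(n+1)}$, $\varphi(B,z)$ holds in $H_{\lambda'}$; by the elementarity of $j$ and $X\preceq H_{\kappa'}$, $\varphi(B_0,z)$ holds in $H_{\kappa'}$, and since $\kappa'\in C^{(n)}$ the inner $\Pi_n$-part of $\varphi$ is absolute between $H_{\kappa'}$ and $V$, so the existential witness in $H_{\kappa'}$ still works in $V$ and $\varphi(B_0,z)$ holds in $V$ (cf. the argument in the proof of Proposition \ref{proposition:ESRfromExact}). Elementarity also gives that $B_0$ has type $\seq{\lambda_i}{i<\eta}$, and since $V_\rho\subseteq X$ the restriction $j\restriction B_0$ is an elementary embedding of $B_0$ into $B$.

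For the harder direction (i) $\Rightarrow$ (ii), fix $A\in V_{\lambda+1}$ and a cardinal $\lambda'\in C^{(n+1)}$ above $\lambda$. Take an elementary submodel $Y$ of $H_{\lambda'}$ of cardinality $\beth_\lambda$ containing $V_\lambda\cup\{A,\lambda,\vec\lambda,z\}$, a bijection $f:Y\to V_\lambda$, and let $R$ be the induced binary relation on $V_\lambda$. Then $\theta=\lambda'$, $X=Y$ and $\pi=f^{-1}$ witness that the structure $\langle V_\lambda,R,\seq{V_{\lambda_{i+1}}}{i<\eta},f(A),f(\lambda),f(z),\seq{f(\lambda_{i+1})}{i<\eta}\rangle$ lies in $\Ce$ with type $\seq{\lambda_{i+1}}{i<\eta}$. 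By (i), this structure admits an elementary embedding $i$ from some $\langle D,E,\vec P,a,b,c,\vec d\rangle\in\Ce$ of type $\seq{\lambda_i}{i<\eta}$; let $\kappa'\in C^{(n)}$, $X\preceq H_{\kappa'}$ and $\pi:\langle D,E\rangle\to\langle X,\in\rangle$ be the corresponding witnesses. Setting $\rho=\rank D=\sup_{i<\eta}\lambda_i$ and $j=f^{-1}\circ i\circ\pi^{-1}:X\to H_{\lambda'}$, a direct calculation yields $j(\rho)=\lambda$, $j(z)=z$, $j(\lambda_i)=j(\rank P_i)=\lambda_{i+1}$ for $i<\eta$ and $A=j(\pi(a))\in\ran j$, which gives (ii). The main delicate point is here: one must set up the witness predicates correctly (taking $P_i=V_{\lambda_{i+1}}$ works) and verify that $\rho$ takes the expected value in both the case $\eta<\omega$ (where $\rho=\lambda_{\eta-1}$) and the case $\eta=\omega$ (where $\rho=\lambda$ is a fixed point of the ordinal part of $j$). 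Aside from this bookkeeping, both directions are routine adaptations of arguments already present in the paper.
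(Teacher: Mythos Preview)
Your proposal is correct and follows essentially the same approach as the paper's proof: both directions are argued exactly as in the paper, using a $\De_n(z)$-style class enriched by the predicate symbols $\dot P_i$ and the constants recording $\rank D$, $z$ and the $\lambda_i$'s. The only cosmetic difference is that you interpret $\dot P_i$ by $V_{\lambda_{i+1}}$ whereas the paper uses the ordinals $\lambda_{i+1}$ themselves; since both have rank $\lambda_{i+1}$, this is immaterial.
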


\begin{proof}
 Assume that (ii) holds. Pick a $\Sigma_{n+1}$-formula $\varphi(v_0,v_1)$ with the property that the class $\Ce=\Set{A}{\varphi(A,z)}$ consists of suitable structures and fix a structure $B$ in $\Ce$ of type $\seq{\lambda_{i+1}}{i<\eta}$. 
 Since $B\in V_{\lambda +1}$, there exists a cardinal $\rho$, a cardinal  $\kappa'$ with $\rho<\kappa'\in C^{(n)}$, a cardinal $\lambda'$ with   $\lambda<\lambda'\in C^{(n+1)}$, an  elementary submodel $X$ of $V_{\kappa'}$ with $V_{\rho}\cup \{\rho \}\subseteq X$, and 
 an elementary embedding $j:X\to H_{\lambda'}$ with $B\in\ran{j}$, $j(z)=z$, $j(\rho)=\lambda$ and  $j(\lambda_i)=\lambda_{i+1}$  for all $i<\eta$. 
 Pick  $A\in X$ such that  $j(A)=B$. Since $\lambda' \in C^{(n+1)}$, we know that  $\varphi(B,z)$ holds in $V_{\lambda'}$. The elementarity of $j$ then implies that  $\varphi(A,z)$ holds in  $V_{\kappa'}$ and,  since $\kappa'\in C^{(n)}$,  we may then    conclude that $A\in \Ce$.
  Moreover, since  $j(\lambda_i)=\lambda_{i+1}$ holds for all $i<\eta$,   elementarity allows us to  conclude that $A$ has type $\seq{\lambda_i}{i<\eta}$. 
  Finally, elementarity also implies that $A$ has rank $\rho$ and, since $V_\rho$  is a subset of $X$, the map $j$ induces an elementary embedding of $A$ into $B$.

 Next, assume that (i) holds and fix $A\in V_{\lambda+1}$. 
 Let $\calL$  be the first-order language extending $\calL_\in$ by  predicate symbols $\vec{P}=\seq{\dot{P}_i}{i<\eta}$ and constant symbols $\dot{A}$, $\dot{\lambda}$, $\dot{z}$ and 
 $\seq{\dot{\lambda}_i}{i<\eta}$.  
 Define $\Ce$ to be the class of all $\calL$-structures $\langle D,E,\vec{P},a,b,c,\vec{d}\rangle$ for which  there exists $\theta \in C^{(n)}$ and an isomorphism  $\tau$ between $\langle D,E\rangle$ and an elementary substructure $X$ of $H_\theta$ with the property that, if $\vec{P}=\seq{P_i}{i<\eta}$ and $\vec{d}=\seq{d_i}{i<\eta}$,  then the following statements   hold: 
  \begin{itemize}
        \item $\rank{D}$ is a cardinal and $V_{\rank{D}}\cup\{\rank{D}\}\subseteq X$. 
  
      \item $\tau(b)=\rank{D}$, $\tau(c)=z$ and $\tau(d_i)=\rank{P_i}$ for all $i<\eta$.   
  
      \item $\seq{\rank{P_i}}{i<\eta}$ is a strictly increasing sequence of cardinals with supremum $\rank{D}$. 
      
  \end{itemize}
 Then  the class $\Ce$ is then  $\Sigma_{n+1}$-definable  with parameter $z$.

  Pick $\lambda<\lambda'\in C^{(n+1)}$,  an elementary substructure $Y$ of $H_{\lambda'}$ of cardinality $\beth_\lambda$ with    $V_\lambda\cup\{A,\lambda\}\subseteq Y$ and a bijection $f:Y\to V_\lambda$. 
  %
  %Given $i<\eta$, pick a subset $Q_i$ of $V_\lambda$ of rank $\lambda_{i+1}$. 
  %
  If we now let $R$ denote the binary relation on $V_\lambda$ induced by $f$, then the resulting $\calL$-structure $$\langle V_\lambda,R,\seq{\lambda_{i+1}}{i<\omega},f(A),f(\lambda),f(z),\seq{f(\lambda_{i+1})}{i<\eta}\rangle$$ is an element of $\Ce$ of type $\seq{\lambda_{i+1}}{i<\eta}$. 
  Set $\rho=\sup_{i<\eta}\lambda_i$. 
   By our assumptions, there exists an elementary embedding $i$ of a structure $\langle D,E,\vec{P},a,b,c,\vec{d}\rangle$ of type $\seq{\lambda_i}{i<\eta}$ in $\Ce$ into the above structure. 
  Pick a cardinal $\kappa'\in C^{(n)}$, an elementary submodel $X$ of $H_{\kappa'}$ and an isomorphism $\tau:\langle D,E\rangle\to \langle X,\in\rangle$ witnessing that the given structure is contained in $\Ce$. 
  Then $\rho$ is a cardinal with $\tau(b)=\rho=\rank{D}$ and $V_\rho\cup\{\rho\}\subseteq X$. Moreover, we have $\tau(c)=z$ and  $\lambda_i=\rank{P_i}=\tau(d_i)$ for all $i<\eta$.  
  If we now define  $$j ~ = ~ f^{{-}1}\circ i\circ\tau^{{-}1}:X\to H_{\lambda'},$$ then  $j$ is an elementary embedding with $A\in\ran{j}$, $j(z)=z$, $j(\rho)=\lambda$ and $j(\lambda_i)=\lambda_{i+1}$ for all $i<\eta$. 
\end{proof}

\begin{corollary}\label{corollary:SeqEquivSigma}
 Let $0<n<\omega$, let $0<\eta\leq\omega$ and let $\vec{\lambda}=\seq{\lambda_i}{i<1+\eta}$ be a strictly increasing sequence of cardinals. 
  \begin{enumerate}
   \item The cardinal $\lambda_0$ is  $n$-exact for $\seq{\lambda_{i+1}}{i<\eta}$ if and only if  $\Sigma_{n+1}$-$\ESR(\vec{\lambda})$ holds. 
   
   \item If $\lambda_0$ is  parametrically $n$-exact for $\seq{\lambda_{i+1}}{i<\eta}$, then $\Sigma_{n+1}(V_{\lambda_0})$-$\ESR(\vec{\lambda})$ holds. \qed 
  \end{enumerate}
\end{corollary}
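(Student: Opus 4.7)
The plan is to derive both parts of Corollary \ref{corollary:SeqEquivSigma} directly from Lemma \ref{lemma:SeqEquivESR}, by matching its embedding condition (ii) with the definitions of sequential $n$-exact and parametrically $n$-exact cardinals. This parallels the derivation of Corollary \ref{corollary:SeqWeaklyExactYieldsReflcetion} from Lemma \ref{lemma:EmbCharESRPi1seq} in the $\Pi_n$ setting.

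For part (i), I would observe that $\Sigma_{n+1}$-$\ESR(\vec{\lambda})$ coincides with $\Sigma_{n+1}(\{\emptyset\})$-$\ESR(\vec{\lambda})$ and apply Lemma \ref{lemma:SeqEquivESR} with $z = \emptyset$. The resulting condition asserts that for every $A \in V_{\lambda+1}$ there exist a cardinal $\rho$, cardinals $\kappa' \in C^{(n)}$ and $\lambda' \in C^{(n+1)}$ (with $\kappa' > \beth_\rho$, as actually produced by the proof of the Lemma via $\De_n(\emptyset)$), an elementary submodel $X \prec H_{\kappa'}$ with $V_\rho \cup \{\rho\} \subseteq X$, and an elementary embedding $j:X \to H_{\lambda'}$ with $A \in \ran{j}$, $j(\rho) = \lambda$ and $j(\lambda_i) = \lambda_{i+1}$ for all $i < \eta$; the constraint $j(\emptyset) = \emptyset$ is automatic. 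Writing $\kappa = \lambda_0$, this is verbatim the definition of $\kappa$ being $n$-exact for $\seq{\lambda_{i+1}}{i<\eta}$, giving both directions of the stated equivalence.

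For part (ii), assume $\lambda_0$ is parametrically $n$-exact for $\seq{\lambda_{i+1}}{i<\eta}$ and fix $z \in V_{\lambda_0}$ together with $A \in V_{\lambda+1}$. Applying parametric $n$-exactness to the pair $\langle A, z \rangle \in V_{\lambda+1}$ yields $\rho, \kappa', \lambda', X, j$ as above with $\langle A, z \rangle \in \ran{j}$ and $j(\crit{j}) = \lambda_0$; by elementarity both $A$ and $z$ lie in $\ran{j}$. The key step, mirroring Proposition \ref{proposition:ParametersFromParametrically}, is to show $j(z) = z$: pick $z_0 \in X$ with $j(z_0) = z$; then elementarity forces $j(\rank{z_0}) = \rank{z} < \lambda_0 = j(\crit{j})$, hence $\rank{z_0} < \crit{j}$ and $z_0 \in V_{\crit{j}}$. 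Since $V_{\crit{j}} \subseteq V_{\lambda_0} \subseteq V_\rho \subseteq X$ and $j$ acts as the identity on $V_{\crit{j}}$, we conclude $z = z_0 = j(z)$. This verifies condition (ii) of Lemma \ref{lemma:SeqEquivESR} for $z$, yielding $\Sigma_{n+1}(\{z\})$-$\ESR(\vec{\lambda})$; since $z \in V_{\lambda_0}$ was arbitrary, $\Sigma_{n+1}(V_{\lambda_0})$-$\ESR(\vec{\lambda})$ follows.

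The only subtle point, and the main obstacle worth recording, is that $X$ need not be transitive, so the assertion $j \restriction V_{\crit{j}} = \id$ needs a brief $\in$-induction on rank using $V_{\crit{j}} \subseteq X$ together with the elementarity of $j:X \to H_{\lambda'}$; once this is in place, the rank-comparison argument proceeds exactly as in the transitive setting of Proposition \ref{proposition:ParametersFromParametrically}. Beyond this, the corollary is a direct translation between Lemma \ref{lemma:SeqEquivESR} and the definitions and requires no new ideas.
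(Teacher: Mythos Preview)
Your proposal is correct and matches the paper's approach: the corollary is marked with \qed\ precisely because both parts are immediate from Lemma \ref{lemma:SeqEquivESR}, with part (i) being the case $z=\emptyset$ and part (ii) following from the parametric hypothesis via the rank-comparison argument you give (the sequential analogue of Proposition \ref{proposition:ParametersFromParametrically}). Your observation about the discrepancy $\kappa'>\rho$ versus $\kappa'>\beth_\rho$ and your handling of the non-transitivity of $X$ via $V_{\crit{j}}\subseteq V_\rho\subseteq X$ are both accurate and resolve the only points requiring care.
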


As above, we can now generalize Theorem \ref{theorem:CharacterizationSigma-ESR} to the principle $\Sigma_{n+1}$-$\ESR(\vec{\lambda})$ for finite sequences of cardinals $\vec{\lambda}$.

\begin{theorem} 
\label{theorem:SeqEquivalenceExactReflection}
 The following statements are equivalent for every cardinal $\kappa$ and all natural numbers $n,\eta>0$: 
  \begin{enumerate}
      \item $\kappa$ is the least cardinal such that there exists a strictly increasing sequence $\bar{\lambda}=\seq{\lambda_i}{i<\eta+1}$ of cardinals with $\lambda_0=\kappa$ and the property that $\Sigma_{n+1}$-$\ESR(\vec{\lambda})$ holds. 

      \item $\kappa$ is the least cardinal such that there exists a strictly increasing sequence $\bar{\lambda}=\seq{\lambda_i}{i<\eta+1}$ of cardinals with $\lambda_0=\kappa$ and the property that $\Sigma_{n+1}(V_\kappa)$-$\ESR(\vec{\lambda})$ holds. 
      
     \item $\kappa$ is the least cardinal that is  $n$-exact for some strictly increasing sequence of cardinals greater than $\kappa$ of length $\eta$. 
      
    \item $\kappa$ is the least cardinal that is  parametrically $n$-exact for some strictly increasing sequence of cardinals greater than $\kappa$ of length $\eta$.
  \end{enumerate}
\end{theorem}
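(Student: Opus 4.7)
The plan is to closely mimic the proof of Theorem \ref{theorem:CharacterizationSigma-ESR}, substituting the sequential tools of this section for their single-cardinal counterparts. The easy half is immediate: by Corollary \ref{corollary:SeqEquivSigma}(i), statements (i) and (iii) determine the same cardinal; trivially the cardinal in (ii) is at least that in (i) and the cardinal in (iv) is at least that in (iii); and by Corollary \ref{corollary:SeqEquivSigma}(ii), $\Sigma_{n+1}(V_\kappa)$-$\ESR(\vec\lambda)$ holds whenever $\lambda_0 = \kappa$ is parametrically $n$-exact for the shifted sequence, so the cardinal in (iv) dominates that in (ii). Hence it will suffice to show that the least cardinal $\kappa$ satisfying (iii) is in fact parametrically $n$-exact for some sequence of length $\eta$ of cardinals greater than $\kappa$, which will give (iv).

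So fix such a $\kappa$ and, via Corollary \ref{corollary:SeqEquivSigma}(i), fix a strictly increasing sequence $\vec\lambda = \seq{\lambda_i}{i\leq\eta}$ with $\lambda_0 = \kappa$ such that $\Sigma_{n+1}$-$\ESR(\vec\lambda)$ holds; set $\rho = \lambda_{\eta-1}$ and $\lambda = \lambda_\eta$. Given arbitrary $A \in V_{\lambda+1}$, I would apply Lemma \ref{lemma:SeqEquivESR} to the coded tuple $\langle A,\kappa \rangle$ (to force $\kappa$ into the range of the resulting embedding) in order to obtain a cardinal $\rho<\kappa' \in C^{(n)}$, a cardinal $\lambda<\lambda' \in C^{(n+1)}$, an elementary submodel $X$ of $H_{\kappa'}$ with $V_\rho \cup \{\rho\} \subseteq X$, and an elementary embedding $j : X \to H_{\lambda'}$ with $A, \kappa \in \ran{j}$, $j(\rho) = \lambda$, $j(\kappa) = \lambda_1$, and $j(\lambda_i) = \lambda_{i+1}$ for $1 \leq i < \eta$. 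Since $\kappa \in \ran{j}$ while $j(\kappa) = \lambda_1 > \kappa$, an easy elementary argument (picking $\alpha \in X$ with $j(\alpha) = \kappa$) forces $\crit{j} < \kappa$ and $j(\crit{j}) \leq \kappa$. The goal is $j(\crit{j}) = \kappa$: if this holds, then $X$ and $j$ witness parametric $n$-exactness of $\kappa$ for $\seq{\lambda_{i+1}}{i<\eta}$ at the datum $A$.

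Suppose for contradiction that $\mu_1 := j(\crit{j}) < \kappa$. I would define $\mu_0 = \crit{j}$ and $\mu_{i+1} = j(\mu_i)$, and verify by induction (using the bound $\mu_i \leq \lambda_{i-1} < \rho$ for $1 \leq i \leq \eta$, which guarantees each iterate lies in $V_\rho \subseteq X$) that $\mu_1 < \mu_2 < \ldots < \mu_{\eta+1}$ is a strictly increasing sequence of cardinals with $\mu_{i+1} \leq \lambda_i$ for $i<\eta+1$. Since $\mu_1 < \kappa$, the minimality of $\kappa$ implies that $\Sigma_{n+1}$-$\ESR(\mu_1, \mu_2, \ldots, \mu_{\eta+1})$ fails, yielding a $\Sigma_{n+1}$-formula $\varphi$ defining a class $\Ce$ of structures of the same type and a structure $B \in \Ce$ of type $(\mu_2,\ldots,\mu_{\eta+1})$ with the property that no structure $A' \in \Ce$ of type $(\mu_1,\ldots,\mu_\eta)$ embeds into $B$.

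The finale runs the same transfer argument as in the proof of Theorem \ref{theorem:CharacterizationSigma-ESR}: this failure, having all its parameters $\mu_1,\ldots,\mu_{\eta+1}$ in $\ran{j}$, transfers downwards to $H_{\lambda'}$ via $\lambda' \in C^{(n+1)}$; it then pulls back by the elementarity of $j:X\to H_{\lambda'}$ to produce some $B_0 \in X$ of type $(\mu_1,\ldots,\mu_\eta)$ with $\varphi(B_0)$ such that no $A_0 \in X$ of type $(\mu_0,\ldots,\mu_{\eta-1})$ satisfying $\varphi$ embeds into $B_0$; and the $\Sigma_{n+1}$-statement $\varphi(B_0)$ transfers upwards from $X \prec H_{\kappa'}$ to $V$ via $\kappa' \in C^{(n)}$. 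The contradiction is that $j \restriction B_0 : B_0 \to j(B_0)$ is an elementary embedding inside $H_{\lambda'}$, while $B_0$ is a structure of type $(\mu_1,\ldots,\mu_\eta)$ satisfying $\varphi$ in $H_{\lambda'}$ (again by $\lambda' \in C^{(n+1)}$), directly contradicting the $j$-image of the universal clause that no structure of type $(\mu_1,\ldots,\mu_\eta)$ satisfying $\varphi$ embeds into $j(B_0)$. The principal technical point, as in the proof of Theorem \ref{theorem:SeqEquivalenceWeaklyExactReflection}, is controlling the iterates $\mu_i = j^i(\crit{j})$: one must ensure each lies in the domain $X$ so the next application of $j$ is legal, and that they form a strictly increasing sequence of cardinals bounded appropriately by $\vec\lambda$. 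For finite $\eta$ the inductive bound $\mu_{i+1}\leq\lambda_i$ makes this immediate, which is precisely why the analogous method cannot be pushed to sequences of length $\omega$.
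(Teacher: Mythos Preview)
Your proposal is correct and follows essentially the same route as the paper's proof: both reduce everything (via Corollary \ref{corollary:SeqEquivSigma}) to showing that the least $\kappa$ with $\Sigma_{n+1}$-$\ESR(\vec\lambda)$ is parametrically $n$-exact for the shifted sequence, and both obtain this by applying Lemma \ref{lemma:SeqEquivESR}, iterating $\mu_i=j^i(\crit{j})$, and deriving a contradiction from $j(\crit{j})<\kappa$ via the minimality of $\kappa$ and the embedding $j\restriction B_0$. Your exposition is in fact somewhat more explicit than the paper's about forcing $\kappa\in\ran{j}$ (by coding $\langle A,\kappa\rangle$) and about the chain of easy inequalities among the four least cardinals.
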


\begin{proof}
 Assume that  $\kappa$ is the least cardinal such that $\Sigma_{n+1}$-$\ESR(\vec{\lambda})$ holds for some   strictly increasing sequence $\vec{\lambda}=\seq{\lambda_i}{i<\eta+1}$ of cardinals with $\lambda_0=\kappa$.  
 Set $\lambda=\lambda_\eta$ and $\rho=\lambda_{\eta-1}$. Pick $\lambda'>\lambda$ with the property that $V_{\lambda'}$ is sufficiently elementary in $V$. 
  
  \begin{claim*}
   $\kappa$ is  parametrically $n$-exact for $\seq{\lambda_{i+1}}{i<\eta}$. 
  \end{claim*}
  
  \begin{proof}[Proof of the Claim]
   Assume, towards a contradiction, that $A\in V_{\lambda+1}$ witnesses that $\kappa$ is not parametrically $n$-exact for $\seq{\lambda_{i+1}}{i<\eta}$. 
   By Lemma \ref{lemma:SeqEquivESR},  a cardinal $\kappa'$ with   $\rho<\kappa'\in C^{(n)}$, a cardinal $\lambda'$ with $\lambda<\lambda'\in C^{(n+1)}$, an elementary submodel $X$ of $H_{\kappa'}$ with $V_\rho\cup\{\rho\}\subseteq X$ and an elementary embedding $j:X\to H_{\lambda'}$ with $A,\kappa\in\ran{j}$    and $j(\lambda_i)=\lambda_{i+1}$ for all $i<\eta$. 
   We then know that $j(\crit{j})\neq\kappa$ and, since $\kappa\in\ran{j}$ and $j(\kappa)>\kappa$, this allows us to conclude  $j(\crit{j})<\kappa$. 
   Given $i<\eta+2$, set $\mu_i=j^i(\crit{j})$.  
   Since $\mu_1<\kappa$, the minimality of $\kappa$ yields a $\Sigma_{n+1}$-formula $\varphi(v)$ with the property that the class $\Ce=\Set{A}{\varphi(A)}$ consists of structures of the same type and there exists a structure $B$ of type $\seq{\mu_{i+2}}{i<\eta}$ in $\Ce$ such that for every structure $A$ of type $\seq{\mu_{i+1}}{i<\eta}$ in $\Ce$, there exists no elementary embedding of $A$ into $B$. 
   Our set up now ensures that,  in $X$, there exists a structure $B_0$ of type $\seq{\mu_{i+1}}{i<\eta}$ with the property that $\varphi(B_0)$ holds and for every structure $A$ of type $\seq{\mu_i}{i<\eta}$ such that $\varphi(A)$ holds, there is no elementary embedding of $A$ into $B_0$. 
   Then both $\varphi(B_0)$ and $\varphi(j(B_0))$ hold in $V_{\lambda'}$, and $j$ induces an elementary embedding of $B_0$ into $j(B_0)$ that is an element of $V_{\lambda'}$. 
   Since $B_0$ has type $\seq{\mu_{i+1}}{i<\eta}=j(\seq{\mu_i}{i<\eta})$ and $j(B_0)$ has type $\seq{\mu_{i+2}}{i<\eta}=j(\seq{\mu_{i+1}}{i<\eta})$, we can now use the elementarity of $j$ to derive a contradiction.  
  \end{proof}
  
  A combination  of this claim and  Corollary \ref{corollary:SeqEquivSigma} now yields the desired equivalences. 
\end{proof}

%%%%%%%%%%%%%%%%%%%
%%%%%%%%%%%%%%%%%%%

\section{Open questions and concluding remarks}

We close this paper by discussing some questions raised by the above results.

 First, recall that a cardinal $\kappa$ is \emph{superhuge} if there is a proper class of cardinals $\lambda$ with the property that $\kappa$ is huge with target $\lambda$. 
 In order to study principles of structural reflection related to superhugeness,  Proposition \ref{proposition:ESRfromHuge} suggests to study cardinals $\kappa$ that are weakly $1$-exact for a proper class of cardinals $\lambda$. 
 By Corollary \ref{corollary:WeakExactStronger}, this property is equivalent to the assumption that  the principle $\Pi_1$-$\ESR(\kappa,\lambda)$ holds for a proper class of cardinals $\lambda$. 
 Therefore, it is natural to ask whether a variation of Theorem \ref{theorem:CharacterizationPI-ESR} can be proven for these cardinals.

\begin{question}
 Are the following statements equivalent for every cardinal $\kappa$ and every natural number $n>0$? 
\begin{enumerate}
 \item $\kappa$ is the least cardinal that is weakly parametrically $n$-exact for a proper class of cardinals $\lambda$. 
 
 %\item $\kappa$ is the least  $n$-exact cardinal. 
 
  \item $\kappa$ is the least cardinal with the property that $\Pi_n$-$\ESR(\kappa,\lambda)$ holds for a proper class of cardinals $\lambda$. 
  %for which $\Sigma_{n+1}$-$\ESR_{\De_n}(\kappa)$ holds. 
  
 %\item $\kappa$ is the least cardinal for which $\Sigma_{n+1}$-$\UESR(\kappa)$ holds.
\end{enumerate}
\end{question}

Our next question deals with the exact position of $n$-exact and weakly $n$-exact cardinals in the large cardinal hierarchy. 
 By Corollary \ref{corollary:AlmostHugeFromPi1ic}  and Proposition \ref{proposition:ESRfromHugely2Exact}, these notions are properly contained  in the interval given by \emph{almost hugeness} and \emph{almost $2$-hugeness}. Moreover, Corollary \ref{corollary:HUGEstrongerthanPI1ESR} shows that hugeness is strictly stronger than weak $1$-exactness. Finally, Proposition \ref{proposition:LeastHugeFailure} implies that, if $\kappa$ is the least huge cardinal, then $\Sigma_2$-$\ESR(\kappa,\lambda)$ fails for all $\lambda>\kappa$.   
 These results leave open the precise relationship between hugeness and exactness, and  motivate the following question:

\begin{question}
 Does the consistency of the theory $\textrm{ZFC}+``\textit{there exists a huge cardinal}"$ imply the consistency of the theory $\mathrm{ZFC}+ ``\textit{$\Sigma_2$-$\ESR(\kappa)$ holds for some  cardinal  $\kappa$}"$? 
\end{question}

The results of {\cite[Section 2.2.1]{MR2538021}} might provide tools to derive a negative answer to this question.

We finally discuss some questions left open about the infinite sequential versions of exact structural reflection principles introduced in Section \ref{section:Beyond}. In the light of Proposition \ref{proposition:I1impliesSeqESR}, it is natural to ask whether the consistency of principles of the form $\Sigma_n$-$\ESR(\vec{\lambda})$ for infinite sequences $\vec{\lambda}$ of cardinals and natural numbers $n>1$ can be established from some very strong large cardinal assumption (like $\mathrm{ZFC}+"\textit{There exists an $I0$-cardinal\hspace{1.7pt}}"$), or whether these principles are outright inconsistent with \rm{ZFC}.

\begin{question}
 Does \rm{ZFC} prove that the principle $\Sigma_2$-$\ESR(\vec{\lambda})$ fails for every strictly increasing sequence $\vec{\lambda}$ of cardinals of length $\omega$? 
\end{question}

However, if we only assume \rm{ZF} and  $\kappa$ is a Reinhardt cardinal, witnessed by an elementary embedding $j:V\to V$, then for every  natural number $n>0$, the critical  sequence $\vec{\lambda}=\seq{\lambda_i}{i<\omega}$, given by  $\lambda_i=j^i(\crit{j})$, witnesses that the principle  $\Pi_n(V_{\lambda_0})$-$\ESR(\vec{\lambda})$ holds. 

\medskip

Finally, as noted in the discussion following Theorem \ref{theorem:SeqEquivalenceWeaklyExactReflection}, our techniques do not allow us to generalize Theorem \ref{theorem:CharacterizationPI-ESR} to principles of the form $\ESR(\vec{\lambda})$ for infinite sequences $\vec{\lambda}$. This motivates the following question:

\begin{question}
 If $\kappa$ is the least cardinal with the property that $\Pi_n$-$\ESR(\vec{\lambda})$ holds for some strictly increasing sequence $\vec{\lambda}$ of cardinals of length $\omega$ with minimum $\kappa$, is $\kappa$ weakly parametrically $n$-exact for some strictly increasing sequence of cardinals greater than $\kappa$ of length $\omega$? 
\end{question}

%%%%%%%%%%%%%%%%%%%
%%%%%%%%%%%%%%%%%%%

\bibliographystyle{plain} 
\bibliography{masterbiblio}

\begin{thebibliography}{10}

\bibitem{Ba:CC}
Joan Bagaria.
\newblock {$C^{(n)}$}-cardinals.
\newblock {\em Archive for Mathematical Logic}, 51:213--240, 2012.

\bibitem{BagariaRefl}
Joan Bagaria.
\newblock Large cardinals as principles of structural reflection.
\newblock {\em Bull. Symb. Log.}, 29(1):19--70, 2023.

\bibitem{BCMR}
Joan Bagaria, Carles Casacuberta, A.~R.~D. Mathias, and Ji\v{r}\'{\i}
  Rosick\'y.
\newblock Definable orthogonality classes in accessible categories are small.
\newblock {\em Journal of the European Mathematical Society}, 17(3):549--589,
  2015.

\bibitem{BV}
Joan Bagaria and Jouko V{\"a}{\"a}n{\"a}nen.
\newblock On the symbiosis between model-theoretic and set-theoretic properties
  of large cardinals.
\newblock {\em The Journal of Symbolic Logic}, 81(02):584--604, June 2016.

\bibitem{BagariaWilsonRefl}
Joan Bagaria and Trevor~M. Wilson.
\newblock The weak {V}op\v{e}nka principle for definable classes of structures.
\newblock {\em J. Symb. Log.}, 88(1):145--168, 2023.

\bibitem{MR645907}
Hans-Dieter Donder, Ronald~B. Jensen, and Bernd~J. Koppelberg.
\newblock Some applications of the core model.
\newblock In {\em Set theory and model theory ({B}onn, 1979)}, volume 872 of
  {\em Lecture Notes in Math.}, pages 55--97. Springer, Berlin-New York, 1981.

\bibitem{MR2538021}
Matthew Foreman.
\newblock Smoke and mirrors: combinatorial properties of small cardinals
  equiconsistent with huge cardinals.
\newblock {\em Adv. Math.}, 222(2):565--595, 2009.

\bibitem{MR2768692}
Matthew Foreman.
\newblock Ideals and generic elementary embeddings.
\newblock In {\em Handbook of set theory. {V}ols. 1, 2, 3}, pages 885--1147.
  Springer, Dordrecht, 2010.

\bibitem{MR3913154}
Peter Holy, Philipp L\"{u}cke, and Ana Njegomir.
\newblock Small embedding characterizations for large cardinals.
\newblock {\em Ann. Pure Appl. Logic}, 170(2):251--271, 2019.

\bibitem{Jech}
Thomas Jech.
\newblock {\em Set theory. {T}he {T}hird {M}illenium {E}dition, {R}evised and
  {E}xpanded}.
\newblock {S}pringer {M}onographs in {M}athematics. Springer, 2002.

\bibitem{Kan:THI}
Akihiro Kanamori.
\newblock {\em The Higher Infinite}.
\newblock Springer, 2nd edition, 2003.

\bibitem{Lu:SR}
Philipp L\"{u}cke.
\newblock Structural reflection, shrewd cardinals and the size of the
  continuum.
\newblock {\em J. Math. Log.}, 22(2):Paper No. 2250007, 43, 2022.

\bibitem{Mag}
M.~Magidor.
\newblock On the role of supercompact and extendible cardinals in logic.
\newblock {\em Israel Journal of Mathematics}, 10(2):147--157, Jun 1971.

\bibitem{MR1369172}
Michael Rathjen.
\newblock Recent advances in ordinal analysis: {$\Pi^1_2$}-{${\rm CA}$} and
  related systems.
\newblock {\em Bull. Symbolic Logic}, 1(4):468--485, 1995.

\bibitem{Tsan}
Konstantinos Tsaprounis.
\newblock On {$C^{(n)}$}-extendible cardinals.
\newblock {\em The Journal of Symbolic Logic}, 83(3):1112--1131, 2018.

\bibitem{MR1856729}
J.~Vickers and P.~D. Welch.
\newblock On elementary embeddings from an inner model to the universe.
\newblock {\em The Journal of Symbolic Logic}, 66(3):1090--1116, 2001.

\end{thebibliography}

\end{document}